\newtheorem{thm}{Theorem}[section]
\newtheorem{prop}[thm]{Proposition}
\newtheorem{lem}[thm]{Lemma}
\newtheorem{conj}[thm]{Conjecture}
\theoremstyle{definition}
\newtheorem{defn}[thm]{Definition}
\newtheorem{noexmp}[thm]{Counterexample}
\newtheorem{exmp}[thm]{Example}
\newtheorem{rem}[thm]{Remark}
\newtheorem{sch}[thm]{Scholium}
\let\c@equation\c@thm
\numberwithin{equation}{section}
\let\SK@label\label\fi
 \let\your@thm\@thm
 \def\@thm#1#2#3{\gdef\currthmtype{#3}\your@thm{#1}{#2}{#3}}
 \def\mylabel#1{{\let\your@currentlabel\@currentlabel\def\@currentlabel
  {\currthmtype~\your@currentlabel}
 \SK@label{#1@}}\label{#1}}
 \def\myref#1{\ref{#1@}}
\newcommand{\coldbit}[1]{\textcolor{orange}{#1}}
\renewcommand{\coldbit}[1]{}
\begin{document}


\title[Equivariant iterated loop spaces and permutative $G$-categories]{Equivariant iterated loop space theory and permutative $G$-categories}

\author{B.J. Guillou}
\address{Department of Mathematics, The University of Kentucky, Lexington, KY, 40506}
\email{bertguillou@uky.edu}
\author{J.P. May}
\address{Department of Mathematics, The University of Chicago, Chicago, IL 60637}
\email{may@math.uchicago.edu}



\begin{abstract}  We set up operadic foundations for equivariant iterated
loop space theory.   We start by building up from a discussion of the approximation theorem
and recognition principle for $V$-fold loop $G$-spaces to several avatars of a recognition principle for
infinite loop $G$-spaces.  We then explain what genuine permutative $G$-categories are and, more 
generally, what $E_{\infty}$ $G$-categories are, giving examples showing how they arise.   As an 
application, we prove the equivariant Barratt-Priddy-Quillen theorem as a statement
about genuine $G$-spectra and use it to give a new, categorical, proof
of the tom Dieck splitting theorem for suspension $G$-spectra.
Other examples are geared towards equivariant algebraic $K$-theory.
\end{abstract}

\maketitle

\tableofcontents



\section*{Introduction}

Let $G$ be a finite group. We will develop equivariant infinite loop space
theory in a series of papers. In this introductory one, we focus on the operadic equivariant infinite
loop space machine.  This is the most topologically grounded machine, as we illustrate
by first focusing on its relationship to $V$-fold deloopings for $G$-representations $V$.  Genuine
permutative $G$-categories and, more generally, $E_{\infty}$ $G$-categories are also 
defined operadically. They  provide the simplest categorical input needed to construct genuine 
$G$-spectra from categorical input. 

For background, naive $G$-spectra are just spectra with actions by $G$.
They have their uses, but they are not adequate for serious work in equivariant
stable homotopy theory.  The naive suspension $G$-spectra of spheres $S^n$ with 
trivial $G$-action are invertible in the naive equivariant stable homotopy category. In contrast, 
for all real orthogonal $G$-representations $V$,
the genuine suspension $G$-spectra of $G$-spheres $S^V$ are invertible in the
genuine equivariant stable homotopy category, where $S^V$ is the one-point compactification
of  $V$.  Naive $G$-spectra represent $\bZ$-graded cohomology
theories, whereas genuine $G$-spectra represent 
cohomology theories graded on the real representation ring $RO(G)$.  The $RO(G)$-grading is essential for Poincar\'e duality
and, surprisingly, for many nonequivariant applications.

The zeroth space $E_0=\OM^{\infty}E$ of a naive $\OM$-$G$-spectrum is an infinite loop
$G$-space in the sense that it is equivalent to an $n$-fold loop $G$-space 
$\OM^n E_n$ for each $n\geq 0$.  The zeroth space $E_0$ of a genuine $\OM$-$G$-spectrum 
$E$ is an infinite loop $G$-space in the sense that it is  equivalent to a $V$-fold
loop $G$-space $\OM^VE(V)$ for all real representations $V$. The essential point
of equivariant infinite loop space theory is to construct $G$-spectra from space or category level
data. Such a result is called a recognition principle since it allows us to recognize infinite 
loop $G$-spaces when we see them.  A functor that constructs $G$-spectra from $G$-space or 
$G$-category level input is called an equivariant infinite loop space machine.  

As we shall see, a recognition principle for naive $G$-spectra is obtained 
simply by letting $G$ act in the obvious way on the input data familiar
from the nonequivariant theory. One of our main interests is to construct and apply an 
equivariant infinite loop space machine that constructs genuine $G$-spectra from
categorical input.

A permutative category is a symmetric strictly associative and unital 
monoidal category, and any symmetric monoidal category is equivalent to a 
permutative category.  The
classifying space of a permutative category $\sA$ is rarely an infinite loop space, 
but infinite loop space theory constructs an $\OM$-spectrum  $\bK \sA$ whose zeroth
space is a group completion of the classifying space $B\sA$.  A naive permutative
$G$-category is a permutative category that is a $G$-category with equivariant
structure data. It is a straightforward  adaptation of the nonequivariant theory to 
construct naive $G$-spectra $\bK \sA$ from naive permutative $G$-categories $\sA$ in 
such a way that $\bK_0\sA$ is a group completion of $B\sA$, meaning that $(\bK_0\sA)^H$ is
a nonequivariant group completion of $B(\sA^H)$ for all subgroups $H$ of $G$.

In this paper, we explain what genuine
permutative $G$-categories are and what $E_{\infty}$ $G$-categories are, and 
we explain how to construct a genuine $G$-spectrum $\bK_G\sA$ from a genuine
permutative $G$-category $\sA$ or, more generally, from an $E_{\infty}$ $G$-category $\sA$. 
A genuine $G$-spectrum has an underlying naive $G$-spectrum, and the underlying naive 
$G$-spectrum of $\bK_G\sA$ will be $\bK \sA$.  Therefore we still have the crucial
group completion property relating $B\sA$ to the zeroth $G$-space of $\bK_G\sA$.

We use this theory to show how to construct suspension $G$-spectra from categorical
data, giving a new equivariant version of the classical Barratt-Priddy-Quillen (BPQ)
theorem for the construction of the sphere spectrum from symmetric groups. In \cite{GMMOMult},
we shall use this version of the BPQ theorem as input to a  proof of the results from
equivariant infinite loop space theory that were promised in \cite{GM2}, where we described the category 
of $G$-spectra as an easily understood category of spectral presheaves.  Here we use this 
version of the BPQ theorem to give a new categorical proof of the tom Dieck splitting 
theorem for the fixed point spectra of suspension $G$-spectra. The new proof is
simpler and gives more precise information than the classical proof by induction up orbit types.

A complementary interest is to understand the geometry of $V$-fold loop $G$-spaces.  
As we shall explain in this paper, these interests lead to quite different perspectives.
They are manifested in point set level distinctions that would be invisible to a more
abstract approach.   One way of pinpointing these differences is to
emphasize the distinction between the role played by $E_V$ operads for representations $V$,
which are the equivariant generalizations of $E_n$ operads, and the role played by (genuine)
$E_{\infty}$-operads of $G$-spaces.

An $E_V$-space is a $G$-space with an action by an $E_V$-operad. We here develop a 
machine that constructs $V$-fold loop $G$-spaces from 
$E_V$-spaces.  For future perspective, we envision 
the possibility of an equivariant version of factorization homology in which $E_V$ operads will govern local 
structure of $G$-manifolds in analogy  with the role played by $E_n$ operads in the existing non-equivariant theory.  
For such a theory, $E_{\infty}$ operads would be essentially irrelevant.

In contrast, for infinite loop space theory $E_V$ operads serve merely as scaffolding used to build a machine that 
constructs genuine $G$-spectra from $E_{\infty}$ $G$-spaces, which are spaces with an action by 
some $E_{\infty}$ operad.  The classifying $G$-spaces of genuine permutative $G$-categories are examples of 
$E_{\infty}$ $G$-spaces with actions by a particular $E_{\infty}$ operad $\sP_G$, but $E_{\infty}$ $G$-spaces with 
actions by quite different $E_{\infty}$ operads abound.  We concentrate on such an operadic machine in this paper.   The machine 
we concentrate on in the sequels \cite{GMMOAdd, GMMOMult, MMO} makes no use of $E_V$-operads and does not 
recognize $V$-fold loop $G$-spaces, but it allows a level of categorical power and multiplicative control that is 
unobtainable with the machine built here.

This paper offers a  number of variant perspectives on the topics it studies.  We give recognition
principles for $V$-fold loop spaces (\myref{Vrec}), for orthogonal $G$-spectra (\myref{KStarrec}
and \myref{Einforth}) and, preserving space level structure invisible in orthogonal $G$-spectra, for Lewis-May 
$G$-spectra (\myref{EinfLM} and \myref{Vrec2}). 
  The geometric input data for \myref{Vrec} consists 
of algebras over the little disks or Steiner operad $\sD_V$ or $\sK_V$.  For \myref{KStarrec}, it consists 
of compatible algebras over the $\sK_V$ for all finite dimensional $V$.  

In both Definitions \ref{Einforth} and \ref{EinfLM}, the input data consists of algebras over an $E_\infty$ operad 
of $G$-spaces.   These algebras may come 
by applying the classifying space functor $B$ to algebras over an $E_{\infty}$ operad of $G$-categories.
The orthogonal spectrum machine and the Lewis-May spectrum machine are shown to 
be equivalent by comparing them both to a machine landing in the $S_G$-modules of EKMM \cite{EKMM, MM}.  
In effect, the machines landing in Lewis-May $G$-spectra and in $S_G$-modules provide highly structured fibrant
approximations of the machine landing in orthogonal $G$-spectra.   In retrospect, such fibrant approximation
is central to nonequivariant calculational understanding, and one can hope that the same will eventually
prove true equivariantly.

The variants have alternative and contradictory good features, which become particularly apparent and relevant
when specialized to free $E_{\infty}$ algebras, where they are all viewed as giving variants of the equivariant BPQ theorem.    
Thinking unstably and geometrically, \myref{unstableBPQ} shows how the
machine recognizes $V$-fold suspensions $\SI^V X$ and shows that the recognition is precisely compatible with 
the evident $G$-homeomorphisms $\SI^VX \sma \SI^W Y \iso \SI^{V\oplus W} Y$.   Thinking stably and geometrically,
Theorems  \ref{BPQ2} and \ref{BPQ3} show how the machine recognizes orthogonal or Lewis-May suspension 
$G$-spectra $\SI^{\infty}_GX$. In both cases, the recognition is precisely compatible with the standard
$G$-isomorphisms $\SI^{\infty}_G X\sma \SI^{\infty}_G Y \iso \SI^\infty_G (X\sma Y)$.  However, the meaning of $\SI^{\infty}_G$ is quite
different in the two cases.  For orthogonal $G$-spectra, $\SI^{\infty}_GX$ is cofibrant if $G$ is cofibrant as 
a $G$-space, but it is never fibrant.  For Lewis-May or EKMM $G$-spectra, $\SI^{\infty}_GX$ is always fibrant
and often bifibrant. 

Theorems \ref{SpThree} and \ref{GBPQ} show how the machine recognizes suspension 
$G$-spectra from two variant categorical inputs.  Here we do {\em not} have precise compatibility with
smash products, a failure that will be rectified with a hefty dose of $2$-category theory in the sequel \cite{GMMOMult},
but instead we have structure that allows our new proof of the tom Dieck splitting theorem.

As already mentioned, there are three sequels to this paper.  The first, \cite{MMO}, develops a new version of the 
Segal-Shimakawa infinite loop space machine and proves among other things that it is equivalent both to the original
Segal-Shimakawa machine and to the machine landing in orthogonal $G$-spectra that we develop here. That requires a 
generalization of the present machine from operads to categories of operators, about which we say nothing here.  The second and third
\cite{GMMOAdd, GMMOMult} give a more categorically sophisticated machine, the first purely additive and 
the second building in multiplicative structure.  These start with more general categorical input than we deal with here and 
give new information even nonequivariantly. 

\subsection*{Outline}

We begin with a machine for recognizing iterated equivariant loop spaces in \S\ref{sec:VLoops}. 
All versions of our iterated loop space machine are based on use  of the Steiner operads, whose
equivariant versions have not previously appeared.  We define them and compare them to the 
little disks operads in \S\ref{sec:DisksSteiner}.  All versions are also based on an approximation theorem, which is 
explained in \S\ref{approxSec}.  We use a strengthened version due to Rourke and Sanderson 
\cite{RourSan2}, and that allows us to obtain slightly stronger versions of the recognition principle 
than might be expected.  The compatibility with smash products of the geometric versions of the
recognition principle is based on pairings between Steiner operads that are defined in 
\S\ref{SecSteinerPair}; the relevant definition of a pairing is recalled in \S\ref{SecPair}.  The 
promised variants of the recognition principle starting from space level input data are given in 
\S\S \ref{sec:RecogPrin}, \ref{GeoRec}, \ref{sec:MachineGS}, and \ref{sec:MachineGSp}. 

Section \ref{Sec5} gives our machines for recognizing infinite loop $G$-spaces.  After recalling the notion of 
$E_\infty$ $G$-operad and giving some examples in \S2.1, the orthogonal and Lewis-May machines are 
defined and compared in \S2.2--\S2.4.  Examples of $E_\infty$ $G$-spaces are given in \S\ref{sec:Exmps}. 
General properties that must hold for any equivariant infinite loop space machine are 
described in \S\ref{sec:MachineProps}.  A recognition principle for naive $G$-spectra, with $G$ not 
necessarily finite, is given in \S\ref{SecNaive}.   An interesting detail  there shows how to use the 
recognition principle to construct change of  universe functors on the space level.  The 
proof uses a double bar construction described in \S\ref{DoubleTrouble}.  

The recognition principle starting from categorical imput is given in \S\ref{SecKGB}.  It is preceded
by preliminaries about equivariant universal bundles and equivariant $E_{\infty}$ operads in \S\ref{sec:Prelim}
and by a discussion of operadic definitions of naive and genuine permutative $G$-categories in \S\ref{Sec2}.  
In the brief and parenthetical \S\ref{SecAlgKThy}, we point out how these ideas and our prequel \cite{GMM} with 
Merling specialize to give a starting point for equivariant algebraic $K$-theory \cite{DK, FHM, Kuku, Merling}.  
We give an alternative and equivalent starting point in the case of $G$-rings $R$ in \S\ref{SecKGBtoo}.

We give a precise description of the $G$-fixed $E_{\infty}$ categories of 
free $\sP_G$-categories in \S\ref{Sec3}. This is a precursor of our first
categorical version of the BPQ theorem, which we prove in \S\ref{Sec5.1}, 
and of the tom Dieck splitting theorem for suspension $G$-spectra, which we reprove
in \S\ref{Sec5.2}.

Changing focus, in \S\ref{SecPQR} and \S\ref{sec:Einf} we give three interrelated examples of $E_{\infty}$ $G$-operads, denoted 
$\sV_G$, $\sV^\times_G$, and $\sW_G$, and give examples of their algebras.
This approach to examples is more intuitive than the approach based on
genuine permutative $G$-categories, and it has some technical advantages.  It is new and illuminating even nonequivariantly.  It gives a more intuitive categorical hold on the BPQ theorem than does the treatment starting from genuine permutative $G$-categories, as we explain in \S\ref{subSecbait}.  It also gives a new starting point for multiplicative infinite loop space theory, both equivariantly and nonequivariantly, but that is work in progress.  

\subsection*{Notational preliminaries}

A dichotomy between Hom objects with $G$-actions 
and Hom objects of equivariant morphisms, often denoted using a $G$ in front, is omnipresent.  
We start with an underlying category $\sV$. A $G$-object $X$ in $\sV$ 
can be defined to be a group homomorphism $G\rtarr \Aut X$. We have the category 
$\sV_G$ of $G$-objects in $\sV$ and all morphisms in $\sV$ between them, with $G$ acting by conjugation.  
We denote the morphism objects of $\sV_G$ simply by $\sV(X,Y)$.\footnote{In  \cite{MM} and elsewhere,
we used the notation $\sV_G(X,Y)$ instead of $\sV(X,Y)$, but some readers found that misleadingly analogous to $\Hom_G(X,Y)$.}
We also have the category $G\sV$ of $G$-objects in $\sV$ 
and $G$-maps in $\sV$. Since objects are fixed by $G$, $G\sV$ is in fact the 
$G$-fixed category $(\sV_G)^G$, although we shall not use that notation.  Thus the
hom object $G\sV(X,Y)$ in $\sV$ of $G$-morphisms between $G$-objects $X$ and $Y$
is the fixed point object $\sV(X,Y)^G$.

One frequently used choice of $\sV$ is $\sU$, the category of unbased (compactly generated) spaces.  We let $\sT$ denote the category of based spaces. We assume once and for all that the basepoints $\ast$ of all given based $G$-spaces $X$ (or spaces $X$ when $G=e$) are nondegenerate. This means that $\ast\rtarr X$ is a $G$-cofibration (satisfies the $G$-HEP). It follows that $\ast\rtarr X^H$ is a cofibration for all $H\subset G$.

By an equivalence $f\colon X\rtarr Y$ of $G$-spaces we understand a $G$-map whose fixed point
maps $f^H\colon X^H\rtarr Y^H$ are weak homotopy equivalences for all subgroups $H$ of $G$.
When $X$ and $Y$ have the homotopy types of $G$-CW complexes, such an $f$ is a $G$-homotopy equivalence.

By a topological category $\sC$ we understand a category internal to $\sU$; thus it has an object space and a morphism space such that the structural maps $I$, $S$, $T$, and $C$ are continuous.  This is more structure than a topologically enriched category, which would have a discrete space of objects.  We also have the based variant of categories internal to $\sT$, but $\sU$ will be
the default.  

We let $\sC\!at$ denote the category of (small) topological categories. As above, starting from
$\sC\!at$, we obtain the concomitant categories $G\sC\!at$ and $\sC\!at_G$ of $G$-categories.  
A $G$-category is a topological category equipped with an action of $G$ through natural isomorphisms.  This is the same structure as a category internal to $G\sU$.  Similarly, a based $G$-category, is a category internal to $G\sT$.  That is, an action of $G$ on a topological category $\sC$ is given by actions of $G$ on both the object space and the morphism space such that $I$, $S$, $T$, and $C$ are $G$-maps. In particular, $G$ can and often will act non-trivially on the space of objects.  That may be unfamiliar (as the referee noted), but in many of our examples it is essential for proper behavior on passage to $H$-fixed subcategories for $H\subset G$.

For brevity of notation, we shall often but not always write $|-|$ for the composite classifying space functor $B = |N-|$ from topological categories through simplicial spaces to spaces.
It works equally well to construct $G$-spaces from topological $G$-categories. We assume that the reader is familiar 
with operads (as originally defined in \cite{MayGeo}) and especially with the fact 
that operads can be defined in any symmmetic monoidal category $\sV$. 
Brief modernized expositions are given in \cite{MayOp1, MayOp2}. Since it is 
product-preserving, the functor $|-|$ takes operads in $\sC\!at$ or in $G\sC\!at$ 
to operads in $\sU$ or in $G\sU$, and it takes algebras over an operad $\sC$ in 
$\sC\!at$ or in $G\sC\!at$ to algebras over the operad $|\sC|$ in $\sU$ or in $G\sU$.

To avoid proliferation of letters, we shall write $\bO_G$ for the monad on based 
$G$-categories constructed from an operad $\oO_G$ of $G$-categories. We shall write 
$\mathbf{O}_G$ for the monad on based $G$-spaces constructed from the operad $|\sO_G|$ of $G$-spaces. 
More generally, for an operad $\sC_G$ of unbased $G$-spaces, we write
$\mathbf{C}_G$ for the associated monad on based $G$-spaces.

\subsection*{Acknowledgements} The first author thanks Nat Stapleton for
very helpful discussions leading to the rediscovery of the operad $\sP_G$,
which was in fact first defined, but not used, by Shimakawa \cite[Remark, p.\,255]{Shim1}.
The second author thanks Mona Merling for many conversations and questions 
that helped clarify ideas.  We both thank the referee for helpful suggestions.  
We also
thank Anna Marie Bohmann and Ang\'{e}lica Osorno for pointing out a mistake in the 
original version.  That led to a reworking of this paper and to much of the
work in the sequels \cite{GMMOAdd, GMMOMult, MMO}.   It also led to the long 
delay in the publication of this paper, which is entirely due to the authors and not at
all to the referee or editors. We thank them for their patience.

\section{$E_V$ operads and $V$-fold loop $G$-spaces}\label{sec:VLoops}

In this geometrically focused chapter, we first define $E_V$ operads  and give two
examples. We then relate $E_V$-spaces to $V$-fold loop $G$-spaces via the equivariant 
approximation theorem and recognition principle.  The approximation theorem shows how to approximate
``free'' $V$-fold loop $G$-spaces $\OM^V\SI^V X$ in terms of free algebras  $\bD_V X$ or
$\bK_V X$ over the $E_V$ operad $\sD_V$ or $\sK_V$.
The recognition principle shows how to construct $V$-fold loop spaces from $E_V$-algebras. 
 We elaborate multiplicatively by showing how machine-built 
pairings relate to evident pairings between iterated loop $G$-spaces.
We then give a geometric version 
of a concrete spacewise infinite loop $G$-space machine that does not use $E_{\infty}$ operads 
and is new even nonequivariantly.  This gives a geometric precursor of the BPQ theorem that relates 
well to smash products.   As already noted, we envision that the theory here can provide the local data for an
as yet undeveloped equivariant factorization homology theory.

\subsection{The little disks and Steiner operads}\label{sec:DisksSteiner}

\begin{defn} Let $D(V)$ be the open unit disk in $V$.  A little $V$-disk is a map $d\colon D(V)\rtarr D(V)$ 
of the form $d(v) = rv + v_0$ for some $r\in [0,1)$ and some $v_0\in V$; $c(d) = v_0$ is the
center point of $d$ and $r$ is the radius. For $g\in G$, $(gd)(v) = rv + gv_0$.  
Define $\sD_V(j)$ to be the $G$-space of (ordered) $j$-tuples of little $V$-disks whose images
have empty pairwise intersections.
With the evident structure maps determined by disjoint union and composites of little disks, the $\sD_V(j)$ form 
an operad $\sD_V$, called the little disks operad.
\end{defn}

For a $G$-space $V$, let $F(V,j)\subset V^j$ be the configuration space of (ordered)
$j$-tuples of distinct points of $V$, with $G$ acting by restriction of the 
diagonal action on $V^j$.   By convention, $F(V,0)$ is a point, the empty $0$-tuple 
of points in $V$.  We are interested in the special case when $V$
is a real representation of $G$, by which we understand an orthogonal action of $G$
on a real inner product space.  In contrast to the nonequivariant case, very
little is known about the (Bredon) homology and cohomology of the $G$-spaces
$F(V,j)$, but we have the following result.

\begin{lem}\label{HtpyLittleDisks}  There is a $(G\times \SI_j)$-homotopy equivalence $\sD_V(j) \rtarr F(V,j)$ for each $j\geq 0$.
\end{lem} 

\begin{proof} Choose a decreasing rescaling homeomorphism $\ze\colon [0,\infty) \rtarr [0,1)$ and also denote
by $\ze$ the rescaling homeomorphism $V\rtarr D(V)$ that sends $v$ to $\ze(|v|/|v|)v$, where $D(V)$ is the open
unit disc in $V$.
Then $\ze$ induces a rescaling homeomorphism $\ze\colon F(V,j) \rtarr F(D(V),j)$. Define a map
$c\colon \sD_V(j)\rtarr F(D(V),j)$
by sending little disks to their
center points.   For a point $ \ul v =(v_1, \cdots, v_j)$ in $F(D(V),j)$,  define 
$$ \de(\ul v) = 1/2 \, \text{min}\,  \{| v_i-v_j|, \  i\neq j \}. $$
Define $s\colon F(D(V),j)\rtarr \sD_V(j)$ by $s(\ul v) = (d_1,\cdots, d_j)$, where $d_j(v) = \de( \ul v) v + v_i $.
Then $s$ and $c$ are $(G\times \SI_j)$-maps, $c\com s = \id$,  and there is a  $(G\times \SI_j)$-homotopy 
$h\colon s\com c\htp \id$.  If  $\ul d = (d_1,\cdots, d_j)\in \sD(j)$, where $d_i(v) = r_i v + v_i$,   then $c(\ul d) = \ul v$ and
$h(\ul d, t)$ has $i$th little $V$-disk $d_i(t)$ given by $d_i(t)(v)= \big{(}(1-t)\de(\ul v) + t r_i)\big{)}v + v_i$. 
\end{proof}

The following definition is the equivariant generalization of the usual definition of an $E_n$-operad.
We say that a map of operads of $G$-spaces is a weak equivalence if its $j$th map is a weak
$(G\times \SI_j)$-equivalence.

\begin{defn} An operad $\sC_G$ of $G$-spaces is an $E_V$-operad if there
is a chain of weak equivalences of operads connecting $\sC_G$ to $\sD_V$.   
\end{defn}

Of course, we could use any operad weakly equivalent to $\sD_V$ as a reference operad in the definition.
As explained in \cite[\S3]{Rant1},  for inclusions $V\subset W$ of inner product spaces, there is no map of operads 
$\sD_V\rtarr \sD_W$ that is compatible with suspension, so that use of the little disks operads is inappropriate for iterated
loop space theory.   The Steiner operads remedy the defect and will be used in \cite{MMO} to compare the operadic
and Segalic equivariant  infinite loop space machines.  Their equivariant definition is little different from their
nonequivariant definition given in \cite{Rant1}, following Steiner \cite{St}. 

\begin{defn}
Let $E_V$ be the space of embeddings $V\rtarr V$, with $G$ acting by
conjugation, and let $\text{Emb}_V(j)\subset E_V^j$ be the 
$G$-subspace of (ordered) $j$-tuples of embeddings with pairwise disjoint images.  
Regard such a $j$-tuple as an embedding  $^jV\rtarr V$, where $^jV$ 
denotes the disjoint union of $j$ copies of $V$ (where $^0V$ is empty).  
The element $\id$ in $\text{Emb}_V(1)$ is the identity embedding, the group 
$\SI_j$ acts on $\text{Emb}_V(j)$ by 
permuting embeddings, and the structure maps 
\[
\ga\colon \text{Emb}_V(k)\times \text{Emb}_V(j_1)\times \cdots \times
\text{Emb}_V(j_k)\rtarr \text{Emb}_V(j_1+\cdots + j_k) 
\]
are defined by composition and disjoint union in the evident way \cite[\S3]{Rant1}.
This gives an operad $\text{Emb}_V$ of $G$-spaces.
 
Define $R_V\subset E_V=\text{Emb}_V(1)$ to be 
the sub $G$-space of distance reducing embeddings $f\colon V\rtarr V$.  This
means that  $|f(v)-f(w)|\leq |v-w|$ for all $v,w\in V$.  Define a 
Steiner path to be a map $h\colon I\rtarr R_V$ such that $h(1)=\id$
and let $P_V$ be the $G$-space of Steiner paths, with action of $G$
induced by the action on $R_V$.  Define 
$\pi\colon P_V\rtarr R_V$ by evaluation at $0$, $\pi(h) = h(0)$.  
 
Define $\sK_V(j)$ to be the $G$-space of (ordered) $j$-tuples $(h_1,\cdots,h_j)$ of
Steiner paths such that the $\pi(h_i)$ have disjoint images. The element 
$\id$ in $\sK_V(1)$ is the constant path at the identity embedding,
the group $\SI_j$ acts on $\sK_V(j)$ by permutations, and the structure
maps $\ga$ are defined pointwise in the same way as those of $\text{Emb}_V$.
This gives an operad of $G$-spaces, and application of $\pi$ to Steiner paths 
gives a map of operads $\pi\colon \sK_V\rtarr \text{Emb}_V$.  Evaluation of embeddings 
at $0\in V$ gives center point $(G\times \SI_j)$-maps $c\colon \text{Emb}_V(j)\rtarr  F(V,j)$.
\end{defn}

The Steiner operads $\sK_V$ are reduced, meaning that $\sK_V(0)$ is a point, and 
$\sK_0$ is  the trivial operad with $\sK_0(1)= \id$ and $\sK_0(j) = \emptyset$ for $j>1$.
By pullback along $\pi$, any space with an action by $\text{Emb}_V$ inherits 
an action by $\sK_V$.  As in \cite[\S5]{MayGeo}, \cite[VII\S2]{MQR}, or 
\cite[\S3]{Rant1}, $\text{Emb}_V$ acts naturally on $\OM^VX$ for  based $G$-spaces $X$.

\begin{prop}[\cite{St}]\label{SteinerHtpy}  There is a weak equivalence of operads $\io\colon \sD_V \rtarr \sK_V$.
\end{prop}
\begin{proof}
For each $j$, we have a composite $(G\times \SI_j)$-map
\[ c\com \pi\colon \sK_V(j) \rtarr \mathrm{Emb}_V(j) \rtarr F(V,j).\]
Steiner's nonequivariant proof that $c\com \pi$ 
is a $\SI_j$-homotopy equivalence
applies to prove that it is a $(G\times \SI_j)$-homotopy equivalence. The argument is a clever and non-trivial
variant on the proof above for $\sD_V$, but for us the essential point is that it uses the metric on $V$ and the contractibility
of $I$ and $V$ in such a way that the construction is clearly $G$-equivariant. 

For a little disk $d(v) = rv + v_0$, define a path of little disks from $d$ to the identity map of $D(V)$ by sending 
$s\in I$ to the little disk
$$d(s)(v) = (s-rs + r)v + (1-s) v_0.$$
Conjugating $d$ by the rescaling $\ze$ of Lemma~\ref{HtpyLittleDisks} gives a distance reducing embedding $\ze^{-1}d\ze\colon V\rtarr V$, and conjugating paths pointwise 
gives an embedding $\io$ of $\sD_V$ as a suboperad of $\sK_V$.   Composing the inverse $(G\times \SI_j)$-homotopy equivalence
$F(V,j)\rtarr \sD_V(j)$ with $\io\colon \sD_V(j)\rtarr \sK_V(j)$ gives an inverse $(G\times \SI_j)$-homotopy equivalence
to $c\com \pi$, by Steiner's proof, and it follows that $\io$ is a $(G\times \SI_j)$-homotopy equivalence.
\end{proof}

Again, one key advantage of the Steiner operads over the little disks operads is that, for an inclusion $V\subset W$ of $G$-inner product
spaces,  there is an induced inclusion
 $\sK_V \rtarr \sK_W$ of $G$-operads such that the map
\[ \OM^V \et \colon \OM^V X \rtarr  \OM^V\OM^{W-V}\SI^{W-V} X \iso \OM^{W} \SI^{W-V} X \]
is a map of $\sK_V$-spaces for any $G$-space $X$.  Here $W-V$ is the orthogonal complement of $V$ in $W$.  If $f:V\rtarr V$ is a distance-reducing embedding, then $f\oplus \id_{W-V}:W \rtarr W$ is also distance-reducing, and this construction induces the inclusion.

\subsection{The approximation theorem}\label{approxSec}

Write $\BK_V$ for the monad on based $G$-spaces associated to the operad $\sK_V$.   For a $G$-space $X$,
$\BK_VX = \coprod \sK_V(j) \times_{\SI_j} X^j/ (\sim)$.  If $\si_i \colon \sK_V(j) \rtarr \sK_V(j-1)$ deletes the 
$i$th Steiner path and $s_i \colon X^{j-1}\rtarr X^j$ inserts the basepoint in the $i$th position, then
$(\si^i k, y) \sim (k, s_i y)$ for $k\in \sK_V(j)$ and $y\in X^{j-1}$.   The monad $\BD_V$ arising from the operad $\sD_V$ is defined the same way.

The unit $\et\colon \Id\rtarr \OM^V\SI^V$ of the monad $\OM^V\SI^V$ and the action $\tha$ of 
$\mathbf{K}_V$ on the $G$-spaces $\OM^V\SI^V X$ induce a composite natural map 
\[ \xymatrix@1{ \al_V\colon \mathbf{K}_VX \ar[r]^-{\mathbf{K}_V \et} &  \mathbf{K}_V\OM^V\SI^V X \ar[r]^-{\tha} &\OM^V\SI^V X,\\} \]
and $\al_V \colon \mathbf{K}_V\rtarr \OM^V\SI^V$ is a map of monads whose adjoint defines a right action of $\mathbf{K}_V$ on the 
functor $\SI^V$, just as in \cite{MayGeo}.   The restriction to $\mathbf{D}_V$ gives the corresponding map $\al_V\colon \mathbf{D}_V X\rtarr \OM^V\SI^V X$. 

The heart of the operadic recognition principle is the approximation theorem that says that $\al_V$ is a group
completion.  However, already nonequivariantly, we have two variants of what it means for a map $ X\rtarr Y$ 
to be a group completion.  Recall that Hopf spaces are spaces with a product with a two-sided unit
element up to homotopy.

\begin{defn}  A Hopf space $Y$ is grouplike if $\pi_0(Y)$ is a group.  Let $X$ and $Y$ be homotopy associative and 
commutative Hopf spaces, where $Y$ is grouplike, and let $f\colon X\rtarr Y$ be a Hopf map.  
Then $f$ is a group completion if $f_*\colon \pi_0 (X)\rtarr \pi_0(Y)$ is the Grothendieck construction converting a commutative 
monoid to an abelian group and if, for any field of coefficients $k$, the map of commutative $k$-algebras 
$H_*(X)[\pi_0(X)^{-1}] \rtarr H_*(Y)$ induced by $f_*$ is an isomorphism.   
\end{defn}

The second version of group completion drops the commutativity assumption and lives in the setting of $A_\infty$-spaces. For us, 
an $A_\infty$-space will mean a space with an action of the Steiner operad $\sK_\bR$.  An $A_{\infty}$-map will mean either a map homotopic to a
map of $\sK_{\bR}$-spaces or the homotopy inverse of a map of  $\sK_{\bR}$-spaces that is an underlying homotopy equivalence.

\begin{defn}  An  $A_{\infty}$-map $f\colon X \rtarr Y$ of $\mathbf{K}_\bR$-spaces is a weak group 
completion if it is equivalent under a chain of $A_{\infty}$-maps to the natural map $\eta\colon M \rtarr \OM B M$ for some topological monoid $M$.
\end{defn}

The following classical result has several proofs; see \cite[\S 15]{MayClass} for discussion in slightly greater generality.

\begin{thm}\mylabel{gpcom} If a topological monoid $M$ is homotopy commutative, then the natural map $\et\colon M\rtarr \OM B M$ 
is a group completion.
\end{thm}

Returning to our equivariant context, we have the following definition.

\begin{defn} A Hopf $G$-space $Y$ is grouplike if each $\pi_0(Y^H)$ is a group.  Let $X$ and $Y$ be homotopy associative and 
commutative Hopf $G$-spaces, where $Y$ is grouplike, and let $f\colon X\rtarr Y$ be a Hopf $G$-map.  
Then $f$ is a group completion if $f^H\colon X^H\rtarr Y^H$ is a group completion for all subgroups $H$ of $G$.  
\end{defn}

For the equivariant notion of weak group completion, note that if $X$ is a $\mathbf{K}_\bR$-$G$-space and 
$H\subset G$ is a subgroup, then $X^H$ inherits an action of $\mathbf{K}_\bR$.

\begin{defn} A map $f\colon X \rtarr Y$ of $\mathbf{K}_\bR$-$G$-spaces is a weak group completion if $f^H$ is a weak group completion for all $H$.
By \myref{gpcom}, $f$ is then a group completion if $X$ and $Y$ are homotopy commutative. 
\end{defn}

In the weak case we require no compatibility  between the monoids $M(H)\htp X^H$ as $H$ varies.
Recall that we understand equivalences of $G$-spaces to mean maps that induce (weak) equivalences
on passage to fixed points and observe that a group completion is an equivalence if $X$ is grouplike, 
for example if $X$ is $G$-connected in the sense that each $X^H$ is (path) connected.

\begin{thm}[The approximation theorem]\mylabel{approx}  Let $V$ be a representation of $G$.  If $X$ is $G$-connected,
then $\al_V:\mathbf{K}_V X \rtarr \OM^V \SI^V X$ is an equivalence.  If $V$ contains a copy of the trivial representation $\bR$, then $\al_V$ is a 
weak group completion.  Therefore, if $V$ contains a copy of $\bR^2$, then $\al_V$ is a group completion.
\end{thm}

We shall not give a proof, only a commentary on the existing proofs.   The group completion version was 
first proven by Hauschild in his unpublished Habilitationschrift \cite{Haus1}, but the shorter published 
version \cite{Haus2} restricts to the case $X= S^0$, remarking that the proof in the general case is essentially the
same.  Assuming that $V$ contains $\bR^{\infty}$ and not just $\bR^2$, Caruso and Waner \cite[1.18]{CarWaner} 
gave a shorter proof in a paper that concentrated on compact Lie groups $G$, rather than just finite groups.

Nonequivariantly, there is a proof by direct calculation due to Fred Cohen \cite{LMS} and a geometric
proof due to Segal \cite{SegCon}.   Starting from Segal's proof, Rourke and Sanderson 
\cite{RourSan1a, RourSan1b, RourSan1c} gave an 
elegant proof using their ``compression theorem''.   Following up a suggestion of May, they generalized that 
proof to give the stated version of the theorem in \cite{RourSan2}.  However, their
notations are quite different from ours.   They never work equivariantly and focus instead on
$G$-fixed point spaces.  They use the notation $\OM^V\SI^V X$ for the $G$-fixed point space
$(\OM^V\SI^V X)^G$.  One can replace $G$ by a subgroup $H$ in their proof, and it works just as well.

All known proofs are manifold theoretic in nature and start with the $G$-space $\BF_VX$ of (unordered) configurations 
of points in $V$ with labels in $X$.  More precisely, $\BF_VX = \coprod F(V,j) \times_{\SI_j} X^j/ (\sim)$ is defined in the same
way as $\BK_V X$.  In the notation of \cite{RourSan2}, their $C_V X$ is our $(\BF_VX)^G$. They work with little disks,
and their $C_V^o X$ is our $(\BD_VX)^G$.   Their map $j_V$ is the restriction to $\BF_V(X)^G$ of our map $\al_V^G$.  

Translated to our notations, \cite[Theorem 1]{RourSan2} proves the first statement of \myref{approx}, taking $X$ to be
$G$-connected; here there are no Hopf $G$-space structures in sight.
When $W = V\oplus \bR$, Rourke and Sanderson observe that $(\BD_WX)^G$ is equivalent to a monoid, and their \cite[Theorem 2]{RourSan2} proves that its classifying space is weak homotopy equivalent to $(\OM^V\SI^{W}X)^G$.  The approximation theorem as stated follows by applying $\OM$, as in \cite[Corollary 1]{RourSan2}.  

\subsection{The recognition principle for $V$-fold loop spaces}\label{sec:RecogPrin}

We explain how $K_V$-spaces, which are based spaces with an action of $\sK_V$,  give rise
to $V$-fold loop spaces.  For fixed $V$, we can work equally well with $\sD_V$.  For compatibility
as $V$ varies, $\sK_V$ is required.  The two-sided monadic bar construction is described in \cite{MayGeo, Rant1} 
and works exactly the same way equivariantly as nonequivariantly.\footnote{In particular, Reedy cofibrancy 
(or properness) works the same way; see \cite{MMO}.}  The adjoint of $\al_V$ gives a right action
$\tilde{\al}_V\colon \SI^V\BK_V \rtarr \SI^V$ of the monad $\BK_V$ on the functor $\SI^V$. 

\begin{defn} Let $Y$ be a $\sK_V$-space.  We define 
$$\bE_V Y = B(\SI^V,\BK_V, Y).$$ 
\end{defn}

We have the diagram of $\sK_V$-spaces and $\sK_V$-maps
\begin{equation}\label{myold}
\xymatrix@1{ Y & \ar[l]_-{\epz} \ar[r]^-{\overline{\al}_V} 
B(\mathbf{K}_V,\mathbf{K}_V,Y) & 
B(\OM^V\SI^V,\mathbf{K}_V,Y) \ar[r]^-{\zeta} & 
\OM^{V} B(\SI^{V},\mathbf{K}_V,Y), \\}
\end{equation}
where $\overline{\al}_V = B(\al_V,\id,\id)$ and $\zeta$ will be defined in the following sketch proof, 
which is based on arguments in \cite{CostWan, MayGeo, MayPerm}.  

\begin{thm}[From $\sK_V$-spaces to $V$-fold loop spaces]\mylabel{Vrec}  The following statements 
hold relating a $\sK_V$-space $Y$ to its ``$V$-fold delooping'' $\bE_V Y$.
\begin{enumerate}[(i)]
\item The map $\epz$ is a $G$-homotopy equivalence with a natural homotopy inverse $\nu$.
\item The map $\overline{\al}_V $ is an equivalence when $Y$ is $G$-connected and is a weak
group completion when $V\supset \bR$.
\item The map $\ze$ is an equivalence.
\end{enumerate}
Therefore the composite 
\begin{equation}\label{xi}
\xi = \ze\com \overline{\al}_V \com \nu\colon Y\rtarr \OM^V \bE_V Y
\end{equation}
is an equivalence if $Y$ is $G$-connected, a weak group completion if $V\supset \bR$, and a group completion if $V\supset \bR^2$.
\end{thm}
\begin{proof} The proof of (i) uses an ``extra degeneracy argument''  explained in \cite[9.8]{MayGeo};
note that the homotopy equivalence $\nu$ is not a $\sK_V$-map.   For (ii), it is shown nonequivariantly in  \cite[Theorem 2.3]{MayPerm}, 
that $\overline{\al}_V$ is an equivalence when $Y$ is connected and is a group completion when $V = \bR^n$ with $n\geq 2$.  
We use \myref{approx} to improve on that equivariantly. Geometric realization of simplicial $G$-spaces commutes with 
passage to $H$-fixed points, so we can work nonequivariantly, one fixed point space at a time.  If $Y$ is $G$-connected, 
each $(\bK_V^q Y)^H$ is connected, 
hence $\overline{\al}^H$ is the realization of a levelwise equivalence of simplicial spaces and hence an equivalence.  

Now assume $V\supset \bR$ and let $\sK = \sK_{\bR}$, with associated monad $\BK$.  We then have an inclusion of the 
nonequivariant $A_{\infty}$ operad $\sK$ in $\sK_V$ and can regard $Y$ and each $(\bK_V^q Y)^H$ as a $\sK$-space. 
From here we combine arguments from  \cite[Section 13]{MayGeo} and the proof of \cite[Theorem 2.3]{MayPerm} with
the Rourke-Sanderson proof of the approximation theorem.
Let $\sM$ be the associativity operad that defines monoids; we have a weak equivalence of ($G$-fixed) operads $\de\colon \sK\rtarr \sM$.  
For a $\sK$-space $X$, we define a topological monoid  $\LA(X) = B(\BM, \BK, X)$, where the monad $\BM$ is a $\BK$-functor
via $\de$.  We have a  zigzag
\[   
\xymatrix@1{  X & \ar[l]_-{\epz} \ar[r]^-{\overline{\de}} B(\BK, \BK, X)  & B(\BM, \BK, X) = \LA X} \]
in which $\epz$ is a $\sK$-map and a $G$-homotopy equivalence 
and $\overline{\de} = B(\de,\id,\id)$ is an equivalence.  Define $\GA(X) = \OM B \LA (X)$ and $\ga = \et\com \overline{\de} \colon B(\mathbf{K},\mathbf{K},X) \rtarr \GA X$.
We view $\ga$ as a natural choice of a weak group completion. Moreover, $\ga$ is an equivalence if $X$ is grouplike.
If $f\colon X\rtarr Y$ is a weak group completion between $\sK$-spaces, then $\GA f$ is an equivalence.
To see this, note that by the definition of weak group completion, we may assume without loss of generality that $f$ is the map $\eta\colon M \rtarr \OM B M$ for some topological monoid $M$. It suffices to show that $B\LA(\eta):B \LA M \rtarr B \LA (\OM B M)$ is an equivalence. This follows from \cite[Proposition~3.9 and Theorem~3.11]{Thmsn}.

Now consider the following commutative diagram.  
\[ \xymatrix{
Y  &  \ar[l]_-{\epz} B(\BK_V,\BK_V,Y) \ar[r]^-{\overline{\al}_V}  & B(\OM^V\SI^V, \BK_V, Y) \\
B(\mathbf{K},\mathbf{K},Y) \ar[d]_{\ga} \ar[u]^{\epz} & \ar[l]_-{\epz} B( B(\mathbf{K},\mathbf{K},\mathbf{K}_V),\mathbf{K}_V,Y) \ar[r]^-{\overline{\al}_V} \ar[d]^{\overline{\ga}} \ar[u]^{\epz} & B(B(\mathbf{K},\mathbf{K},\OM^V \SI^V),\mathbf{K}_V, Y) \ar[u]^\epz \ar[d]^{\overline{\ga}}\\
\GA Y  &      \ar[l]_-{\epz} B(\GA \BK_V,\BK_V,Y) \ar[r]^-{\overline{\GA\al}_V}  & B(\GA \OM^V\SI^V, \BK_V, Y) \\}   \]
The maps $\epz$ are $G$-homotopy equivalences, hence the middle map $\overline{\ga} = B(\ga,\id,\id)$ is a weak group
completion since $\ga$ is so.  The right map $\overline{\ga}$ and the bottom map $\overline{\GA\al}_V$ are equivalences
since realization preserves levelwise equivalences.   Therefore $\overline{\al}_V$ is a weak group completion. 

In (iii), $\ze$ is an instance of the natural $G$-map $\ze\colon |\OM^V K| \rtarr \OM^V|K|$ for simplicial based $G$-spaces $K$;
suspensions commute with realization, and the adjoint of $\ze$ is the evident evaluation $G$-map $\SI^V |\OM^V K|\iso |\SI^V\OM^V K|  \rtarr |K|$.   The proof of (iii) is due to Hauschild \cite{Haus1} and appears in \cite[pp. 495-496]{CostWan}.  We will not repeat the argument, which reduces
the proof to the nonequivariant case treated in  \cite[\S12]{MayGeo}.  The main equivariant input that allows the reduction is the fact if $S(V)$ 
is the unit sphere in $V$, then the space $Map_H(S(V),K_n)$ of $H$-maps is connected, where $K_n = \SI^V\BK_V^n Y$ is the $G$-space of 
$n$-simplices of the simplicial $G$-space $B_*(\SI^V,\BK_V,Y)$.  This holds since $K_n^J$ is $(dim(V^J) -1)$-connected for each subgroup $J\subset G$, while $S(V)$ regarded as an $H$-CW complex only has cells of type $H/J \times e^n$ where $n<dim(V^J)$.
\end{proof}

\begin{rem} Equivariant homotopy theory often admits varying generalizations of nonequivariant theorems.  A very
different and very interesting equivariant recognition principle was proven by Salvatore and Wahl \cite{SW}.
\end{rem}

\subsection{The pairing $(\sK_V, \sK_W) \rtarr \sK_{V\oplus W}$ and the recognition principle}\label{SecSteinerPair}

The general notion of a pairing of operads is recalled in \S\ref{SecPair}.  In \cite[8.3]{MayGeo}, a pairing 
\[ \boxtimes\colon \BC_mX\sma  \BC_nY \rtarr \BC_{m+n}(X\sma Y) \] 
is defined for based spaces $X$ and $Y$, where $\BC_n$ denotes the monad on based spaces
induced from the little $n$-cubes operad $\sC_n$.  Implicitly,
it comes from a pairing of operads $\boxtimes\colon (\sC_m, \sC_n)\rtarr \sC_{m+n}$.
The Steiner operad analogue appears in \cite[p. 337]{MayPair}, and we recall it here. 

\begin{prop}\mylabel{SteinPair} For finite dimensional real inner product $G$-spaces $V$ and $W$,
there is a unital, associative, and commutative system of pairings 
$$\boxtimes\colon (\sK_V, \sK_W) \rtarr \sK_{V\oplus W}$$
of Steiner operads of $G$-spaces.
\end{prop}
\begin{proof}
The required maps
\[ \boxtimes\colon \sK_V(j) \times \sK_W(k) \rtarr \sK_{V\oplus W}(jk) \]
are given by $(c\otimes d) = e$, where, writing
$c=(f_1,\cdots,f_j)$ and $d=(g_1,\cdots,g_k)$, $e$ is the $jk$-tuple
of Steiner paths 
\[ (f_q, g_r)\colon I\rtarr R_V\times R_W \subset R_{V\oplus W}, \]
$1\leq q\leq j$ and $1\leq r\leq k$, ordered lexicographically.  The formulas required in 
\myref{pairop} are easily verified, as we illustrate in \myref{pairopex}. 

The pairing is unital in the sense that
$\boxtimes\colon \sK_V(j) \iso \sK_0(1)\times \sK_V(j)\rtarr \sK_V(j)$ is the identity map.
It is associative in the sense that the following diagram commutes for a triple $(V,W,Z)$ of inner product $G$-spaces
and a triple $(i,j,k)$. 
\[ \xymatrix{
\sK_V(i) \times \sK_W(j)\times \sK_Z(k) \ar[r]^-{\boxtimes\times \id}
\ar[d]_{\id\times \boxtimes} & \sK_{V\oplus W}(ij)\times \sK_Z(k) \ar[d]^{\boxtimes}\\
\sK_V(i)\times \sK_{W\oplus Z}(jk) \ar[r]_-{\boxtimes} & \sK_{V\oplus W\oplus Z}(ijk) \\}
\]
It is commutative in the sense that the following diagram commutes. 
\[ \xymatrix{
\sK_V(j) \times \sK_W(k) \ar[d]_{t} \ar[r]^-{\boxtimes} & \sK_{V\oplus W}(jk) \ar[d]^{\ta(j,k)}\\
\sK_W(k)\times \sK_V(j) \ar[r]_-{\boxtimes} & \sK_{W\oplus V}(kj)\\} \]
Here $t$ is the interchange map and $\ta(j,k)$ is determined in an evident way by the interchange map for $V$ and $W$ and the 
permutation $\ta(j,k)$ of $jk$-letters.
\end{proof}

Passing to monads as in \myref{pairsm} below,  we 
obtain a unital, associative, and commutative system of pairings
\begin{equation}\label{StMonPair}
 \boxtimes\colon  \mathbf{K}_VX\sma \mathbf{K}_WY
\rtarr \mathbf{K}_{V\oplus W}(X\sma Y).
\end{equation}
For the unit property, when $V=0$ the map 
$\boxtimes\colon X\sma  \mathbf{K}_WY\rtarr \mathbf{K}(X\sma Y)$ 
is induced by the maps $X\times Y^j\rtarr (X\times Y)^j$ obtained from the
diagonal map on $X$ and shuffling. We have the following key observation. 
Its analogue for the little cubes operads is \cite[8.3]{MayGeo}.

\begin{lem}\mylabel{alsma}  The following diagram commutes.
\[ \xymatrix{
\mathbf{K}_V X \sma \mathbf{K}_W Y \ar[r]^-{\boxtimes} \ar[d]_{\al_V\sma \al_W} 
& \mathbf{K}_{V\oplus W} (X\sma Y) \ar[d]^{\al_{V\oplus W}}\\
\OM^V\SI^V X \sma \OM^W\SI^W Y \ar[r]_-{\sma} & 
\OM^{V\oplus W}\SI^{V\oplus W} (X\sma Y)\\} \]
\end{lem}

{The notion of a pairing of a $\sK_V$-space $X$ and a $\sK_W$-space $Y$
to a $\sK_{V\oplus W}$-space $Z$ is defined in \myref{MayPair}, and we have the following 
recognition principle for pairings.  Note that smashing maps out of spheres 
gives a natural map
$$\OM^V X \sma \OM^W Y \rtarr \OM^{V\oplus W} (X\sma Y).$$

\begin{prop}\mylabel{PairVrec}
A pairing $f\colon X\sma Y\rtarr Z$ of a $\sK_V$-space $X$ and a $\sK_W$-space $Y$
to a $\sK_{V\oplus W}$-space $Z$ induces a $G$-map 
\[ \bE f\colon \bE_V X \sma \bE_W Y \rtarr \bE_{V\oplus W} Z \]
such that the following diagram commutes.
\[
\xymatrix{
X\sma Y \ar[r]^-{\xi\sma \xi}  \ar[d]_f &   \OM^V \bE_V X \sma \OM^W \bE_W Y \ar[r] & \OM^{V\oplus W}(\bE_V X \sma \bE_W Y) \ar[d]^{\bE f} \\
Z \ar[rr]_-{\xi} & & \OM^{V\oplus W} \bE_{V\oplus W} Z
\\} \] 
\end{prop}
\begin{proof}   By convention, $\BK_V^0 = \Id$ for any $V$.  Starting at $q=0$ with the identity map on $X\sma Y$,
$\boxtimes$ inductively determines a  pairing $\boxtimes^q$ for all $q$, namely the composite
\[   \xymatrix@1{\BK^q_V X \sma \BK^q_W Y \ar[r]^-{\boxtimes} & \BK_{V\oplus W}(\BK^{q-1}_V X \sma \BK^{q-1}_W Y) 
\ar[rr]^-{\BK_{V\oplus W} \boxtimes^{q-1}}  & & \BK_{V\oplus W}^q (X\sma Y).\\} 
\]
The map $\bE f$ is the geometric realization of a map of simplicial topological spaces that is given on $q$-simplices by
\[
 \xymatrix@1{\SI^V \BK_V^q X \sma \SI^W \BK_W^q Y  \iso  \SI^{V\oplus W} (\BK_V^q X \sma  \BK_W^q Y) 
\ar[rr]^-{\SI^{V\oplus W} \boxtimes^q} & &  \SI^{V\oplus W} \BK_{V\oplus W}^q (X\sma Y). \\} 
\]
Commutation with face and degeneracy operators follows from \myref{pairsm}. The diagram in the statement
commutes by a diagram chase from \myref{alsma}, \myref{MayPair}, and the description of $\xi$ given in (\ref{xi}).
\end{proof}

We have an unstable precursor of the BPQ theorem. 

\begin{thm}[The BPQ theorem for $V$-fold suspensions]\mylabel{unstableBPQ} For based $G$-spaces $X$,  there is a natural $G$-homotopy equivalence 
$$\om \colon \SI^V X \rtarr \bE_V \BK_V X$$ 
such that the following diagram commutes for based $G$-spaces $X$ and $Y$.
\[
\xymatrix{
\SI^V X \sma \SI^W Y \ar[r]^-{\om\sma\om} \ar[d]_{\iso} & \bE_V \BK_V X \sma \bE_W \BK_W Y \ar[d]^-{\bE (\boxtimes)}\\
\SI^{V\oplus W}(X\sma Y) \ar[r]_-{\om}  & \bE_{V\oplus W}\BK_{V\oplus W}(X\sma Y) \\}
\]
Therefore $\bE(\boxtimes)$ is an equivalence.
\end{thm}
\begin{proof} 
Since $\bE_V\BK_V X = B(\SI^V, \BK_V, \BK_V X)$, another extra degeneracy argument explained in 
\cite[9.8]{MayGeo} gives the natural homotopy equivalence $\om$.  For 
the diagram, it suffices to prove commutativity of the adjoint diagram, which features two adjoint maps  
$X\sma Y \rtarr \OM^{V\oplus W}(-)$.
These maps are equal by inspection of definitions.
\end{proof}

\subsection{The geometric recognition principle for orthogonal $G$-spectra}\label{GeoRec}
As in \cite{GM2}, we let $G\sS$ denote the category of orthogonal $G$-spectra.  Briefly, these start with
$\sI_G$-spaces $E$, which are 
continuous functors $E\colon \sI_G\rtarr \sT_G$, where $\sI_G$ is the category of finite dimensional $G$-inner
product spaces and linear isometric isomorphisms, with $G$ acting by conjugation on morphism spaces
$\sI_G(V,V')$.  The continuous $G$-maps $E\colon \sI_G(V,V')\rtarr \sT_G(E(V),E(V'))$ can be specified via
adjoint evaluation $G$-maps  $\sI_G(V,V')_+\sma E(V)\rtarr E(V')$. 

An $\sI_G$-space $E$ is an orthogonal 
$G$-spectrum if there are structure $G$-maps $\SI^W E(V) \rtarr E(V\oplus W)$ 
that give a natural transformation $E\barwedge S_G\rtarr E\com \oplus$ of functors $\sI_G\times \sI_G \rtarr \sT_G$, 
where $S_G = \{S^V\}$ is the sphere $G$-spectrum, $\barwedge$ is the external smash product specified by 
$(D\barwedge E)(V,W) = D(V) \sma E(W)$ for $\sI_G$-spaces $D$ and $E$, and 
$\oplus\colon  \sI_G\times \sI_G \rtarr \sI_G$ is the direct sum of $G$-inner product spaces functor.  See \cite[II\S2]{MM} for details.  

\begin{defn}\mylabel{KIF}  We define a continuous $G$-functor $\sK_*$ from $\sI_G$ to $G$-operads.  It takes a $G$-inner 
product space $V$ to the Steiner operad $\sK_V$.   Linear isometric isomorphisms $i\colon V\rtarr V'$ act
by conjugation of embeddings to send $R_V$ to $R_{V'}$. The action extends pointwise to Steiner paths and 
then applies one at a time to $j$-tuples of Steiner paths to give $G$-maps $\sK_V(j)$ to $\sK_{V'}(j)$.  Compatibility with 
the operad structure is immediate.  Composing with the functor that sends the operad $\sK_V$ to the associated 
monad $\BK_V$ on based $G$-spaces gives a functor $\BK$ from $\sI_G$ to the category of monads in the
category of $\sI_G$-spaces.  In more detail, for an $\sI_G$-space $\sX$ with $V$th space $\sX(V)$, we have based 
evaluation $G$-maps 
$$\sI(V,V')_+\sma \BK_V\sX(V)\rtarr \BK_{V'}\sX(V').$$
Using the diagonal action of $\sI_G(V,V')$, we obtain $G$-maps
$$
\xymatrix{ \sI_G(V,V') \times \sK_V(k)\times \sK_V(j_1)\times \cdots \times \sK_V(j_k) \times \sX(V) \ar[d] \\
\sK_{V'}(k)\times \sK_{V'}(j_1)\times \cdots \times \sK_{V'}(j_k)\times \sX(V')\\} $$
and these give evaluation $G$-maps
$$\sI_G(V,V')_+\sma \BK_V\BK_V \sX(V) \rtarr  \BK_{V'}\BK_{V'} \sX(V'). $$
The product and unit maps are compatible with these maps in the sense that the following diagrams commute,
where the unlabelled arrows are evaluation $G$-maps.

\small{
\begin{equation} 
\xymatrix{
\sI_G(V,V')_+ \sma  \sX(V) \ar[d]_{\id\sma \et}  \ar[r]^-{\pi} & \sX(V') \ar[d]^{\et} \\
\sI_G(V,V')_+ \sma \BK_V \sX(V) \ar[r] & \BK_{V'} \sX(V')  } 
\  
\xymatrix{
\sI_G(V,V')_+ \sma \BK_V \BK_V \sX(V) \ar[r] \ar[d]_{\id\sma \mu} & \BK_{V'} \BK_{V'} \sX(V) \ar[d]^{\mu}  \\
\sI_G(V,V')_+ \sma \BK_V \sX \ar[r] & \BK_{V'} \sX(V')  \\} 
\end{equation}
}

\noindent
Note that we can regard based $G$-spaces $X$ as constant $\sI_G$-spaces, $X(V) = X$; the evaluation $G$-maps 
$\sI_G(V,V')_+\sma X\rtarr X$ are then the projections.
\end{defn}

\begin{defn}\mylabel{KStar}  Define a $\sK_*$-$G$-space $\sY$ to be an $\sI_G$-space $\sY$ with a structure
of $\sK_V$-algebra on  $\sY(V)$ for each $V$
together with $G$-maps $i\colon \sY(V) \rtarr \sY(V\oplus W)$ such that the following diagrams commute,
where the $\tha$ are monad action maps. 
\[
\xymatrix{
\sI_G(V,V')_+\sma \BK_V\sY(V)\ar[r] \ar[d]_{\id\sma \tha} & \BK_{V'}\sY(V') \ar[d]^{\tha}\\
\sI_G(V,V')_+\sma \sY(V)\ar[r] & \sY(V') \\}
\]
In the second diagram, we identify $S^V\sma S^W$ with $S^{V\oplus W}$.
\[  
\xymatrix{ 
\big{(}\sI_G(V,V')\times \sI_G(W,W')\big{)}_+ \sma \sY(V)\sma S^V\sma S^W \ar[r] \ar[d]_{\oplus\sma i\sma \id}
& \sY(V')\sma S^{V'}\sma S^{W'} \ar[d]^{i \sma \id}\\
\big{(}\sI_G(V\oplus W, V'\oplus W')\big{)}_+ \sma \sY(V\oplus W)\sma S^{V\oplus W} 
\ar[r] &  \sY(V'\oplus W')\sma S^{V'\oplus W'} 
\\}
\]
The first diagram says that $\tha$ is a map of $\sI_G$-spaces and, ignoring the sphere coordinates, the second diagram says 
that $i\colon \sY\com \pi_1\Longrightarrow \sY\com \oplus$ is a natural transformation of functors  
$\sI_G\times \sI_G \rtarr \sT_G$. 
 \end{defn} 
  
 \begin{thm}[From $\sK_*$-$G$-spaces to orthogonal $G$-spectra]\mylabel{KStarrec}  For a $\sK_*$-$G$-space $\sY$, the 
 based $G$-spaces $\bE_V\sY(V)$ and the based $G$-maps 
 $$\SI^W\bE_V\sY(V) \rtarr \bE_{V\oplus W}\sY(V\oplus W)$$
 determined by  $i\colon \sY\com \pi_1\Longrightarrow \sY\com \oplus$ specify an orthogonal $G$-spectrum
 $\bE_G^{geo}\sY$. 
 \end{thm}
 \begin{proof} 
 Regarding the $\sI_G(V,V')$ as constant simplicial $G$-spaces, we see by diagram chases from
 the definitions that the data of the previous definitions determine $G$-maps
 \[ \sI_G(V,V')_+\sma \SI^V\BK_V^q\sY(V) \rtarr \SI^{V'} \BK_{V'}^q\sY(V') \]
 and 
 \[  \SI^W  \SI^V\BK_V^q\sY(V) \rtarr \SI^{V\oplus W}\BK_{V\oplus W}^q\sY(V\oplus W). \]
 On passage to geometric realization, these give the required $\sI_G$-space  $\bE_G^{geo}\sY$ and
 the required natural transformation $\bE_G^{geo}\sY\barwedge S_G \rtarr \bE_G^{geo}\sY\com \oplus$. 
 \end{proof}

Of course, the recognition principle of (\ref{myold}) and \myref{Vrec} applies to describe the
relationship between the $G$-spaces $\sY(V)$ and $\OM^V(\bE_G^{geo}\sY)(V)$.  The recognition
principle for pairings also adapts directly. 

\begin{defn}\mylabel{KFP}  Let $\sX$, $\sY$, and $\sZ$ be $\sK_*$-$G$-spaces.  A pairing 
$$f\colon \sX\barwedge \sY \rtarr \sZ\com \oplus$$
is a natural transformation of continuous functors 
$\sI_G \times \sI_G \rtarr \sT_G$ such that each $f\colon \sX(V)\sma \sY(W) \rtarr \sZ(V\oplus W)$ 
is a pairing as in \myref{MayPair} and the following diagram commutes for all $U,V,W$. 
\begin{equation}\label{pairdia}
\xymatrix{
\sX(U)\sma \sY(V)  \ar[rr]^-{\id\sma i}  \ar[dd]^{i\sma \id} \ar[dr]^f
& & \sX(U) \sma \sY(V\oplus W) \ar[dd]^f \\
& \sZ(U\oplus V) \ar[dr]^i & \\
\sX(U\oplus W) \sma \sY(V) \ar[r]_-{f} & \sZ(U\oplus W \oplus V) \ar[r]_{\sZ(\id\oplus t)} & \sZ(U\oplus V\oplus W)\\}
\end{equation}
This diagram expresses that the three composite natural transformations 
of functors $\sI_G^3\rtarr \sT_G$ in sight agree.
\end{defn}

The smash product of orthogonal $G$-spectra is obtained by first applying Day convolution to the
external smash product $\barwedge$ and then coequalizing the action of the sphere $G$-spectrum
on the two variables.  See \cite[II\S3]{MM} for details.

\begin{prop}  A pairing $f\colon \sX\barwedge \sY \rtarr \sZ\com \oplus$  of $\sK_*$-$G$-spaces induces a map
$$\bE_G^{geo} f\colon \bE_G^{geo}\sX\sma \bE_G^{geo}\sY \rtarr \bE_G^{geo}\sZ$$
of orthogonal $G$-spectra that is given levelwise by specialization of \myref{PairVrec}.
\end{prop}
\begin{proof}
The definition of a pairing immediately implies that $f$ induces an external pairing
$$\bE_G^{geo}\sX \barwedge \bE_G^{geo}\sY \rtarr \bE_G^{geo}\sZ\com \oplus, $$
and the diagram (\ref{pairdia}) ensures that the resulting map from the Day convolution to $\bE_G^{geo}\sZ$ 
factors through the coequalizer defining $\bE_G^{geo}\sX\sma \bE_G^{geo}\sY$.
\end{proof}

The suspension $G$-spectrum $\SI^{\infty}_G X$ of a based $G$-space $X$ is given by the
$G$-spaces $\SI^V X$; its structure maps are the evident identifications $\SI^W\SI^V X\iso \SI^{V\oplus W}X$.
The unstable BPQ theorem of \myref{unstableBPQ} leads to the following ``geometric'' version
of the BPQ theorem.

\begin{defn}  For a based $G$-space $X$, define $\BK_* X$ to be the $\sK_*$-$G$-space given
by the $\sK_V$-spaces $\BK_V X$ and the maps $i\colon \BK_V X \rtarr \BK_{V\oplus W} X$
induced by the map of operads $\sK_V \rtarr \sK_{V\oplus W}$ obtained by sending 
embeddings $e\colon V\rtarr V$ to $e\times \id\colon V\times W\rtarr V\times W$.
\end{defn}

It is easily verified that $\BK_*X$ is a $\sK_*$-$G$-space and the pairings $\boxtimes$ 
of (\ref{StMonPair}) prescribe pairings
\begin{equation}\label{StMonPair2}
\boxtimes\colon \bK_* X \barwedge \bK_* Y \rtarr \bK_* (X\sma Y)\com \oplus.
\end{equation}

\begin{thm}[The geometric BPQ theorem for orthogonal suspension $G$-spectra]\mylabel{BPQ2} 
For based $G$-spaces $X$, there is a natural equivalence 
\[ \om\colon \SI^{\infty}_G X \rtarr \bE_G^{geo}\BK_* X \]
such that the following diagram commutes for based $G$-spaces $X$ and $Y$.
\[
\xymatrix{
\SI^{\infty}_G X \sma \SI^{\infty}_G Y \ar[d]_{\iso} \ar[r]^-{\om\sma \om} 
&  \bE_G^{geo}\BK_* X \sma \bE_G^{geo}\BK_* Y \ar[d]^{\bE_G^{geo}(\boxtimes)}\\
\SI^{\infty}_G(X\sma Y) \ar[r]_{\om} & \bE_G^{geo}\BK_* (X\sma Y)\\}
\]
\end{thm}
\begin{proof}
The levelwise equivalence follows from \myref{unstableBPQ}. For the diagram,
the functor $\SI^{\infty}_G$ is left adjoint to the $0$th $G$-space functor, and inspection
of definitions shows that the adjoint diagram starting with $X\sma Y$ commutes. 
\end{proof}

\subsection{A configuration space model for free $\sK_V$-spaces}\label{sec:ConfigModel}

The free $\sK_V$-spaces  $\bK_V X$ can be modelled more geometrically by configuration
spaces.  To explain this, we first record the nonequivariant analogue in terms of the little cubes
operads, since that is relevant folklore which is not in the literature.  

Consider the little 
$n$-cubes operads $\sC_n$ and their associated monads $\mathbf{C}_n$. Let $J=(0,1)$ be
 the interior of $I$. We have the configuration spaces $F(J^n,j)$ of $j$-tuples of distinct points 
 in $J^n$. Sending little $n$-cubes $c\colon J^n\rtarr J^n$ to their center points $c(1/2,\cdots,1/2)$ 
gives a $\SI_n$-homotopy equivalence $f\colon \sC_n(j)\rtarr F(J^n,j)$.  

For based spaces $X$, we construct spaces $\mathbf{F}_n X$ by replacing $\sC_n(j)$ by $F(J^n,j)$ 
in the construction of $\mathbf{C}_n X$ as the quotient of $\amalg  \sC_n(j)\times_{\SI_j} X^j$
by basepoint identifications; we now use the evident omit a point projections $F(j,n)\rtarr  F(j,n-1)$
rather than the analogous maps $\sC_n(j)\rtarr \sC_n(j-1)$. The maps $f$ induce a homotopy equivalence 
$$ f\colon \mathbf{C}_n X\rtarr \mathbf{F}_nX.$$ 
That much has been known since \cite{MayGeo}.  

The folklore observation is that although the $F(J^n,j)$ do not form an operad, $\sC_n$ acts on 
$\mathbf{F}_n X$ in such a way that $f$ is a map of $\sC_n$-spaces.  Indeed, we can evaluate 
little $n$-cubes $J^n\rtarr J^n$ on points of $J^n$ to obtain maps 
$$\sC_n(j)\times F(J^n,j)\rtarr F(J^n,j),$$
and any reader of \cite{MayGeo} will see how to proceed from there.  Moreover, we have pairings 
$$ \boxtimes\colon \BF_mX \sma \BF_n Y \rtarr \BF_{m+n} (X\sma Y)  $$
defined as in  \myref{MayPair} and  \myref{pairsm}, starting from the maps
$$  F(J^n, j) \times F(J^n,k) \rtarr F(J^n,jk) $$
that send $(x,y)$, $x = (x_1,\cdots, x_j)$ and $y = (y_1,\cdots, y_k)$ to the set of pairs 
$(x_q,y_r)$, $1\leq q\leq j$ and $1\leq r\leq k$, ordered lexicographically. 

Nonequivariantly, we put this together to obtain an analogue of \myref{BPQ2}, using the
evident variant of the geometric recognition principle that is obtained from the operads $\sC_n$ as $n$ varies.
Here it is more natural to use symmetric spectra rather than orthogonal spectra, since
it is natural to deal with sequences rather than inner product spaces.  The relationship between the little cubes
operads and symmetric spectra is explained in \cite[\S I.8]{MM},  and we leave
details of the relevant retooling of the previous subsections to the interested reader.

\begin{thm}[The configuration space BPQ theorem for symmetric spectra]\mylabel{BPQ2too} 
For based spaces $X$, there is a natural equivalence 
\[ \om\colon \SI^{\infty} X \rtarr \bE^{geo}\BF_* X \]
such that the following diagram commutes for based spaces $X$ and $Y$.
\[
\xymatrix{
\SI^{\infty}X \sma \SI^{\infty} Y \ar[d]_{\iso} \ar[r]^-{\om\sma \om} 
&  \bE^{geo}\BF_* X \sma \bE^{geo}\BF_* Y \ar[d]^{\bE_G^{geo}(\boxtimes)}\\
\SI^{\infty}(X\sma Y) \ar[r]_{\om} & \bE^{geo}\BF_* (X\sma Y)\\}
\]
\end{thm}
 
For fixed $V$, the discussion generalizes equivariantly to relate $\BD_V  X$ or $\BK_V X$ to $\BF_V X$ for based 
$G$-spaces $X$.  In the case of $\BK_V X$,  we use the time $0$ projections from Steiner paths to embeddings 
$V\rtarr V$ and the centerpoint map from $\mathrm{Emb}_V(j)$ to $F(V,j)$. Letting $V$ vary, we obtain
the following equivariant version of \myref{BPQ2too}.

\begin{thm}[The configuration space BPQ theorem for orthogonal $G$-spectra]\mylabel{BPQ2tri} 
For based $G$-spaces $X$, there is a natural equivalence 
\[ \om\colon \SI^{\infty}_G X \rtarr \bE_G^{geo}\BF_* X \]
such that the following diagram commutes for based $G$-spaces $X$ and $Y$.
\[
\xymatrix{
\SI^{\infty}_G X \sma \SI^{\infty}_G Y \ar[d]_{\iso} \ar[r]^-{\om\sma \om} 
&  \bE_G^{geo}\BF_* X \sma \bE_G^{geo}\BF_* Y \ar[d]^{\bE_G^{geo}(\boxtimes)}\\
\SI^{\infty}_G(X\sma Y) \ar[r]_{\om} & \bE_G^{geo}\BF_* (X\sma Y)\\}
\]
\end{thm}


\section{The recognition principle for infinite loop $G$-spaces}\label{Sec5}

The equivariant recognition principle shows how to recognize (genuine) $G$-spectra
in terms of category or space level information.  It comes in various versions. We shall
give two modernized variants of the machine from \cite{MayGeo}, differing in their 
choice of the output category of $G$-spectra.  In contrast with the previous section,
we are now concerned with infinite loop space machines with input given by  
$E_{\infty}$ $G$-spaces (or $G$-categories) defined over any (genuine) $E_{\infty}$ operad.
A $G$-spectrum $E$ is connective if the negative homotopy groups of each of its fixed
point spectra $E^H$ are zero, and all infinite loop space machines take values in
connective $G$-spectra.

As in \cite{GM2}, we let $\sS$, $\sS\!p$, and $\sZ$ denote the categories of orthogonal
spectra \cite{MMSS}, Lewis-May spectra \cite{LMS}, and EKMM $S$-modules \cite{EKMM}.
Similarly, we let $G\sS$, $G\sS\!p$ and $G\sZ$ denote the corresponding categories
of genuine $G$-spectra from \cite{MM}, \cite{LMS}, and again \cite{MM}.  
We start with a machine that lands in $G\sS$.  It is related to but different from the geometric
machine of the previous section, and it is the choice preferred in \cite{GM2} and in the 
sequels \cite{GMMOAdd, GMMOMult, MMO}. The sphere $G$-spectrum $S_G$ in $G\sS$ 
is cofibrant, and so are the suspension $G$-spectra $\SI_G X$ of cofibrant based $G$-spaces $X$.  
We then give the variant machine that lands in $G\sS\!p$ or $G\sZ$, where every object is fibrant, 
and give a comparison that illuminates homotopical properties of the first machine via its comparison 
with the second.

\subsection{Equivariant $E_{\infty}$ operads}\label{sec:Einfty}

Since operads make sense in any symmetric mon\-oidal category, we have operads 
of categories, spaces, $G$-categories, and $G$-spaces.  Operads in $G\sU$ were
first used in \cite[VII]{LMS}. Although we are only 
interested in finite groups $G$ in this paper, the following definition makes sense 
for any topological group $G$ and is of interest in at least the generality of compact 
Lie groups. 

\begin{defn}\mylabel{EinfGoperad} An $E_{\infty}$ operad $\sC_G$ of $G$-spaces is an
operad in the cartesian monoidal category $G\sU$ such that $\sC_G(0)$ is a contractible $G$-space 
and the $(G\times \SI_j)$-space $\sC_G(j)$ is a 
universal principal $(G,\SI_j)$-bundle for each $j\geq 1$. Equivalently, for a subgroup $\LA$ of $G\times \SI_j$, the $\LA$-fixed point space 
$\sC_G(j)^{\LA}$ is contractible if $\LA\cap \SI_j = \{e\}$ and is empty otherwise. 
We say that $\sC_G$ is reduced if $\sC_G(0)$ is a point. 
\end{defn}

As is usual in equivariant bundle theory, we think of $G$ as acting from the left 
and $\SI_j$ as acting from the right on the spaces $\sC_G(j)$. 
These actions must commute and so define an action of $G\times \SI_j$.
We shall say nothing more about equivariant bundle theory except to note the 
following parallel.  In \cite{MayGeo}, an operad $\sC$ of spaces was defined 
to be an $E_{\infty}$ operad if $\sC(j)$ is a free contractible $\SI_j$-space.  
Effectively, $\sC(j)$ is then a universal principal $\SI_j$-bundle.  If we regard
each $\sC(j)$ as a $G$-trivial $G$-space, such an operad is called a naive 
$E_{\infty}$ operad of $G$-spaces.  Analogously, we have defined genuine $E_{\infty}$ operads  
by requiring the $\sC_G(j)$ to be universal principal $(G,\SI_j)$-bundles. That dictates 
the appropriate homotopical properties of the $\sC_G(j)$, and it is only 
those homotopical properties and not their bundle theoretic consequences that
concern us in the theory of operads. The bundle theory implicitly tells us 
which homotopical properties are relevant to equivariant infinite loop 
space theory.   Our default is that $E_{\infty}$ operads are understood to be
genuine unless otherwise specified.

We give two well-known examples.  Recall that a complete $G$-universe $U$ is a $G$-inner 
product space that contains countably many copies of each irreducible representation of $G$; a canonical choice 
is the sum of countably many copies of the regular representation $\rho_G$.

\begin{exmp}\mylabel{InfSteiner}(The Steiner operad $\sK_U$)
Inclusions $V\subset W$ induce
inclusions of operads $\sK_V\rtarr \sK_W$.  Let $\sK_U$ be the union over $V\subset U$
of the operads $\sK_V$, where $U$ is a complete $G$-universe.\footnote{We denoted the
nonequivariant version as $\sC$ in \cite{Rant1}, but we prefer the notation $\sK_U$ here.}  
This is the infinite Steiner operad of $G$-spaces.  It is an $E_{\infty}$ operad since $\SI_j$-acts 
freely on $\sK_U(j)$ and $\sK_U(j)^{\LA}$ is contractible if $\LA\subset G\times \SI_j$ and
$\LA\cap \SI_j = e$.  Indeed, such a $\LA$ is isomorphic to a subgroup $H$ of $G$ via the projection 
$G\times \SI_j \rtarr G$, and if we let $H$ act on $U$ through the isomorphism, then $U$ is a complete
$H$-universe and $U^H$ is isomorphic to $\bR^{\infty}$.  Therefore, by the proof of Proposition~\ref{SteinerHtpy} $\sK_U(j)^{\LA}$
is equivalent to the configuration space $F(\bR^{\infty},j)$, which is contractible.
\end{exmp}

\begin{exmp}\mylabel{LinIsoms}(Linear isometries operad) 
The equivariant linear isometries operad $\sL_U$ was first 
used in \cite[VII\S1]{LMS} and is defined just as nonequivariantly (e.g. \cite[\S2]{Rant1}).
The $(G\times \SI_j)$-space $\sL_U(j)$ is the space of linear isometries $U^j\rtarr U$, with 
$G$ acting by conjugation, and $\sL_U$ is an $E_{\infty}$ operad of $G$-spaces if $U$ is a complete $G$-universe. Indeed, 
$\SI_j$ acts freely on $\sL_U(j)$ and $\sL_U(j)^{\LA}$ is contractible if $\LA\subset G\times \SI_j$ 
and $\LA\cap \SI_j = e$.  If $\LA\iso H$ and $H$ acts on $U$ through the isomorphism,
then $U$ is a complete $H$-universe and $\sL_U(j)^H$ is isomorphic to the space of 
$H$-linear isometries $U^j\rtarr U$. The usual argument that $\sL(j)$ is contractible 
(e.g. \cite[I.1.2]{MQR}) adapts to prove that this space is contractible.
\end{exmp}

We define $E_\infty$-operads in $G$-categories in \S\ref{sec:CatEin} and give examples in \S\ref{sec:GenPermGCat} and \S\ref{SecPQR}.

\begin{rem}\mylabel{reduced} We will encounter one naturally occuring operad that 
is not reduced.
When an operad $\sC$ acts on a space $X$ via maps $\tha_i$ and we choose points 
$c_i\in \sC(i)$, we have a map $\tha_0\colon \sC(0)\rtarr X$ and the relation 
\[ \tha_2(c_2;\tha_0(c_0),\tha_1(c_1,x)) = \tha_1(\ga(c_2;c_0,c_1),x) \]
for $x\in X$. When the $\sC(i)$ are connected, this says that $\tha_0(c_0)$
is a unit element for the product determined by $c_2$. Reduced operads
give a single unit element. The original definition 
\cite[1.1]{MayGeo} required operads to be reduced.
\end{rem}

\begin{lem}\mylabel{Fixedptop} Let $\sC_G$ be an $E_{\infty}$ operad of $G$-spaces and define
$\sC = (\sC_G)^G$.  Then $\sC$ is an $E_{\infty}$ operad of spaces.  If $Y$ is a 
$\sC_G$-space, then $Y^G$ is a $\sC$-space. 
\end{lem}
\begin{proof}  $(\sC_G)^G$ is an operad since the fixed point functor commutes 
with products, and it is an $E_{\infty}$ operad since the space $\sC_G(j)^G$ is 
contractible and $\SI_j$-free. 
\end{proof}

\subsection{The infinite loop space machine: orthogonal $G$-spectrum version}\label{sec:MachineGS}

In brief, we have a functor $\bE_G = \bE_G^{\sS}$ that assigns an orthogonal $G$-spectrum $\bE_G Y$ 
to a $G$-space $Y$ with an action by some chosen $E_{\infty}$ operad $\sC_G$ of $G$-spaces.  We 
want to start with $\sC_G$-algebras and still exploit the Steiner operads, and we use the product of operads 
trick recalled in \S2.3 to allow this; compare \cite[\S9]{Rant1}.   For simplicity of notation, define 
$\sC_V = \sC_G\times \sK_V$.   We use the following observation.

\begin{lem}\label{CVEV} If $\sC_G$ is an $E_\infty$ operad of $G$-spaces, then the projection 
$$\sC_V(j) = \sC_G(j)\times \sK_V(j) \rtarr \sK_V(j)$$  
is a  $(G\times \SI_j)$-equivalence for each $j$.  
\end{lem}
\begin{proof}
We must show that for each subgroup $\LA\subseteq G\times \SI_j$, the induced map on fixed points 
\[\sC_G(j)^\LA\times \sK_V(j)^\LA \rtarr \sK_V(j)^\LA\]
 is an equivalence. If $\LA\cap \{e\}\times \SI_j = \{e\}$, then $\sC(j)^\LA\simeq \ast$, so the projection 
 is an equivalence. If $\LA$ contains a non-identity permutation, then the fixed points on both sides are 
 empty. Both sides are trivial if $j=0$.
\end{proof}

We view $\sC_G$-spaces as $\sC_V$-spaces for all $V$ via the projections $\sC_V\rtarr \sC_G$, and 
$\sC_V$ acts on $V$-fold loop spaces via its projection to $\sK_V$.  Write $\mathbf{C}_V$ for the monad 
on based $G$-spaces associated to the operad $\sC_V$. 
The categories of $\sC_V$-spaces and $\mathbf{C}_V$-algebras are isomorphic.  As in the $V$-fold
delooping argument, the unit 
$\et\colon \Id\rtarr \OM^V\SI^V$ of the monad $\OM^V\SI^V$ and the action $\tha$ of 
$\mathbf{C}_V$ on the $G$-spaces $\OM^V\SI^V X$ induce a composite natural map 
\[ \xymatrix@1{ \al_V\colon \mathbf{C}_VX \ar[r]^-{\mathbf{C}_V \et} 
&  \mathbf{C}_V\OM^V\SI^V X \ar[r]^-{\tha} &\OM^V\SI^V X,\\} \]
and $\al_V \colon \mathbf{C}_V\rtarr \OM^V\SI^V$ is a map of monads whose adjoint defines
a right action of $\mathbf{C}_V$ on the functor $\SI^V$. 

\begin{defn}[From $\sC_G$-spaces to orthogonal $G$-spectra] \mylabel{Einforth} Let $Y$ be a $\sC_G$-space.  
We define an orthogonal $G$-spectrum 
$\bE_G Y$, which we denote by $\bE_G^{\sS}Y$ when necessary for clarity.  Let
\[ \bE_GY(V) = B(\SI^V,\mathbf{C}_V,Y). \]
Using the action of isometric isomorphisms on the $\sK_V$ and $\SI^V$, as in the
previous section but starting with $Y$ regarded as a constant $\sI_G$-functor, as we 
can since its action by $\sC_G$ is independent of $V$, this defines an $\sI_G$-space.
The structure $G$-map 
\[ \si\colon  \SI^{W}\bE_GY(V) \rtarr \bE_GY(V\oplus W) \]
is the composite
\[ \SI^{W} B(\SI^V,\mathbf{C}_V,Y)\iso B(\SI^{V\oplus W},\mathbf{C}_V,Y)
\rtarr B(\SI^{V\oplus W},\mathbf{C}_{V\oplus W},Y). \]
obtained by commuting $\SI^{W}$ with geometric realization and using the map of monads 
$\BC_V \rtarr \BC_{V\oplus W}$ induced by the inclusion $i\colon \sK_V \rtarr \sK_{V\oplus W}$. 
\end{defn}

Just as in (\ref{myold}), we have the diagram of $\sC_V$-spaces and $\sC_V$-maps
\begin{equation}\label{myold2}
\xymatrix@1{ Y & \ar[l]_-{\epz} \ar[r]^-{\overline{\al}} 
B(\mathbf{C}_V,\mathbf{C}_V,Y) & 
B(\OM^V\SI^V,\mathbf{C}_V,Y) \ar[r]^-{\zeta} & 
\OM^{V} B(\SI^{V},\mathbf{C}_V,Y),\\}
\end{equation}
where $\overline{\al} = B(\al,\id,\id)$. \myref{Vrec} applies verbatim, with the same proof.     We let
$\xi_V= \ze\com \overline{\al} \com \nu$, where $\nu$ is the canonical homotopy inverse to $\epz$.
Then the following diagram commutes, where $\tilde{\si}$ is adjoint to $\si$. 

\[ \xymatrix{
& Y \ar[dl]_{\xi_V} \ar[dr]^{\xi_{V\oplus W}} &\\
 \OM^{V}\bE_GY(V)\ar[rr]_-{\OM^V\tilde{\si}}  & & \OM^{V\oplus W}\bE_GY(V\oplus W)\\} \]
Therefore $\OM^V\tilde{\si}$ is a weak equivalence if $V \supset \bR$.  If we replace 
$\bE_G Y$ by a fibrant approximation $\BR\bE_G Y$, there results a group completion
$\xi\colon Y\rtarr (\BR\bE_G Y)_0$.  
We shall shortly use the category $\sS\!p$ to give an explicit way to think about this.

\begin{rem} Since $\sK_0(0)=\{\ast\}$, $\sK_0(1) = \{\id\}$, and $\sK_0(j) = \emptyset$ 
for $j >1$, $\mathbf{C}_0$ is the identity functor if $\sC_G(0) = \{\ast\}$ and $\sC_G(1) = \{\id\}$.
In that case
\[ \bE_GY(0) = B(\SI^0,\mathbf{C}_0,Y) = B(\Id,\Id,Y)\iso Y. \]
\end{rem}

We comment on an alternative point of view not taken above but relevant below.  
We can use the product of operads trick from \cite{MayGeo} to replace a $\sC_G$-space $Y$ by the equivalent $\sK_U$-space  
$B(\BK_U, \BC_U,Y)$, where $\BC_U$ is the monad associated to the $E_{\infty}$ operad $\sC_U = \sC_G\times \sK_U$ 
and from there only use Steiner operads.  However, there is a catch.  A $\sK_U$-algebra $Y$ is  a $\sK_V$-algebra by
restriction, but the constant $\sI_G$-space $Y$ is {\em not} a $\sK_*$-$G$-space in the sense of \myref{KStar} since 
conjugation by isometries is not compatible with the inclusions used to define $\sK_U$.  Therefore the $B(\SI^V,\sK_V,Y)$ 
do not define an $\sI_G$-space.  However, ignoring isometries, they do define a coordinate free $G$-prespectrum, as defined in
\cite[II.1.2]{MM}.  That can be viewed as the starting point for the alternative machine we construct next.

\subsection{The infinite loop space machine:  Lewis-May $G$-spectrum version}\label{sec:MachineGSp}

A Lewis-May (henceforward LM) $G$-spectrum $E$ consists of $G$-spaces 
$EV$ for each finite dimensional sub $G$-inner product space $V$ in a complete
$G$-universe $U$ together with $G$-homeomorphisms $EV\rtarr \OM^{W-V}EW$ 
whenever $V\subset W$.   For a based $G$-space $X$ we define $Q_G X = \colim \OM^V\SI^V X$.
The suspension LM $G$-spectrum  $\SI^{\infty}_G X$ has $V$th $G$-space 
$Q_G \SI^V X$, and the functor $\SI^{\infty}_G$ is left adjoint to the zeroth space functor $\OM^{\infty}_G$.
We sometimes change notation to $\SI^{\infty}_U$ and $\OM^{\infty}_U$, allowing change of universe.
While $G\sS\!p$ is not symmetric monoidal, that is rectified by passage to the
$S_G$-modules of \cite{EKMM}, at the inevitable price of losing the adjunction;
see \cite[\S11]{Rant1}.

The operad $\sK_U$ acts on $\OM^{\infty}_UE$ for any LM
$G$-spectrum $E$.  One could not expect such precise structure 
when working with orthogonal $G$-spectra.  Nonequivariantly,
such highly structured infinite loop spaces are central to
calculations, and it is to be hoped that the equivariant theory
will eventually reach a comparable state.   Therefore it is natural to 
want an infinite loop space machine that lands in the category $G\sS\!p$ 
of LM $G$-spectra.

The operad $\sK_U$ plays a privileged role. As noted above, if $\sC_G$ is an $E_\infty$ $G$-operad,
we can convert $\sC_G$-spaces to equivalent $\sK_U$-spaces, so that it suffices to build a machine for $\sK_U$-spaces. 
On the other hand,  $\sC_G$ spaces inherit actions of $\sC_U  = \sC_G\times \sK_U$, so that it suffices to build 
a machine for $\sC_U$-spaces.  To encompass both of these approaches in a single machine, we suppose given a map 
(necessarily an equivalence) of $E_\infty$ $G$-operads $\sO_G\rtarr \sK_U$.   We can take $\sO_G = \sC_U$ 
or $\sO_G = \sK_U$, but both here and in \cite{GMMOAdd, GMMOMult, MMO}, our primary interest is in $\sC_U$. 
Formally, the equivariant theory now works in the same way as the nonequivariant theory, and we follow the summary in \cite[\S9]{Rant1}.  An
early version of this machine is in the paper \cite{CostWan} of Costenoble and Waner.  

\begin{sch} We must use the Steiner operads $\sK_V$ and $\sK_U$ rather than the little disks operads
$\sD_V$ and $\sD_U$, which was the choice in \cite{CostWan}, and our notion of an $E_{\infty}$ operad 
of $G$-spaces should replace the notion of a complete operad used there.
\end{sch} 

\begin{defn}[From $\sO_G$-spaces to Lewis-May $G$-spectra]\mylabel{EinfLM}
Let $Y$ be an $\sO_G$-space.  We define a LM $G$-spectrum $\bE_G Y$, which we denote by 
$\bE_G^{\mathrm{Sp}}$ when necessary for clarity, by
\[\bE_G Y = B(\SI^{\infty}_G,\mathbf{O}_G, Y).\]
Here $\mathbf{O}_G$ acts on $\SI^{\infty}_G$ through its projection to  $\bK_U$.
\end{defn}
We have the diagram of $\sO_G$-spaces and $\sO_G$-maps
\begin{equation}\label{myold3}
\xymatrix@1{ Y & \ar[l]_-{\epz} \ar[r]^-{\overline{\al}_U} 
B(\mathbf{O}_G,\mathbf{O}_G,Y) & 
B(Q_G,\mathbf{O}_G,Y) \ar[r]^-{\zeta} & 
\OM^{\infty} _GB(\SI^{\infty}_G,\mathbf{O}_G,Y) = \OM^{\infty}_G \bE_G Y, \\}
\end{equation}
where $\overline{\al}_U = B(\al_U,\id,\id)$.  
As explained nonequivariantly in \cite[\S9]{Rant1}, the following analogue of \myref{Vrec} holds.

\begin{thm}\mylabel{Vrec2} Let $\sO_G$ be an $E_\infty$ $G$-operad  with a map of operads 
$\sO_G \rtarr \sK_U$. The following statements hold
for an $\sO_G$-space $Y$.
\begin{enumerate}[(i)]
\item The map $\epz$ is a $G$-homotopy equivalence with a natural homotopy inverse $\nu$.
\item The map $\overline{\al}_U $ is an equivalence when $Y$ is connected and is a
group completion otherwise.
\item The map $\ze$ is an equivalence.
\end{enumerate}
Therefore the composite 
$$\xi = \ze\com \overline{\al}_U \com \nu\colon Y\rtarr \OM^{\infty}_G \bE_G Y$$ 
is an equivalence if $Y$ is grouplike and is a group completion otherwise. 
\end{thm}

We shall not pursue this variant of the recognition principle in further detail, but we reemphasize
that its much tighter relationship with space level data may eventually aid equivariant calculation.  However, 
it is worth stating the alternative geometric version of the stable BPQ theorem  to which it leads.  Here we specialize to the 
case $\sO_G=\sK_U$. This allows us to use
the pairings of Steiner operads described in \S\ref{SecSteinerPair}, which are not available for  other
$E_{\infty}$ operads. By passage to colimits, we obtain
the following analogue of \myref{SteinPair}.

\begin{prop}  For $G$-universes $U$ and $U'$, there is a unital, associative, and commutative pairing 
$$ \boxtimes\colon (\sK_U,\sK_{U'}) \rtarr \sK_{U\oplus U'} $$
of Steiner operads of $G$-spaces.
\end{prop}

Passing to monads, we obtain a unital, associative, and commutative system of pairings
\begin{equation} 
\boxtimes\colon \bK_U X \sma \bK_{U'} Y \rtarr \bK_{U\oplus U'} (X\sma Y). 
\end{equation}

Passage to colimits from \myref{alsma} gives the following analogue of that result.

\begin{lem}\mylabel{alsma2}  The following diagram commutes.
\[ \xymatrix{
\mathbf{K}_U X \sma \mathbf{K}_U Y \ar[r]^-{\boxtimes} \ar[d]_{\al_U\sma \al_U} 
& \mathbf{K}_{U\oplus U} (X\sma Y) \ar[d]^{\al_{U\oplus U}}\\
\OM^{\infty}_U \SI^{\infty}_U X \sma \OM^{\infty}_U\SI^{\infty}_U Y \ar[r]_-{\sma} & 
\OM^{\infty}_{U\oplus U}\SI^{\infty}_{U\oplus U}(X\sma Y).\\} \]
\end{lem}

The following recognition principle for pairings can by derived from \myref{PairVrec} by passage to colimits
or can be proven by the same argument as there.  We note that our definition of the machine $\bE_G$ depends
on a choice of complete $G$-universe $U$, and we sometimes write $\bE_U$ to indicate that choice.

\begin{prop}\mylabel{PairUrec}
A pairing $f\colon X\sma Y\rtarr Z$ of a $\sK_U$-space $X$ and a $\sK_U$-space $Y$
to a $\sK_{U\oplus U}$-space $Z$ induces a map 
\[ \bE f\colon \bE_U X \barwedge \bE_U Y \rtarr \bE_{U\oplus U} Z \]
of LM $G$-spectra indexed on $U\oplus U$ such that the following diagram commutes.
\[
\xymatrix{
X\sma Y \ar[r]^-{\xi\sma \xi}  \ar[d]_f &   \OM^{\infty}_U \bE_U X \sma \OM^{\infty}_U \bE_U Y \ar[r] & \OM^{\infty}_{U\oplus U}(\bE_U X \barwedge \bE_U Y) 
\ar[d]^{\OM^{\infty}_{U\oplus U}\bE f} \\
Z \ar[rr]_-{\xi} & & \OM^{\infty}_{U\oplus U} \bE_{U\oplus U} Z
\\} \] 
\end{prop}

We can internalize the external smash product, as in \cite{LMS}, by choosing a linear isometry $\ph\colon U\oplus U\rtarr U$.
Then $\ph$ induces a change of universe functor $\ph_*$ which allows us to replace the right arrow by $\OM^{\infty}_U \ph_*\bE f$.
In the following result we can either stick with Lewis-May $G$-spectra or pass to the $S_G$-modules  of 
\cite{EKMM, MM}.  We interpret the smash product according to choice.

\begin{thm}[The $\sK_U$-space BPQ theorem for Lewis-May $G$-spectra]\mylabel{BPQ3} 
For based $G$-spaces $X$, there is a natural equivalence 
\[ \om\colon \SI^{\infty}_U X \rtarr \bE_U \BK_U X \]
such that the following diagram commutes for based $G$-spaces $X$ and $Y$.
\[
\xymatrix{
\SI^{\infty}_U X \sma \SI^{\infty}_U Y \ar[d]_{\iso} \ar[r]^-{\om\sma \om} 
&  \bE_U \BK_UX \sma \bE_U\BK_U Y \ar[d]^{\bE(\boxtimes)}\\
\SI^{\infty}_U(X\sma Y) \ar[r]_-{\om} & \bE_U\BK_U (X\sma Y)\\}
\]
\end{thm}
\begin{proof}[Sketch proof]  The first statement is the usual extra degeneracy argument  \cite[9.8]{MayGeo}.  
We comment on the diagram.  In either  $G\sS\!p$ or $G\sZ$,  it is an internalization of a diagram of 
$G$-spectra indexed on $U\oplus U$.
\[
\xymatrix{
\SI^{\infty}_U X \barwedge \SI^{\infty}_U Y \ar[d]_{\iso} \ar[r]^-{\om\barwedge \om} 
&  \bE_U \BK_UX \barwedge \bE_U\BK_U Y \ar[d]^{\bE(\boxtimes)}\\
\SI^{\infty}_{U\oplus U}(X\sma Y) \ar[r]_-{\om} & \bE_{U\oplus U}\BK_{U\oplus U} (X\sma Y)\\}
\]
The isomorphism on the left is trivial on the prespectrum level (indexing on inner product 
$G$-spaces of the form $V\oplus W$) and follows on the spectrum level.  After passage to
adjoints, to check commutativity it suffices to check starting from $X\sma Y$ on the bottom 
left, where an inspection of definitions gives the conclusion.  If in $G\sS\!p$, this is internalized 
by use of a linear isometry $\phi\colon U\oplus U\rtarr U$.  If in $G\sZ$, this is internalized by 
use of the definition of the smash product in terms of the linear isometries operad $\sL_U$, as in \cite{EKMM, MM}. 
\end{proof}     

In fact, with the model theoretic modernization of the original version of the theory that is given 
nonequivariantly in \cite{Five}, one can redefine the restriction of $\bE_U$ to cofibrant 
$\sK_U$-spaces $Y$ to be 
$$\bE_UY = \SI^{\infty}_G\otimes_{\BK_U}Y,$$ 
where $\otimes_{\BK_U}$ is the evident coequalizer. 
With that reinterpretation and taking $X$ to be a $G$-CW complex,  $\bE_U \BK_U X$ is actually isomorphic to 
$\SI^{\infty}_GX$. 

The nonequivariant statement is often restricted to the case $Y=S^0$. Then 
$\BK_U S^0$ is the disjoint union of operadic models for the classifying 
spaces $B\SI_j$.  Similarly, $\BK_U S^0$ is the disjoint union of 
operadic models for the classifying $G$-spaces $B(G,\SI_j)$.

\subsection{{A comparison of infinite loop space machines}}
We compare the $\sS$ and $\sS\!p$ machines $\bE_G^{\sS}$ and $\bE_G^{\sS\!p}$ by
transporting both of them to the category $G\sZ$ of $S_G$-modules, following \cite{MM}.  
As discussed in
\cite[IV\S4]{MM} with slightly different notations, there is a diagram of Quillen equivalences 
$$\xymatrix{
G\sP \ar[rr]<.5ex>^{L} \ar[dd]<.5ex>^{\bP}
& &G\sS\!p \ar[ll]<.5ex>^{\ell} \ar[dd]<.5ex>^{\bF}\\
& & \\
G\sS  \ar[uu]<.5ex>^{\bU} \ar[rr]<.5ex>^{\bN} 
& & G\sZ. \ar[ll]<.5ex>^{\bN^{\#}} \ar[uu]<.5ex>^{\bV}
}$$
Here $G\sP$ is the category of coordinate-free $G$-prespectra.  
The left adjoint $\bN$ is strong symmetric monoidal, and the unit map 
$\et\colon X \rtarr \bN^{\#}\bN X$ is a weak equivalence for all cofibrant
orthogonal $G$-spectra $X$.  It can be viewed as a fibrant approximation in 
the stable model structure on $G\sS$.  The pair $(\bN,\bN^{\#})$ is a Quillen
equivalence with the positive stable model structure on $G\sS$;
see \cite[III\S\S4,5]{MM}.

We can compare machines using the diagram. In fact, by a direct inspection of
definitions, we see the following result, which is essentially a reinterpretation of 
the original construction of \cite{MayGeo} that becomes visible as soon as one introduces orthogonal spectra.

\begin{lem} The functor $\bE_G^{\sS\!p}$ from $\sC_G$-spaces to the category
$G\sS\!p$ of Lewis-May $G$-spectra is naturally isomorphic to the composite functor 
$L\com \bU\com \bE_G^{\sS}$.
\end{lem}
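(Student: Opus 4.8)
The statement to prove is that $\bE_G^{\sS\!p}$ is naturally isomorphic to $L\com \bU\com \bE_G^{\sS}$ as functors from $\sC_G$-spaces to $G\sS\!p$. The plan is to unwind both sides into bar constructions and observe that they agree term by term in the simplicial direction, then conclude by naturality of geometric realization. First I would recall the two definitions side by side: on one hand $\bE_G^{\sS}Y$ is the orthogonal $G$-spectrum with $V$th space $B(\SI^V, \mathbf{C}_V, Y)$, built from the monads $\mathbf{C}_V = \mathbf{C}_G \times \mathbf{K}_V$ and the action of $\SI^V$; on the other hand $\bE_G^{\sS\!p}Y = B(\SI^{\infty}_G, \mathbf{C}_U, Y)$ is a Lewis--May $G$-spectrum built from the single monad $\mathbf{C}_U = \mathbf{C}_G \times \mathbf{K}_U$ and the suspension $G$-spectrum functor.

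The key step is to trace through the functors $L$ and $\bU$ in the diagram of Quillen equivalences. The functor $\bU$ is the forgetful-type right adjoint from orthogonal $G$-spectra to coordinate-free $G$-prespectra; applied to $\bE_G^{\sS}Y$ it produces the $G$-prespectrum whose value at $V$ is $B(\SI^V, \mathbf{C}_V, Y)$, with the structure maps described in the paper. The functor $L$ from $G\sP$ to $G\sS\!p$ is spectrification, the left adjoint to the forgetful functor $\ell$. So the claim reduces to showing that spectrifying the prespectrum $\{B(\SI^V, \mathbf{C}_V, Y)\}_V$ yields the Lewis--May $G$-spectrum $B(\SI^{\infty}_G, \mathbf{C}_U, Y)$. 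Here one uses that geometric realization commutes with the relevant colimits (it is a left adjoint), so that spectrification of a realized prespectrum is the realization of the levelwise-spectrified simplicial prespectrum; at simplicial level $q$ the prespectrum is $\{\SI^V \mathbf{C}_V^{q+1} Y\}_V$, and its spectrification is precisely $\SI^{\infty}_G \mathbf{C}_U^{q+1} Y$ once one identifies $\colim_V \SI^{W-V}\SI^V\mathbf{C}_V^{q+1}Y$ with $\SI^{\infty}_G \mathbf{C}_U^{q+1}Y$, using that $\mathbf{C}_U = \colim_V \mathbf{C}_V$ and that $\SI^{\infty}_G X = L\{\SI^V X\}_V$ on the prespectrum level. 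Matching the simplicial faces and degeneracies (which come from the monad structure maps and the action of $\al$) on both sides is then a formal check, since the stabilization maps $\mathbf{C}_V \rtarr \mathbf{C}_W$ are compatible with $\al_V$, $\al_W$ and the structure maps of $\SI^{\infty}_G$.

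The main obstacle I expect is bookkeeping rather than conceptual: one must be careful that the structure maps of the orthogonal $G$-spectrum $\bE_G^{\sS}Y$, which involve commuting $\SI^{W-V}$ past geometric realization and the spacewise closed inclusions $\mathbf{C}_V \rtarr \mathbf{C}_W$, are exactly the data that spectrification uses, so that $L\bU$ does not introduce any correction. This is precisely the point flagged in the statement that the result ``becomes visible as soon as one introduces orthogonal spectra'': the orthogonal $G$-spectrum structure is what records the isometry-equivariance of the Steiner data, and $\bU$ then just reads off the underlying coordinate-free prespectrum, whose spectrification is the classical Lewis--May construction of \cite{MayGeo}. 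So the proof is essentially ``inspect the definitions,'' organized as: (1) write $\bU\bE_G^{\sS}Y$ explicitly as a prespectrum-valued simplicial object; (2) apply $L$ levelwise, using that $L$ and $|-|$ both commute with the colimit defining spectrification; (3) identify the resulting simplicial Lewis--May $G$-spectrum with $B(\SI^{\infty}_G, \mathbf{C}_U, Y)$ via $\mathbf{C}_U = \colim_V \mathbf{C}_V$ and $\SI^{\infty}_G = L \circ \{\SI^V(-)\}_V$; (4) check the simplicial operators and the $G$-action match. No genuinely hard estimate or homotopical input is needed, since everything is a point-set identification of colimits of functors that are all left adjoints in the relevant variables.
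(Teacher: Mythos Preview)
Your proposal is correct and is exactly the ``direct inspection of definitions'' the paper invokes; the paper gives no proof beyond that phrase, noting only that the lemma is a reinterpretation of the original construction in \cite{MayGeo} made visible by the orthogonal spectrum formalism. Two small corrections: the $q$-simplices of $B(\SI^V,\mathbf{C}_V,Y)$ are $\SI^V\mathbf{C}_V^{q}Y$, not $\mathbf{C}_V^{q+1}$; and your phrase ``$L$ and $|{-}|$ both commute with the colimit defining spectrification'' should simply read that $L$, being a left adjoint, commutes with geometric realization, after which the levelwise identification $L\{\SI^V\mathbf{C}_V^{q}Y\}_V\iso \SI^{\infty}_G\mathbf{C}_U^{q}Y$ follows from the cofinality of the diagonal in the double filtered colimit over $V$ and $W$ (using that the maps $\mathbf{C}_VY\rtarr \mathbf{C}_WY$ are closed inclusions and that $S^{W-V}$ is compact).
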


As explained in \cite[IV\S5]{MM}, there is a monad $\bL$ on $G\sS\!p$ and a category
$G\sS\!p[\bL]$ of $\bL$-algebras.  The left adjoint $\bF$ in the diagram is the composite of left adjoints
\[ \bL\colon G\sS\!p\rtarr G\sS\!p[\bL]\ \ \text{and}\ \  \bJ\colon G\sS\!p[\bL]\rtarr G\sZ. \]  
The functor $L\com \bU\colon G\sS\rtarr G\sS\!p$ lands naturally in $G\sS\!p[\bL]$, 
so that we can define 
\[ \bM = \bJ \com L \com \bU\colon G\sS\rtarr G\sZ. \]
By \cite[IV.5.2 and IV.5.4]{MM}, $\bM$ is lax symmetric monoidal and there is a natural
lax symmetric monoidal map $\al\colon \bN X\rtarr \bM X$ that is a weak equivalence when 
$X$ is cofibrant. Effectively, we have two infinite loop space machines landing 
in $G\sZ$, namely $\bN\com\bE_G^{\sS}$ and $\bJ \com \bE_G^{\sS\!p}$.
In view of the lemma, the latter is isomorphic to $\bM\com \bE_G^{\sS}$, hence
\[ \al\colon \bN \com\bE_G^{\sS}\rtarr \bM \com\bE_G^{\sS} 
\iso \bJ \com \bE_G^{\sS\!p} \] 
compares the two machines, showing that they are equivalent for all 
practical purposes. Homotopically, these categorical distinctions are irrelevant, 
and we can use whichever machine we prefer, deducing properties of one from the other.

\subsection{Examples of $E_\infty$ spaces and $E_\infty$ ring spaces}\label{sec:Exmps}

Many of the examples from the nonequivariant theory generalize directly to the equivariant setting.  To illustrate
the point of using varying $E_{\infty}$ operads and their natural actions on spaces of interest, rather than just 
using $\sK_U$, we focus on actions of the linear isometries operad $\sL_U$.    

Nonequivariantly, taking 
$U\iso \bR^{\infty}$, a systematic account of naturally occurring examples of $\sL_U$-spaces was already given in 
\cite[\S I.1]{MQR}.  It was revisited briefly in more modern language \cite[\S2]{Rant1}.   It includes the infinite classical groups $O$, $SO$, $Spin$,
$U$, $SU$, $Sp$, their classifying spaces, constructed either using Grassmannian manifolds or the standard classifying space functor $B$, and all 
of their associated infinite homogeneous spaces.  All of these examples are grouplike, and all of them are given infinite loop spaces 
by application of the nonequivariant infinite loop space machine.  The discussion in \cite{MQR, Rant1} was in terms of sub 
inner-product spaces $V$ of a universe $U$.  The point to make here is that the entire exposition works verbatim equivariantly, with the $V$ being 
sub inner-product $G$-spaces of our complete $G$-universe $U$.  We give a brief account to show the idea.

As explained in \cite[\S2]{Rant1}, an $\sI_G$-FCP (functor with cartesian product) is a lax symmetric monoidal functor $ \sI_G \rtarr \sT_G$. 
We say that an $\sI_G$-FCP  is monoid-valued if it factors through the category of equivariant topological monoids 
and monoid homomorphisms. The classical groups all give group-valued $\sI_G$-FCPs: 
\[ V\mapsto O(V), \quad V\mapsto SO(V), \quad V\mapsto U(\bC\otimes_{\bR} V), V\mapsto SU(\bC\otimes_{\bR}V), \ \text{etc}. \] 

Any $\sI_G$-FCP $X$ extends to a functor on all isometries (not just isometric isomorphisms) as follows: an isometry $\alpha:V \rtarr W$ yields an 
identification $W\iso \alpha(V) \oplus \alpha(V)^\perp$. Then $X(\alpha)$ is the composite 
\[X(V)\xrightarrow{X(\alpha)\times 0} X(\alpha(V))\times X(\alpha(V)^\perp) \rightarrow X(\alpha(V) \oplus \alpha(V)^\perp).\]
Then the colimit $X(U) = \colim_V X(V)$ inherits an action of $\sL_U$.  The classifying space $BF$ of a 
monoid-valued $\sI_G$-FCP $F$ is an $\sI_G$-space, and the cited sources show that
$F$ is equivalent to $\OM BF$ as an $\sL_U$-space when $F$ is group-valued.  

The formal structure of the operad pair $(\sK_U,\sL_U)$ works the same way equivariantly as nonequivariantly.
It is an $E_{\infty}$ operad pair in the sense originally defined in \cite[VI.1.2]{MQR} and 
reviewed in \cite[\S1]{Rant1} and, in more detail, \cite[4.2]{Rant2}. See \S\ref{sec:MultCatOps} 
below for an example of an operad pair in $G$-categories.  The action of $\sL_U$ on 
$\sK_U$ is defined nonequivariantly in \cite[\S3]{Rant1}, and it works the same way equivariantly.  

From here, multiplicative infinite loop space theory works equivariantly to construct 
$E_{\infty}$ ring $G$-spectra from $(\sK_U,\sL_U)$-spaces, alias $E_{\infty}$-ring 
$G$-spaces, in exactly the same way as nonequivariantly \cite{MQR, Rant1, Rant2}.  
In particular, for any $\sL_U$-algebra $X$,  the free $\sK_U$-algebra $\BK_U X_+$ is an $E_\infty$ ring 
$G$-space, where $X_+$ is obtained from $X$ by adjoining an additive $G$-fixed basepoint $0$.  The 
group completion $\alpha_U\colon \BK_U X_+ \rtarr Q_G X_+$ is a map of $E_{\infty}$ ring
$G$-spaces, and $\bE_G \BK_U X_+$ is equivalent to $\SI^{\infty}_GX_+$ as $E_{\infty}$ ring $G$-spectra. 

As we intend to show elsewhere \cite{AM}, 
the passage from category level data to $E_{\infty}$-ring $G$-spaces, in analogy with 
\cite{MayMult, Rant2}, generalizes to equivariant multicategories.

We remark that the usual construction of Thom $G$-spectra, such as $MO_G$ and $MU_G$, 
already presents them as $E_{\infty}$ ring $G$-spectra, without use of infinite loop space theory, 
as was explained and generalized in \cite[Chapter X]{LMS}.

\subsection{Some properties of equivariant infinite loop space machines}\label{sec:MachineProps}
Many properties of the infinite loop space machine $\bE_G$ follow directly from the
group completion property, independent of how the machine is constructed, but it is
notationally convenient to work with the machine $\bE_G^{\sS\!p}$, for which $\xi$
is a natural group completion without any bother with fibrant approximation. The 
results apply equally well to $\bE_G^{\sS}$.  It is plausible to 
hope that the group completion property actually characterizes the machine up to homotopy, 
as in \cite{MT}, but the proof there fails equivariantly. A direct point-set level comparison of our 
machine with a new version of the Segal-Shimakawa machine will be given in \cite{MMO}. 
We illustrate with the following two results, some version of which must hold for any
equivariant infinite loop space machine $\bE_G$.   The first says that it commutes with 
passage to fixed points and the second says that it commutes with products, both up
to weak equivalence.

\begin{thm}\mylabel{SpFour} For $\sC_G$-spaces $Y$, there is a natural map of spectra
$$\phi\colon \bE(Y^G) \rtarr (\bE_G Y)^G$$
that induces a natural map of spaces under $Y^G$
\[\xymatrix{
& Y^G \ar[dl]_{\xi} \ar[dr]^{\xi^G} & \\ 
\OM^{\infty}\bE(Y^G)  \ar[rr] & &  
(\OM^{\infty}_G\bE_G Y)^G\\} \]
in which the diagonal arrows are both group completions. Therefore the 
horizontal arrow is a weak equivalence of spaces and $\ph$ is a weak
equivalence of spectra.
\end{thm}  
\begin{proof} For based $G$-spaces $X$, we have natural inclusions
$\mathbf C_{U^G}(X^G)\rtarr (\mathbf C_UX)^G$ and $\SI^{\infty}(X^G)
\rtarr (\SI^{\infty}_G X)^G$.
For $G$-spectra $E$, we have a natural isomorphism $\OM^{\infty}(E^G)\iso (\OM_G^{\infty} E)^G$.
This gives the required natural map of spectra
\[ \xymatrix@1{ \bE(Y^G) = B(\SI^{\infty},\mathbf C_{U^G}, Y^G) \ar[r]^-{\phi} & (B(\SI^{\infty}_G,\mathbf C_U, Y))^G 
= (\bE_G Y)^G\\} \]
and the induced natural map of spaces under $Y^G$.  Since the diagonal arrows in the diagram are group completions, 
the horizontal arrow must be a homology isomorphism and hence a weak equivalence.  Since our spectra 
are connective, $\ph$ must also be a weak equivalence.
\end{proof}

\begin{thm}\mylabel{prodcom} Let $X$ and $Y$ be $\sC_G$-spaces. Then the map 
\[ \bE_G(X\times Y) \rtarr \bE_G X\times \bE_GY \]
induced by the projections is a weak equivalence of $G$-spectra.
\end{thm}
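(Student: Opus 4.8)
The plan is to deduce this from the group completion property of $\et$, together with the uniqueness of group completions and the connectivity of the machine's output; one cannot argue directly on the defining bar constructions, since neither $\SI^{\infty}_G$ nor the monad $\mathbf{C}_U$ preserves products. I would work with the variant $\bE_G=\bE_G^{\sS\!p}$ landing in $G\sS\!p$, where every object is an $\OM$-$G$-spectrum and $\bE_G Z$ is connective for each $\sC_G$-space $Z$; the statement for $\bE_G^{\sS}$ then follows from the comparison recorded earlier. Since $\bE_G X\times \bE_G Y$ is also connective and every object of $G\sS\!p$ is an $\OM$-$G$-spectrum, the displayed map is a weak equivalence of $G$-spectra if and only if its effect on $0$th $G$-spaces --- the honest infinite loop $G$-spaces $\OM^{\infty}_G(-)$ --- is a weak $G$-equivalence, i.e.\ a weak equivalence on $H$-fixed points for all $H\subset G$.

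Since $\OM^{\infty}_G$ is a right adjoint it preserves products, so $\OM^{\infty}_G(\bE_G X\times \bE_G Y)\iso \OM^{\infty}_G\bE_G X\times \OM^{\infty}_G\bE_G Y$; and by naturality of $\et$ the induced map on $0$th spaces fits into a commuting triangle under $X\times Y$ whose two legs are the group completion $\et\colon X\times Y\rtarr \OM^{\infty}_G\bE_G(X\times Y)$ and the map $\et\times\et\colon X\times Y \rtarr \OM^{\infty}_G\bE_G X\times \OM^{\infty}_G\bE_G Y$. The crux is that $\et\times\et$, a product of two group completions, is again a group completion. On $H$-fixed points this rests on three facts: $\pi_0$ of a product of Hopf spaces is the product monoid; the Grothendieck group functor preserves products; and, for any field $k$, the K\"unneth isomorphism $H_*((A\times B)^H;k)\iso H_*(A^H;k)\otimes_k H_*(B^H;k)$ carries localization at $\pi_0((A\times B)^H)$ to the tensor product of the localizations at $\pi_0(A^H)$ and at $\pi_0(B^H)$. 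For the last point one uses the factorization $(m,n)=(m,e)(e,n)$ in $\pi_0(A^H)\times\pi_0(B^H)$, so that inverting $\pi_0((A\times B)^H)$ amounts to inverting $\pi_0(A^H)$ on the first tensor factor and $\pi_0(B^H)$ on the second, together with the compatibility of localization with tensor products.

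It then remains to invoke a fixed-point-wise uniqueness of group completions: if $i\colon Z\rtarr A$ and $j\colon Z\rtarr B$ are group completions of Hopf $G$-spaces and $f\colon A\rtarr B$ satisfies $f\com i=j$, then $f$ is a weak $G$-equivalence. Indeed, on $H$-fixed points $f^H$ carries the identification $H_*(Z^H;k)[\pi_0(Z^H)^{-1}]\iso H_*(A^H;k)$ to the corresponding one for $B^H$, so $f^H$ is an isomorphism on homology with coefficients in every field, hence on integral homology; likewise $\pi_0(f^H)$ is an isomorphism. Since $A^H$ and $B^H$ occur as $H$-fixed points of infinite loop $G$-spaces, each of their path components is a simple space, so an integral homology isomorphism that is a bijection on components is a weak equivalence. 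Applying this with $Z=X\times Y$ and $f$ the induced map on $0$th spaces completes the argument.

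The main obstacle is the middle step: verifying that a product of group completions is a group completion, where the K\"unneth theorem and the compatibility of homology localization with tensor products are the substantive inputs. The reduction to $0$th spaces via connectivity and the uniqueness-of-group-completions step are then formal.
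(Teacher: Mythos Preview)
Your proposal is correct and follows essentially the same route as the paper: reduce to the $G\sS\!p$ machine, use connectivity to pass to $0$th spaces, set up the commuting triangle under $X\times Y$ with $\et$ and $\et\times\et$, invoke that a product of group completions is a group completion, and conclude that the comparison map on $0$th spaces is a weak $G$-equivalence. The paper's proof is terser---it simply asserts that products of group completions are group completions and that this forces the horizontal arrow to be a weak equivalence---whereas you spell out the K\"unneth/localization argument and the uniqueness-of-group-completions step; but the architecture is identical.
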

\begin{proof}  We are using that the product of $\sC_G$-spaces is a $\sC_G$-space, the proof of which
uses that the category of operads is cartesian monoidal. Working in $G\sS\!p$, the functor $\OM^{\infty}_G$ 
commutes with products and passage 
to fixed points and we have the commutative diagram
\[ \xymatrix{
& (X\times Y)^H\iso X^H\times Y^H \ar[dl]_{\xi^H} \ar[dr]^{\xi^H\times \xi^H} &\\
(\OM^{\infty}_G\bE_G(X\times Y))^H\ar[rr] & & (\OM^{\infty}_G\bE_GX)^H\times (\OM^{\infty}_G\bE_GY)^H.\\} \]
Since the product of group completions is a group completion, the diagonal arrows are both group completions.
Therefore the horizontal arrow is a weak equivalence. Since our spectra are connective, the conclusion follows.
\end{proof} 

\subsection{The recognition principle for naive $G$-spectra}\label{SecNaive}

We elaborate on \myref{SpFour}. The functor $\bE = \bE_{e}$ in that result is the nonequivariant infinite loop space machine, 
which is defined using the product of the nonequivariant Steiner operad $\sK = \sK_{U^G}$ and the
fixed point operad $\sC = (\sC_G)^G$.  We may think of $U^G$ as $\bR^{\infty}$, without reference to $U$, and start
with any (naive) $E_{\infty}$ operad $\sC$ to obtain a recognition principle for naive $G$-spectra, which are just spectra 
with $G$-actions. Again we can use either the category $\sS$ of orthogonal spectra or the category $\sS\!p$ of Lewis-May spectra, 
comparing the two by mapping to the category $\sZ$ of EKMM $S$-modules, but letting $G$ act on objects
in all three.  We continue to write $\bE$ for this construction since it is exactly the same construction as the nonequivariant one, 
but applied to $G$-spaces with an action by the $G$-trivial $E_{\infty}$ operad $\sC$.  

It is worth emphasizing that when working with naive $G$-spectra, there is no
need to restrict to finite groups. We can just as well work with general topological 
groups $G$.  The machine $\bE$  still enjoys the same properties, including the group 
completion property.  Working with Lewis-May spectra, 
the adjunction $(\SI^{\infty},\OM^{\infty})$ relating spaces and 
spectra applies just as well to give an adjunction relating based
$G$-spaces and naive $G$-spectra.  For based $G$-spaces $X$, the map 
$\al\colon \mathbf{C}X\rtarr \OM^{\infty}\SI^{\infty} X$ is a group completion of Hopf
$G$-spaces by the nonequivariant special case since $(\mathbf{C}X)^H = \mathbf{C}(X^H)$ and
$(\OM^{\infty}\SI^{\infty}X)^H =  \OM^{\infty}\SI^{\infty}(X^H)$.   

Returning to finite groups, we work with Lewis-May spectra and $G$-spectra in the rest
of this section in order to exploit the more precise relationship between spaces and spectra
that holds in that context.  However, the conclusions can easily be transported to orthogonal spectra.
We index genuine $G$-spectra on a complete $G$-universe $U$ and we index naive $G$-spectra on the trivial
$G$-universe $U^G\iso \bR^{\infty}$.  The inclusion of universes 
$i\colon U^G\rtarr U$
induces a forgetful functor $i^*\colon G\sS\!p^{U}\rtarr G\sS\!p^{U^G}$ from genuine $G$-spectra 
to naive $G$-spectra. It represents the forgetful 
functor from $RO(G)$-graded  cohomology theories to $\bZ$-graded cohomology theories. 
The functor $i^*$ has a left adjoint $i_*$.  The following observations are trivial but important.

\begin{lem}\mylabel{obvious}  The functors $i_*\SI^{\infty}$ and $\SI^{\infty}_G$ from based $G$-spaces to genuine $G$-spectra are isomorphic. 
\end{lem}
\begin{proof}
Clearly $\OM^{\infty}\io^* = \OM^{\infty}_G$, since both are evaluation at $V=0$,
hence their left adjoints are isomorphic.
\end{proof}

\begin{rem}\mylabel{etc} For $G$-spaces $X$, the unit of the $(i_*,i^*)$ adjunction gives a
natural map $\SI^{\infty}X \rtarr i^*i_*\SI^{\infty}X \iso \io^*\SI_G^{\infty}$ 
of naive $G$-spectra. It is very far from being an equivalence, as the tom Dieck 
splitting theorem shows; see \myref{SpOne}.
\end{rem}

The inclusion of universes $i\colon U^G\rtarr U$ induces an 
inclusion of operads of $G$-spaces $\io\colon \sK_{U^G}\rtarr \sK_{U}$, where $G$ acts
trivially on $\sK_{U^G}$.  The product of this inclusion and
the inclusion $\io\colon \sC = (\sC_G)^G \rtarr \sC_G$ is an inclusion
\[\io\colon \sC_{U^G} \equiv \sC \times \sK_{U^G} \rtarr \sC_G\times \sK_U \equiv \sC_U.\] 
Pulling actions back along $\io$ gives a functor $\io^*$ from $\sC_U$-spaces 
to  $\sC_{U^G}$-spaces. The following consistency statement is important since, 
by definition, the $H$-fixed point spectrum $E^H$ of a genuine $G$-spectrum $E$ is $(i^*E)^H$
and the homotopy groups of $E$ are $\pi_*^H(E) \equiv \pi_*(E^H)$. 

\begin{thm}\mylabel{SpSix}  Let $Y$ be a $\sC_G$-space.  Then there is a natural weak equivalence 
of naive $G$-spectra $\bE\io^*Y \rtarr i^* \bE_G Y$. 
\end{thm}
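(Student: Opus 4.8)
The plan is to exhibit the weak equivalence directly at the level of bar constructions, mimicking the proofs of \myref{SpFour} and \myref{prodcom}. Recall that $\bE\io^*Y = B(\SI^{\infty},\mathbf{C}_{U^G},\io^*Y)$, where the monad $\mathbf{C}_{U^G}$ is built from the operad $\sC_{U^G} = \sC\times \sK_{U^G}$, while $i^*\bE_GY = i^*B(\SI^{\infty}_G,\mathbf{C}_U,Y)$, and $i^*$ commutes with geometric realization. First I would observe that the inclusion of operads $\io\colon \sC_{U^G}\rtarr \sC_U$ recalled just before the statement gives a map of monads $\mathbf{C}_{U^G}\rtarr i^*\mathbf{C}_U$ (using that $i^*$ is lax monoidal and, more concretely, that $\mathbf{C}_{U^G}X$ sits inside $i^*\mathbf{C}_UX$ as the subspace built from the points of $\sC_{U^G}(j)\subset \sC_U(j)$). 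Together with the natural map $\SI^{\infty}X\rtarr i^*\SI^{\infty}_GX$ of \myref{obvious}/\myref{etc}, compatible with the respective right actions of $\mathbf{C}_{U^G}$, these assemble into a natural map of simplicial naive $G$-spectra and hence, on realization, a natural map $\bE\io^*Y \rtarr i^*\bE_GY$.

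Next I would prove this map is a weak equivalence by the group completion argument used throughout \S4, now comparing $H$-fixed points for each $H\subset G$ rather than only $G$-fixed points. Applying $\OM^{\infty}$ to both sides and using that $\OM^{\infty}$ commutes with $i^*$ (since, as in \myref{obvious}, both are evaluation at $V=0$), we get a diagram of $H$-fixed spaces under $(\io^*Y)^H = Y^H$,
\[
\xymatrix{
& Y^H \ar[dl]_{\et^H} \ar[dr]^{\et^H} & \\
(\OM^{\infty}\bE\io^*Y)^H \ar[rr] & & (\OM^{\infty}_G\bE_GY)^H.
}
\]
Both slanted arrows are group completions: the left one by the recognition principle for naive $G$-spectra of \S\ref{SecNaive} (the machine $\bE$ has the group completion property, and $(\io^*Y)^H = Y^H$ as a $\sC^H$-space), and the right one by the $\sS\!p$-machine group completion property recalled in \S4.2, together with the fact that taking $H$-fixed points of a genuine $G$-spectrum and its underlying infinite loop $G$-space is exactly passing to the $H$-fixed Hopf space. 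Since a map of grouplike Hopf spaces under a common source which is a group completion on both sides is a weak equivalence, the bottom arrow is a weak equivalence for every $H$. Finally, both $\bE\io^*Y$ and $i^*\bE_GY$ are connective naive $G$-spectra (the first because $\bE$ lands in connective spectra, the second because $\bE_G Y$ is connective and $i^*$ preserves $H$-homotopy groups by definition), so a map inducing a weak equivalence on all zeroth $H$-fixed spaces is a weak equivalence of spectra.

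The main obstacle I anticipate is the first step: checking that the comparison map really is a well-defined map of simplicial objects, i.e. that $\mathbf{C}_{U^G}\rtarr i^*\mathbf{C}_U$ is a map of monads and is compatible with the right action of $\mathbf{C}_{U^G}$ on $\SI^{\infty}$ (through $i^*$ of the right action of $\mathbf{C}_U$ on $\SI^{\infty}_G$). This is the point where one must be careful that the operad inclusion $\io\colon \sK_{U^G}\rtarr \sK_U$ of \S\ref{App}, the action of $\mathbf{C}_U$ on $\SI^{\infty}_G$ via the monad map $\al$, and the unit $\SI^{\infty}\rtarr i^*\SI^{\infty}_G$ all fit into one commuting square of monads and modules; granting the formal properties of $(i_*,i^*)$ and of the Steiner operads already cited in the paper, this is a diagram chase rather than a substantive difficulty. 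Everything after that is the now-standard group completion bookkeeping.
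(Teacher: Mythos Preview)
Your proposal is correct and follows essentially the same route as the paper: construct the comparison map $\mu$ at the level of bar constructions from the operad inclusion $\io\colon \sC_{U^G}\rtarr \sC_U$ together with the unit $\SI^{\infty}\rtarr i^*\SI^{\infty}_G$, then deduce that $\mu$ is a weak equivalence by sandwiching its map of zeroth spaces between two group completions of $Y$ and invoking connectivity. One notational slip: you write $i^*\mathbf{C}_U$, but $i^*$ is a functor on spectra while the monads live on based $G$-spaces, so the map you actually want (and describe correctly in your parenthetical) is just $\mathbf{C}_{U^G}\rtarr \mathbf{C}_U$; the paper makes the compatibility you flag as ``the main obstacle'' explicit via the commuting square of $\al$'s and its adjoint square, confirming that this step is indeed a diagram chase.
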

\begin{proof} Again, although we work with $\bE^{\sS\!p}_G$, the conclusion carries over to $\bE^{\sS}_G$. 
It is easy to check from the definitions that, for $G$-spaces 
$X$, we have a natural commutative diagram of $G$-spaces
\[ \xymatrix{
\mathbf{C_{U^G}}X \ar[r]^-{\al} \ar[d] & \OM^{\infty} \SI^{\infty} X \ar[d] \\
\mathbf{C_U}X \ar[r]_-{\al} & \OM^{\infty}_G \SI^{\infty}_G X. \\} \]
The vertical arrows both restrict colimits over representations to colimits
over trivial representations. 
Passing to adjoints, we obtain a natural commutative diagram 
\[ \xymatrix{
\SI^{\infty}_G\mathbf{C_{U^G}}X \ar[r] \ar[d] &  \SI^{\infty} X \ar[d] \\
\SI^{\infty}_G\mathbf{C_U}X \ar[r] & \SI^{\infty}_G X. \\} \]
The composite gives a right action of $\mathbf{C_{U^G}}$ on $\SI^{\infty}_G$ that  
is compatible with the right action of $\mathbf{C_U}$.  
Using the natural map $\SI^{\infty} \rtarr i^*\SI^{\infty}_G$ of \myref{etc}, there results
a natural map  
\[ \mu\colon \bE\io^*Y = B(\SI^{\infty},\mathbf{C_{U^G}},\io^*Y) \rtarr
B(i^*\SI^{\infty}_G,\mathbf{C_U},Y) \iso i^*\bE_G Y \]
of naive $G$-spectra. The following diagram commutes by a check of definitions.

{\small

\[\xymatrix{ Y\ar[d]_{=} & \ar[l]_-{\epz} \ar[rr]^-{B(\al,\id,\id)} 
B(\mathbf{C_{U^G}},\mathbf{C_{U^G}},\io^*Y) \ar[d] & &
B(Q,\mathbf{C_{U^G}},Y) \ar[r]^-{\zeta} \ar[d] & \OM^{\infty} B(\SI^{\infty},\mathbf{C_{U^G}},Y)\ar[d]^{\OM^{\infty}\mu}\\
Y & \ar[l]_-{\epz} \ar[rr]^-{B(\al,\id,\id)} B(\mathbf{C}_G,\mathbf{C}_G,Y) & &
B(Q_G,\mathbf{C_U},Y) \ar[r]^-{\zeta} & \OM^{\infty}_G B(\SI^{\infty}_G,\mathbf{C_U},Y).\\}\]

}

\noindent
Here the right vertical map is the map of zeroth spaces given by $\mu$.
Replacing the maps $\epz$ with their homotopy inverses, the
horizontal composites become group completions. Therefore $\OM^{\infty}\mu$ is
a weak equivalence, hence so is $\mu$.
\end{proof}

We also have the corresponding statement for the left adjoint $i_*$ of $i^*$. In effect, it gives 
a space level construction of the change of universe functor $i_*$ on connective $G$-spectra.  
We need a homotopically well-behaved version of the left adjoint of the functor $\io^*$ from $\sC$-spaces
to $\sC_G$-spaces, and we define it by $\io_!X = B(\mathbf{C}_G,\mathbf{C},X)$.

\begin{thm}\mylabel{Trouble} Let $X$ be a $\sC$-space. Then there is a natural weak equivalence of genuine 
$G$-spectra $\bE_G(i_!X) \htp i_*\bE(X)$.
\end{thm}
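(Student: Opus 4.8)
The plan is to mimic the proof of \myref{SpSix}, but dualized to exploit the left adjoints. The key point is that the change-of-universe functor $i_*\colon \sS\!p\rtarr G\sS\!p$ is a left adjoint, so it commutes with the two-sided bar construction and with $\SI^{\infty}$, while the space-level functor $\io_!X = B(\mathbf{C}_G,\mathbf{C},X)$ is the derived left adjoint of $\io^*$. First I would record the two compatibilities we need at the monad level. On the one hand, since $i_*$ is left adjoint to $i^*$ and $i^*\SI^{\infty}_G \iso \SI^{\infty}$ (as in \myref{obvious}, dualized), we get $i_*\SI^{\infty} \iso \SI^{\infty}_G$. On the other hand, the inclusion $\io\colon \sC_{U^G} = \sC\times\sK_{U^G}\rtarr \sC_G\times\sK_U = \sC_U$ induces a map of monads $\mathbf{C}_{U^G}\rtarr \mathbf{C}_U$ over which the right action of $\mathbf{C}_{U^G}$ on $\SI^{\infty}$ is compatible with the right action of $\mathbf{C}_U$ on $\SI^{\infty}_G$ — this is precisely the commutative diagram already displayed in the proof of \myref{SpSix}, read from the top row to the bottom row.

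Next I would write down the comparison map. Using $i_*\SI^{\infty}\iso \SI^{\infty}_G$, the fact that $i_*$ commutes with geometric realization (it is a left adjoint), and the compatibility of the monad actions, there is a chain of natural isomorphisms and maps
\[
\bE_G(\io_!X) = B(\SI^{\infty}_G, \mathbf{C}_U, \io_! X)
= B(\SI^{\infty}_G, \mathbf{C}_U, B(\mathbf{C}_U,\mathbf{C}_{U^G},X)),
\]
and a standard bar-construction identity (the "two bars collapse to one" argument, as in \cite{MayGeo}) identifies this up to homotopy with $B(\SI^{\infty}_G, \mathbf{C}_{U^G}, X)$, where $\mathbf{C}_{U^G}$ now acts on $\SI^{\infty}_G$ through the monad map $\mathbf{C}_{U^G}\rtarr\mathbf{C}_U$. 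Finally, $i_*B(\SI^{\infty},\mathbf{C}_{U^G},X) \iso B(i_*\SI^{\infty},\mathbf{C}_{U^G},X) \iso B(\SI^{\infty}_G,\mathbf{C}_{U^G},X)$ since $i_*$ commutes with the bar construction and $i_*\SI^{\infty}\iso \SI^{\infty}_G$. Chaining these gives a natural zig-zag between $\bE_G(\io_!X)$ and $i_*\bE(X)$.

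To see that the comparison map is a weak equivalence, I would argue homotopically rather than point-set-ly, exactly as in the other theorems of \S4. It suffices to check the map is a $\pi_*$-isomorphism on $H$-fixed points for all $H\subset G$. Here I would invoke the group completion property together with the adjunction identities already established: $\bE$ and $\bE_G$ both have the group completion property (the displayed $\et$'s are group completions after $\OM^{\infty}$), so it reduces to comparing $Q_G$-type targets. Concretely, the map $\al\colon \mathbf{C}X\rtarr \OM^{\infty}\SI^{\infty}X$ and $\al\colon \mathbf{C}_U(\io_!X)\rtarr\OM^{\infty}_G\SI^{\infty}_G(\io_!X)$ fit into a commuting square with the comparison map, and I would check that the induced map on zeroth spaces is the composite of group completions, forcing it to be a weak equivalence; connectivity of both spectra then upgrades this to a spectrum-level equivalence.

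The main obstacle will be the bar-construction bookkeeping in the second paragraph: proving cleanly that $B(\SI^{\infty}_G,\mathbf{C}_U, B(\mathbf{C}_U,\mathbf{C}_{U^G},X))$ is equivalent to $B(\SI^{\infty}_G,\mathbf{C}_{U^G},X)$ requires knowing that $B(\mathbf{C}_U,\mathbf{C}_{U^G},X)$ is a model for the derived pushforward $\io_! X$ and that the relevant bar constructions are Reedy cofibrant (properness), so that the collapse map $B(\mathbf{C}_U,\mathbf{C}_{U^G},X)\rtarr \mathbf{C}_U\otimes_{\mathbf{C}_{U^G}}X$-type comparisons behave correctly. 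This is the equivariant analogue of the fact that $\mu_!$ defined via the bar construction is a homotopically meaningful left adjoint, exactly the "pragmatic" point flagged in \S2.3; I would cite \cite{MayGeo, Rant1} (and \cite{MMO} for the equivariant Reedy cofibrancy details) rather than redo it. Everything else — commuting $i_*$ past realization, the monad-map compatibilities, and the group completion argument — is routine given the earlier results.
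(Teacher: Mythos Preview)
Your second paragraph is exactly the paper's proof. The paper packages the ``two bars collapse to one'' step as a general statement about a \emph{double bar construction}: form the bisimplicial object $B_{\bullet,\bullet}(\SI^{\infty}_G,\mathbf{C}_U,\mathbf{C}_U,\mathbf{C}_{U^G},X)$ with $(p,q)$-simplices $\SI^{\infty}_G\mathbf{C}_U^p\mathbf{C}_U\mathbf{C}_{U^G}^qX$; realizing vertically then horizontally gives $B(\SI^{\infty}_G,\mathbf{C}_U,\io_!X)$ on the nose, while realizing horizontally then vertically gives $B(B(\SI^{\infty}_G,\mathbf{C}_U,\mathbf{C}_U),\mathbf{C}_{U^G},X)\htp B(\SI^{\infty}_G,\mathbf{C}_{U^G},X)$ via the standard contraction $\epz\colon B(\SI^{\infty}_G,\mathbf{C}_U,\mathbf{C}_UY)\rtarr \SI^{\infty}_G Y$. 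Combined with $i_*\SI^{\infty}\iso\SI^{\infty}_G$ and the fact that $i_*$ commutes with realization, this is the whole argument.

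Your third paragraph should be deleted: it is both unnecessary and would not work. The double bar collapse already furnishes a genuine equivalence, not merely a map whose status must then be checked. More seriously, the group-completion strategy of \myref{SpSix} cannot be transported here. That argument compares $\OM^{\infty}$ of two spectra, each of which is the output of an infinite loop space machine applied to the \emph{same} underlying space $Y$, so both zeroth spaces are group completions of $Y$. In the present situation one side is $i_*\bE(X)$, and $i_*$ is a \emph{left} adjoint: it does not commute with $\OM^{\infty}_G$ or with passage to $H$-fixed points, and $i_*E$ is typically far from fibrant even when $E$ is. There is no tractable description of $(\OM^{\infty}_G i_*\bE(X))^H$ as a group completion of anything, so the square you propose cannot be set up. Trust the bar-construction argument and stop there.
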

We give the proof in \S\ref{DoubleTrouble}, using a construction that is of independent interest.

\section{Categorical preliminaries on classifying $G$-spaces and $G$-operads}\label{sec:Prelim}

We recall an elementary functor $\sC\!at(\tilde G,-)$  
from $G$-categories to $G$-categories from our paper \cite{GMM} with Mona Merling.  
We explored this functor in detail in the context of 
equivariant bundle theory in \cite{GMM}, and we refer the reader there for proofs.  
In \S\ref{Sec2}, we shall use it to define 
a certain operad $\sP_G$ of $G$-categories. The $\sP_G$-algebras 
will be the genuine permutative $G$-categories.

\subsection{Chaotic topological categories and equivariant classifying spaces}

For (small) categories $\sA$ and $\sB$, we let $\sC\!at(\sA,\sB)$ denote the
category whose objects are the functors $\sA\rtarr \sB$ and whose morphisms
are the natural transformations between them.  
When $\sB$ has a right action by some group $\PI$, then $\sC\!at(\sA,\sB)$ inherits a right $\PI$-action.
When a group $G$ acts from the 
left on $\sA$ and $\sB$, 
$\sC\!at(\sA,\sB)$ inherits a left $G$-action by conjugation on objects and morphisms. Then 
$G\sC\!at\!(\sA,\sB)$ is alternative notation for the $G$-fixed category 
$\sC\!at(\sA,\sB)^G$ of $G$-functors and $G$-natural transformations.  
We have the $G$-equivariant version of the standard adjunction
\begin{equation}\label{adjoint}
 \sC\!at(\sA\times\sB,\sC) \iso \sC\!at(\sA,\sC\!at(\sB,\sC)). 
\end{equation}

\begin{defn} For a space $X$, the chaotic (topological) category $\tilde X$ has
object space $X$, morphism space $X\times X$, and structure maps $I$, $S$, $T$,
and $C$ given by $I(x) = (x,x)$, $S(y,x) = x$, $T(y,x) = y$, and 
$C((z,y),(y,x))  = (z,x)$.  For any point $\ast\in X$, the map $\et\colon X\rtarr X\times X$
specified by $\et(x) = (\ast,x)$ is a continuous natural isomorphism from the identity
functor to the trivial functor $\tilde{X}\rtarr {\ast}\rtarr \tilde{X}$, hence $\tilde{X}$ is equivalent
to $\ast$.   When $X=G$ is a topological group, $\tilde G$ is
isomorphic to the translation category of $G$, but the isomorphism encodes
information about the group action and should not be viewed as an identification; see \cite[1.7]{GMM}. We say that a topological
category with object space $X$ is chaotic if it is isomorphic to $\tilde{X}$.
\end{defn} 

\begin{defn} Without changing notation, we regard a topological group $\PI$ 
as a topological category with a single object $\ast$ and morphism space $\PI$,
with composition given by multiplication.  
Then $\Pi$ is isomorphic to the orbit category $\tilde{\PI}/\PI$, where $\Pi$
acts from the right on $\tilde{\PI}$ via right multiplication on
objects and diagonal right multiplication on morphisms. The resulting
functor $p\colon \tilde{\PI}\rtarr \PI$ is given by 
the trivial map $\PI\rtarr \ast$ of object spaces and the map 
$\xymatrix@1{  p\colon \PI\times\PI \rtarr \PI\times \PI/\PI \iso \PI\\}$
on morphism spaces specified by $p(\ta,\si) = \ta\si^{-1}$.
\end{defn}

\begin{thm}\cite[2.7]{GMM} For a $G$-space $X$ and a topological group 
$\PI$, regarded as a $G$-trivial $G$-space, the functor 
$p\colon \tilde{\PI}\rtarr \PI$ induces an isomorphism of 
topological $G$-categories
\[ \xi\colon \sC\!at(\tilde{X},\tilde{\PI})/\PI \rtarr \sC\!at_G(\tilde{X},\PI). \]
Therefore, passing to $G$-fixed point categories,
\[ (\sC\!at(\tilde{X},\tilde{\PI})/\PI)^G 
\iso \sC\!at(\tilde{X},\PI)^G \iso \sC\!at(\tilde X/G , \PI).\]
\end{thm}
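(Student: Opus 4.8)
\medskip
\noindent\emph{Sketch of proof.} The plan is to transport the statement into the combinatorics of $\PI$-valued cocycles on $\tilde X$, where the asserted isomorphism becomes a bijection that can be written down by hand, and then to descend to $G$-fixed points.

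First I would record the one structural fact about chaotic categories that does the work: for any topological category $\sC$ and space $Y$, a functor $\sC\rtarr\tilde Y$ is exactly a continuous map $\mathrm{ob}\,\sC\rtarr Y$ (its value on a morphism being forced), and between any two such functors there is a unique natural transformation with prescribed source and target; hence $\sC\!at(\sC,\tilde Y)\iso\widetilde{\sU(\mathrm{ob}\,\sC,Y)}$ as topological categories. Taking $\sC=\tilde X$ and $Y=\PI$ gives $\sC\!at(\tilde X,\tilde\PI)\iso\widetilde{\sU(X,\PI)}$, and I would note that under this identification the right $\PI$-action induced by the $\PI$-action on $\tilde\PI$ is pointwise right translation $(f\cdot\si)(x)=f(x)\si$ on the object space $\sU(X,\PI)$ and the diagonal of it on the morphism space; in particular it is free when $X\neq\emptyset$, and $\sC\!at(\tilde X,\tilde\PI)/\PI$ has object space $\sU(X,\PI)/\PI$ and morphism space $\bigl(\sU(X,\PI)\times\sU(X,\PI)\bigr)/\PI$. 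Here lies the one conceptual point to keep in mind: $\widetilde{(-)}$ does not commute with orbits, so this quotient is \emph{not} a chaotic category, and $\xi$ will be a genuine comparison of two differently packaged $G$-categories rather than a formal manipulation.

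Next I would unwind the definitions on the other side: an object of $\sC\!at_G(\tilde X,\PI)$ is a continuous map $\varphi\colon X\times X\rtarr\PI$ with $\varphi(x,x)=e$ and $\varphi(z,y)\varphi(y,x)=\varphi(z,x)$, and a morphism $\varphi\rtarr\varphi'$ is a continuous $\eta\colon X\rtarr\PI$ with $\varphi'(y,x)=\eta(y)\,\varphi(y,x)\,\eta(x)^{-1}$, all carrying the conjugation $G$-action coming from $\tilde X$ ($\PI$ being $G$-trivial). The functor $p\colon\tilde\PI\rtarr\PI$, with trivial object map and morphism map $(\ta,\si)\mapsto\ta\si^{-1}$, carries $f\in\sU(X,\PI)$ to the cocycle $\varphi_f(y,x)=f(y)f(x)^{-1}$ and a pair $(f,f')$ to the morphism $\eta(x)=f'(x)f(x)^{-1}$; since both formulas are invariant under the diagonal $\PI$-action, post-composition with $p$ descends through the quotient to a continuous $G$-functor $\xi\colon\sC\!at(\tilde X,\tilde\PI)/\PI\rtarr\sC\!at_G(\tilde X,\PI)$, the equivariance being inherited from $\tilde X$ because every construction in play is functorial in it.

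It then remains to see that $\xi$ is an isomorphism and to pass to fixed points. Fixing any $\ast_0\in X$, the cocycle identity gives $\varphi=\varphi_f$ for $f(x)=\varphi(x,\ast_0)$, so $\xi$ is surjective on objects, while $\varphi_f=\varphi_{f'}$ forces $f'=f\si$ with $\si=f(\ast_0)^{-1}f'(\ast_0)$ constant, so $\xi$ is injective on objects; the same bookkeeping with a fixed $\ast_0$ handles morphisms, and $\varphi\mapsto[\varphi(-,\ast_0)]$ supplies a continuous two-sided inverse on objects and on morphisms, so $\xi$ is an isomorphism of topological $G$-categories. Applying $(-)^G$ gives the first isomorphism of the final display. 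For the second, since $\PI$ is $G$-trivial a $G$-functor $\tilde X\rtarr\PI$ coequalizes the $G$-action and so factors uniquely through $\tilde X\rtarr\tilde X/G$, and likewise for $G$-natural transformations; combined with the identifications $\mathrm{ob}(\tilde X/G)=X/G$, $\mathrm{mor}(\tilde X/G)=(X\times X)/G$ and $\sU(Z,\PI)^G=\sU(Z/G,\PI)$, this identifies $\sC\!at(\tilde X,\PI)^G=G\sC\!at(\tilde X,\PI)$ with $\sC\!at(\tilde X/G,\PI)$ on the nose. The main obstacle, modest as it is, is the construction and verification of $\xi$: one is forced to route through the cocycle description rather than manipulate $\widetilde{(-)}$ formally, and one tolerates the harmless auxiliary basepoint of $X$ used to invert $\xi$ (harmless because passing to $\PI$-orbits washes out the choice); the remaining checks are routine bookkeeping with the compact-open topologies.
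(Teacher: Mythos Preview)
The paper does not supply its own proof of this statement: it is quoted verbatim from \cite[2.7]{GMM}, and the only argument the paper adds is the one-line remark that the final isomorphism $\sC\!at(\tilde X,\PI)^G\iso\sC\!at(\tilde X/G,\PI)$ is clear because $G$ acts trivially on $\PI$. Your sketch therefore fills in what the paper deliberately outsources, and it does so correctly: the identification $\sC\!at(\tilde X,\tilde\PI)\iso\widetilde{\sU(X,\PI)}$, the cocycle description of functors $\tilde X\rtarr\PI$, the explicit formulas $\varphi_f(y,x)=f(y)f(x)^{-1}$ and $\eta(x)=f'(x)f(x)^{-1}$ for $p_*$, and the basepoint inverse $\varphi\mapsto[\varphi(-,\ast_0)]$ are exactly the right moves. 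Two small remarks: your choice of $\ast_0$ tacitly assumes $X\neq\emptyset$, but the empty case is trivial on both sides; and your inverse $\varphi\mapsto[\varphi(-,\ast_0)]$ need not itself be $G$-equivariant when $\ast_0$ is not $G$-fixed, but this is harmless since $\xi$ is already $G$-equivariant and a bijective $G$-map has $G$-equivariant inverse automatically (your phrasing ``passing to $\PI$-orbits washes out the choice'' gestures at this but could be sharpened).
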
  

The last isomorphism is clear since $G$ acts trivially on $\PI$.
Situations where $G$ is allowed to act non-trivially on $\PI$ are of considerable interest,
as we shall see in \S\ref{SecAlgKThy}, but otherwise they will only appear peripherally in this paper.   
The paper \cite{GMM} works throughout in that more general context.  The previous result will not be used directly, 
but it is the key underpinning for the results of the next section.

\subsection{The functor $\sC\!at(\tilde{G},-)$}
The functor $\sC\!at(\tilde{G},-)$ from $G$-categories to
$G$-cate\-gories is a right adjoint (\ref{adjoint}), hence it preserves limits 
and in particular products.  The projection $\tilde{G}\rtarr \ast$ to 
the trivial $G$-category induces a natural map 
\begin{equation}\label{iota}
 \io\colon \sA = \sC\!at(\ast,\sA) \rtarr \sC\!at(\tilde{G},\sA).
\end{equation}
The map $\io$ is not an equivalence of $G$-categories in general \cite[4.19]{GMM}, but
the functor $\sC\!at(\tilde{G},-)$ is idempotent in the sense that the following result holds.

\begin{lem}\mylabel{idem} For any $G$-category $\sA$, 
\[ \io\colon \sC\!at(\tilde{G},\sA) \rtarr  
\sC\!at(\tilde{G},\sC\!at(\tilde{G},\sA))  \]
is an equivalence of $G$-categories.
\end{lem}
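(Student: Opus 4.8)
The plan is to exhibit an explicit inverse, up to natural isomorphism, to the map $\io$ in the statement. The natural candidate comes from functoriality: applying $\sC\!at(\tilde G,-)$ to $\io\colon\sA\to\sC\!at(\tilde G,\sA)$ gives one map, while $\io$ evaluated at the $G$-category $\sC\!at(\tilde G,\sA)$ gives the other; these are the two maps $\sC\!at(\tilde G,\sA)\rightrightarrows\sC\!at(\tilde G,\sC\!at(\tilde G,\sA))$ that one must compare. First I would unwind the adjunction (\ref{adjoint}) to identify $\sC\!at(\tilde G,\sC\!at(\tilde G,\sA))$ with $\sC\!at(\tilde G\times\tilde G,\sA)$, and then observe that $\tilde G\times\tilde G\cong\widetilde{G\times G}$ as chaotic $G$-categories (with $G$ acting diagonally). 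Under this identification, $\io$ corresponds to restriction along the diagonal $G\to G\times G$, while the functorial map corresponds to restriction along a projection $G\times G\to G$.

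The key point is then that the diagonal $\De\colon\tilde G\to\tilde G\times\tilde G$ and either projection $\mathrm{pr}\colon\tilde G\times\tilde G\to\tilde G$ are mutually inverse \emph{up to natural isomorphism of $G$-functors}: indeed $\mathrm{pr}\circ\De=\mathrm{id}$ on the nose, and $\De\circ\mathrm{pr}$ is naturally isomorphic to the identity because any two functors into a chaotic category are uniquely naturally isomorphic, and here one checks the unique natural isomorphism is $G$-equivariant since the relevant comparison data lives in $\tilde G\times\tilde G$ and $G$ acts through isomorphisms. Applying $\sC\!at(-,\sA)$ to a natural isomorphism of $G$-functors between $G$-categories yields a natural isomorphism of $G$-functors on the mapping categories; this is the mechanism that turns the space-level homotopy $\De\simeq\mathrm{pr}$ into the required equivalence. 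So I would prove a small lemma: if $f,g\colon\sB\to\sC$ are $G$-functors related by a $G$-natural isomorphism, then $f^*,g^*\colon\sC\!at(\sC,\sA)\to\sC\!at(\sB,\sA)$ are related by a $G$-natural isomorphism, and hence $\sC\!at(\sC,\sA)\simeq\sC\!at(\sB,\sA)$ whenever $\sB\simeq\sC$ through $G$-functors.

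With that lemma in hand the argument is short: $\sC\!at(\tilde G\times\tilde G,\sA)\simeq\sC\!at(\tilde G,\sA)$ as $G$-categories via $\De^*$ and $\mathrm{pr}^*$, and one must only verify that under the chain of identifications this equivalence is the map $\io$ of the statement rather than some other comparison map. That bookkeeping — tracing $\io$ through the adjunction isomorphism and the identification $\tilde G\times\tilde G\cong\widetilde{G\times G}$ to confirm it is exactly $\De^*$ — is the main obstacle, since it is easy to conflate the two a priori distinct maps $\sC\!at(\tilde G,\sA)\rightrightarrows\sC\!at(\tilde G,\sC\!at(\tilde G,\sA))$; one wants the one induced by $\io\colon\sA\to\sC\!at(\tilde G,\sA)$ in the second variable, i.e. postcomposition, which after transposition is restriction along the diagonal. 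Everything else is formal manipulation of the adjunction and the elementary fact that a chaotic category is $G$-equivalent to a point through a canonical $G$-natural isomorphism, which is exactly the content recorded in the Definition of $\tilde X$ above.
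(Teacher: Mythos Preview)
Your approach is the paper's: use the adjunction to identify the target with $\sC\!at(\tilde G\times\tilde G,\sA)$, observe that the diagonal $\tilde G\to\tilde G\times\tilde G$ is a $G$-equivalence with inverse either projection, and conclude.

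However, the bookkeeping you flag as ``the main obstacle'' is where you slip. The map $\io$ of the lemma is the component of the natural transformation $\io$ at the $G$-category $\sC\!at(\tilde G,\sA)$; under the adjunction it becomes precomposition by a \emph{projection} $\tilde G\times\tilde G\to\tilde G$ (exactly as the paper's proof states), not by the diagonal. Restriction along the diagonal goes in the opposite direction $\sC\!at(\tilde G\times\tilde G,\sA)\to\sC\!at(\tilde G,\sA)$ and furnishes the inverse equivalence. In fact both maps you set up --- the component $\io_{\sC\!at(\tilde G,\sA)}$ and the functorial image $\sC\!at(\tilde G,\io_\sA)$ --- transpose to the two projections, never to the diagonal: since $\io_\sA(a)$ is the constant functor at $a$, postcomposition by $\io_\sA$ sends $F$ to the functor $(g_1,g_2)\mapsto F(g_1)$, which is $F\circ\mathrm{pr}_1$. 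The error is harmless for the conclusion, since either projection is a $G$-equivalence, but your explicit claim that $\io$ transposes to restriction along the diagonal is incorrect.
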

\begin{proof} This follows from the adjunction (\ref{adjoint}) using that 
the diagonal $\tilde{G}\rtarr \tilde{G}\times\tilde{G}$ is 
an equivalence with inverse given by either projection and
that the specialization of $\io$ here is induced by the first projection.
\end{proof}

\begin{lem}\cite[3.7]{GMM} Let $\LA$ be a subgroup of $G\times \PI$. The $\LA$-fixed
category $\sC\!at(\tilde G,\tilde\PI)^{\LA}$ is empty if $\LA\cap \PI\neq e$ and is 
nonempty and chaotic if $\LA\cap \PI = e$.
\end{lem}

With $G$ acting trivially on $\PI$, let $H^1(G;\Pi)$ denote the set of isomorphism classes
of homomorphisms $\al\colon G\rtarr \PI$.  Equivalently, it is the set of $\PI$-conjugacy
classes of subgroups $\LA = \{(g,\al(g))\,|\, g\in G\}$ of $G\times \PI $.  Define 
$\PI^{\al}\subset \PI$ to be the subgroup of elements $\si$ that commute with $\al(g)$
for all $g\in G$. 

\begin{thm}\cite[4.14, 4.18]{GMM}\mylabel{fixedCat}  For $H\subset G$, The $H$-fixed category 
$\sC\!at(\tilde G,\PI)^H$ is equivalent to the coproduct of the groups 
$\PI^{\al}$ (regarded as categories), where the coproduct runs over $[\al]\in H^1(H;\PI)$.
\end{thm}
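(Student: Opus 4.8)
The plan is to compute the groupoid $\sC\!at(\tilde G,\PI)^H$ by hand. Recall that $\sC\!at(\tilde G,\PI)$ is the category of functors $\tilde G\rtarr\PI$ and natural transformations (equivalently, via the isomorphism $\xi$ of the theorem above, the $\PI$-quotient $\sC\!at(\tilde G,\tilde\PI)/\PI$), carrying its conjugation $G$-action inherited from the translation action of $G$ on $\tilde G$. Since $\PI$ has a single object and $\tilde G$ is chaotic, a functor $F\colon\tilde G\rtarr\PI$ is determined by the function $\psi_F\colon G\rtarr\PI$ with $\psi_F(g)=F(e,g)$, through $F(g',g)=\psi_F(g')\psi_F(g)^{-1}$, and $F$ depends only on $\psi_F$ modulo right translation by a constant element of $\PI$ (this constant is the ``$/\PI$''). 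A natural transformation $F_\psi\Rightarrow F_{\psi'}$ is determined by its single component $a=\et_e$, any element of $\PI$ occurs, and composition is multiplication in $\PI$; so $\sC\!at(\tilde G,\PI)$ is a connected groupoid, as it must be since $\tilde G\simeq\ast$. Under the conjugation action, $\ga$ translates the source: $\ga\cdot F_\psi=F_{\psi(\ga^{-1}-)}$ on objects and $(\ga\cdot\et)_g=\et_{\ga^{-1}g}$ on morphisms.

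First I would extract the $H$-fixed objects. Normalizing each $\psi$ so that $\psi(e)=e$, the functor $F_\psi$ is $H$-fixed precisely when $\psi(\ga^{-1}-)$ differs from $\psi$ by a constant for every $\ga\in H$; evaluating at $e$ identifies that constant as $\psi(\ga^{-1})$, so $H$-fixedness is the identity $\psi(\ga^{-1}g)=\psi(g)\,\psi(\ga^{-1})$ for all $g\in G$ and $\ga\in H$. Setting $g=\ga'^{-1}$ in this identity shows that $\al(\ga):=\psi(\ga^{-1})$ defines a homomorphism $\al\colon H\rtarr\PI$, and setting $g$ equal to a representative of a coset of $H$ in $G$ shows that $\psi$ is then freely and consistently determined on that coset by its value at the representative. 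Conversely, a homomorphism $\al$ together with a choice of values on a set of coset representatives produces such a $\psi$.

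Then I would compare two $H$-fixed objects, with associated homomorphisms $\al$ and $\al'$: the natural transformation with component $a$ between them is $H$-fixed iff $a\,\al(\ga)\,a^{-1}=\al'(\ga)$ for all $\ga\in H$. Two consequences finish the proof. Taking $a=e$ shows that changing the coset-representative values of a given object does not change its $H$-fixed isomorphism class, while a general $a$ shows that two $H$-fixed objects are $H$-fixed isomorphic exactly when their homomorphisms are $\PI$-conjugate; hence the $H$-fixed isomorphism classes of objects are indexed by $H^1(H;\PI)$. And the $H$-fixed automorphisms of the object indexed by $\al$ are precisely the $a\in\PI$ commuting with every $\al(\ga)$, i.e. the subgroup $\PI^{\al}$. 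Therefore $\sC\!at(\tilde G,\PI)^H$ is a groupoid whose component indexed by $[\al]$ is connected with automorphism group $\PI^{\al}$ at every object, hence equivalent to the one-object category $\PI^{\al}$; choosing one object in each component yields the claimed equivalence $\sC\!at(\tilde G,\PI)^H\simeq\coprod_{[\al]\in H^1(H;\PI)}\PI^{\al}$.

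The main thing to be careful about is the bookkeeping in the last two steps, and in particular the collapse of the coset-representative freedom under $H$-fixed isomorphism; conceptually this is exactly Lemma \cite[3.7]{GMM}, which records that each fixed category $\sC\!at(\tilde G,\tilde\PI)^{\LA_{\al}}$ is nonempty and chaotic, so that the component of $\sC\!at(\tilde G,\PI)^H$ it contributes is a connected groupoid. One could instead organize the whole argument that way, decomposing the $H$-fixed points of the free $\PI$-quotient $\sC\!at(\tilde G,\tilde\PI)/\PI$ over $\PI$-conjugacy classes of the subgroups $\LA_{\al}\subset\PI\times G$ and feeding in \cite[3.7]{GMM}. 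The remaining topological points, continuity of the $\psi$, the $\PI$-quotient, and closedness of $\PI^{\al}$, are routine since $G$ is finite.
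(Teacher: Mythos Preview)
The paper does not prove this theorem here; it is quoted from \cite[4.14, 4.18]{GMM}.  However, the paper does carry out the parallel computation for $\PI=\SI_j$ in \S3.3 (the lemmas describing $\sO_G(j)/\SI_j$ and its fixed subcategories), and in particular records the restriction equivalence $(\sO_G(j)/\SI_j)^H\simeq(\sO_H(j)/\SI_j)^H$, which is exactly your reduction to the case $H=G$.  Your direct computation is correct and matches that outline, and your closing remark about organizing the argument through the $\PI$-quotient $\sC\!at(\tilde G,\tilde\PI)/\PI$ and \cite[3.7]{GMM} identifies the bundle-theoretic route taken in \cite{GMM}.

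Two small points deserve to be made explicit.  First, with your convention $F(g',g)=\psi_F(g')\psi_F(g)^{-1}$ one has $\psi_F(g)=F(g,e)$ rather than $F(e,g)$; this is a harmless slip.  Second, and more substantively, your biconditional ``the natural transformation with component $a$ is $H$-fixed iff $a\,\al(\ga)\,a^{-1}=\al'(\ga)$'' needs the observation that, for $H$-fixed source and target, the full system of conditions $\et_{\ga^{-1}g}=\et_g$ collapses to the single condition at $g=e$.  Substituting $\psi(\ga^{-1}g)=\psi(g)\al(\ga)$ and $\psi'(\ga^{-1}g)=\psi'(g)\al'(\ga)$ into $\et_{\ga^{-1}g}=\psi'(\ga^{-1}g)\,a\,\psi(\ga^{-1}g)^{-1}$ gives $\psi'(g)\,\al'(\ga)\,a\,\al(\ga)^{-1}\,\psi(g)^{-1}$, and comparison with $\et_g=\psi'(g)\,a\,\psi(g)^{-1}$ shows the $\psi(g),\psi'(g)$ factors cancel, leaving a condition independent of $g$.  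This is precisely what makes the coset-representative freedom disappear and forces each component to have automorphism group exactly $\PI^{\al}$; you should display this cancellation rather than leave it implicit.
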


\begin{defn} Define $E(G,\PI) = |\sC\!at(\tilde G,\tilde\PI)|$ and 
$B(G,\PI) = |\sC\!at(\tilde G,\PI)|$.
Let 
$$p\colon E(G,\PI) \rtarr B(G,\PI)$$ be induced by the passage to orbits 
functor $\tilde{\PI}\rtarr \mathbf\PI$.
\end{defn}

\begin{thm}\cite[3.11, 4.23, 4.24]{GMM}\mylabel{UniPrin} Let $\PI$ be a discrete or compact Lie group and let $G$ be a discrete group. 
Then $p \colon E(G,\PI)\rtarr B(G,\PI)$ 
is a  universal principal $(G,\PI)$-bundle. For a subgroup $H$ of $G$,
$$B(G, \PI)^H \htp \coprod B(\Pi^\al),$$
where the union runs over $[\al]\in H^1(H;\PI)$. 
\end{thm}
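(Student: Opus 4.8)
The plan is to deduce the theorem from the categorical results already quoted --- the isomorphism $\sC\!at(\tilde G,\tilde\PI)/\PI\iso\sC\!at(\tilde G,\PI)$ of \cite[2.7]{GMM}, the description of the $\LA$-fixed categories $\sC\!at(\tilde G,\tilde\PI)^{\LA}$ in \cite[3.7]{GMM}, and the equivalence of Theorem~\ref{fixedCat} --- by applying the classifying space functor $|-|$ and then invoking the recognition criterion for universal principal $(G,\PI)$-bundles. The formal inputs are that $|-|$ preserves products and coproducts, takes equivalences of topological categories to homotopy equivalences, commutes with passage to fixed-point subcategories, and commutes with passage to $\PI$-orbits whenever $\PI$ acts freely on the object space (so that the nerve already commutes with the orbit quotient, whence so does $|-|$).

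First I would identify the total and base spaces. The group $\PI$ acts on $\sC\!at(\tilde G,\tilde\PI)$ through post-composition with its free action on the objects of $\tilde\PI$; in particular its action on the object space is free, so realizing the isomorphism of \cite[2.7]{GMM} yields a $G$-homeomorphism $E(G,\PI)/\PI\iso B(G,\PI)$ under which $p$ is the orbit projection, and the $\PI$-action on $E(G,\PI)$ is free.

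Next I would check the universal fixed-point condition on $E(G,\PI)$. For any subgroup $\LA$ of $G\times\PI$ we have $E(G,\PI)^{\LA}\iso|\sC\!at(\tilde G,\tilde\PI)^{\LA}|$, since nerve and realization both commute with $\LA$-fixed points here. By \cite[3.7]{GMM} this category is empty when $\LA\cap\PI\neq e$ and is nonempty and chaotic, hence equivalent to a point, when $\LA\cap\PI=e$; so $E(G,\PI)^{\LA}$ is empty or contractible accordingly. This is exactly the homotopical requirement on the total space of a universal principal $(G,\PI)$-bundle. Combined with the fact --- proved in \cite{GMM}, and the point where the hypothesis that $\PI$ be discrete or compact Lie is used --- that $p$ is a numerable principal $(G,\PI)$-bundle, the recognition theorem for universal $(G,\PI)$-bundles (tom Dieck, Lashof) then gives that $p$ is universal. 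I expect this bundle-theoretic point-set step, rather than the homotopical bookkeeping, to be the real obstacle: one must know that $p$ is genuinely locally trivial in the appropriate equivariant sense, not merely a free $\PI$-quotient map with contractible graph-subgroup fixed points.

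Finally, the fixed-point formula for the base follows at once from Theorem~\ref{fixedCat}: applying $|-|$ to the equivalence of topological categories $\sC\!at(\tilde G,\PI)^{H}\simeq\coprod_{[\al]\in H^{1}(H;\PI)}\PI^{\al}$, and using that $|-|$ commutes with $H$-fixed points and coproducts and carries equivalences of categories to homotopy equivalences, gives
\[ B(G,\PI)^{H}=|\sC\!at(\tilde G,\PI)|^{H}\iso|\sC\!at(\tilde G,\PI)^{H}|\htp\coprod_{[\al]\in H^{1}(H;\PI)}B(\PI^{\al}), \]
since $|\PI^{\al}|=B(\PI^{\al})$ for the group $\PI^{\al}$ regarded as a one-object category.
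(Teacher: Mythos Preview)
The paper does not give a proof of this theorem: it is simply cited from \cite[3.11, 4.23, 4.24]{GMM}, with no argument supplied here. So there is no ``paper's own proof'' to compare against.

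That said, your sketch is a sound reconstruction of how the result is assembled from the ingredients the paper has already quoted. You correctly identify the three pieces: the orbit identification $E(G,\PI)/\PI\iso B(G,\PI)$ from \cite[2.7]{GMM}, the $\LA$-fixed-point analysis of the total space from \cite[3.7]{GMM} (empty or chaotic, hence contractible after realization), and the fixed-point description of the base from \myref{fixedCat}. You also correctly flag that the genuinely nontrivial step---the one that actually uses the hypothesis on $\PI$---is the bundle-theoretic verification that $p$ is a numerable principal $(G,\PI)$-bundle, which is carried out in \cite{GMM} rather than here. Your use of the standard commutation properties of $|-|$ with fixed points, free orbits, products, and coproducts is appropriate. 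In short: your outline matches what the cited reference does, and there is nothing in this paper to add.
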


\subsection{$E_{\infty}$ operads of $G$-categories}\label{sec:CatEin}

The definition of an $E_\infty$-operad of $G$-spaces given in \S\ref{sec:Einfty} has the following categorical analogue.

\begin{defn}\mylabel{EinfGCatoperad} 
An $E_{\infty}$ 
operad $\sO_G$ of (topological) $G$-categories is an operad in the cartesian monoidal category $G\sC\!at$ such that $|\sO_G|$ 
is an $E_{\infty}$ operad of $G$-spaces. We say
that $\sO_G$ is reduced if $\sO_G(0)$ is the trivial category. In practice, the $\sO_G(j)$ are groupoids.
\end{defn}

The proof of \myref{Fixedptop} works just as well to give the following analogue.

\begin{lem}\mylabel{FixedptCatop} 
Let $\sO_G$ be
an $E_{\infty}$ operad of $G$-categories. Then $\sO =(\sO_G)^G$ is an 
$E_{\infty}$ operad of categories. If $\sA$ is an $\sO_G$-category,
then $\sA^G$ is an $\sO$-category. 
\end{lem}

\section{Categorical philosophy: what is a permutative $G$-category?}\label{Sec2}
\subsection{Naive permutative $G$-categories}
We have a notion of a monoidal category $\sA$ internal to a cartesian monoidal
category $\sV$. It is a category internal to $\sV$ together with a coherently
associative and unital product $\sA\times \sA\rtarr \sA$.
It is strict monoidal if the product is strictly associative 
and unital.  It is symmetric monoidal if it has an equivariant symmetry isomorphism 
satisfying the usual coherence 
properties.  A functor $F:\sA\rtarr \sB$ between symmetric monoidal categories is strict 
monoidal if $F(A\otimes A') = FA\otimes FA'$ for $A$,$A'\in \sA$ and $FI = J$, where 
$I$ and $J$ are the unit objects of $\sA$ and $\sB$.

A permutative category is a symmetric strict monoidal category.\footnote{In interesting examples, the product cannot be strictly commutative.}  Taking
$\sV$ to be $\sU$, these are the topological permutative categories. Taking $\sV$
to be $G\sU$, these are the {\em naive} topological permutative $G$-categories.

Nonequivariantly, there is a standard $E_{\infty}$ operad of spaces that is
obtained by applying the classifying space functor to an $E_{\infty}$ 
operad $\sP$ of categories.  The following definition goes back to 
Barratt and Eccles, thought of simplicially \cite{BarrEc}, and to 
\cite{MayPerm}, thought of categorically.

\begin{defn}\mylabel{operadO} We define an $E_{\infty}$ operad $\sP$ of categories. 
Let $\sP(j) = \tilde{\SI}_j$. Since $\SI_j$ acts freely and $\tilde{\SI}_j$ is chaotic, the classifying space $|\sP(j)|$ 
is $\SI_j$-free and contractible, as required of an
$E_{\infty}$ operad. The structure maps
\[ \ga\colon \tilde{\SI}_k\times \tilde{\SI}_{j_1}\times \cdots \times \tilde{\SI}_{j_k} \rtarr \tilde{\SI}_j,\]
where $j = j_1+ \cdots + j_k$, are dictated on objects by the definition of an operad. If we view the object sets of 
the $\sP(j)$ as discrete categories (identity morphisms only), then they form the associativity operad $\sM$.  
\end{defn}

We can define $\sM$-algebras and $\sP$-algebras in $\sC\!at$ or in $G\sC\!at$. In 
the latter case, we regard $\sM$ and $\sP$ as operads with trivial
$G$-action. The following result characterizes naive permutative $G$-categories
operadically. The proof is easy \cite{MayPerm}. 

\begin{prop}\mylabel{RecCat} The category of strict monoidal $G$-categories and strict monoidal $G$-functors is 
isomorphic to the category of $\sM$-algebras in $G\sC\!at$.  The category of naive permutative $G$-categories and 
strict symmetric monoidal $G$-functors is isomorphic to the category of $\sP$-algebras in $G\sC\!at$.
\end{prop}

The term ``naive'' is appropriate since naive permutative $G$-categories give rise to naive $G$-spectra on application 
of an infinite loop space machine. Genuine permutative $G$-categories need more structure, especially precursors of transfer maps, to give rise 
to genuine $G$-spectra. Nonequivariantly, there is no distinction.

\subsection{Genuine permutative $G$-categories}\label{sec:GenPermGCat}

The following observation will play a helpful role in our work.
Recall the natural map $\io\colon \sA\rtarr \sC\!at(\tilde G,\sA)$  of (\ref{iota}).

\begin{lem}  For any space $X$ regarded as a $G$-trivial $G$-space,
$\io\colon \tilde{X} \rtarr \sC\! at(\tilde G,\tilde{X})$ is the inclusion of the 
$G$-fixed category $G\sC\! at(\tilde G,\tilde{X})$.
\end{lem}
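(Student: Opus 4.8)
The plan is to unwind the definitions on both sides and check that the canonical map $\io$ lands exactly in the fixed category and is a bijection onto it. First I would recall that for a $G$-trivial space $X$, the chaotic category $\tilde X$ has trivial $G$-action on both its object space $X$ and its morphism space $X\times X$. Then $\sC\!at(\tilde G,\tilde X)$ has, as objects, all (continuous) functors $\tilde G\rtarr \tilde X$, with $G$ acting by conjugation; since $G$ acts trivially on the target, the conjugation action of $g$ on a functor $F$ sends it to $F\circ (g\cdot{-})$, i.e. precomposition with left translation by $g^{-1}$ on $\tilde G$. A functor out of a chaotic category is determined by its effect on objects (the morphism part is forced, since there is a unique morphism between any two objects), so a functor $\tilde G\rtarr\tilde X$ is the same as a function $G\rtarr X$ of underlying sets/spaces, and conjugation by $g$ sends the function $f$ to $f(g^{-1}{-})$.

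Next I would identify the $G$-fixed objects: $G\sC\!at(\tilde G,\tilde X)$ consists of the functors fixed by conjugation, which under the identification above are the functions $f\colon G\rtarr X$ with $f(g^{-1}h)=f(h)$ for all $g,h$, i.e. the constant functions. These are precisely the composites $\tilde G\rtarr \ast \xrightarrow{x} \tilde X$, one for each point $x\in X$. On the other hand, the map $\io\colon \tilde X\rtarr\sC\!at(\tilde G,\tilde X)$ of (\ref{iota}) is induced by the projection $\tilde G\rtarr\ast$, so it sends an object $x$ of $\tilde X$ to exactly the constant functor at $x$; thus $\io$ on objects is a homeomorphism onto the object space of $G\sC\!at(\tilde G,\tilde X)$. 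For morphisms, a morphism in $\sC\!at(\tilde G,\tilde X)$ between two functors is a natural transformation; between chaotic targets a natural transformation is again uniquely determined (and always exists), so the morphism space of $\sC\!at(\tilde G,\tilde X)$ is the product of the object space with itself, and likewise for the fixed subcategory, and $\io$ on morphisms is again the corresponding homeomorphism. Hence $\io$ is an isomorphism of topological categories from $\tilde X$ onto $G\sC\!at(\tilde G,\tilde X)\subset\sC\!at(\tilde G,\tilde X)$, which is the claim.

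I expect the only genuinely delicate point to be the bookkeeping of the $G$-action: making sure that the conjugation action on $\sC\!at(\tilde G,\tilde X)$ really does reduce, for trivial target, to precomposition with translation on $\tilde G$, and that ``fixed by this action'' translates to ``constant functor.'' Everything else—functors out of chaotic categories being determined by their object map, natural transformations between such functors being unique—is standard and can be cited from the discussion of chaotic categories above, or dispatched in a line. I would also remark that this is just the $\PI = e$ special case of Theorem~\cite[2.7]{GMM} recalled above (with $\tilde\PI/\PI = \ast$ and $\sC\!at_G(\tilde X,\ast)\iso\sC\!at(\tilde X/G,\ast)$, but here run in the other variable), so the lemma can alternatively be deduced from that theorem; writing it out directly, however, is shorter than setting up the translation.
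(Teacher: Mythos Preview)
Your proof is correct and takes the same approach as the paper's (much terser) argument: functors $\tilde G\rtarr \tilde X$ are determined by their object maps $G\rtarr X$, and the $G$-fixed ones are exactly the constants, i.e., the image of $\io$. Your closing aside about deducing this from \cite[2.7]{GMM} is slightly garbled---that theorem concerns $\sC\!at(\tilde X,\tilde\PI)/\PI$ rather than $\sC\!at(\tilde G,\tilde X)$---but you rightly do not rely on it.
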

\begin{proof} Since $\tilde{X}$ is chaotic, functors $\tilde G\rtarr \tilde{X}$ 
are determined by their object map $G\rtarr X$ and are $G$-fixed if and 
only if the object map factors through $G/G = \ast$.
\end{proof}

\begin{defn} Let $\sP_G$ be the (reduced) operad of $G$-categories whose 
$j^{th}$ $G$-category
is $\sP_G(j) = \sC\! at(\tilde G,\sP(j))$, where $\sP(j) =\tilde{\SI}_j$ is 
viewed as a $G$-category with trivial $G$-action and is given its usual right 
$\SI_j$-action. The unit in $\sP_G(1)$ is the unique functor from $\tilde{G}$ 
to the trivial category $\sP(1)=\sP_G(1)$.  The structure maps $\ga$ of $\sP_G$ 
are induced from those of $\sP$, using that the functor $\sC\! at(\tilde G,-)$ 
preserves products.
By \myref{UniPrin}, $\sP_G$ is an $E_{\infty}$ operad of $G$-categories.
The natural map $\io$  of (\ref{iota}) induces an inclusion
$\io\colon \sP =(\sP_G)^G  \rtarr \sP_G$ of operads of $G$-categories.
\end{defn}

\begin{defn} A {\em genuine} permutative $G$-category is a $\sP_G$-algebra in 
$G\sC\!at$. A map of genuine permutative $G$-categories is a map of $\sP_G$-algebras.
\end{defn}

We usually call these $\sP_G$-categories. We have an immediate source 
of examples. Let $\io^*$ be the functor from genuine permutative $G$-categories 
to naive permutative $G$-categories that is obtained by restricting actions 
by $\sP_G$ to its suboperad $\sP$. 

\begin{prop}\mylabel{ScriptG} The action of $\sP$ on a naive permutative 
$G$-category $\sA$ induces an action of $\sP_G$ on $\sC\!at(\tilde G,\sA)$. Therefore $\sC\!at(\tilde G,-)$ restricts 
to a functor from naive permutative $G$-categories to 
genuine permutative $G$-categories.  
\end{prop}
\begin{proof} This holds since the functor $\sC\!at(\tilde G,-)$ preserves products.
\end{proof}

\begin{prop} The map $\io$ of (\ref{iota})
restricts to a natural map $\sA\rtarr \io^*\sC\!at(\tilde G,\sA)$ of naive permutative 
$G$-categories, and $\io$ is an equivalence when $\sA = \io^*\sC\!at(\tilde G,\sB)$ for 
a naive permutative $G$-category $\sB$. 
\end{prop}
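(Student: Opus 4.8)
The plan is to verify both assertions by reducing everything to the already-established properties of the functor $\sC\!at(\tilde G,-)$, and then to a structural check that the underlying naive permutative structures match. First I would observe that the map $\io$ of (\ref{iota}) is a map of $G$-categories, and by \myref{ScriptG} the functor $\sC\!at(\tilde G,-)$ carries the $\sO$-action on $\sA$ to an $\sO_G$-action on $\sC\!at(\tilde G,\sA)$; restricting that $\sO_G$-action along $\io\colon \sO\rtarr \sO_G$ gives the naive permutative structure on $\io^*\sC\!at(\tilde G,\sA)$. The claim that $\io\colon \sA\rtarr \io^*\sC\!at(\tilde G,\sA)$ is a map of $\sO$-algebras then amounts to checking that $\io$ intertwines the two $\sO$-actions. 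Since $\sC\!at(\tilde G,-)$ is product-preserving and $\io$ is the natural transformation $\sC\!at(\ast,-)\rtarr \sC\!at(\tilde G,-)$ induced by $\tilde G\rtarr \ast$, this is a naturality diagram: for each $j$ and each object of $\sO(j)$, the two composites $\sO(j)\times\sA^j\rtarr \sA\rtarr \sC\!at(\tilde G,\sA)$ and $\sO(j)\times\sA^j\rtarr \sO_G(j)\times\sC\!at(\tilde G,\sA)^j\rtarr \sC\!at(\tilde G,\sA)$ agree because the action maps of $\sC\!at(\tilde G,\sA)$ are defined by applying $\sC\!at(\tilde G,-)$ to those of $\sA$ and $\io$ is natural in $\sA$. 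So the first assertion is essentially a formal consequence of how the $\sO_G$-structure was constructed in \myref{ScriptG}.

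For the second assertion, the key input is the idempotence lemma \myref{idem}: $\io\colon \sC\!at(\tilde G,\sB)\rtarr \sC\!at(\tilde G,\sC\!at(\tilde G,\sB))$ is an equivalence of $G$-categories. Setting $\sA = \io^*\sC\!at(\tilde G,\sB)$, the map in question is exactly this equivalence, so it is an equivalence of underlying $G$-categories. What remains is to see that it is an equivalence \emph{of naive permutative $G$-categories}, i.e.\ a map of $\sO$-algebras which admits (or at least is) a homotopy/categorical equivalence in the appropriate sense — and here one should be careful about exactly what ``equivalence'' means for naive permutative $G$-categories. The natural reading, consistent with the rest of the paper, is an equivalence of underlying $G$-categories that is simultaneously a map of $\sO$-algebras; by the first assertion the map $\io$ \emph{is} a map of $\sO$-algebras, and by \myref{idem} it is an underlying $G$-equivalence, so the two facts combine to give the conclusion directly.

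The main obstacle I anticipate is bookkeeping rather than conceptual: making sure that the $\sO$-algebra structure on $\io^*\sC\!at(\tilde G,\sB)$ obtained by restricting the $\sO_G$-action along $\io\colon\sO\rtarr\sO_G$ really does coincide with the $\sO$-algebra structure on $\sC\!at(\tilde G,\sB)$ that arises from applying $\sC\!at(\tilde G,-)$ to the $\sO$-algebra $\io^*\sC\!at(\tilde G,\sB)$, viewed in two slightly different ways — one must confirm that the $G$-equivalence of \myref{idem}, which was proved purely at the level of underlying $G$-categories using the diagonal $\tilde G\rtarr\tilde G\times\tilde G$, is compatible with these operadic actions. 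The cleanest way to handle this is to note that the equivalence inverse in \myref{idem} is itself induced by a map of $G$-categories (a projection $\tilde G\times\tilde G\rtarr\tilde G$) and hence, because $\sC\!at(\tilde G,-)$ and $\sC\!at(-,\sO(j))$-type constructions are all product-preserving and functorial, both $\io$ and its inverse are maps of $\sO$-algebras; one then quotes that a map of $\sO$-algebras in $G\sC\!at$ whose underlying $G$-functor is an equivalence, with an inverse that is also a map of $\sO$-algebras, is an equivalence of naive permutative $G$-categories. I would keep this step terse, since it is exactly the kind of ``preserves products, hence preserves algebra structure'' argument already invoked repeatedly (e.g.\ in \myref{ScriptG} and \myref{idem}).
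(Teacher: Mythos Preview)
Your proposal is correct and follows essentially the same approach as the paper: the first assertion comes from the naturality of $\io$ (induced by $\tilde G\rtarr\ast$) together with product-preservation of $\sC\!at(\tilde G,-)$, and the second assertion is an immediate application of \myref{idem}. The paper's proof is a two-line summary of exactly this; your additional care about whether the equivalence respects the $\sO$-algebra structure is legitimate bookkeeping that the paper leaves implicit, but it does not constitute a different argument.
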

\begin{proof} Since $\io$ is induced by the projection $\tilde G\rtarr \tilde{e} =\ast$, 
the first claim is clear, and the second holds by \myref{idem}.
\end{proof}

As noted before, the map $\io\colon \sA\rtarr \io^* \sC\!at(\tilde G,\sA)$ is not an equivalence 
in general \cite[4.19]{GMM}. The $\sP_G$-categories of interest in this paper are of the form $\sC\!at(\tilde G,\sA)$ for a 
naive permutative $G$-category $\sA$. In fact, we do not yet know how to construct 
other examples, although we believe that they exist.

\begin{rem} Shimakawa \cite[p. 256]{Shim1} introduced the $E_\infty$ operad $\sP_G$ under the 
name $\sD$ and demonstrated the first part of \myref{ScriptG}. 
\end{rem} 

\begin{rem}\mylabel{coend}  One might hope that $(\sC\!at(\tilde G,-),\io^*)$ is an adjoint pair.  However, regarding $\io^*$ 
monadically as the forgetful functor from $\bP_G$-algebras to $\bP$-algebras, its left adjoint 
is the coend that sends a naive permutative $G$-category $\sA$ to the genuine permutative
$G$-category $\bP_G\otimes_{\bP}\sA$, which is the coequalizer in $G\sC\!at$ of the maps 
$\xymatrix{\bP_G\bP\sA\ar[r]<.5ex> \ar[r]<-.5ex> &  \bP_G\sA}$ 
induced by the action map $\bP\sA\rtarr \sA$ and by the map $\bP_G\bP \rtarr \bP_G\bP_G\rtarr \bP_G$ 
induced by the inclusion $\bP\rtarr \bP_G$ and the
product on $\bP_G$.  The universal property of the coequalizer gives a natural map
$$\tilde{\io} \colon \bP_G\otimes_{\bP}\sA\rtarr \sC\!at(\tilde G,\sA)$$ of genuine permutative $G$-categories 
that restricts to $\io$ on $\sA$, but $\tilde{\io}$ is not an isomorphism. We shall say a bit
more about this in \myref{absence}. 
\end{rem}

\subsection{$E_{\infty}$ $G$-categories}\label{Sec2.3}
We can generalize the notion of a genuine permutative $G$-category by
allowing the use of $E_{\infty}$ operads other than $\sP_G$. In fact, thinking 
as algebraic topologists rather than category theorists, there is no
need to give the particular $E_{\infty}$ operad $\sP_G$ a privileged role.

\begin{defn}\mylabel{permcat} An $E_{\infty}$ $G$-category $\sA$ is a $G$-category together with an action of some $E_{\infty}$ operad $\sO_G$ of $G$-categories. The classifying 
space $B\sA =|\sA|$ is then an $|\sO_G|$-space and thus an $E_{\infty}$ $G$-space.
\end{defn}

We may think of $E_{\infty}$ $G$-categories as generalized kinds of genuine permutative $G$-categories. 
The point of the generalization is that we have interesting examples 
of $E_{\infty}$ operads of $G$-categories with easily recognizable algebras. We shall
later define $E_{\infty}$ operads $\sV_G$, $\sV^\times_G$, and $\sW_G$ that are interrelated 
in a way that illuminates the study of multiplicative structures. 

Observe that $\sP_G$-algebras, like nonequivariant permutative categories, have a canonical product, whereas $E_{\infty}$ $G$-categories over other operads do not. 
The general philosophy of operad theory is that algebras over an operad $\sC$ in any suitable category $\sV$ have $j$-fold operations parametrized by the objects 
$\sC(j)$. Homotopical properties of $\sC$ relate these operations. In general, in an $E_{\infty}$ space, there is no preferred choice of a product on its underlying $H$-space, and none is relevant to the applications; $E_{\infty}$ $G$-categories work similarly.

\begin{rem} Symmetric monoidal categories occur more often ``in nature'' than permutative categories. We have not specified a notion of a genuine symmetric monoidal $G$-category in this paper. One approach is to apply the construction $\sC\! at(\tilde{G},-)$ to the tree operad that defines symmetric monoidal categories.  Another approach, which we find more useful, is to define a genuine symmetric monoidal $G$-category to be a pseudoalgebra over
$\sP_G$.   That approach is developed and applied in the categorical sequels \cite{GMMOAdd, GMMOMult}.  We shall not pursue the topic further here.
A first comparison between symmetric monoidal $G$-categories and (genuine) $G$-symmetric monoidal categories, whose definition is a priori quite different, is given in Hill and Hopkins  \cite[Section 3.2]{HH}, but work in progress shows that there is a good deal more to be said about that comparison and about the comparison between these notions and Tambara functors that is given in \cite[Section 5.1]{HH}. 
\end{rem}

Up to homotopy, any two choices of $E_{\infty}$ operads give rise to equivalent 
categories of $E_{\infty}$ $G$-spaces.  To see that, we apply the trick from \cite{MayGeo} of using products of operads to transport operadic algebras from 
one $E_{\infty}$ operad to another.  The product of operads $\sC$ and $\sD$ in any cartesian monoidal category $\sV$ is given by 
\[   (\sC\times \sD)(j) = \sC(j)\times \sD(j), \]
with the evident permutations and structure maps. With the choices of $\sV$ of
interest to us, the product of $E_{\infty}$ operads is an $E_{\infty}$ operad.
The projections 
\[ \sC\ltarr \sC\times \sD \rtarr \sD \] 
allow us to construct
$(\sC\times \sD)$-algebras in $\sV$ from either $\sC$-algebras or $\sD$-algebras 
in $\sV$, by pullback of action maps along the projections.  

More generally, for any map $\mu\colon \sC\rtarr \sD$ of operads in $\sV$, 
the pullback functor $\mu^*$ from $\sD$-algebras to $\sC$-algebras has a 
left adjoint pushforward functor $\mu_!$ from $\sC$-algebras to $\sD$-algebras. 
One can work out a homotopical comparison model 
categorically.
Pragmatically, use of the two-sided bar construction 
as in \cite{MayGeo, Rant1} gives all that 
is needed. One redefines $\mu_! X = B({\bD},{\bC},X)$, where $\bC$ and $\bD$ are 
the monads whose algebras are the $\sC$-algebras and $\sD$-algebras.\footnote{Of course,
this is an abuse of notation, since $\mu_!$ here is really a derived functor.}
In spaces, or equally well $G$-spaces, $\mu^*$ and $\mu_!$ give inverse equivalences of homotopy categories between 
$\sC$-algebras and $\sD$-algebras when $\sC$ and $\sD$ are $E_{\infty}$-operads.

Starting with operads in $\sC\!at$ or in $G\sC\!at$ we can first apply the classifying space functor and then apply this trick.  The conclusion is that all $E_{\infty}$ categories and $E_{\infty}$ $G$-categories give equivalent inputs for infinite loop space machines. In particular, for example, letting $\mathbf{O}_G$, $\mathbf{P}_G$, and 
$\mathbf{O_G\times P_G}$ denote the monads in the category of $G$-spaces whose algebras 
are $|\sO_G|$-algebras, $|\sP_G|$-algebras, and $|\sO_G\times \sP_G|$-algebras, we see that after passage to classifying spaces, every 
$\mathbf{P}_G$-algebra $Y$ determines an $\mathbf{O}_G$-algebra 
$X= B(\mathbf{O}_G,\mathbf{O_G\times P_G},Y)$ such that $X$ and $Y$ are weakly equivalent as $(\mathbf{O_G\times P_G})$-algebras 
(and conversely).  This says that for purposes of equivariant infinite loop space theory, $\sP_G$ and any other $E_{\infty}$ operad $\sO_G$ can be used interchangeably, regardless of how their algebras compare categorically.

\subsection{Equivariant algebraic $K$-theory}\label{SecAlgKThy}

The most interesting non-equivariant permutative categories are given by categories 
$\sA = \coprod{\PI_n}$, where $\{\PI_n|n\geq 0\}$ is a sequence of groups (regarded as
categories with a single object) and where the permutative structure is given by an
associative and unital system of pairings $\PI_m\times \PI_n\rtarr \PI_{m+n}$.  Then
the pairings give the classifying space $B\sA = \coprod B\PI_n$ a structure of topological monoid, 
and one definition of the algebraic $K$-groups of $\sA$ is the homotopy groups of the space $\OM B(B\sA)$.  

Equivariantly, it is sensible to replace the spaces $B\PI_n$
by the classifying $G$-spaces $B(G,\PI_n)$ and proceed by analogy. This definition of 
equivariant algebraic $K$-groups was introduced and studied calculationally in \cite{FHM}.  
It is the equivariant analogue of Quillen's original definition in
terms of the plus construction. With essentially the same level of generality,
the analogue of Quillen's definition in terms of the $Q$-construction has been
studied by Dress and Kuku \cite{DK, Kuku}.  Shimada \cite{Shim2} has given an equivariant 
version of Quillen's ``plus $=$ $Q$'' theorem in this context.

Regarding $\sA$ as a $G$-trivial naive permutative $G$-category, we see that 
the classifying $G$-space of the genuine permutative $G$-category $\sC\!at(\tilde G,\sA)$ is
the disjoint union of classifying spaces $B(G,\PI_n)$. Just as nonequivariantly, 
the functor $\OM B$ can be replaced by the zeroth space functor $\OM^{\infty}_G\bE_G$ 
of an infinite loop $G$-space machine $\bE_G$. The underlying equivariant homotopy type 
is unchanged. Therefore, we may redefine the algebraic $K$-groups 
to be the homotopy groups of the genuine $G$-spectrum $\bK_G\sA\equiv\bE_G B\sC\!at(\tilde G,\sA)$.  
Essentially the same definition is implicit in Shimakawa \cite{Shim1}, who 
focused on an equivariant version of Segal's infinite loop space machine.  
A different equivariant version of Segal's machine is developed and
compared to Shimakawa's in \cite{MMO}.   It is generalized categorically 
in \cite{GMMOAdd, GMMOMult}. 

Applying the functor $\sC\!at(\tilde G,-)$ to naive permutative $G$-categories $\sA$ with 
non-trivial $G$-actions gives more general input for equivariant algebraic $K$-theory 
than has been studied in the literature.  This allows for $G$-actions on the groups 
$\PI_n$, and we then replace $B(G,\PI_n)$ by classifying $G$-spaces $B(G,(\PI_n)_G)$ 
for the $(G,(\PI_n)_G)$-bundles associated to the split extensions $\PI_n\rtimes G$. 
Such classifying spaces are studied in \cite{GMM}.  Alternative but equivalent constructions
of the associated $G$-spectra $\bK_G(\sA)$ are given in \S\ref{SecKGB} and \S\ref{SecKGBtoo}
below. The resulting generalization of equivariant algebraic $K$-theory is studied 
in \cite{Merling}.  

\subsection{The recognition principle for permutative $G$-categories}\label{SecKGB}
We may start with any $E_{\infty}$ operad $\sO_G$ of $G$-categories and apply
the classifying space functor to obtain an $E_{\infty}$ operad $|\sO_G|$ 
of $G$-spaces.  If $\sO_G$ acts on a category $\sA$, then $|\sO_G|$ acts on
$|\sA|=B\sA$.  We can replace $|\sO_G|$ by its product with the Steiner operads 
$\sK_V$ or with the Steiner operad $\sK_U$ and apply the functor $\bE_G^{\sS}$ 
or $\bE_G^{\sS\!p}$ to obtain a (genuine) associated $G$-spectrum, which we denote
ambiguously by $\bE_G(B\sA)$.

\begin{defn} Define the (genuine) algebraic $K$-theory $G$-spectrum of an $\sO_G$-category $\sA$ by $\bK_G(\sA) = \bE_G(B\sA)$.
\end{defn}

We might also start with an operad $\sO$ of categories such that $|\sO|$ is an
$E_{\infty}$ operad of spaces and regard these as $G$-objects with trivial 
action.  Following up the previous section, we then have the following related but less interesting notion.

\begin{defn} Define the (naive) algebraic $K$-theory $G$-spectrum of an $\sO$-category 
$\sA$ by $\bK(\sA) = \bE(B\sA)$.
\end{defn}

Until \S\ref{SecPQR}, we restrict attention to the cases $\sO_G = \sP_G$ and
$\sO = \sP$, recalling that the $\sP_G$-categories are the genuine permutative 
$G$-categories, the $\sP$-categories are the naive permutative $G$-categories,
and the inclusion $\io\colon \sP\rtarr \sP_G$ induces a forgetful functor 
$\io^*$ from genuine to naive permutative $G$-categories.
Since the classifying space functor commmutes with products, passage to 
fixed points, and the functors $\io^*$, Theorems \ref{SpFour}, \ref{prodcom}, and 
\ref{SpSix} have the following immediate corollaries. The first was promised in 
\cite[Thm 2.2]{GM2}.

\begin{thm}\mylabel{SpFour2} For $\sP_G$-categories $\sA$, there is a natural weak equivalence of spectra
$$\bK(\sA^G) \rtarr (\bK_G \sA)^G.$$
\end{thm}

\begin{thm}\mylabel{prodcom2} Let $\sA$ and $\sB$ be $\sP_G$-categories. Then the map 
\[ \bK_G(\sA\times \sB) \rtarr \bK_G \sA \times \bK_G\sB \]
induced by the projections is a weak equivalence of $G$-spectra.
\end{thm}

\begin{thm}\mylabel{SpSix2} For $\sP_G$-categories $\sA$, there is a natural
weak equivalence of naive $G$-spectra $\bK \io^*\sA \rtarr i^*\bK_G\sA$. 
\end{thm}

The algebraic $K$-groups of $\sA$ are defined to be the groups 

\begin{equation} K_*^H\sA = \pi_*^H(\bK\io^*\sA) \iso \pi_*^H(\bK_G\sA).
\end{equation}

We are particularly interested in examples of the form $\sC\!at(\tilde G,\sA)$, where
$\sA$ is a naive permutative $G$-category.  As noted in \myref{ScriptG}, we then
have a natural map $\io\colon \sA \rtarr \io^*\sC\!at(\tilde G,\sA)$ of naive permutative 
$G$ categories. We can pass to classifying spaces and apply the functor $\bE$ 
to obtain a natural map
\begin{equation}
\xymatrix@1{ \bK \sA \ar[r]^-{\bK\io} 
& \bK \io^*\sC\!at(\tilde G,\sA) \ar[r]^-{\mu}_-{\htp} & i^*\bK_G\sC\!at(\tilde G,\sA).\\} 
\end{equation}
This map is a weak equivalence when $\io^H\colon \sA^H \rtarr (\io^*\sC\!at(\tilde G,\sA))^H$
is an equivalence of categories for all $H\subset G$.   
The following example where this holds is important in equivariant algebraic 
$K$-theory.

\begin{exmp} Let $E$ be a Galois extension of $F$ with Galois group $G$
and let $G$ act entrywise on $GL(n,E)$ for $n\geq 0$.  The disjoint union
of the $GL(n,E)$ is a naive permutative $G$-category that we denote by $GL(E_G)$.
Its product is given by the block sum of matrices. Write $GL(R)$ for the 
nonequivariant permutative general linear category of a ring $R$.  
As we proved in \cite[4.20]{GMM}, Serre's version of Hilbert's Theorem 90 implies that
\[ \io^H\colon GL(E^H) \iso GL(E_G)^H \rtarr (\io^* \sC\!at(\tilde G,GL(E_G))^H \] 
is an equivalence of categories for $H\subset G$.  This identifies the
equivariant algebraic $K$-groups of $E$ with the nonequivariant
algebraic $K$-groups of its fixed fields $E^H$.
\end{exmp}   

\begin{rem}\mylabel{absence} In the list above of theorems about permutative categories, 
a consequence of \myref{Trouble} is conspicuous by its absence.  Letting
$\io_!\sA \equiv \bP_G\otimes_{\bP}\sA$ denote the left adjoint of $\io^*$, as defined in \myref{coend}, 
one might hope that $B\io_!\sA$ is equivalent as an $|\sP_G|$-space to $\io_!B\sA$ for a naive 
permutative $G$-category $\sA$. We do not know whether or not that is true.    
\end{rem}

\section{The free $|\sP_G|$-space generated by a $G$-space $X$}\label{Sec3}

The goal of this section is to obtain a decomposition of the fixed point categories of free 
permutative G-categories. This decomposition will be the crux of the proof of the tom Dieck 
splitting theorem given in \S5.2.

\subsection{The monads $\bP_G$ and $\mathbf{P}_G$ associated to $\sP_G$}
Recall that $\sP_G$ is reduced. In fact, both $\sP_G(0)$ and 
$\sP_G(1)$ are trivial categories. As discussed for spaces in \cite[\S4]{Rant1}, 
there are two monads on $G$-categories whose algebras are the genuine 
permutative $G$-categories. The unit object of an $\sP_G$-category can be preassigned, 
resulting in a monad $\bP_{G}$ on based $G$-categories, or it can be viewed as part of 
the $\sP_G$-algebra structure, resulting in a monad $\bP_{G+}$ on unbased $G$-categories.  
Just as in \cite{Rant1}, these monads are related by 
$$\bP_{G}(\sA_+) \iso \bP_{G+}\sA,$$
where $\sA_+ = \sA\amalg \ast$ is obtained from an unbased $G$-category $\sA$ by adjoining
a disjoint copy of the trivial $G$-category $\ast$.  Explicitly,  
\begin{equation}\label{opAone}
\bP_G(\sA_+) = \coprod_{j\geq 0} \sP_G(j)\times_{\SI_j} \sA^j. 
\end{equation}
The term with $j=0$ is $\ast$ and accounts for the copy of $\ast$ on the left. 
The unit $\et\colon \sA\rtarr \bP_G(\sA_+)$
identifies $\sA$ with the term with $j=1$. The product
$\mu\colon \bP_G\bP_G \sA_+\rtarr \bP_G\sA_+$ is induced by the operad structure 
maps $\ga$. We are only concerned with based $G$-categories that can be written
in the form $\sA_+$.

Since we are concerned with the precise point-set relationship between an infinite 
loop space machine defined on $G$-categories and suspension $G$-spectra, it is useful
to think of (unbased) $G$-spaces $X$ as categories.  Thus we also let $X$ denote the topological 
$G$-category  with object and morphism $G$-space $X$ and with $I$, $S$, $T$, 
and $C$ all given by the identity map $X\rtarr X$; this makes sense for $C$ since we can
identify $X\times_X X$ with $X$.  We can also identify the classifying $G$-space $|X|$ with $X$. 

By specialization of (\ref{opAone}), we have an identification of (topological) $G$-categories
\begin{equation}\label{opXone}
\bP_G(X_+) = \coprod_{j\geq 0} \sP_G(j)\times_{\SI_j} X^j.
\end{equation}

The following illuminating result gives another description of $\bP_G(X_+)$.

\begin{prop} For $G$-spaces $X$, there is a natural isomorphism of genuine
permutative $G$-categories
\[  \bP_G(X_+) = \coprod_j \sC\!at(\tilde G,\tilde{\SI}_j)\times_{\SI_j} X^j
\rtarr \coprod_j \sC\! at(\tilde G,\tilde{\SI}_j\times_{\SI_j} X^j) = \sC\!at(\tilde G,\bP(X_+)).\]
\end{prop}
\begin{proof} For each $j$ and for $(G\times \SI_j)$-spaces $Y$, such as $Y= X^j$,
we construct a natural isomorphism of $(G\times \SI_j)$-categories 
\[ \sC\!at(\tilde G,\tilde{\SI}_j) \times Y
\rtarr \sC\!at(\tilde G,\tilde{\SI}_j\times Y).\]
Here $Y$ is viewed as the constant $(G\times \SI_j)$-category at $Y$. The target is
\[ \sC\!at(\tilde G,\tilde{\SI}_j) \times \sC\!at(\tilde G,Y). \]
Since there is a map between any two objects of $\tilde{G}$ but the only maps
in $Y$ are identity maps $i_y\colon y\rtarr y$ for $y\in Y$, the only functors $\tilde G\rtarr Y$
are the constant functors $c_y$ at $y\in Y$ and the only natural transformations between 
them are the identity transformations $\id_y\colon c_y\rtarr c_y$.  Sending $y$ to 
$c_y$ on objects and $i_y$ to $\id_y$ on morphisms specifies an identification of 
$(G\times \SI_j)$-categories $Y\rtarr \sC\!at(\tilde G,Y)$. The product of the identity 
functor on $\sC\!at(\tilde G,\tilde{\SI}_j)$ and this identification gives the desired natural
equivalence. With $Y=X^j$, passage to orbits over $\SI_j$ gives the $j^{th}$ component of the 
claimed isomorphism of $G$-categories.  It is an isomorphism of $\sP_G$-categories since on 
both sides the action maps are induced by the structure maps of the operad $\sP$.
\end{proof}

Recall that we write $\mathbf{P}_G$ for the monad on based $G$-spaces associated to the operad $|\sP_G|$.  
Thus $\mathbf{P}_G(X_+)$ is the free $|\sP_G|$-space generated by the $G$-space $X$.

\begin{prop} For $G$-spaces $X$, there is a natural isomorphism
\[
\mathbf{P}_G(X_+) = \coprod_{j\geq 0} |\sP_G(j)|\times_{\SI_j} X^j \iso 
|\bP_G X_+|.
\]
\end{prop}
\begin{proof} For a $(G\times \SI_j)$-space $Y$ viewed as a $G$-category,
the nerve $NY$ can be identified with the constant simplicial space $Y_*$ with 
$Y_q=Y$.   The nerve functor $N$ does not commute with passage to orbits 
in general, but arguing as in \cite[\S2.3]{GMM} we see that 
\[ N(\sP_G(j)\times_{\SI_j} Y)\iso  (N\sP_G(j))\times_{\SI_j} Y_* = 
N(\sP_G(j)\times_{\SI_j} NY).\]
Therefore the classifying space functor commutes with coproducts, products,  
and the passage to orbits that we see here.
\end{proof}

\subsection{The identification of $(\bP_GX_+)^G$}  
The functor $|-|$ commutes with passage to $G$-fixed points, and we 
shall prove the following identification. Let $\bP$ denote the monad on 
nonequivariant based categories associated to the operad $\sP$ that defines 
permutative categories.

\begin{thm}\mylabel{ident} For $G$-spaces $X$, there is a natural equivalence 
of $\sP$-categories
\[ \bP_G(X_+)^G\htp \prod_{(H)}\bP(\widetilde{WH}\times_{WH}X^H)_+, \]
where $(H)$ runs over the conjugacy classes of subgroups of $G$ and $WH = NH/H$. 
\end{thm}

We are regarding $\sP$ as the suboperad $(\sP_G)^G$ of $\sP_G$, and the identification
of categories will make clear that the identification preserves the action by $\sP$.
Of course,
\begin{equation}\label{catone}
\bP_G( X_+)^G = \coprod_{j\geq 0} (\sP_G(j)\times_{\SI_j} X^j)^G 
\end{equation}
and
\begin{equation}\label{cattwo}
\bP(\widetilde{WH}\times_{WH}X^H)_+ = 
\coprod_{k\geq 0}\ \tilde{\SI}_k\times_{\SI_k}(\widetilde{WH}\times_{WH}X^H)^k.
\end{equation}

We shall prove \myref{ident} by identifying both (\ref{catone}) and (\ref{cattwo})
with a small (but not skeletal) model $\sF_G(X)^G$ for the category of finite $G$-sets 
over $X$ and their isomorphisms over $X$. We give the relevant definitions and 
describe these identifications here, and we fill in the 
easy proofs in \S\ref{Sec3.3} and \S\ref{Sec3.4}. 

A homomorphism $\al\colon G\rtarr \SI_j$ is equivalent to the left 
action of $G$ on the set $\mathbf{j}=\{1,\cdots,j\}$ specified by
$g\cdot i = \al(g)(i)$ for $i\in \mathbf{j}$. Similarly,
an anti-homomorphism $\al\colon G\rtarr \SI_j$ is equivalent 
to the right action of $G$ on $\mathbf{j}$ specified by $i\cdot g = \al(g)(i)$ or,
equivalently, the left action specified by $g\cdot i = \al(g^{-1})(i)$; of course,
if we set $\al^{-1}(g) = \al(g)^{-1}$, then $\al^{-1}$ is a homomorphism.  
We focus on homomorphisms and left actions, and we denote such $G$-spaces 
by $(\mathbf{j},\al)$. When we say that $A$ is a finite $G$-set, we agree to 
mean that $A = (\mathbf{j},\al)$ for a given homomorphism $\al\colon G\rtarr \SI_j$.  
That convention has the effect of fixing a small groupoid $G\sF$ equivalent to the 
groupoid of all finite $G$-sets and isomorphisms of finite $G$-sets. By a $j$-pointed $G$-set, we mean a $G$-set
with $j$ elements. 

\begin{defn}\mylabel{Gsets}  Let $X$ be a $G$-space and $j\geq 0$. 
\begin{enumerate}[(i)]
\item Let $\sF_G(j)$ be the $G$-groupoid whose objects are the $j$-pointed $G$-sets 
$A$ and whose morphisms $\si\colon A\rtarr B$ are the bijections, 
with $G$ acting by conjugation.  Then $\sF_{G}(j)^G$ is the category with the same objects 
and with morphisms the isomorphisms of $G$-sets $\si\colon A\rtarr B$. 

\vspace{2mm}

\item Let $\sF_G(j,X)$ be the $G$-groupoid whose objects are
the maps (not $G$-maps) $p\colon A\rtarr X$ and whose morphisms $f\colon p\rtarr q$,
$q\colon B\rtarr X$, are the bijections $f\colon A\rtarr B$ such that 
$q\com f = p$; $G$ acts by conjugation on all maps $p$, $q$, and $f$.
We view $\sF_G(j,X)^G$ as the category of $j$-pointed $G$-sets over 
$X$ and isomorphisms of $j$-pointed $G$-sets over $X$. 

\vspace{2mm}

\item Let $\sF_G= \coprod_{j\geq 0} \sF_G(j)$ and  $\sF_G(X) = \coprod_{j\geq 0} \sF_G(j,X)$. 

\vspace{2mm}

\item Let $\sF_G^{\sP}(j)$ be the full $G$-subcategory of $G$-fixed objects of 
$\sP_G(j)/\SI_j$ and let $\sF_G^{\sP}(j,X)$ be the full $G$-subcategory of $G$-fixed objects of $\sP_G(j)\times_{\SI_j} X^j$.
Then 
$$\sF_G^{\sP}(j)^G =(\sP_G(j)/\SI_j)^G \ \ \text{and} \ \ \sF_{G}^{\sP}(j,X)^G 
= (\sP_G(j)\times_{\SI_j} X^j)^G.$$ 
\end{enumerate}
\end{defn}

In \S\ref{Sec3.3}, we prove that the right side of (\ref{catone}) 
can be identified with $\sF_G(X)^G$.

\begin{thm}\mylabel{CATone} There is a natural isomorphism of permutative categories
\[  (\bP_G(X_+))^G = \coprod_{j\geq 0} \sF_G^{\sP}(j,X)^G 
\iso \coprod_{j\geq 0} \sF_G(j,X)^G = \sF_G(X)^G.\]
\end{thm}

We will prove an equivariant variant of this result, before passage to fixed points,
in \myref{identtoo}. In \S\ref{Sec3.4}, we prove that the right side of (\ref{cattwo}) 
can also be identified with $\sF_G(X)^G$.  At least implicitly, this identification of fixed point
categories has been known since the 1970's; see for example Nishida \cite[App. A]{Nishida}.

\begin{thm}\mylabel{CATtwo} There is a natural equivalence of categories
\[ \prod_{(H)}\coprod_{k\geq 0}\ \tilde{\SI}_k\times_{\SI_k}(\widetilde{WH}\times_{WH}X^H)^k\rtarr \coprod_{j\geq 0} \sF_G(j,X)^G = \sF_G(X)^G.\]
\end{thm}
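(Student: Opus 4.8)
The plan is to prove Theorem \myref{CATtwo} by combining Theorem \myref{CATone} with a direct combinatorial analysis of finite $G$-sets over $X$, decomposing them by orbit type. Every finite $G$-set $A$ decomposes canonically as a disjoint union of orbits, and each orbit is isomorphic to $G/H$ for some $H\subset G$, well-defined up to conjugacy. Grouping the orbits of $A$ by conjugacy class of stabilizer gives a functorial decomposition $A \iso \coprod_{(H)} A_{(H)}$, where $A_{(H)}$ is the sum of the orbits of type $G/H$. Since isomorphisms of $G$-sets preserve orbit type, this decomposition is respected by morphisms, so it induces an equivalence of groupoids $\sF_G(X)^G \htp \prod_{(H)} \sF_G^{(H)}(X)^G$, where $\sF_G^{(H)}(X)^G$ is the full subcategory of $G$-sets over $X$ all of whose orbits have type $(H)$. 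This reduces the theorem to establishing, for each fixed $(H)$, a natural equivalence
\[ \coprod_{k\geq 0}\ \tilde{\SI}_k\times_{\SI_k}(\widetilde{WH}\times_{WH}X^H)^k\ \htp\ \sF_G^{(H)}(X)^G. \]

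For the single-orbit-type identification, I would first note that a $G$-set with exactly $k$ orbits, each of type $G/H$, together with a $G$-map to $X$, is determined by the following data. For each orbit $G\cdot a \iso G/H$, a $G$-map $G/H \rtarr X$ is the same as a point $x\in X^H$ (the image of the identity coset), and two such agree iff the corresponding points of $X^H$ coincide; the residual freedom in identifying the orbit with $G/H$ is a choice of generator of the orbit modulo $H$, which acts on $X^H$ through $WH = NH/H$. Thus the groupoid of single such orbits over $X$, with isomorphisms over $X$, is equivalent to the translation groupoid $\widetilde{X^H}\times_{WH}\widetilde{WH}$, i.e. to $\widetilde{WH}\times_{WH}X^H$ in the notation of (\ref{cattwo}); here one uses that an isomorphism of orbits over $X$ is a bijection commuting with the $G$-action and with the maps to $X$, which forces the relevant point of $X^H$ to be fixed and allows a further shift by $WH$. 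Taking $k$ such orbits and allowing them to be permuted yields the $k$-th summand $\tilde{\SI}_k\times_{\SI_k}(\widetilde{WH}\times_{WH}X^H)^k$, with $\tilde{\SI}_k$ recording a (chaotically contractible) ordering of the orbits and $\SI_k$ permuting them; the coproduct over $k$ records the number of orbits of type $(H)$. The functor in the statement sends such a labeled tuple of orbit data to the corresponding $G$-set over $X$, and I would check it is full, faithful, and essentially surjective onto $\sF_G^{(H)}(X)^G$.

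The main obstacle will be the bookkeeping in the second step: making precise the equivalence between ``a single $G$-orbit of type $(H)$ equipped with a map to $X$'' and the translation groupoid $\widetilde{WH}\times_{WH} X^H$, and verifying that it is compatible with isomorphisms over $X$ on the nose (not merely up to non-canonical choices). One must be careful that the $WH$-action appearing is the correct one — it comes from the automorphisms of $G/H$ as a $G$-set, which is exactly $WH$ acting by right translation — and that passing to $G$-fixed objects of the chaotic categories $\tilde{\SI}_k$ and $\widetilde{WH}$ interacts correctly with the orbit-type decomposition. A convenient way to organize all of this is to observe that $\sF_G^{(H)}(X)^G$ is equivalent to the category of finite free $WH$-sets over $X^H$ with free $WH$-action, via $A \mapsto A^H$ with its residual $WH$-action and induced map to $X^H$; the inverse is induction $G\times_{NH}(-)$. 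Under that equivalence the right-hand side of Theorem \myref{CATtwo} becomes, summand by summand, the standard model $\coprod_k \tilde\SI_k\times_{\SI_k}(\widetilde{WH}\times_{WH}X^H)^k$ for finite free $WH$-sets over $X^H$, which is the nonequivariant statement ``$\bO(\widetilde{WH}\times_{WH}X^H)_+ \htp$ finite free $WH$-sets over $X^H$'' — essentially Theorem \myref{CATone} applied with group $WH$ and base $X^H$. Assembling these equivalences over all conjugacy classes $(H)$, and using Theorem \myref{CATone} to identify the left-hand side of Theorem \myref{ident} with $\sF_G(X)^G$, completes the proof; once Theorem \myref{CATtwo} is in hand, Theorem \myref{ident} follows by combining it with Theorem \myref{CATone}.
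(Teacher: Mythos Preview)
Your argument is correct and follows essentially the paper's approach: decompose $\sF_G(X)^G$ by orbit type and, for each $(H)$, identify the groupoid of $G$-sets of pure type $(H)$ over $X$ using that $\mathrm{Aut}_G(kG/H)\iso \SI_k\int WH$ and that $G$-maps $kG/H\rtarr X$ correspond to $k$-tuples in $X^H$; your detour through the equivalence $A\mapsto A^H$ with finite free $WH$-sets over $X^H$ is a valid repackaging of the same computation that the paper carries out more directly. One correction: Theorem~\myref{CATone} is neither needed nor correctly invoked here---it is an \emph{independent} identification of $\sF_G(X)^G$ (with $(\bO_G X_+)^G$), and the statement you attribute to it at the end is just the elementary wreath-product fact above, not an instance of \myref{CATone} with group $WH$.
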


These two results prove \myref{ident}.

\begin{rem}\mylabel{subtle} With our specification of finite $G$-sets as 
$A=(\mathbf{j},\al)$, the disjoint union of $A$ and $B =(\mathbf{k},\be)$ 
is obtained via the obvious identification of $\mathbf{j}\coprod \mathbf{k}$ 
with $\mathbf{j+k}$.  The disjoint union of finite $G$-sets over a $G$-space 
$X$ gives $\sF_G(X)$ a structure of naive permutative $G$-category.
By \myref{CATone}, its fixed point category $\sF_G(X)^G$ is a $\sP$-category 
equivalent to $(\bP_G(X_+))^G$.  One might think that $\sF_G(X)$ is a genuine
permutative $G$-category equivalent 
to the free $\sP_G$-category $\sP_G(X_+)$. However, its $H$-fixed subcategory for 
$H\neq G$ is not equivalent to  $\sF_H(X)^H$, and one cannot expect an
action of $\sP_G$ (or any other $E_{\infty}$ $G$-operad) on $\sF_G(X)$. 
To see the point, let $G$ be the quaternion group of order $8$, $Q=\{\pm 1, \pm i, \pm j, \pm k\}$, 
and let $X=\ast$. Every nontrivial subgroup of $G$ contains the center $H=Z={\pm 1}$. Therefore 
the $H$-set $H$ cannot be obtained by starting with a $G$-set (a disjoint
union of orbits $G/K$) and restricting along the inclusion $H\rtarr G$. 
\end{rem}

To compare with our paper \cite{GM2}, we offer some alternative notations.

\begin{defn}\mylabel{EGO}  For an unbased $G$-space $X$, let 
$\sE_G(X) = \sE_G^{\sP}(X) = \bP_G(X_+)$.  
It is a genuine permutative $G$-category, and its
$H$-fixed subcategory $\sE_G(X)^H$ is equivalent to $\sE_H(X)^H$ and
therefore to $\sF_H(X)^H$.
\end{defn}

\begin{rem}\mylabel{altnot} In \cite{GM2}, we gave a more 
intuitive definition of a $G$-category $\sE_G(X)$. It will reappear in \S\ref{SecEGP},
where it will be given the alternative notation $\sE^{U}_G(X)$. It is acted on  
by an $E_{\infty}$ operad $\sV_G$ of $G$-categories, and, again, its fixed point 
category $\sE^{U}_G(X)^H$ is equivalent to $\sE^{U}_H(X)^H$ and therefore to $\sF_H(X)^H$.
\end{rem}

\subsection{The proof of \myref{CATone}}\label{Sec3.3}

We first use \myref{fixedCat} to identify (\ref{catone}) when $X$ is a point.  
The proof of \myref{fixedCat} compares several equivalent categories, and
anti-homomorphisms appear naturally.  To control details of equivariance,
it is helpful to describe the relevant categories implicit in our operad $\sP_G$ 
in their simplest forms up to isomorphism.  Details are in \cite[\S\S2.1, 2.2, 4.1, 4.2]{GMM}.  

\begin{lem} The objects of the chaotic $(G\times \SI_j)$-category $\sP_G(j)$ are the 
functions $\ph\colon G\rtarr \SI_j$.  The (left) action of $G$ on $\sP_G(j)$ is given 
by $(g\ph)(h) = \ph(g^{-1} h)$
on objects and the diagonal action on morphisms.
The (right) action of $\SI_j$ is given by $(\ph\si)(h) = \ph(h)\si$ on
objects and the diagonal action on morphisms. 
\end{lem}

\begin{lem} The objects of the $G$-category $\sP_{G}(j)/\SI_j$ are the functions \linebreak
$\al\colon G\rtarr \SI_j$ such that $\al(e) = e$. The morphisms $\si\colon \al \rtarr \be$ 
are the elements $\si\in \SI_j$, thought of as
the functions $G\rtarr \SI_j$ specified by $\si(h) = \be(h)\si\al(h)^{-1}$.
The composite of $\si$ with $\ta\colon \be\rtarr \ga$ is $\ta\si\colon \al\rtarr \ga$.
The action of $G$ is given on objects by 
\[ (g\al)(h) = \al(g^{-1}h)\al(g^{-1})^{-1}. \]
In particular, $(g\al)(e) = e$. The action on morphisms
is given by 
\[g(\si\colon \al\rtarr \be) = \si\colon g\al\rtarr g\be. \]
\end{lem}

\begin{lem} For $\LA\subset G\times \SI_j$, $\sP_G(j)^{\LA}$ is empty if 
$\LA\cap \SI_j\neq e$. It is a nonempty and hence chaotic subcategory of 
$\sP_{G}(j)$ if $\LA\cap \SI_j=e$. 
\end{lem} 

\begin{lem}\mylabel{pedantic} The objects of $(\sP_G(j)/\SI_j)^G$ are the anti-homomorphisms \linebreak
$\al\colon G\rtarr \SI_j$. Its morphisms $\si\colon \al\rtarr \be$ are the conjugacy relations
$\be = \si\al\si^{-1}$, where $\si\in \SI_j$. For $H\subset G$, restriction of functions gives 
an equivalence of categories $$(\sP_G(j)/\SI_j)^H\rtarr (\sP_H(j)/\SI_j)^H. $$
\end{lem}

Now return to a general $G$-space $X$.  To prove \myref{CATone}, it suffices to
prove that $(\sP_G(j)\times_{\SI_j} X^j)^G$ is isomorphic to $\sF_G(j,X)^G$ 
for all $j$. 
Passage to orbits here means that for $\ph\in \sP_G(j)$, $y\in X^j$, and $\si\in \SI_j$ 
(thought of as acting on the left on $\mathbf{j}$ and therefore on $j$-tuples of elements of $X$), 
$(\ph\si,  y) = (\ph,\si y)$ in $\sP_G(j)\times_{\SI_j} X^j$.
Observe that an object $(\ph, z_1,\cdots, z_j)\in \sP_G(j)\times_{\SI_j} X^j$
has a unique representative in the same orbit under $\SI_j$ of the form 
$(\al,x_1,\cdots,x_j)$ where $\al(e) = e$. It is obtained by replacing $\ph$ 
by $\ph \tau$, where $\ta=\ph(e)^{-1}$, and replacing $z_i$ by $x_i = z_{\ta(i)}$. 

\begin{lem}\mylabel{picky} 
An object $(\al,y) \in \sP_G(j)\times_{\SI_j} X^j$, where $\al(e) = e$ 
and $y\in X^j$, is $G$-fixed if and only if $\al\colon G\rtarr \SI_j$ is an anti-homomorphism and $\al(g^{-1})y = gy$ for all $g\in G$. 
\end{lem}
\begin{proof} Assume that $(\al,y) = (g\al,gy)$ for all $g\in G$. Then each 
$g\al$ must be in the same $\SI_j$-orbit as $\al$, where $\al$ is regarded as an 
object of $\sP_G(j)$ and not $\sP_G(j)/\SI_j$, so that $(g\al)(h) = \al(g^{-1}h)$. 
Then $(g\al)(h) = \al(h)\si$ for all $h\in G$ and some $\si\in \PI$. Taking 
$h=e$ shows that $\si = \al(g^{-1})$. The resulting formula $\al(g^{-1}h) = \al(h)\al(g^{-1})$ implies that $\al$ is an anti-homomorphism.  Now 
\[ (\al,y) = (g\al,gy) = (\al \al(g^{-1}), gy) = (\al, \al(g)gy),\]
which means that $\al(g) gy = y$ and thus $gy = \al(g^{-1})y$. 
\end{proof}

Use $\al^{-1}$ to  define a left 
action of $G$ on $\mathbf{j}$ and define $p\colon \mathbf{j}\rtarr X$ by 
$p(i) = x_i$.  Then the lemma shows that the $G$-fixed elements $(\al,y)$ are in
bijective correspondence with the maps of $G$-sets $p\colon A \rtarr X$, where
$A$ is a $j$-pointed $G$-set.
Using \myref{pedantic}, we see similarly that maps $f\colon A\rtarr B$ 
of $j$-pointed $G$-sets over $X$ correspond bijectively to morphisms in  
$(\sP_G(j)\times_{\SI_j} X^j)^G$.  These bijections specify the
required isomorphism between $\sF_G(j,X)^G$ and $(\sP_G(j)\times_{\SI_j} X^j)^G$.

\subsection{The proof of \myref{CATtwo}}\label{Sec3.4}
This decomposition is best proven by a simple thought exercise. Every finite 
$G$-set $A$ decomposes non-uniquely as a disjoint union of orbits $G/H$, and 
orbits $G/H$ and $G/J$ are isomorphic if and only if $H$ and $J$ are conjugate.  
Choose one $H$ in each conjugacy class.  Then $A$ decomposes uniquely as the 
disjoint union of the $G$-sets $A_H$, where $A_H$ is the set of elements of $A$ 
with isotropy group conjugate to $H$.  This decomposes the category $G\sF \equiv (\sF_G)^G$ as the 
product over $H$ of the categories $G\sF(H)$ of finite $G$-sets all of whose isotropy groups are conjugate to $H$.

In turn, $G\sF(H)$ decomposes uniquely as the coproduct over $k\geq 0$ of the 
categories $G\sF(H,k)$ whose objects are isomorphic to the disjoint union, denoted 
$kG/H$, of $k$ copies of $G/H$.  Up to isomorphism, $kG/H$ is the only object of
$G\sF(H,k)$. The automorphism group of the $G$-set $G/H$ is $WH$, hence the 
automorphism group of $kG/H$ is the wreath product $\SI_k\int WH$. Viewed
as a category with a single object, we may identify this group with the category
$\tilde{\SI}_k\times_{\SI_k} (WH)^k$. This proves the following result.

\begin{prop} The category $G\sF$ is equivalent to the category
\[ \prod_{(H)} \coprod_{k\geq 0} \tilde{\SI}_k\times_{\SI_k}(WH)^k. \]
\end{prop}

The displayed category is a skeleton of $G\sF$. As written, its objects are sets
of numbers $\{k_H\}$, one for each $(H)$, but they are thought of as the
finite $G$-sets $\coprod_H k_H G/H$. Its morphism groups specify the automorphisms
of these objects.  On objects, the equivalence sends a finite $G$-set $A$ to the 
unique finite $G$-set of the form $\coprod_{(H)} kG/H$ in the same isomorphism 
class as $A$.  Via chosen isomorphisms, this specifies the inverse 
equivalence to the inclusion of the chosen skeleton in $G\sF$.

We parametrize this equivalence to obtain a description of the category
$G\sF(X)$ of finite $G$-sets over $X$.  Given any $H$ and $k$, a $k$-tuple of
elements $\{x_1,\cdots,x_k\}$ of $X^H$ determines the $G$-map $p\colon kG/H \rtarr X$ 
that sends $eH$ in the $i$th copy of $G/H$ to $x_i$, and it is clear that every 
finite $G$-set $A$ over $X$ is isomorphic to one of this form.  Similarly, for
a finite $G$-set $q\colon B\rtarr X$ over $X$ and an isomorphism $f\colon A\rtarr B$,
$f$ is an isomorphism over $X$ from $q$ to $p=q\com f$, and every isomorphism
over $X$ can be constructed in this fashion.  Since we may as well choose $A$ and $B$
to be in our chosen skeleton of $G\sF$, this argument proves \myref{CATtwo}.


\section{The Barratt-Priddy-Quillen and tom Dieck splitting theorems}\label{Sec6}
\subsection{The Barratt-Priddy-Quillen theorem revisited}\label{Sec5.1}
The BPQ theorem shows how to model suspension $G$-spectra in 
terms of free $E_{\infty}$ $G$-categories and $G$-spaces. It is built 
tautologically into the equivariant infinite loop space machine in the 
same way as it is nonequivariantly \cite[2.3(vii)]{MayPerm} or \cite[\S10]{Rant1}.
The following result works for either $\bE_G = \bE_G^{\sS\!p}$ or $\bE_G =\bE_G^{\sS}$,
but note that the interpretation of both the source and target are different in the two cases.
The proof shows consistency with the versions of the BPQ theorem in 
Theorems \ref{BPQ2} and \ref{BPQ3}.

\begin{thm}[The $E_{\infty}$ operad BPQ theorem]\mylabel{SpThree} 
For an $E_{\infty}$ operad $\sC_G$ of $G$-spaces and based $G$-spaces $X$, there 
is a natural weak equivalence of $G$-spectra
\[ \SI^{\infty}_G X\rtarr \bE_G \mathbf{C_G}X. \]
\end{thm}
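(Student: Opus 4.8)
The plan is to construct the map tautologically and then recognize it as a composite of a homotopy equivalence and a weak equivalence, exactly as in the nonequivariant argument \cite[2.3(vii)]{MayPerm}, \cite[\S10]{Rant1}. Write $\pi\colon \sC_U = \sC_G\times\sK_U\rtarr \sC_G$ for the projection. This is the map of operads along which a $\sC_G$-space is regarded as the $\sC_U$-space that the machine $\bE_G = \bE_G^{\sS\!p}$ actually processes; it induces a map of monads $\pi\colon \mathbf{C}_U\rtarr \mathbf{C_G}$ on based $G$-spaces that carries the unit $\et^{\mathbf{C}_U}$ to the unit $\et^{\mathbf{C_G}}$. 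For a based $G$-space $Y$, I would define the map of the theorem to be
\[ \lambda\colon \SI^\infty_G Y\rtarr \SI^\infty_G\mathbf{C_G}Y\rtarr \bE_G\mathbf{C_G}Y, \]
the first arrow being $\SI^\infty_G$ applied to $\et^{\mathbf{C_G}}\colon Y\rtarr \mathbf{C_G}Y$ and the second the natural inclusion of $\SI^\infty_G\mathbf{C_G}Y$ as the $0$-simplices of the bar construction $\bE_G\mathbf{C_G}Y = B(\SI^\infty_G,\mathbf{C}_U,\mathbf{C_G}Y)$. (Up to homotopy this agrees with the adjoint of the composite of $\et^{\mathbf{C_G}}$ with the natural group completion $\mathbf{C_G}Y\rtarr \OM^\infty_G\bE_G\mathbf{C_G}Y$ produced by the machine; that description is convenient for naturality checks but is not needed below.)

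Next I would analyze $\lambda$ through the free $\mathbf{C}_U$-algebra $\mathbf{C}_U Y$. Since $\mathbf{C}_U Y$ is \emph{free}, the two-sided bar construction $B(\SI^\infty_G,\mathbf{C}_U,\mathbf{C}_U Y)$ carries an extra degeneracy induced by $\et^{\mathbf{C}_U}$, so by the purely formal argument of \cite[9.8]{MayGeo} --- which is blind to the $G$-action and carries over verbatim --- the augmentation
\[ \epz\colon B(\SI^\infty_G,\mathbf{C}_U,\mathbf{C}_U Y)\rtarr \SI^\infty_G Y \]
is a homotopy equivalence with a natural homotopy inverse $\nu$, namely $\SI^\infty_G\et^{\mathbf{C}_U}$ followed by the inclusion of $0$-simplices. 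Because $\pi\com\et^{\mathbf{C}_U} = \et^{\mathbf{C_G}}$ and the inclusion of $0$-simplices is natural in the simplicial object, one checks immediately that
\[ \lambda = B(\id,\id,\pi_Y)\com\nu, \]
where $\pi_Y\colon \mathbf{C}_U Y\rtarr \mathbf{C_G}Y$ is the $\mathbf{C}_U$-algebra map induced by $\pi$ and $B(\id,\id,\pi_Y)$ is the evident map $B(\SI^\infty_G,\mathbf{C}_U,\mathbf{C}_U Y)\rtarr \bE_G\mathbf{C_G}Y$. It therefore suffices to prove that $B(\id,\id,\pi_Y)$ is a weak equivalence of $G$-spectra.

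That rests on two standard facts. First, $\pi_Y\colon \mathbf{C}_U Y\rtarr \mathbf{C_G}Y$ is a weak $G$-equivalence whenever $Y$ is (nondegenerately) based: at the operad level $\sC_U(j) = \sC_G(j)\times\sK_U(j)\rtarr \sC_G(j)$ is a $(\SI_j\times G)$-map between $\SI_j$-free spaces whose restriction to $\LA$-fixed points, for every $\LA\subset \SI_j\times G$, is an equivalence --- a map of contractible spaces when $\LA\cap\SI_j = \{e\}$ and of empty spaces otherwise --- since both $\sC_G(j)$ and $\sK_U(j)$ are universal principal $(G,\SI_j)$-bundles; passing to $\SI_j$-orbits with $Y^j$ and summing over $j$ (standard manipulations with orbits, in the spirit of the product-of-operads comparison recalled in \S\ref{Sec2.3}) yields the claim, and in fact shows $\mathbf{C}_U\rtarr \mathbf{C_G}$ is an objectwise weak equivalence of monads. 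Second, $B(\SI^\infty_G,\mathbf{C}_U,-)$ sends weak equivalences between well-based $\mathbf{C}_U$-algebras to weak equivalences of $G$-spectra, because $\SI^\infty_G$ and $\mathbf{C}_U$ are homotopical on well-based $G$-spaces and the relevant simplicial objects are Reedy cofibrant --- the same bookkeeping as nonequivariantly, valid here thanks to the standing nondegenerate-basepoint hypothesis (with details of Reedy cofibrancy deferred to \cite{MMO}). Combining the two, $B(\id,\id,\pi_Y)$, and hence $\lambda$, is a weak equivalence; and the argument is uniform in the chosen $E_\infty$ operad $\sC_G$.

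I expect the main obstacle to be organizational rather than mathematical: keeping the operad $\sC_G$ that builds the input $\mathbf{C_G}Y$ carefully distinct from the operad $\sC_U = \sC_G\times\sK_U$ wired into the machine, and invoking the two homotopical lemmas above --- the $G$-equivariant invariance of $\mathbf{C}_U X$ in $X$, and the Reedy cofibrancy/properness of the monadic bar construction --- rather than reproving them here. There is no genuinely new equivariant content: the heart of the proof, the extra-degeneracy contraction of \cite[9.8]{MayGeo}, is identical to the nonequivariant one, which is exactly the sense in which the Barratt-Priddy-Quillen equivalence is ``built tautologically into the machine.''
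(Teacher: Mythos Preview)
Your proof is correct and follows essentially the same approach as the paper: both hinge on the formal extra-degeneracy argument of \cite[9.8]{MayGeo} showing that the bar construction on a free algebra collapses to $\SI^\infty_G Y$. You are in fact more explicit than the paper's proof of this particular theorem, which writes $\bE_G\mathbf{C_G}Y = B(\SI^{\infty}_G,\mathbf{C}_G,\mathbf C_GY)$ and applies \cite[9.8]{MayGeo} directly, glossing over the distinction between the monad $\mathbf{C}_U$ wired into the machine and the monad $\mathbf{C_G}$ producing the input; your intermediate step through $B(\SI^\infty_G,\mathbf{C}_U,\mathbf{C}_UY)$ and the weak equivalence $B(\id,\id,\pi_Y)$ is exactly what the paper spells out in its parallel proof of the orthogonal-spectrum version (\myref{SpThree3}), where an auxiliary spectrum $\bD_GY$ with $V$th space $B(\SI^V,\mathbf{C}_V,\mathbf{C}_VY)$ plays the role of your $B(\SI^\infty_G,\mathbf{C}_U,\mathbf{C}_UY)$.
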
 
\begin{proof} 
For $ \bE_G^{\sS\!p}$, recall that $\sC_U = \sK_U\times \sC_G$.  The same formal argument 
as for \myref{BPQ3} and use of the projections to $\sC_G$ and to $\sK_U$ give equivalences
of LM $G$-spectra
\[ \xymatrix{\SI^{\infty}_G X \ar[r] \ar[dr] & B(\SI^{\infty}_G,\BC_U,\BC_U X) \ar[d] 
\ar[r] & B(\SI^{\infty}_G,\BC_U,\BC_G X). \\
& B(\SI^{\infty}_G,\BK_U,\BK_U X) & \\} \]
For $ \bE_G^{\sS}$, recall that $\sC_V = \sK_V\times \sC_G$.  Analogously to 
\myref{BPQ2}, there is an orthogonal $G$-spectrum with $V$th space 
$B(\SI^V,\BC_V,\BC_VX)$. The usual formal argument and the projections to 
$\sC_G$ and $\sK_V$ give diagrams
\[ \xymatrix{\SI^V X \ar[r] \ar[dr]& B(\SI^V,\BC_V,\BC_V X) \ar[d] 
\ar[r] & B(\SI^V,\BC_V,\BC_G X). \\
& B(\SI^V,\BK_V,\BK_V X) & \\} \]
for all $V$ in which the left horizontal arrow and the vertical arrow are level equivalences of orthogonal
$G$-spectra, and the right horizontal arrow is a weak equivalence ($\pi_*$-isomorphism) of orthogonal
$G$-spectra, as we see my forgetting to $G$-prespectra and passing to colimits over $V\subset U$,
where $U$ is a complete $G$-universe.
\end{proof}

Taking $Y= X_+$ for an unbased $G$-space $X$ and using (\ref{opXone}), we can
rewrite this version of the BPQ theorem using the infinite loop space machine defined 
on permutative $G$-categories.

\begin{thm}[The categorical BPQ theorem: first version]\mylabel{SpThree2}  For unbased $G$-spaces $X$, there is a natural weak
equivalence of $G$-spectra
\[ \SI^{\infty}_G X_+\rtarr \bK_G \bP_G(X_+). \]
\end{thm}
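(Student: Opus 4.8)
The plan is to deduce \myref{SpThree2} as a direct specialization of the already-proven equivariant Barratt-Priddy-Quillen theorem \myref{SpThree}, using only the bookkeeping that relates the various monads and operads introduced earlier. First I would recall that $\bK_G(\sA) = \bE_G(B\sA)$ by definition, so the right-hand side $\bK_G \bO_G(X_+)$ is $\bE_G(B\bO_G(X_+)) = \bE_G|\bO_G(X_+)|$. The key identification is then the natural isomorphism $|\bO_G X_+| \iso \mathbf{O}_G(X_+)$ established in the proposition just before this section (the one asserting $\mathbf{O}_G(X_+) = \coprod_{j} |\sO_G(j)|\times_{\SI_j} X^j \iso |\bO_G X_+|$), which says that the classifying space of the free genuine permutative $G$-category on $X$ is the free $|\sO_G|$-space on $X$. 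In the notation of \S\ref{Sec4}, with $\sC_G = |\sO_G|$ and $\mathbf{C_G} = \mathbf{O}_G$, this identifies $B\bO_G(X_+)$ with $\mathbf{C_G}(X_+)$.

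Granting that, the statement is immediate: apply \myref{SpThree} with the $E_\infty$ operad $\sC_G = |\sO_G|$ (which is an $E_\infty$ operad of $G$-spaces by \myref{UniPrin}, as noted when $\sO_G$ was defined) and with the based $G$-space $Y = X_+$. That theorem produces a natural weak equivalence $\SI^{\infty}_G(X_+) \rtarr \bE_G \mathbf{C_G}(X_+)$. Composing with the isomorphism $\bE_G \mathbf{C_G}(X_+) = \bE_G(B\bO_G(X_+)) = \bK_G\bO_G(X_+)$ coming from the classifying-space identification gives the desired natural weak equivalence $\SI^{\infty}_G X_+ \rtarr \bK_G\bO_G(X_+)$. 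Naturality in $X$ follows because every map in sight — the classifying space functor, the monad structure, and the bar construction defining $\bE_G$ — is functorial, and \myref{SpThree} is already asserted to be natural.

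The only point requiring any care is the compatibility of the two roles played by $\sO_G$: on the one hand it is a reduced operad of $G$-categories with associated monad $\bO_G$ on based $G$-categories, and on the other the relevant input to the machine $\bE_G$ is an algebra over the operad $|\sO_G| \times \sK_V$ (or $|\sO_G| \times \sK_U$) of $G$-spaces. I would note that the construction of $\bE_G$ in \S\ref{Sec4} already builds this product with the Steiner operads internally, and that the underlying $|\sO_G|$-action on $B\bO_G(X_+)$ is exactly the free one, so no further rectification is needed; this is precisely the sense in which, as remarked after \myref{SpThree}, the BPQ theorem is ``built tautologically into the machine.'' I do not expect a genuine obstacle here — the substance of the theorem is entirely contained in \myref{SpThree} and in the classifying-space identification of $|\bO_G X_+|$ with the free $|\sO_G|$-space — so the remaining work is purely a matter of unwinding definitions. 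If one wished to state a sharper conclusion, one could also invoke the model-theoretic reinterpretation mentioned after \myref{SpThree}, under which $\bE_G\mathbf{C_G}(X_+)$ is literally isomorphic (not merely equivalent) to $\SI^{\infty}_G X_+$ when $X$ is a $G$-CW complex, but that refinement is not needed for the stated weak equivalence.
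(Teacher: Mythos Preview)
Your proposal is correct and takes essentially the same approach as the paper: the paper simply states that taking $Y = X_+$ in \myref{SpThree} and using the identification of $|\bO_G X_+|$ with $\mathbf{O}_G(X_+)$ allows one to rewrite the BPQ theorem in terms of $\bK_G$, and you have spelled out exactly this reduction. There is nothing more to the paper's argument than the one sentence preceding the statement of \myref{SpThree2}.
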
 

\begin{rem}  Diagrams showing compatibility with smash products, like those in
Theorems \ref{BPQ2} and \ref{BPQ3} are conspicuous by their absence from
Theorems \ref{SpThree} and \ref{SpThree2}.  A previous version of this article 
erroneously claimed that the operad $\sP$ has a self pairing $(\sP,\sP) \rtarr \sP$
induced by the homomorphisms
\begin{equation}\label{otimes}
 \otimes: \SI_j\times \SI_k \rtarr \SI_{jk},
 \end{equation}
which are made precise in \S\ref{SecPair} by use of  lexicographic ordering.
However, these do not satisfy the condition in \myref{pairop}(iii); see \myref{oops}.
For a conceptual understanding of why $\sP$ cannot have a self-pairing, consider the 
free $\sP$-algebra $\bP(S^0)$. This is a model for the groupoid of finite sets. As explained in 
\cite[Appendix A]{MayMult}, a self-pairing on $\sP$ would give strict distributivity on both sides
in $\bP(S^0)$. But the lexicographic ordering on $\mathbf{j}\times(\mathbf{k} \amalg \mathbf{m})$ does not agree with the lexicographic ordering on $(\mathbf{j}\times \mathbf{k}) \amalg (\mathbf{j}\times \mathbf{m})$.

As we explain in \cite{GMMOMult}, the homomorphisms $\otimes$ exhibit a product that exists in any operad. 
The categorical operads $\sP$ and $\sP_G$ are ``pseudo-commutative'', meaning that certain diagrams of
functors defined using these products commute up to natural isomorphism.    Putting together \myref{SpThree2}, the comparison of operadic and Segalic machines in \cite{MMO}, and $2$-category machinery developed in \cite{GMMOAdd}, we will
obtain multicategorical generalizations of the missing diagrams in \cite{GMMOMult}, where we complete the proofs from equivariant infinite loop space theory promised in \cite{GM2}.
\end{rem}

\subsection{The tom Dieck splitting theorem}\label{BQtD1}\label{Sec5.2}
The $G$-fixed point spectra of suspension $G$-spectra have a well-known 
splitting. It is due to tom Dieck \cite{tDOrbitII} on the level of homotopy groups
and was lifted to the spectrum level in \cite[\S V.11]{LMS}. The tom Dieck splitting 
actually works for all compact Lie groups $G$, but we have nothing helpful to add in 
that generality. Our group $G$ is always finite.  In that case, we have already
given the ingredients for a new categorical proof, as we now explain.

\begin{thm}\mylabel{SpOne} For a based $G$-space $Y$,
\[ (\SI^{\infty}_G Y)^G \htp \bigvee_{(H)} \SI^{\infty}(EWH_+\sma_{WH} Y^H). \]
The wedge runs over the conjugacy classes of subgroups $H$ of $G$, and
$WH = NH/H$.  
\end{thm}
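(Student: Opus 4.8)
The strategy is to reduce the tom Dieck splitting (Theorem \ref{SpOne}) to a combination of the BPQ theorem (Theorem \ref{SpThree}), the fixed-point commutation result (Theorem \ref{SpFour}), and the categorical decomposition of the fixed points of the free genuine permutative $G$-category (Theorem \ref{ident}). First I would reduce to the case $Y = X_+$ for an unbased $G$-space $X$, since a general based $G$-space with nondegenerate basepoint is a retract of one of this form, and since both sides of the asserted equivalence are homotopy functors that commute with the relevant homotopy colimits; this is the standard reduction and I would only indicate it. With $Y = X_+$, the BPQ theorem in the form of \myref{SpThree2} gives a weak equivalence $\SI^{\infty}_G X_+ \htp \bK_G \bO_G(X_+)$ of $G$-spectra, so it suffices to compute the $G$-fixed point spectrum of the right-hand side.

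\textbf{Key steps.} Applying \myref{SpFour2} (the categorical form of \myref{SpFour}), we get $(\bK_G \bO_G(X_+))^G \htp \bK((\bO_G(X_+))^G)$, the nonequivariant $K$-theory spectrum of the fixed-point permutative category. Now \myref{ident} identifies this fixed-point category, up to equivalence of $\sO$-categories, with the product
\[ \prod_{(H)} \bO(\widetilde{WH} \times_{WH} X^H)_+, \]
where $(H)$ runs over conjugacy classes of subgroups of $G$. Since $\bK = \bE$ is an infinite loop space machine that, by \myref{prodcom}, carries (finite) products of operadic algebras to products — equivalently wedges, in the connective range — of spectra, we obtain
\[ \bK\bigl((\bO_G(X_+))^G\bigr) \htp \prod_{(H)} \bK\bigl(\bO(\widetilde{WH} \times_{WH} X^H)_+\bigr) \htp \bigvee_{(H)} \bK\bigl(\bO((\widetilde{WH} \times_{WH} X^H)_+)\bigr). \]
Finally, the nonequivariant BPQ theorem (the case $G = e$ of \myref{SpThree}, in the form \myref{SpThree2}) identifies each wedge summand $\bK(\bO(Z_+))$ with $\SI^{\infty} Z_+$ for $Z = \widetilde{WH} \times_{WH} X^H$. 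Since $\widetilde{WH}$ is a model for $EWH$ (its classifying space is a free contractible $WH$-space, being the realization of a chaotic category on a free $WH$-space) and $\widetilde{WH} \times_{WH} X^H$ has classifying space $EWH \times_{WH} X^H$, we get $\SI^{\infty}\bigl((EWH \times_{WH} X^H)_+\bigr) \iso \SI^{\infty}(EWH_+ \sma_{WH} X^H_+)$, and with $Y = X_+$ this is $\SI^{\infty}(EWH_+ \sma_{WH} Y^H)$, as claimed. Assembling the three equivalences — BPQ, fixed-point commutation, and the categorical decomposition — yields the splitting.

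\textbf{Main obstacle.} The conceptual content is entirely in \myref{ident}, which is already proven earlier; given that, the steps above are essentially bookkeeping. The one point demanding genuine care is the bookkeeping itself: one must check that the three named equivalences are compatible, i.e.\ that the equivalence of \myref{SpFour2} composed with the equivalence induced by \myref{ident} and then with the product-to-wedge comparison of \myref{prodcom2} actually assembles into a well-defined natural map of spectra $\bigvee_{(H)} \SI^{\infty}(EWH_+ \sma_{WH} Y^H) \rtarr (\SI^{\infty}_G Y)^G$ (so that the splitting is natural and canonical, not just an abstract equivalence), and that passing from the finite product in \myref{ident} to a wedge is legitimate — this uses connectivity of the spectra involved, exactly as in the proof of \myref{prodcom}. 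A secondary nuisance is the reduction from general $Y$ to $Y = X_+$: one wants to know both sides convert cofiber sequences or retracts correctly, which again follows from the group completion property and connectivity but should be stated. None of this is deep, so I expect the proof to be short.
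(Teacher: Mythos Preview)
Your proposal is correct and follows essentially the same route as the paper: reduce to $Y=X_+$, apply the equivariant BPQ theorem and \myref{SpFour2} to reach $\bK((\bO_G X_+)^G)$, invoke \myref{ident} (the paper uses its space-level consequence \myref{identtwo}) to decompose this as a product indexed by conjugacy classes, and then use the nonequivariant BPQ theorem together with the product--wedge comparison (\myref{prodcom}, or the paper's \myref{CWP}) to identify the pieces. One small correction: the reduction from general $Y$ to $Y=X_+$ is not via a retract argument but via the cofiber sequence $S^0\rtarr Y_+\rtarr Y$ and the identifications (\ref{sillyplus}) and (\ref{billyplus}), exactly as you indicate later in your ``secondary nuisance'' paragraph; the paper makes this explicit.
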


\myref{SpOne} and the evident natural identifications 
\begin{equation}\label{sillyplus} 
EWH_+\sma_{WH} X_+^H \iso (EWH\times_{WH} X^H)_+
\end{equation}
imply the following version for unbased $G$-spaces $X$.

\begin{thm}\mylabel{SpTwo} For an unbased $G$-space $X$,
\[ (\SI^{\infty}_G X_+)^G \htp \bigvee_{(H)} \SI^{\infty}(EWH\times_{WH} X^H)_+. \]
\end{thm}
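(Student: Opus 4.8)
The plan is to deduce \myref{SpTwo} from \myref{SpOne} exactly as indicated: apply the $G$-fixed point splitting to the based $G$-space $Y = X_+$ and then use the identification (\ref{sillyplus}). First I would recall that the BPQ machinery developed in \S\ref{Sec5.1} gives a model for $(\SI^{\infty}_G X_+)^G$ via \myref{SpFour2} and \myref{SpThree2}, but for the statement as phrased the cleanest route is purely formal: \myref{SpOne} with $Y=X_+$ yields
\[ (\SI^{\infty}_G X_+)^G \htp \bigvee_{(H)} \SI^{\infty}(EWH_+\sma_{WH} X_+^H). \]
So the only content beyond \myref{SpOne} is the natural homeomorphism (\ref{sillyplus}), which identifies $EWH_+\sma_{WH} X_+^H$ with $(EWH\times_{WH} X^H)_+$ and hence identifies the $H$-summand $\SI^{\infty}(EWH_+\sma_{WH}X_+^H)$ with $\SI^{\infty}(EWH\times_{WH}X^H)_+$.

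The key step to justify is (\ref{sillyplus}). Here one unwinds the definitions: for a space $A$ with free $WH$-action (such as $EWH$) and a $WH$-space $B$ (such as $X^H$), the half-smash $A_+\sma_{WH} B_+$ is $(A\times B)_+$ quotiented by the $WH$-action, with all the added basepoints collapsed to one, and since $(-)_+$ commutes with the relevant quotient this is exactly $(A\times_{WH} B)_+$. I would note that this identification is natural in $X$ (via $WH$-maps $X^H\rtarr Z^H$) and is $WH$-equivariantly a based homeomorphism, so it passes through $\SI^{\infty}$ to give a natural equivalence of spectra, compatibly with the wedge decomposition. Since $\SI^{\infty}$ preserves wedges and weak equivalences, assembling the $H$-summands gives the desired natural weak equivalence.

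There is essentially no obstacle here: the statement is a formal corollary, and the one-line reduction is already flagged in the text. The only place requiring a small amount of care is checking that the equivalence of \myref{SpOne} is natural enough in $Y$ that specializing to $Y = X_+$ and then applying the natural homeomorphism (\ref{sillyplus}) produces a natural weak equivalence in $X$ respecting the wedge summand-by-summand; this follows from the construction of the splitting in \cite[\S V.11]{LMS} (or, in the categorical approach of this paper, from the naturality of the maps in \S\ref{Sec3} and \myref{SpThree2}). Thus the proof is: invoke \myref{SpOne} at $X_+$, apply (\ref{sillyplus}) to each summand, and conclude.
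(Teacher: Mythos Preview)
Your reduction of \myref{SpTwo} to \myref{SpOne} via $Y=X_+$ and (\ref{sillyplus}) is formally correct, and the paper itself notes this implication in the sentence introducing \myref{SpTwo}. However, this is the opposite of the paper's actual logical flow. The paper's aim in \S\ref{Sec5.2} is to give a \emph{new categorical proof} of the tom Dieck splitting, so it cannot take \myref{SpOne} as input. Instead, the paper proves \myref{SpTwo} directly and then deduces \myref{SpOne} from it via the cofiber sequence argument using (\ref{billyplus}).

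The paper's direct proof of \myref{SpTwo} assembles the chain of weak equivalences (\ref{alpha}), (\ref{beta}), (\ref{delta}), and (\ref{gamma}): it uses \myref{SpFour} and \myref{SpThree} to pass from $(\SI^{\infty}_G X_+)^G$ to $\bE((\mathbf{C}_G X_+)^G)$, invokes the key categorical decomposition \myref{identtwo} (and \myref{pickyprod}) to identify $(\mathbf{O}_G X_+)^G$ with a product of free $|\sO|$-spaces indexed by conjugacy classes $(H)$, and then uses the nonequivariant BPQ theorem together with \myref{CWP} to convert that product back into the desired wedge of suspension spectra. The whole point is that the fixed-point decomposition of \myref{ident}/\myref{identtwo} does the work that the classical induction on orbit types does in the original proof.

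So your argument, while valid if one imports \myref{SpOne} from \cite{tDOrbitII} or \cite[\S V.11]{LMS}, is circular within the paper's own development: here \myref{SpOne} is established only \emph{after} \myref{SpTwo}. What your approach buys is brevity once the classical result is in hand; what the paper's approach buys is an independent proof grounded in the categorical BPQ theorem, which is the advertised contribution of the section.
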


Conversely, we can easily deduce \myref{SpOne} from \myref{SpTwo}.  Viewing $S^0$ as
$\{1\}_+$ with trivial $G$ action, our standing assumption that basepoints are nondegenerate 
gives a based $G$-cofibration $S^0\rtarr Y_+$ that sends $1$ to the basepoint of $Y$,
and $Y = Y_+/S^0$.  The functors appearing in \myref{SpTwo} preserve cofiber
sequences, and the identifications (\ref{sillyplus}) imply identifications
\begin{equation}\label{billyplus} 
(EWH\times_{WH} Y^H)_+/(EWH\times_{WH} \{1\})_+ \iso EWH_+\sma_{WH} Y^H.
\end{equation}
Therefore \myref{SpTwo} implies \myref{SpOne}.

We explain these splittings in terms of the categorical BPQ theorem. 
We begin in the based setting. The nonequivariant case $G=e$ of the
BPQ theorem relates to the equivariant case through \myref{SpFour}. 
Explicitly, Theorems \ref{SpFour} and \ref{SpThree} give a pair of weak equivalences 
\begin{equation}\label{alpha} (\SI^{\infty}_G Y)^G \rtarr (\bE_G \mathbf C_G Y)^G \ltarr \bE((\mathbf C_G Y)^G).
\end{equation}
Since the functor $\SI^{\infty}$ commutes with wedges, the nonequivariant BPQ theorem gives a weak equivalence
\begin{equation}\label{beta} 
\bigvee_{(H)} \SI^{\infty} (EWH_+\sma_{WH} Y^H) \rtarr \bE\mathbf{C} (\bigvee_{(H)} (EWH_+\sma_{WH} Y^H).
\end{equation}
If we could prove that there is a natural weak equivalence of $\sC$-spaces
\[ (\mathbf C_GY)^G\htp \mathbf{C}(\bigvee_{(H)} (EWH_+\sma_{WH} Y^H), \]
that would imply a natural weak equivalence
\begin{equation}\label{gamma} 
\bE((\mathbf C_GY)^G)\htp \bE \mathbf{C} (\bigvee_{(H)} (EWH_+\sma_{WH} Y^H)
\end{equation}
and complete the proof of \myref{SpOne}.  However, the combinatorial study of the behavior 
of $C$ on wedges is complicated by the obvious fact that wedges of based spaces do not 
commute with products. 

We use the following consequence of \myref{ident} and the relationship between 
wedges and products of spectra to get around this.  Recall that $\mathbf{P}_G$ 
is the monad on based $G$-spaces obtained from the operad $|\sP_G|$ of $G$-spaces. 

\begin{thm}\mylabel{identtwo} For unbased $G$-spaces $X$, there is a natural 
equivalence of $|\sP|$-spaces
\[ (\mathbf{P}_G X_+)^G\htp \prod_{(H)}\mathbf{P}(EWH\times_{WH}X^H)_+, \]
where $(H)$ runs over the conjugacy classes of subgroups of $G$ and $WH = NH/H$. 
\end{thm}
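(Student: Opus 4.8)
The plan is to deduce \myref{identtwo} from the categorical statement \myref{ident} by applying the classifying space functor $B=|-|$, using that $|-|$ is product-preserving, commutes with coproducts and with passage to $G$-fixed points, sends equivalences of categories to homotopy equivalences of classifying spaces, and carries $\sO$-algebras in $\sC\!at$ to $|\sO|$-algebras in $\sU$; the one non-formal ingredient is that $|-|$ commutes with the free orbit constructions $\tilde{\SI}_j\times_{\SI_j}(-)^j$ and $\widetilde{WH}\times_{WH}(-)$.

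First I would apply $|-|$ to the natural equivalence of $\sO$-categories $\bO_G(X_+)^G\htp \prod_{(H)}\bO(\widetilde{WH}\times_{WH}X^H)_+$ of \myref{ident} (where $\sO=(\sO_G)^G$), obtaining a natural homotopy equivalence of $|\sO|$-spaces between $|\bO_G(X_+)^G|$ and $\bigl|\prod_{(H)}\bO(\widetilde{WH}\times_{WH}X^H)_+\bigr|$. On the left, $|-|$ commutes with $G$-fixed points, and together with the isomorphism $\mathbf{O}_G(X_+)\iso|\bO_G(X_+)|$ established above this identifies $|\bO_G(X_+)^G|$ with $(\mathbf{O}_G X_+)^G$. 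On the right, $G$ being finite makes the indexing set of conjugacy classes finite, so $|-|$ passes through the product, and $|-|$ likewise passes through the coproduct defining $\bO$. It then remains to compute, for each conjugacy class $(H)$, the classifying space $|\bO(\widetilde{WH}\times_{WH}X^H)_+|$. Here I would argue exactly as in the proof of the isomorphism $\mathbf{O}_G(X_+)\iso|\bO_G X_+|$ above, following \cite[\S2.3]{GMM}: freeness of the $\SI_j$- and $WH$-actions forces $B=|N-|$ to commute with the orbit passages $\times_{\SI_j}$ and $\times_{WH}$, even though the bare nerve functor $N$ does not. This yields $|\widetilde{WH}\times_{WH}X^H|\iso|\widetilde{WH}|\times_{WH}|X^H|=EWH\times_{WH}X^H$ (using $|\widetilde{WH}|=EWH$ and $|X^H|=X^H$), and then $|\bO(\widetilde{WH}\times_{WH}X^H)_+|\iso\coprod_j|\tilde{\SI}_j|\times_{\SI_j}(EWH\times_{WH}X^H)^j=\mathbf{O}((EWH\times_{WH}X^H)_+)$. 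Composing all of these identifications with the displayed equivalence gives the asserted natural equivalence of $|\sO|$-spaces, and the $|\sO|$-algebra structure is respected throughout because every functor involved preserves products.

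I expect the main --- though mild --- obstacle to be exactly this interchange of the classifying space functor with the free orbit constructions $\tilde{\SI}_j\times_{\SI_j}(-)$ and $\widetilde{WH}\times_{WH}(-)$, since $N$ itself does not commute with orbits; but this is precisely the point already dealt with in the cited special case $\mathbf{O}_G(X_+)\iso|\bO_G X_+|$, so once \myref{ident} is granted the argument is essentially formal. A secondary bookkeeping point is to confirm that the $|\sO|$-action transported from \myref{ident} agrees with the product of the standard $\mathbf{O}$-actions on the factors, which again holds since all the maps in question are induced by the structure maps of the operad $\sO$.
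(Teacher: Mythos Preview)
Your proposal is correct and follows essentially the same route as the paper: apply the classifying space functor to the categorical equivalence of \myref{ident}, using that $|\widetilde{WH}\times_{WH}X^H|\iso EWH\times_{WH}X^H$ and the commutation of $|-|$ with products, coproducts, and $G$-fixed points. The paper's proof is a terse version of exactly this, and your more explicit bookkeeping of the orbit-passage step (with the citation to \cite[\S2.3]{GMM}) matches the argument already given for the isomorphism $\mathbf{O}_G(X_+)\iso |\bO_G(X_+)|$.
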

\begin{proof} Remembering that $|\tilde G|= EG$, we see that the classifying 
space of the category $\widetilde{WH}\times_{WH}X^H$ can be identified with 
$EWH\times_{WH}X^H$. The commutation relations between $|-|$ and the constituent
functors used to construct the monads $\mathbf{P}_G$ on $G$-spaces and
$\bP_G$ on $G$-categories make the identification clear.
\end{proof}

\begin{rem}\mylabel{pickyprod}
Of course, we can and must replace $\sP_G$ and $\sP$ by their products with the equivariant and nonequivariant 
Steiner operad to fit into the  infinite loop space machine.  There is no harm in doing so since if we denote the product operads
by $\sO_G$ and $\sO$, as before, the projections $\sO_G\rtarr \sP_G$ and $\sO\rtarr \sP$ 
induce weak equivalences of monads that fit into a commutative diagram
\[ \xymatrix{
(\mathbf{O_G} X_+)^G \ar[r]^-{\htp} \ar[d]_{\htp} 
&\prod_{(H)}\mathbf{O}(EWH\times_{WH}X^H)_+ \ar[d]^{\htp}\\
(\mathbf{P}_G X_+)^G \ar[r]^-{\htp} & \prod_{(H)}\mathbf{P}(EWH\times_{WH}X^H)_+.\\}  \]
\end{rem}

The functor $\SI^{\infty}_G$ commutes with wedges, and the 
natural map of $G$-spectra 
$$E\wed F\rtarr E\times F$$ is a weak equivalence. 
Theorems \ref{prodcom} and \ref{SpThree} have the following implication.
We state it equivariantly, but we shall apply its nonequivariant special case.
\begin{prop}\mylabel{CWP} For based $G$-spaces $X$ and $Y$, the natural map 
\[ \bE_G \mathbf{O}_G(X\wed Y)\rtarr \bE_G (\mathbf{O}_GX\times \mathbf{O}_GY) \]
is a weak equivalence of $G$-spectra.
\end{prop}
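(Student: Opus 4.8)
The plan is to deduce the statement from \myref{prodcom} and the Barratt--Priddy--Quillen equivalence \myref{SpThree} by two applications of the two-out-of-three property. Write $p_X\colon X\wed Y\rtarr X$ and $p_Y\colon X\wed Y\rtarr Y$ for the two collapse maps. Then the natural map in the statement is $\bE_G$ applied to the map of $\sC_G$-spaces $(\mathbf{C}_Gp_X,\mathbf{C}_Gp_Y)\colon \mathbf{C}_G(X\wed Y)\rtarr \mathbf{C}_GX\times\mathbf{C}_GY$, so it suffices to prove that $\bE_G(\mathbf{C}_Gp_X,\mathbf{C}_Gp_Y)$ is a weak equivalence. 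I would denote the natural weak equivalence of \myref{SpThree} by $\et\colon \SI^{\infty}_G(-)\rtarr \bE_G\mathbf{C}_G(-)$.

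First I would assemble the two naturality squares for $\et$, one along $p_X$ and one along $p_Y$, into the single commutative square
\[
\xymatrix{
\SI^{\infty}_G(X\wed Y) \ar[r]^-{\et} \ar[d]
& \bE_G\mathbf{C}_G(X\wed Y) \ar[d] \\
\SI^{\infty}_GX\times\SI^{\infty}_GY \ar[r]^-{\et\times\et}
& \bE_G\mathbf{C}_GX\times\bE_G\mathbf{C}_GY,
}
\]
whose left vertical map is $(\SI^{\infty}_Gp_X,\SI^{\infty}_Gp_Y)$ and whose right vertical map is $(\bE_G\mathbf{C}_Gp_X,\bE_G\mathbf{C}_Gp_Y)$. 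The top arrow is a weak equivalence by \myref{SpThree}. The bottom arrow is a product of two instances of that equivalence, hence a weak equivalence, since finite products of weak equivalences of $G$-spectra are weak equivalences (homotopy groups commute with finite products). The left arrow is, under the canonical identification $\SI^{\infty}_G(X\wed Y)\iso \SI^{\infty}_GX\wed\SI^{\infty}_GY$ (valid because $\SI^{\infty}_G$ is a left adjoint and so preserves coproducts), the canonical wedge-to-product map of $G$-spectra, which is a weak equivalence as noted just before the statement. By two-out-of-three, the right arrow is a weak equivalence.

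Finally I would factor that right arrow. Let $\pi\colon \bE_G(\mathbf{C}_GX\times\mathbf{C}_GY)\rtarr \bE_G\mathbf{C}_GX\times\bE_G\mathbf{C}_GY$ be the comparison map of \myref{prodcom} for the $\sC_G$-spaces $\mathbf{C}_GX$ and $\mathbf{C}_GY$; by construction it is the map induced by the two product projections. Functoriality of $\bE_G$ then gives
\[
(\bE_G\mathbf{C}_Gp_X,\ \bE_G\mathbf{C}_Gp_Y)\ =\ \pi\circ\bE_G(\mathbf{C}_Gp_X,\mathbf{C}_Gp_Y).
\]
By \myref{prodcom} the map $\pi$ is a weak equivalence, and we have just shown that the left-hand side is one; so by two-out-of-three the remaining factor $\bE_G(\mathbf{C}_Gp_X,\mathbf{C}_Gp_Y)$ is a weak equivalence, which is exactly the assertion.

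There is no genuine analytic obstacle here: all the homotopical content is carried by \myref{prodcom} and \myref{SpThree}, and the proof is a formal diagram chase. The one point needing care is the bookkeeping in the last step --- verifying that post-composing $\bE_G$ of the natural map with the \myref{prodcom} comparison map $\pi$ reproduces the assembled map $(\bE_G\mathbf{C}_Gp_X,\bE_G\mathbf{C}_Gp_Y)$ appearing in the square --- which is immediate from functoriality of $\bE_G$ once one recalls that $\pi$ is by definition induced by the two projections.
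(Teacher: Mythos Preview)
Your proof is correct and follows essentially the same route as the paper. The paper assembles the BPQ equivalences of \myref{SpThree}, the wedge--to--product comparison for suspension $G$-spectra, and the product comparison of \myref{prodcom} into a single commuting diagram and applies two-out-of-three; you present the same argument as a naturality square followed by a factorization through the map $\pi$ of \myref{prodcom}, which is just a different way of organizing the identical ingredients.
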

\begin{proof} The following diagram commutes by the universal property of products.
\[ \xymatrix{
\SI^{\infty}_G(X\wed Y)\ar[d]_{\iso} \ar[r] & \bE_G \mathbf{O}_G (X\wed Y) \ar[d] \\
\SI^{\infty}_G X \wed \SI^{\infty}_G Y \ar[d] 
&\bE_G(\mathbf{O}_G X\times\mathbf{O}_G Y) \ar[d]\\
\SI^{\infty}_GX\times \SI^{\infty}_G Y \ar[r] 
& \bE_G \mathbf{O}_G X\times \bE_G \mathbf{O}_G Y.\\}\]
All arrows except the upper right vertical one are weak equivalences, hence that
arrow is also a weak equivalence.
\end{proof}

For any nonequivariant $E_{\infty}$ operad $\sC$, we therefore have a weak equivalence
\begin{equation}\label{delta} 
 \bE \mathbf C (\bigvee_{(H)} (EWH_+\sma_{WH} Y^H)\rtarr  \bE \prod_{(H)} \mathbf{C}(EWH_+\sma_{WH} Y^H).
\end{equation}

Together with (\ref{delta}), \myref{identtwo} and \myref{pickyprod} give a weak equivalence (\ref{gamma}) 
in the case $Y=X_+$.  Together with (\ref{alpha}) and (\ref{beta}), this completes the proof of \myref{SpTwo}, and \myref{SpOne} follows. 

\section{The $E_{\infty}$ operads $\sV_G$, $\sV^\times_G$, and $\sW_G$}\label{SecPQR}

The operad $\sP_G$ has a privileged conceptual role, but there are other
categorical $E_{\infty}$ $G$-operads with different good properties. We define 
three interrelated examples. The objects of the chaotic category $\sP_G(j)$ are functions 
$G\rtarr \SI_j$.  We give analogous chaotic $G$-categories in which the objects are suitable 
functions between well chosen infinite $G$-sets, with $G$ again acting by conjugation. 
Their main advantage over $\sP_G$ is that it is easier to
recognize $G$-categories on which they act.

\subsection{The definitions of $\sV_G$ and $\sV^\times_G$}

We start with what we would like to take as a particularly natural choice for 
the $j^{th}$ category of an $E_{\infty}$ $G$-operad.  It is described in more
detail in \cite[\S6.1]{GMM}. 

\begin{defn} 
Let $U$ be a countable ambient $G$-set that contains countably many copies of each orbit $G/H$. 
Let $U^j$ be the product of $j$ copies of $U$ with diagonal action by $G$, and let 
$^{j}U$ be the disjoint union of $j$ copies of the $G$-set $U$. Here $U^0$ is a one-point set, 
sometimes denoted $1$, and $^0\!U$ is the empty set, sometimes denoted 
$\emptyset$ and sometimes denoted $0$. 
\end{defn}

Let $\mathbf{j}=\{1,\cdots,j\}$ with its natural left action by $\SI_j$, written
$\si\colon \mathbf{j} \rtarr \mathbf{j}$.

\begin{defn}\mylabel{EGJ} 
For $j\geq 0$, let $\tilde{\sE}^U_G(j)$ be the
chaotic $G\times \SI_j$-category whose objects are the pairs $(A,\io)$, where $A$ is a $j$-element subset of $U$ and $\io\colon \mathbf{j}\rtarr A$ is a bijection. The group $G$ acts on objects by $g(A,\io) = (gA,g\io)$, where $(g\io)(i) = g\cdot \io(i)$. 
The group $\SI_j$ acts on objects by $(A,\io)\si = (A,\io\com \si)$ for $\si\in\SI_j$.
Since $\tilde{\sE}^U_G(j)$ is chaotic, this determines the actions on morphisms.
\end{defn}

\begin{prop}\mylabel{GSIHtpyType}\cite[6.3]{GMM} For each $j$, the classifying space 
$|\tilde{\sE}^U_G(j)|$ is a universal principal $(G,\SI_j)$-bundle. 
\end{prop}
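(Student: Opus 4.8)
The plan is to verify the defining homotopical property of a universal principal $(G,\SI_j)$-bundle recorded in \myref{EinfGoperad}: for every subgroup $\LA\subset\SI_j\times G$ one wants $|\tilde{\sE}_G(j)|^{\LA}$ to be contractible when $\LA\cap\SI_j = \{e\}$ and empty otherwise. Since the classifying space functor $|-|$ commutes with passage to fixed points, this reduces to an elementary analysis of the fixed \emph{category} $\tilde{\sE}_G(j)^{\LA}$, in complete parallel with the arguments of \cite{GMM} for $\sO_G$. Because $\tilde{\sE}_G(j)$ is chaotic, so is each $\tilde{\sE}_G(j)^{\LA}$; hence its classifying space is contractible if that category has an object and is empty if it has none. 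So the whole statement comes down to deciding when $\tilde{\sE}_G(j)^{\LA}$ is nonempty.

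First I would dispose of the case $\LA\cap\SI_j\neq\{e\}$. If $(\si,e)\in\LA$ with $\si\neq e$, then any $\LA$-fixed object $(A,\al)$ would in particular satisfy $\al\com\si^{\pm1} = \al$, which is impossible since $\al\colon\mathbf{j}\rtarr A$ is a bijection. Hence $\tilde{\sE}_G(j)^{\LA}=\emptyset$ in this case; in particular the $\SI_j$-action on $|\tilde{\sE}_G(j)|$ is free, which is the remaining ``principal bundle'' ingredient.

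Now suppose $\LA\cap\SI_j = \{e\}$, and let $H$ be the image of $\LA$ under the projection $\SI_j\times G\rtarr G$. Each $h\in H$ then determines a unique $\si_h\in\SI_j$ with $(\si_h,h)\in\LA$, so $\LA = \{(\si_h,h)\mid h\in H\}$. Unwinding the action of \myref{EGJ}, an object $(A,\al)$ is $\LA$-fixed precisely when $A$ is an $H$-stable subset of $U$ and $\al\colon\mathbf{j}\rtarr A$ is $H$-equivariant, where $\mathbf{j}$ carries the $H$-action transported from $A$ along $\al$ (namely $h\cdot k = \si_h^{\pm1}(k)$, the sign depending on the chosen convention for the $\SI_j\times G$-action). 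Thus $\tilde{\sE}_G(j)^{\LA}$ is nonempty if and only if this finite $H$-set admits an $H$-equivariant injection into $U$.

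The crux, and the only genuinely geometric point, is that such an injection always exists. Since $U$ contains countably many copies of $G/K$ for every $K\subset G$, restricting along $H\subset G$ shows that $U|_H$ contains, for every $L\subset H$, countably many disjoint copies of the $H$-orbit $H/L$ (as an $H$-orbit inside $G/L$ restricted to $H$). Decomposing this finite $H$-set into orbits and sending the successive orbits to disjoint copies of the corresponding $H/L$ inside $U$ produces the required $H$-equivariant injection $\iota$; taking $A = \iota(\mathbf{j})$ and $\al = \iota$ then exhibits an object of $\tilde{\sE}_G(j)^{\LA}$, completing the verification. The main obstacle is precisely this combinatorial embedding step; the rest is the formal calculus of chaotic categories together with the commutation of $|-|$ with fixed points.
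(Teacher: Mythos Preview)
Your argument is correct. The paper does not actually prove this proposition; it simply imports it from \cite[6.3]{GMM}. Your proof follows exactly the template the paper \emph{does} spell out for the parallel statement about $\sP_G(j)$, $\bar{\sQ}_G(j)$, and $\sQ_G(j)$ in \myref{UniPQ}: use chaoticity to reduce to nonemptiness of the $\LA$-fixed object set, dispose of $\LA\cap\SI_j\neq\{e\}$ by freeness, and for $\LA=\{(\si_h,h)\}$ identify $\LA$-fixed objects with $H$-equivariant embeddings of a finite $H$-set into $U$, which exist because $U$ restricted to $H$ still contains infinitely many copies of every $H$-orbit. So your approach is the intended one, just made explicit here rather than deferred to \cite{GMM}.
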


Therefore $\tilde{\sE}_G^U(j)$ satisfies the properties required of the $j^{th}$ category 
of an $E_\infty$ $G$-operad. However, these categories as $j$ varies do not form an operad. 
The problem is a familiar one.   These categories can be thought of as analogous to configuration spaces.
Just as we fattened up the configuration space models of \S\ref{sec:ConfigModel} to the little discs operads of \S\ref{sec:DisksSteiner}, we must fatten up these categories
to provide enough room for an operad structure.

\begin{defn}\mylabel{operadPG} We define a reduced operad $\sV_G$ of $G$-categories. Let $\sV_G(j)$ be the 
chaotic $G$-category whose set of objects is the set of
injective functions $^j\!U\rtarr U$. Let $G$ act by conjugation and let $\SI_j$ have the right action 
induced by its left action on $^j\!U$. Let $\id\in\sV_G(1)$ be the identity 
function $U\rtarr U$. Define
\[ \ga\colon\sV_G(k)\times\sV_G(j_1)\times \cdots \times\sV_G(j_k)\rtarr\sV_G(j), \]
where $j = j_1+\cdots + j_k$, to be the composite 
\[\sV_G(k) \times\sV_G(j_1)\times \cdots \times\sV_G(j_k)\rtarr 
\sV_G(k)\times\sV_G(^j\!U,^k\!U) \rtarr\sV_G(j) \]
obtained by first taking coproducts of maps and then composing. Here $\sV(^j\!U,^k\!U)$ 
is the set of injections $^j\!U\rtarr ^k\!U$. The operad axioms \cite[1.1]{MayGeo} are easily verified.
\end{defn}

Remembering that taking sets to the free 
$\bR$-modules they generate gives a coproduct-preserving functor from sets 
to $\bR$-modules, we see that $\sV_G$ is a categorical 
analogue of the linear isometries operad $\sL_U$. 

There is a parallel definition that uses products instead of coproducts.  

\begin{defn} We define an unreduced operad $\overline{\sV}^\times_G$ of $G$-categories. Let 
$\overline{\sV}^\times_G(j)$ be 
the chaotic $G$-category whose set of objects is the set of
injective functions $U^j\rtarr U$. Let $G$ act by conjugation and let $\SI_j$ have the right action 
induced by its left action on $U^j$. Let $\id\in \overline{\sV}^\times_G(1)$ be the identity function. Define
\[ \ga\colon \overline{\sV}^\times_G(k)\times \overline{\sV}^\times_G(j_1)\times\cdots \times \overline{\sV}^\times_G(j_k)\rtarr \overline{\sV}^\times_G(j), \]
where $j = j_1+\cdots + j_k$, to be the composite 
\[ \overline{\sV}^\times_G(k) \times \overline{\sV}^\times_G(j_1)\times\cdots \times \overline{\sV}^\times_G(j_k)\rtarr \overline{\sV}^\times_G(k)\times \overline{\sV}^\times_G(U^j,U^k) \rtarr \overline{\sV}^\times_G(j) \]
obtained by first taking products of maps and then composing. Here $\overline{\sV}^\times_G(U^j,U^k)$ is the set of
injections $U^j\rtarr U^k$. Again, the operad axioms are easily verified.
\end{defn}

Observe that the objects of $\overline{\sV}^\times_G(0)$ are the injections from the point $U^0$ 
into $U$ and can be identified with the set $U$, whereas $\sV_G(0)$ is the trivial category 
given by the injection of the empty set $^{0}\! U$ into $U$.  As in
\myref{reduced}, the objects of the zeroth category give unit objects for operad
actions, and it is convenient to restrict attention to a reduced variant of 
$\overline{\sV}^\times_G$.  

\begin{defn} Choose a $G$-fixed point $1\in U$ (or, equivalently, adjoin a $G$-fixed
basepoint $1$ to $U$) and also write $1$ for the single point in $U^0$.  Give $U^j$, 
$j\geq 0$, the basepoint whose coordinates are all $1$. The reduced variant of $\overline{\sV}^\times_G$ 
is the operad $\sV^\times_G$ of $G$-categories that is obtained by restricting the objects of the $\overline{\sV}^\times_G(j)$ to 
consist only of the basepoint preserving injections $U^j\rtarr U$ for all $j\geq 0$.
\end{defn}

\begin{rem} If $\overline{\sV}^\times_G$ acts on a category $\sA$, then $\sV^\times_G$ acts on $\sA$ by
restriction of the action.  However, $\sV^\times_G$ can act even though
$\overline{\sV}^\times_G$ does not.  This happens when the structure of $\sA$ encodes a particular
unit object and the operad action conditions fail for other choices of objects in $\sA$.
\end{rem}

\begin{prop}\mylabel{UniPQ} The classifying spaces $|\sV_G(j)|$, $|\overline{\sV}^\times_G(j)|$, and $|\sV^\times_G(j)|$ 
are universal principal $(G,\SI_j)$-bundles, hence $\sV_G$, $\overline{\sV}^\times_G$, and $\sV^\times_G$ are $E_{\infty}$ operads.
\end{prop}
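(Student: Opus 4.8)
The plan is to reduce the statement to the equivalent reformulation of the $E_\infty$ condition in \myref{EinfGoperad}: for each $j\geq 0$ and each subgroup $\LA\subset\SI_j\times G$ one must show that the $\LA$-fixed space of $|\sP_G(j)|$ is contractible when $\LA\cap\SI_j=\{e\}$ and empty otherwise, and similarly for $\bar{\sQ}_G$ and $\sQ_G$, together with contractibility of the zeroth spaces. What makes this tractable is that all three $G$-categories are \emph{chaotic}: $\sP_G(j)$ is the chaotic category on the $(\SI_j\times G)$-set of injections ${}^{j}U\rtarr U$, the category $\bar{\sQ}_G(j)$ is the chaotic category on the injections $U^j\rtarr U$, and $\sQ_G(j)$ is the full chaotic subcategory on the basepoint-preserving injections $U^j\rtarr U$. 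Since forming the chaotic category, taking nerves, and --- as $\SI_j\times G$ is finite --- geometric realization all commute with passage to $\LA$-fixed points, the $\LA$-fixed space of $|\sP_G(j)|$ is the classifying space of the chaotic category on the set of $\LA$-fixed objects; and $|\widetilde Y|$ is contractible for every nonempty set $Y$ (a chosen point of $Y$ gives an extra degeneracy in the nerve) and is empty when $Y=\emptyset$. So the whole proposition reduces to the purely set-theoretic claim that the set of $\LA$-fixed objects --- that is, of $\LA$-equivariant injections --- is nonempty exactly when $\LA\cap\SI_j=\{e\}$. Contractibility of the zeroth spaces is immediate, since $\sP_G(0)$ and $\sQ_G(0)$ are trivial categories and $|\bar{\sQ}_G(0)|=|\widetilde U|$ is contractible because $U$ is nonempty.

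For the ``only if'' direction I would argue contrapositively. If $(\si,e)\in\LA$ with $\si\neq e$, an $\LA$-fixed object would be an injection $f$ with $f(\si\cdot x)=f(x)$ for all $x$, where $\si$ permutes the $j$ copies of $U$ (respectively the $j$ product factors); since $\si$ moves some index and $U$ has more than one element, some $x$ has $\si\cdot x\neq x$, contradicting injectivity, so the fixed-object set is empty. For the ``if'' direction, suppose $\LA\cap\SI_j=\{e\}$. Then the projection $q\colon\SI_j\times G\rtarr G$ restricts to an injection on $\LA$, so, exactly as in \cite[3.7]{GMM}, $\LA=\LA_\al=\{(\al(h),h)\mid h\in H\}$ for $H=q(\LA)\subset G$ and a homomorphism $\al\colon H\rtarr\SI_j$. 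Unwinding the conjugation actions, an $\LA$-fixed object of $\sP_G(j)$ is precisely an $H$-equivariant injection from ${}^{j}U$ --- with $H$ acting on the $j$ copies through $\al$ and on each copy through the given $G$-action --- into $U$ viewed as an $H$-set by restriction; for $\bar{\sQ}_G$ one replaces ${}^{j}U$ by $U^j$, and for $\sQ_G$ one additionally requires that the all-$1$ basepoint of $U^j$ map to $1\in U$.

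The only step that is not purely formal is thus the construction of such an $H$-equivariant injection, and here the hypothesis on $U$ is exactly what is needed. Since $G$ is finite and $U$ contains countably many copies of each orbit $G/K$, restricting $G/L$ for any subgroup $L\subset H$ already produces $H/L$ as a summand, so $U$ restricted to $H$ contains countably many copies of every orbit $H/L$. On the other side, ${}^{j}U$ (or $U^j$) with its twisted $H$-action is a countable $H$-set, hence a countable disjoint union of orbits $H/L$ with $L\subset H$, each orbit type occurring at most countably often; matching orbit types therefore yields an $H$-equivariant injection into $U$. In the reduced case $\sQ_G$ one first sends the $H$-fixed basepoint of $U^j$ to the $H$-fixed point $1\in U$ and then injects $U^j\setminus\{*\}$ into $U\setminus\{1\}$, which still contains countably many copies of each $H/L$. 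This exhibits a nonempty fixed-object set and completes the proof. (For $\sP_G$ by itself one could instead compare $|\sP_G(j)|$ with the universal principal $(G,\SI_j)$-bundle $|\tilde{\sE}_G(j)|$ of \myref{GSIHtpyType}, but the fixed-point computation above treats all three operads uniformly and is no harder; the only care needed there is in tracking the left/right and twisting conventions when identifying $\LA$-fixed objects with equivariant injections, the cardinality count itself being routine.)
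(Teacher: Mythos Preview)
Your proof is correct and follows essentially the same approach as the paper: both reduce to showing that the $\LA$-fixed object set is nonempty precisely when $\LA\cap\SI_j=\{e\}$ by exploiting chaoticity, identify such $\LA$ with a subgroup $H\subset G$ via a homomorphism $\al\colon H\rtarr\SI_j$, and then use that $U$ (restricted to $H$) contains countably many copies of every $H$-orbit to produce the required equivariant injection. The paper's version is slightly terser---it observes that $U$, ${}^{j}U$, and $U^j$ are in fact all \emph{isomorphic} as $H$-sets (each being countable with countably many copies of every orbit type), and it handles the ``only if'' direction in one line by noting that $\SI_j$ acts freely on injections---but your more explicit treatment of the empty case and of the basepoint in $\sQ_G$ is entirely in the same spirit.
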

\begin{proof}
Since the objects of our categories are given by injective functions, $\SI_j$ acts freely on the
objects of $\sV_G(j)$ and $\sV^\times_G(j)$. Since our categories are chaotic, it suffices to show that
if $\LA\cap \SI_j = \{e\}$, where $\LA\subset G\times \SI_j$, then the object sets
$\sV_G(j)^{\LA}$ and $\sV^\times_G(j)^{\LA}$ are nonempty. This means that there are $\LA$-equivariant
injections $^{j}U\rtarr U$ and $U^j\rtarr U$, and in fact there are $\LA$-equivariant
bijections.  We have $\LA =\{(h,\al(h))|h\in H\}$ for a subgroup $H$ of $G$ and a
homomorphism $\al\colon H\rtarr \SI_j$, and we may regard $U$ as an 
$H$-set via the canonical isomorphism
$H\iso \LA$.  Since countably many copies of every orbit of $H$ 
embed in $U$, $^{j}U$, and $U^j$ for $j\geq 1$, these sets are all isomorphic 
as $H$-sets and therefore as $\LA$-sets.
\end{proof}

\subsection{The definition of $\sW_G$ and its action on $\sV_G$}\label{sec:MultCatOps}

This section is parenthetical, aimed towards work in progress on a new 
version of multiplicative infinite loop space theory. The notion of an 
action of a ``multiplicative'' operad $\sG$ on an ``additive'' 
operad $\sC$ was defined in \cite[VI.1.6]{MQR}, and $(\sC,\sG)$ was then 
said to be an ``operad pair''. This notion was redefined and discussed in 
\cite{Rant1, Rant2}. Expressed in terms
of diagrams rather than elements, it makes sense for operads in any cartesian
monoidal category, such as the categories of $G$-categories and of $G$-spaces. As is 
emphasized in the cited papers, although this notion is the essential starting 
point for the theory of $E_{\infty}$ ring spaces, the only interesting nonequivariant example 
we know is $(\sK,\sL)$, where $\sK$ is the Steiner operad. As pointed out in 
in \S\ref{sec:Exmps}, this example works equally well equivariantly. 

The pair of operads $(\sV_G,\sV^\times_G)$ very nearly gives another example, but 
we must shrink $\sV^\times_G$ and drop its unit object to obtain this.

\begin{defn} Define $\sW_G\subset\sV^\times_G$ to be the suboperad such that $\sW_G(j)$ 
is the full subcategory
of $\sV^\times_G(j)$ whose objects are the based bijections $U^j\rtarr U$. In particular, 
$\sW_G(0)$ is the empty
category, so that the operad $\sW_G$ does not encode unit object information.  By
the proof of \myref{UniPQ}, for 
$j\geq 1$ $\sW_G(j)$ 
is again a universal principal $(G,\SI_j)$-bundle.  We view $\sW_G$ as a restricted $E_{\infty}$ operad, 
namely one without unit objects.
\end{defn}

\begin{prop}\mylabel{WVaction}  The restricted operad $\sW_G$ acts on the operad $\sV_G$.  
\end{prop}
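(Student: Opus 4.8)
The plan is to write down the structure maps of the action explicitly on objects, extend them over morphisms by chaoticity, and then verify the axioms of an operad pair. Recall from \cite[VI.1.6]{MQR} (or, in the element-free form that applies verbatim in the cartesian monoidal category $G\sC\!at$, from \cite{Rant1, Rant2}) that an action of $\sR_G$ on $\sP_G$ is a system of $G$-functors
\[ \lambda\colon \sR_G(k)\times \sP_G(j_1)\times \cdots \times \sP_G(j_k)\rtarr \sP_G(j_1\cdots j_k) \]
for $k\geq 1$ and $j_a\geq 0$ (no $k=0$ term is needed, since $\sR_G$ has none), subject to: equivariance for the $\SI_k$-action permuting the $\sP_G(j_a)$ and the $\SI_{j_a}$-actions, expressed through a block-and-lexicographic identification of index sets as in \myref{pairop}; a unit axiom relating $\lambda$ to $\id\in\sR_G(1)$ and $\id\in\sP_G(1)$; a compatibility axiom with the structure maps $\ga$ of $\sR_G$; and a compatibility axiom with the structure maps $\ga$ of $\sP_G$ --- the \emph{distributivity axiom} --- mediated by the distributivity permutations $\de$ of \S\ref{subSecPair}.

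First I would define $\lambda$ on objects. Given an object $g$ of $\sR_G(k)$, that is, a based bijection $g\colon U^k\rtarr U$, and objects $f_a$ of $\sP_G(j_a)$, that is, injections $f_a\colon {}^{j_a}\!U\rtarr U$ for $1\leq a\leq k$, set $j=j_1\cdots j_k$ and let $\lambda(g;f_1,\dots,f_k)\colon {}^{j}\!U\rtarr U$ be the composite
\[ {}^{j}\!U \xrightarrow{\coprod g^{-1}} {}^{j}(U^k) \xrightarrow{\de} \prod_{a=1}^{k} {}^{j_a}\!U \xrightarrow{\prod_a f_a} U^k \xrightarrow{g} U, \]
where the index set of the outer coproduct on the left is identified with the product of the index sets of the $j_a$ by block-and-lexicographic order, $\coprod g^{-1}$ applies $g^{-1}$ to each of the $j$ summands, and $\de$ is the distributivity isomorphism of $G$-sets; concretely, $\lambda(g;f_1,\dots,f_k)$ sends the summand of $^{j}\!U$ indexed by $(b_1,\dots,b_k)$ into $U$ by splitting an element via $g^{-1}$ into $k$ coordinates, applying $f_a$ to the $a$-th coordinate viewed in the $b_a$-th copy of $U$ inside $^{j_a}\!U$, and recombining by $g$. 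This is an injection --- a bijection, followed by a bijection, followed by the injection $\prod_a f_a$, followed by a bijection --- so it is indeed an object of $\sP_G(j)$. This is exactly why $\sQ_G$ must be cut down to $\sR_G$: the construction inverts $g$, so $g$ must be a bijection, and it makes no sense for $k=0$, whence the removal of the zeroth, unit-object term. Since $\sR_G(k)$, the $\sP_G(j_a)$ and $\sP_G(j)$ are chaotic and a functor between chaotic categories is uniquely determined by its object map (which may be chosen arbitrarily), $\lambda$ is a well-defined functor; and a short check shows that its object map is $G$-equivariant, the conjugation $G$-actions on $\sR_G(k)$ and the $\sP_G(j_a)$ carrying the displayed composite to the conjugate composite, which is $\lambda$ of the transformed inputs.

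It then remains to verify the operad pair axioms, and since each is an equality of $G$-functors between chaotic $G$-categories it suffices to check it on objects, where it becomes an identity among injective functions of $G$-sets. The unit axiom is immediate: $\lambda(\id;f)=f$, and $\lambda(g;\id,\dots,\id)=g\com g^{-1}=\id$ using that $g$ is a bijection. Compatibility with $\ga$ on $\sR_G$ reflects the description of that $\ga$ as ``form the product, then compose'', which is precisely what the nested inverses and recombinations in an iterate of $\lambda$'s organize, and it unwinds by a direct diagram chase. The step I expect to be the main obstacle is the distributivity axiom, which asserts that $\lambda$ applied to inputs of the form $\ga(c_a;\vec c_a)$ agrees, after inserting the appropriate permutation $\de$, with the composite assembled from $\lambda(g;c_1,\dots,c_k)$, the values of $\lambda$ on the inner operations, and $\ga$ on $\sP_G$; read off on objects it becomes an identity among injections of $G$-sets that reduces to the coherence of the distributivity isomorphisms together with the compatibility of the several block-and-lexicographic identifications of finite index sets that appear. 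This is the same bookkeeping that makes the nonequivariant operad pair $(\sK,\sL)$ of the Steiner and linear isometries operads work (\cite{MQR}; see \cite{Rant1, Rant2}), and it goes through unchanged here since the $G$-action is carried along diagonally throughout. (The universal property of $U$ --- countably many copies of each orbit --- plays no role in constructing $\lambda$ or checking the axioms; it entered only in \myref{UniPQ}, to know that $\sP_G$ and $\sR_G$ are $E_{\infty}$ operads.)
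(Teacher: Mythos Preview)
Your proposal is correct and takes essentially the same approach as the paper. Your composite $g\circ(\prod_a f_a)\circ\delta\circ(\coprod g^{-1})$ is precisely the paper's map, written globally rather than summand-by-summand: on the $(b_1,\dots,b_k)$-indexed copy of $U$ it reduces to $\psi\circ(\phi_{b_1}\times\cdots\times\phi_{b_k})\circ\psi^{-1}$ in the paper's notation, and both proofs then defer the operad-pair axioms to a tedious but routine diagram chase.
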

\begin{proof} We must specify action maps
\[ \la\colon\sW_G(k) \times\sV_G(j_1)\times \cdots \times\sV_G(j_k) \rtarr\sV_G(j),\]
where $j=j_1\cdots j_k$ and $k\geq 1$.  To define them, consider the set of sequences $I = \{i_1,\cdots, i_k\}$, 
ordered lexicographically, where 
$1\leq i_r\leq j_r$ and $1\leq r\leq k$. For an injection $\ph_r\colon ^{j_r}\!U\rtarr U$,
let $\ph_{i_r}\colon U\rtarr U$ denote the restriction of $\ph_r$ to the $i_r^{th}$ copy of $U$ in $^{j_r}\!U$.
Then let 
\[ \ph_I = \phi_{i_1}\times\cdots \times \ph_{i_k} \colon U^k\rtarr U^k. \]  
For a bijection $\ps\colon U^k\rtarr U$, define
\[ \la(\ps; \ph_1,\cdots, \ph_k)\colon ^{j}U\rtarr U \]
to be the injection which restricts on the $I^{th}$ copy of $U$ to the composite 
\[  \xymatrix@1{
U \ar[r]^-{\ps^{-1}} & U^k \ar[r]^-{\phi_I} & U^k \ar[r]^-{\ps} & U.\\} \]
It is tedious but straightforward to verify that all conditions 
specified in \cite[VI.1.6]{MQR}, \cite[4.2]{Rant2} that make sense are 
satisfied\footnote{In fact, with the details of \cite[4.2]{Rant2}, the only condition that does not make sense would 
require $\la(1) = \id\in\sV_G(1)$, where $\{1\}=\sW(0)$, and that condition lacks force since it does not interact with the remaining conditions.}.
\end{proof}

\begin{rem} When all $j_i = 1$, so that there is only one sequence $I$, we can 
define $\la$ more generally, 
with $\sV^\times_G(k)$ replacing $\sW_G(k)$, by letting 
\[ \la(\ps; \ph_1,\cdots, \ph_k)\colon U\rtarr U \]
be the identity on the complement of the image of the injection $\ps\colon U^k\rtarr U$ and
\[  \xymatrix@1{
\ps(U) \ar[r]^-{\ps^{-1}} & U^k \ar[r]^-{\phi_I} & U^k \ar[r]^-{\ps} & \ps(U)\\} \]
on the image of $\ps$. Clearly we can replace $\sV_G(1)$ by $\sV^\times_G(1)$ here.
\end{rem}

This allows us to give the following speculative analogue of \myref{permcat}. 
An $E_{\infty}$ ring space is defined to be a $(\sC,\sG)$-space, where $(\sC,\sG)$ is an 
operad pair such that $\sC$ and $\sG$ are $E_{\infty}$ operads of spaces. Briefly, a $(\sC,\sG)$-space $X$ is a $\sC$-space and a $\sG$-space with respective basepoints $0$ and $1$ such that $0$ is a zero element for the $\sG$-action and the action $\mathbf{C} X\rtarr X$ is a map of $\sG$-spaces with zero, where $\mathbf{C}$ denotes the monad associated to the operad $\sC$. Here the action 
of $\sG$ on $\sC$ induces an action of $\sG$ on the free $\sC$-spaces $\mathbf{C}X$, 
so that $\mathbf{C}$ restricts to a monad in the category of $\sG$-spaces. 
These notions are redefined in the more recent papers \cite{Rant1, Rant2}. The definitions are formal and apply equally well to spaces, $G$-spaces, categories, 
and $G$-categories.  

\begin{defn}\mylabel{bipermcat} An $E_{\infty}$ ring $G$-category $\sA$ is a $G$-category 
together with an action by the $E_{\infty}$ operad pair $(\sV_G,\sW_G)$ such that the multiplicative 
action extends from the restricted $E_{\infty}$ operad $\sW_G$ to an action of the $E_{\infty}$ operad $\sV^\times_G$. 
\end{defn}

The notion of a bipermutative category, or symmetric strict bimonoidal category, was specified in \cite[VI.3.3]{MQR}.  With the standard skeletal model, the direct sum and tensor product on the category of finite dimensional free modules over a commutative ring $R$ gives a typical example. Without any categorical justification, we allow ourselves to think of $E_{\infty}$ ring $G$-categories as an $E_{\infty}$ version of genuine operadic bipermutative $G$-categories.  A less concrete but more general version of this notion is defined and developed in \cite{GMMOMult}. 

Our notion of an $E_{\infty}$ $G$-category $\sA$ implies that $B\sA$ is an $E_{\infty}$ $G$-space. We would like to say that our notion of an $E_{\infty}$ ring $G$-category $\sA$ implies that $B\sA$ is an $E_{\infty}$ ring $G$-space, but that is not quite true. 
However, we believe there is a way to prove the following conjecture that avoids the categorical work of \cite{EM, GMMOAdd, GMMOMult, MayMult, Rant2}.  However, that proof is work in progress. 

\begin{conj} There is an infinite loop space machine that carries $E_{\infty}$ ring
$G$-categories to $E_{\infty}$ ring $G$-spectra. 
\end{conj}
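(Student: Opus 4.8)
The plan is to transport the operadic approach to multiplicative infinite loop space theory of \cite{MQR}, as modernized in \cite{Rant1, Rant2}, to the equivariant setting in precisely the way the additive machine of \S\ref{Sec4} transports the construction of \cite{MayGeo}; the point of insisting on the operadic route is that it sidesteps the categorical coherence machinery of \cite{EM, MayMult, Rant2}. First I would apply the classifying space functor $|-|$. Since it preserves products it carries the operad pair $(\sP_G,\sR_G)$, together with the extension of the $\sR_G$-action to $\sQ_G$, to an operad pair $(|\sP_G|,|\sR_G|)$ of $G$-spaces with multiplicative extension to $|\sQ_G|$, and it carries an $E_{\infty}$ ring $G$-category $\sA$ to $B\sA$ with all of these action maps. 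As flagged just before the conjecture, $B\sA$ need not be an $E_{\infty}$ ring $G$-space on the nose; I would bridge this exactly as in \cite{Rant1, Rant2}, either by carrying the weaker structure through the machine unchanged and observing that the obstruction to a strict ring structure vanishes on passage to spectra, or by rectifying $B\sA$ within ring $G$-spaces first. For the rectification the relevant point is that the action of $\sR_G$ on $\sP_G$ makes $\mathbf{P}_G$ a monad in the category of $|\sR_G|$-spaces, so that the two-sided bar construction $B(\mathbf{P}_G,\mathbf{P}_G,B\sA)$ may be formed there; it produces a $(|\sP_G|,|\sR_G|)$-space weakly equivalent to $B\sA$.

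Next I would feed the resulting $E_{\infty}$ ring $G$-space into the machine of \S\ref{Sec4}, taking the additive operad to be $\sC_G = |\sP_G|$ and forming $\sC_V = \sC_G\times \sK_V$, but replacing the product-of-operads trick there by the product-of-operad-pairs trick of \cite{Rant1, Rant2}: since $(\sK_V, \sL_G)$ is an $E_{\infty}$ operad pair of $G$-spaces --- the one genuinely available multiplicative example, recalled in \S\ref{App} --- the product $\bigl(\sC_V,\ |\sR_G|\times\sL_G\bigr)$ is again an $E_{\infty}$ operad pair acting on our space, with multiplicative part extending to $|\sQ_G|\times\sL_G$. One then reruns the bar construction $\bE_G Y(V) = B(\SI^V,\mathbf{C}_V,Y)$, carrying the multiplicative structure along: the essential facts are that the functors $\SI^V$ (equivalently $\SI^{\infty}_G$) are lax symmetric monoidal for the internal smash product of orthogonal $G$-spectra, that $\mathbf{C}_V$ restricts to a monad in $\bigl(|\sR_G|\times\sL_G\bigr)$-spaces so that the bar construction stays inside ring $G$-spectra, and that the $|\sQ_G|$-extension supplies the unit map $S_G\rtarr \bK_G\sA$. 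The output would be an $\sL_G$-spectrum, transportable as in \S\ref{Sec4} to a commutative $S_G$-algebra in $G\sZ$ and so to an $E_{\infty}$ ring $G$-spectrum in whichever model one prefers, whose underlying additive $G$-spectrum (forget the $|\sR_G|$-action) is $\bK_G\sA = \bE_G(B\sA)$, so the results of \S\ref{Sec4} and \S\ref{SecKGB} all apply.

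The hard part will be the bookkeeping of the interaction between the multiplicative bar construction and the smash product of genuine $G$-spectra; nonequivariantly this already forced the delicate ``whiskering'' arguments of \cite[VI--VIII]{MQR} and their reworking in \cite{Rant2}, and equivariantly there is a further subtlety: a genuine commutative ring $G$-spectrum carries norm structure, so one should not expect the bare operadic output to be a genuine commutative (or even $N_\infty$) ring $G$-spectrum without additional argument. Pinning down precisely which multiplicative structure the machine produces, and confirming that it suffices for the intended applications to equivariant algebraic $K$-theory, is the delicate point. A secondary and essentially routine obstacle is checking that the $\sR_G$-action on $\sP_G$ passes through $|-|$ to an honest operad-pair action of $G$-spaces and survives the rectification; as the footnote to the proposition establishing that action already indicates, this involves axioms that only partially make sense for a restricted operad, so one must be careful about stating exactly what is preserved.
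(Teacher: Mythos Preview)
The paper does not prove this statement. It is explicitly labeled a \emph{Conjecture}, and the sentence immediately preceding it says only that ``we believe there is a way to prove the following conjecture that avoids any of the categorical work of \cite{EM, MayMult, Rant2}. However, that proof is work in progress.'' There is therefore no paper proof to compare your proposal against.

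That said, your sketch is in the spirit of what the authors hint at: an operadic transport of the multiplicative machine of \cite{MQR, Rant1, Rant2} to the equivariant setting, bypassing the categorical coherence arguments. You have correctly identified the pressure points the authors themselves flag --- that $B\sA$ is not literally an $E_\infty$ ring $G$-space, that $\sR_G$ is only a restricted operad so the operad-pair axioms are only partially available, and that the interaction with the smash product on the spectrum level is where the real work lies. Your observation about norm structure is apt and goes beyond anything the paper says; it is a genuine equivariant obstruction with no nonequivariant analogue, and any honest proof would have to confront it. But as the authors themselves decline to give a proof and call it work in progress, your proposal should be read as a plausible outline rather than something that can be checked against the paper.
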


\section{Examples of $E_{\infty}$  and $E_{\infty}$ ring $G$-categories}\label{sec:Einf}
We have several interesting examples.  We emphasize that these particular constructions
are new even when $G=e$.  In that case, we may take $U$ to be the set of positive integers, with 
$1$ as basepoint. 

We have the notion of a genuine permutative $G$-category, which comes with a 
preferred product, and the 
notion of a $\sV_G$-category, which does not.  It seems plausible that 
the latter notion is more general, but to verify that we would have to show how to regard a permutative category as a $\sV_G$-algebra. One natural way to do so would 
be to construct a map of operads $\sV_G\rtarr \sP_G$, but we do not know how to do 
that. Of course, the equivalence of $\sV_G$-categories and $\sP_G$-categories shows that genuine permutative categories give a plethora of examples of $\sV_G$-algebras up to homotopy. However, the most important examples can easily be displayed directly, 
without recourse to the theory of permutative categories.  

\subsection{The $G$-category $\sE^U_G = \sE_G^{\sV}$ of finite sets}
Recall \myref{altnot}. Intuitively, we would like to have a genuine permutative $G$-category
whose product is given by disjoint unions of finite sets, with $G$ relating
finite sets (not $G$-sets) by translations. Even nonequivariantly, this is 
imprecise due to both size issues and the fact that categorical coproducts are not strictly asssociative. We make it precise by taking coproducts of finite subsets of our 
ambient $G$-set $U$, but we must do so without assuming that our given finite 
subsets are disjoint.  We achieve this by using injections $^j\!U\rtarr U$ to 
separate them.  We do not have canonical choices for the injections,
hence we have assembled them into our categorical $E_{\infty}$ operad $\sV_G$.  
Recall \myref{EGJ} and \myref{GSIHtpyType}.  

\begin{defn}\mylabel{ExamE} The $G$-category $\tilde{\sE}^U_G$ of finite ordered sets is the coproduct 
over $n\geq 0$ of the $G$-categories $\tilde{\sE}^U_G(n)$. The $G$-category $\sE^U_G = \sE_G^{\sV}$ of 
finite sets is the coproduct
over $n\geq 0$ of the orbit categories $\tilde{\sE}^U_G(n)/\SI_n$.  By \myref{GSIHtpyType}, $B\sE^U_G$ is 
the coproduct over $n\geq 0$ of classifying spaces $B(G,\SI_n)$. Explicitly, by \cite[6.5]{GMM},
the objects of $\sE^U_G$ are the finite subsets (not $G$-subsets) $A$ of $U$. 
Its morphisms are the bijections $\nu\colon A\rtarr B$; if $A$ has $n$ points,
the morphisms $A\rtarr A$ give a copy of the set $\SI_n$. The group $G$ acts by translation on objects, so that 
$gA = \{ga | a\in A\}$, and by conjugation on morphisms, so that  $g\nu\colon gA\rtarr gB$ is given by 
$(g\nu)(g\cdot a) = g\cdot\nu(a)$. 
\end{defn}

\begin{prop} The $G$-categories $\tilde{\sE}^U_G$ and $\sE^U_G$ are $\sV_G$-categories,
and passage to orbits over symmetric groups defines a map $\tilde{\sE}^U_G\rtarr \sE^U_G$
of $\sV_G$-categories.
\end{prop}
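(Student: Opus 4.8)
The plan is to exhibit explicit action maps $\theta_k\colon \sP_G(k)\times \tilde{\sE}_G^k\rtarr \tilde{\sE}_G$ and check the operad-algebra axioms, then observe that these maps descend to orbits. First I would define the action on objects. An object of $\sP_G(k)\times \tilde{\sE}_G^k$ consists of an injection $\ph\colon {}^k\!U\rtarr U$ together with pairs $(A_r,\al_r)$, $1\le r\le k$, where $A_r\subset U$ has $n_r$ elements and $\al_r\colon \mathbf{n}_r\rtarr A_r$ is a bijection. I would send this to the pair $(B,\be)$ where $B = \coprod_r \ph_r(A_r)\subset U$ is the disjoint union of the images of the $A_r$ under the restrictions $\ph_r\colon U\rtarr U$ of $\ph$ to the $r$th copy of $U$ in ${}^k\!U$ (these images are genuinely disjoint subsets of $U$ precisely because $\ph$ is injective on ${}^k\!U$), and $\be\colon \mathbf{n}\rtarr B$, $n=n_1+\cdots+n_k$, is the bijection determined by the block identification $\mathbf{n}\iso \mathbf{n}_1\amalg\cdots\amalg\mathbf{n}_k$ followed by $\coprod_r(\ph_r\com\al_r)$. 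Since both $\sP_G(k)$ and $\tilde{\sE}_G(n_r)$ are chaotic, this object-level assignment determines the functor on morphisms uniquely, so no further choices are needed there.

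Next I would verify equivariance and the operad axioms. The $G$-equivariance is immediate from the defining formulas: $G$ acts by conjugation on injections and by translation/conjugation on the $\tilde{\sE}_G(n)$, and $g\cdot\bigl(\coprod_r\ph_r(A_r)\bigr) = \coprod_r (g\ph_r g^{-1})(gA_r)$, which is exactly the image of the translated data; the bijection $\be$ transforms compatibly. The $\SI_{n_r}$-equivariance and the way the blockwise $\SI_{n_1}\times\cdots\times\SI_{n_k}\rtarr \SI_n$ and the $\SI_k$-action on ${}^k\!U$ interact give the required $\SI_n$-equivariance of $\theta$ after passing through the operad structure maps $\ga$ of $\sP_G$. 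The unit axiom holds since $\id\in\sP_G(1)$ is the identity function $U\rtarr U$, so $\theta_1(\id;(A,\al)) = (A,\al)$. The associativity axiom $\theta_k(\ga(\ph;\ph_1,\dots,\ph_k); x_1,\dots,x_j) = \theta_k(\ph; \theta_{j_1}(\ph_1;x_1,\dots),\dots)$ reduces to the statement that forming $\coprod$ of subsets of $U$ separated by nested injections is associative up to the canonical reindexing, which is built into the definition of $\ga$ for $\sP_G$ (``first take coproducts of maps, then compose''); here the fact that $\sP_G$ was constructed precisely so that these coproduct-then-compose operations are strictly associative is what makes the verification a routine diagram chase rather than a coherence problem.

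Finally I would handle passage to orbits. The symmetric group $\SI_n$ acts on $\tilde{\sE}_G(n)$ by precomposition $(A,\al)\si = (A,\al\com\si)$, and the action $\theta$ is built from the $\SI_{n_r}$-actions on the factors in a way compatible with the operad structure, so $\theta$ descends to a functor $\sP_G(k)\times\bigl(\coprod_n\tilde{\sE}_G(n)/\SI_n\bigr)^k\rtarr \coprod_n\tilde{\sE}_G(n)/\SI_n$, i.e.\ to a $\sP_G$-action on $\sE_G$. Concretely, on objects this is exactly disjoint union of finite subsets of $U$ via the separating injections, matching the explicit description in \myref{ExamE}. The quotient functor $q\colon\tilde{\sE}_G\rtarr\sE_G$ then intertwines the two $\sP_G$-actions by construction, so it is a map of $\sP_G$-categories. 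I do not expect any serious obstacle: the one place needing care is keeping the block and lexicographic reindexing bookkeeping consistent with $\ga$ for $\sP_G$ and with the $\SI_n$-actions, exactly the same bookkeeping already handled in \cite[\S6.1]{GMM}, so the proof is a verification that the chaotic structure makes automatic once the object map is fixed.
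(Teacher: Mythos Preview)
Your proposal is correct and follows essentially the same approach as the paper: the object-level formula $(B,\be)=\bigl(\ph(A_1\amalg\cdots\amalg A_k),\,\ph\com(\al_1\amalg\cdots\amalg\al_k)\bigr)$ is identical, and both you and the paper exploit that $\tilde{\sE}_G$ is chaotic so that only the object map needs specifying. The only difference is order of presentation: the paper first defines the $\sP_G$-action on $\sE_G$ directly (writing out the morphism level via an explicit commutative square) and then on $\tilde{\sE}_G$, checking compatibility afterward via chosen reference bijections $\et_A\colon\mathbf{n}\rtarr A$, whereas you define the action on $\tilde{\sE}_G$ first and then descend to orbits; this is a cosmetic reordering rather than a different argument.
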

\begin{proof}
Define a $G$-functor
\[ \tha_j\colon\sV_G(j) \times (\sE^U_G)^j\rtarr \sE^U_G \]
as follows. On objects, for $\ph\in\sV_G(j)$ and $A_{i}\in O\!b\,\sE^U_G$, 
$1\leq i\leq j$, define 
\[ \tha_j(\ph; A_1,\cdots,A_j) = \ph(A_1\amalg \cdots\amalg A_j), \]
where $A_i$ is viewed as a subset of the $i^{th}$ copy of $U$ in $^{j}U$.
For a morphism 
\[ (\ze; \nu_1, \cdots, \nu_j)\colon (\ph; A_1, \cdots, A_j)\rtarr (\ps; B_1, \cdots, B_j), \]
where $\ze\colon \ph\rtarr \ps$ is the unique morphism, 
define $\tha_j(\ze; \nu_1, \cdots, \nu_j)$ to be the unique bijection that makes the
following diagram commute.
\[ \xymatrix{
A_1 \amalg \cdots \amalg  A_j  \ar[d]_{\nu_1\amalg\cdots \amalg \nu_j} \ar[r]^-{\ph}
& \ph(A_1\amalg \cdots\amalg A_j)  \ar[d]^{\tha_j(\ze; \nu_1, \cdots, \nu_j)}\\
B_1 \amalg \cdots \amalg B_j \ar[r]_-{\ps} & \ps(B_1\amalg \cdots\amalg B_j) \\} \]
Then the $\tha_j$ specify an action of $\sV_G$ on $\sE^U_G$.  

Since the $\tilde{\sE}^U_G(n)$
are chaotic, to define an action of $\sV_G$ on $\tilde{\sE}^U_G$ we need only specify
the required $G$-functors 
\[ \tilde{\tha}_j\colon\sV_G(j) \times (\tilde{\sE}^U_G)^j\rtarr \tilde{\sE}^U_G \]
on objects.  A typical object has the form $(\ph;(A_1,\io_1), \cdots, (A_j,\io_j))$,
$\io_i\colon \mathbf{n_i}\rtarr A_i$. We have the canonical isomorphism 
$\mathbf{n_1}\amalg\cdots \amalg \mathbf{n_1}\iso \mathbf{n}$, $n = n_1+\cdots n_j$, 
and $\tilde{\tha}_j$ sends our typical object to
\[ \big{(}\ph(A_1\amalg\cdots, \amalg A_j),\ph\com (\io_1\amalg\cdots \amalg \io_j)\big{)}.\]  
Again, the $\tilde{\tha}_j$ specify an action. The compatibility with passage
to orbits is verified by use of canonical orbit
representatives for objects $A$ that are obtained by choosing fixed reference maps 
$\et_A\colon \mathbf{n}\rtarr A$ for each $n$-point set $A\subset U$; compare
\cite[Proposition 6.3 and Lemma 6.5]{GMM}. 
\end{proof}

\begin{rem}\mylabel{EvsF} If we restrict to the full $G$-subcategory of $\sE^U_G$ of 
$G$-fixed sets $A$ of cardinality $n$, we obtain an equivalent analogue 
of the category $\sF_G(n)$ of \myref{Gsets}: these are two small models 
of the $G$-category of all $G$-sets with $n$ elements and the bijections 
between them, and they have 
isomorphic skeleta. Thus the restriction of $\sE^U_G$ to its full $G$-subcategory 
of $G$-fixed sets $A$ is an equivalent analogue of $\sF_G$.  Remember from 
\myref{subtle} that no $E_{\infty}$ operad can be expected to act on $\sF_G$. 
The $\sV_G$-category $\sE^U_G$ gives a convenient substitute. 
\end{rem}

\subsection{The $G$-category $\sG\!\sL_G(R)$ for a $G$-ring $R$}\label{SecKGBtoo}
Let $R$ be a $G$-ring, that is a ring with an action
of $G$ through automorphisms of $R$.  We have analogues of Definitions
\ref{EGJ} and \ref{ExamE} that can be used in equivariant algebraic $K$-theory. 
For a set $A$, let $R[A]$ denote the free $R$-module on the basis $A$. Let $G$ 
act entrywise on the matrix group $GL(n,R)$ and diagonally on $R^n$.  Our conventions 
on semi-direct products
and their universal principal $(G,GL(n,R)_G)$-bundles are in \cite{GMM}, and
\cite[\S6.3]{GMM} gives more details on the following definitions.

\begin{defn}\mylabel{ExamR0} We define the chaotic general linear category 
$\widetilde{\sG\!\sL}_G(n,R)$. Its objects are the monomorphisms of (left) $R$-modules 
$\io\colon R^n\rtarr R[U]$.  The group $G$ acts on objects by 
$g\io = g\com \io \com g^{-1}$.  The group $GL(n,R)$ acts on objects by
$\io \ta = \io \com \ta\colon R^n\rtarr R[U]$.  Since $\widetilde{\sG\!\sL}_G(n,R)$ is chaotic, 
this determines the actions on morphisms.
\end{defn}

\begin{prop}\mylabel{GLHtpyType}\cite[6.18]{GMM} 
The actions of $G$ and $GL(n,R)$ on $\widetilde{\sG\!\sL}_G(n,R)$
determine an action of $GL(n,R) \rtimes G$, and the classifying space 
$|\widetilde{\sG\!\sL}_G(n,R)|$ is a universal principal $(G,GL(n,R)_{G})$-bundle.
\end{prop}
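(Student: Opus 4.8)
The plan is to mimic, in a module-theoretic setting, the proofs of \myref{GSIHtpyType} and \myref{UniPQ}: nothing essentially new happens beyond replacing finite subsets of $U$ (resp.\ injections of $G$-sets) by $R$-module monomorphisms into $R[U]$. First I would check the compatibility of the two actions. For $g\in G$, $\ta\in GL(n,R)$, and an object $\al\colon R^n\rtarr R[U]$, a direct computation gives
\[ g(\al\ta)=g\com\al\com\ta\com g^{-1}=(g\com\al\com g^{-1})\com(g\com\ta\com g^{-1})=(g\al)(g\ta), \]
where $g\ta$ denotes the entrywise $G$-action on $GL(n,R)$. This is exactly the relation needed for the given left $G$-action and right $GL(n,R)$-action to assemble into an action of the semidirect product $GL(n,R)\rtimes G$, and since $\widetilde{\sG\!\sL}_G(n,R)$ is chaotic the action on morphisms is then forced.

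For the classifying space I would use the characterization of universal principal $(G,GL(n,R)_G)$-bundles from \cite{GMM}: it suffices that $|\widetilde{\sG\!\sL}_G(n,R)|^{\LA}$ be contractible when $\LA\subset GL(n,R)\rtimes G$ satisfies $\LA\cap GL(n,R)=\{e\}$ and empty otherwise. Since the category is chaotic, $|\widetilde{\sG\!\sL}_G(n,R)|^{\LA}=|(\widetilde{\sG\!\sL}_G(n,R))^{\LA}|$ is contractible as soon as the fixed object set is nonempty and is empty as soon as that set is empty, so everything reduces to the object level. Because the objects are \emph{monomorphisms}, $\al\com\ta=\al$ forces $\ta=\id$, so $GL(n,R)$ acts freely on objects; in particular $(\widetilde{\sG\!\sL}_G(n,R))^{\LA}$ is empty whenever $\LA\cap GL(n,R)\neq\{e\}$. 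When $\LA\cap GL(n,R)=\{e\}$, write $H\subset G$ for the image of $\LA$ under the projection $q\colon GL(n,R)\rtimes G\rtarr G$ and $\LA=\{(\rh(h),h)\,|\,h\in H\}$, where $\rh\colon H\rtarr GL(n,R)$ is the resulting crossed homomorphism (crossed rather than honest because $G$ acts nontrivially on $GL(n,R)$). A short unwinding shows that a $\LA$-fixed object is precisely an $R$-module monomorphism $\al\colon R^n\rtarr R[U]$ that is $H$-equivariant when $R^n$ carries the $\rh$-twisted $H$-action and $R[U]$ carries its evident $H$-action via $H\subset G$; equivalently, $\al$ is a monomorphism of $R\rtimes H$-modules $M\rtarr R[U]$, where $M$ is $R^n$ with this twisted structure.

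It remains to exhibit such an $\al$, and this is where the hypothesis on $U$ enters. Since $R[G]\iso R\rtimes G$ as left $R\rtimes G$-modules and $U$ contains countably many copies of the orbit $G/e$, the $R\rtimes H$-module $R[U]$ contains $(R\rtimes G)^{(\infty)}$, hence a countably infinite free $R\rtimes H$-module, as a summand. On the other hand $M$, being free of finite rank over $R$, is finitely generated over $R\rtimes H$, and the unit $M\rtarr\mathrm{Coind}_e^H(M|_e)\iso\mathrm{Ind}_e^H(M|_e)\iso(R\rtimes H)^n$ of the restriction--coinduction adjunction is an $R\rtimes H$-monomorphism into a finite free $R\rtimes H$-module, using that $H$ is finite so that coinduction agrees with induction. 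Composing, $M$ embeds $R\rtimes H$-linearly, a fortiori $R$-linearly and $H$-equivariantly, into $R[U]$, so $(\widetilde{\sG\!\sL}_G(n,R))^{\LA}$ is nonempty. This completes the argument, which is precisely the reasoning of \cite[6.18]{GMM}. The main obstacle is bookkeeping the semidirect-product structure — the fact that $\LA$ is the graph of a \emph{crossed} homomorphism, with the attendant twisted $R\rtimes H$-module on $R^n$ — and fitting the twisted-module embedding into the $(G,GL(n,R)_G)$-bundle framework; the module theory itself is routine.
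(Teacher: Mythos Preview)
The paper does not give a proof of this proposition; it simply cites \cite[6.18]{GMM}. Your argument is correct and follows exactly the template of the proof of \myref{UniPQ} given in the paper (and, as you note, of the cited result in \cite{GMM}): freeness of the $GL(n,R)$-action comes from the objects being monomorphisms, chaoticity reduces the bundle universality to nonemptiness of $\LA$-fixed objects, and the latter is supplied by an explicit construction. The only genuine wrinkle beyond \myref{UniPQ}, which you handle correctly, is that $G$ acts nontrivially on $GL(n,R)$, so the graph subgroup $\LA$ is determined by a crossed homomorphism and the resulting $H$-action on $R^n$ is twisted; your coinduction argument for embedding the twisted module into $R[U]$ is the right replacement for the orbit-counting argument used in \myref{UniPQ}.
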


\begin{defn}\mylabel{ExamR} The general linear $G$-category $\sG\!\sL_G(R)$ 
of finite dimensional free $R$-modules is the coproduct over $n\geq 0$ of
the orbit categories $\widetilde{\sG\!\sL}_G(n,R)/GL(n,R)$. By \myref{GLHtpyType},
$B\sG\!\sL_G(R)$ is the coproduct over $n\geq 0$ of classifying spaces $B(G,GL(n,R)_G)$.
Explicitly, by \cite[6.20]{GMM}, the objects of $\sG\!\sL_G(R)$ are the finite dimensional
free $R$-submodules $M$ of $R[U]$. 
The morphisms $\nu\colon M\rtarr N$ are the isomorphisms of $R$-modules.
The group $G$ acts by translation on objects, so that 
$gM = \{gm\,|\,m\in M\}$, and by conjugation on morphisms, 
so that $(g\nu)(gm) = \nu(m)$ for $m\in M$ and $g\in G$. 
\end{defn}

\begin{prop} The $G$-categories $\widetilde{\sG\!\sL}_G(R)$ and $\sG\!\sL_G(R)$ are 
$\sV_G$-categories and passage to orbits over general linear groups defines a 
map $\widetilde{\sG\!\sL}_G(R)\rtarr \sG\!\sL_G(R)$ of $\sV_G$-categories.
\end{prop}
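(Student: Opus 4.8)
The plan is to repeat, essentially verbatim, the argument just given for the $\sP_G$-category $\sE_G$, replacing finite subsets of $U$ by finite dimensional free $R$-submodules of $R[U]$ and bijections of finite sets by isomorphisms of $R$-modules. The one genuinely new ingredient is the observation, already recorded before \myref{operadPG}, that the free module functor $R[-]$ carries coproducts of sets to direct sums of $R$-modules: an injection $\ph\colon {}^jU\rtarr U$ induces a split $R$-module monomorphism $R[\ph]\colon R[{}^jU]\iso\bigoplus_{i=1}^j R[U]\rtarr R[U]$, and the entrywise $G$-action on $R[U]$ is $R$-linear and compatible with all of this. This is exactly what lets the injections parametrized by $\sP_G(j)$ do for submodules of $R[U]$ what they did for subsets of $U$.

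First I would define a $G$-functor $\tha_j\colon \sP_G(j)\times \sG\!\sL_G(R)^j\rtarr \sG\!\sL_G(R)$ by sending an object $(\ph;M_1,\cdots,M_j)$, with $M_i$ regarded as lying in the $i$th summand of $R[{}^jU]$, to the free $R$-submodule $R[\ph](M_1\oplus\cdots\oplus M_j)$ of $R[U]$, which is again finite dimensional and free since $R[\ph]$ is a monomorphism, and sending a morphism $(\io;\al_1,\cdots,\al_j)$, where $\io$ is the unique morphism of $\sP_G(j)$ and each $\al_i\colon M_i\rtarr N_i$ is an $R$-module isomorphism, to the unique isomorphism making the evident square (the direct-sum analogue of the square in the proof for $\sE_G$) commute. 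That $\tha_j$ is a $G$-functor follows from the entrywise action of $G$ on $R[U]$ together with the conjugation action on morphisms, and the operad action axioms hold because $\ga$ in $\sP_G$ is given by taking coproducts of injections and composing, which $R[-]$ turns into taking direct sums and composing. Next, since the categories $\widetilde{\sG\!\sL}_G(n,R)$ are chaotic, to define an action of $\sP_G$ on $\widetilde{\sG\!\sL}_G(R)$ it suffices to specify $\tilde{\tha}_j\colon \sP_G(j)\times\widetilde{\sG\!\sL}_G(R)^j\rtarr\widetilde{\sG\!\sL}_G(R)$ on objects, sending a typical object $(\ph;\al_1,\cdots,\al_j)$ with $\al_i\colon R^{n_i}\rtarr R[U]$ to $R[\ph]\com(\al_1\oplus\cdots\oplus\al_j)\colon R^n\rtarr R[U]$, $n=n_1+\cdots+n_j$, via the canonical isomorphism $R^{n_1}\oplus\cdots\oplus R^{n_j}\iso R^n$. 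Finally, for compatibility with passage to orbits over the groups $GL(n,R)$ I would fix reference monomorphisms $\et_M\colon R^n\rtarr M$ for each $n$-dimensional free $R$-submodule $M$ of $R[U]$, obtaining canonical orbit representatives for the objects of $\sG\!\sL_G(R)$, and check that $\tilde{\tha}_j$ descends to $\tha_j$ along these exactly as in \cite[Proposition~6.3 and Lemma~6.5]{GMM} and in the proof of the corresponding statement for $\sE_G$.

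The main obstacle is really the same (mild) one that already appeared for $\sE_G$: making precise the ``direct sum'' of submodules $M_1,\cdots,M_j$ of $R[U]$ that need not meet trivially inside $R[U]$. This is precisely what the separating injections assembled into $\sP_G$ are designed to resolve, and once the coproduct-preservation of $R[-]$ has been recorded, every remaining verification of the operad action axioms and of the compatibility with orbit passage is the same diagram chase already carried out for $\sE_G$. I would therefore present the proof by explicit analogy with that argument rather than reproducing the chases in full.
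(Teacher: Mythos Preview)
Your proposal is correct and matches the paper's proof essentially line for line: the same definition of $\tha_j$ on objects via $R[\ph](M_1\oplus\cdots\oplus M_j)$, the same unique-filler definition on morphisms, the same object-only specification of $\tilde{\tha}_j$ on the chaotic $\widetilde{\sG\!\sL}_G(R)$, and the same appeal to reference maps $\et_M$ for the orbit compatibility. The only cosmetic difference is that the paper cites \cite[6.18, 6.20]{GMM} (the $GL$ versions) rather than \cite[6.3, 6.5]{GMM} (the $\sE_G$ versions) for the reference-map argument.
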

\begin{proof}
Define a functor
\[ \tha_j\colon\sV_G(j) \times \sG\!\sL_G(R)^j\rtarr \sG\!\sL_G(R) \]
as follows. On objects, for $\ph\in\sV_G(j)$ and $M_i\in \sP b\sG\!\sL_G(R)$, 
$1\leq i\leq j$, define
\[ \tha_j(\ph; M_1, \cdots, M_j) = R[\ph](M_1\oplus \cdots\oplus M_j), \]
where $R[\ph]\colon R[^j\!U]\rtarr R[U]$ is induced by $\ph\colon ^j\!U \rtarr U$
and $M_i$ is viewed as a submodule of the $i^{th}$ copy of $R[U]$ in 
$R[^j\!U] = \oplus_j R[U]$. 
For a morphism 
\[ (\ze; \nu_1, \cdots, \nu_j)\colon (\ph; M_1, \cdots, M_j)
\rtarr (\ps; N_1, \cdots, N_j), \]
define $\tha_j(\io; \nu_1, \cdots, \nu_j)$ to be the unique isomorphism of $R$-modules that makes the following diagram commute.
\[ \xymatrix{
M_1 \oplus \cdots \oplus M_j  \ar[d]_{\nu_1\oplus\cdots \oplus \nu_j} \ar[r]^-{R[\phi]}  
& R[\ph](M_1\oplus \cdots\oplus M_j)  \ar[d]^{\tha_j(\ze; \nu_1, \cdots, \nu_j)} \\
N_1 \oplus \cdots \oplus N_j \ar[r]_-{R[\ps]}
& R[\ps](N_1\oplus \cdots\oplus N_j)\\} \]
Then the $\tha_j$ specify an action of $\sV_G$ on $\sG\!\sL_G(R)$. Since the 
$\widetilde{\sG\!\sL}_G(R,n)$ are chaotic, to define an action of $\sV_G$ on 
$\widetilde{\sG\!\sL}_G(R)$, we need only specify the required $G$-functors
\[ \tilde{\tha}_j\colon\sV_G(j) \times \widetilde{\sG\!\sL}_G(R)^j
\rtarr \widetilde{\sG\!\sL}_G(R) \]
on objects.  A typical object has the form $(\ph;\io_1,\cdots,\io_j)$,
$\io_i\colon R^{n_i}\rtarr R[U]$, and, with $n=n_1+\cdots +n_j$,  $\tilde{\tha}_j$ 
sends it to 
$$R[\ph]\com (\io_1\oplus\cdots \oplus \io_j)\colon R^n\rtarr R[U].$$
Again, the $\tilde{\tha}_j$ specify an action. The compatibility with passage
to orbits is verified by use of canonical orbit
representatives for objects that are obtained by choosing reference maps 
$\et_M\colon R^n \rtarr M$ for each $M$ dimensional free $R$-module $M\subset R[U]$; compare \cite[6.18, 6.20]{GMM}. 
\end{proof}

On passage to classifying spaces and then to $G$-spectra via our infinite loop space machine  $\bE_G$, we obtain a 
model $\bE_G B\sG\!\sL_G(R)$ for the $K$-theory spectrum 
$\bK_G(R)$ of $R$. The following result compares the two evident models in sight.

\begin{defn} Define the naive permutative $G$-category $GL_G(R)$ to be the 
$G$-groupoid whose objects are the $n\geq 0$ and whose set of morphisms 
$m\rtarr n$ is empty if $m\neq n$ and is the $G$-group $GL(n,R)$ if $m=n$, 
where $G$ acts entrywise.  The product is given by block sum of matrices. 
Applying the chaotic groupoid functor to the groups $GL(n,R)$ we obtain 
another naive permutative $G$-category $\widetilde{GL}_G(R)$ and a map 
$\widetilde{GL}_G(R)\rtarr GL_G(R)$ of naive permutative $G$-categories.
Applying the functor $\sC\!at(\tilde G,-)$ from \myref{ScriptG}, we obtain a map of 
genuine permutative $G$-categories $\sC\!at(\tilde G, (\widetilde{GL}_G(R)))\rtarr \sC\!at(\tilde G,(GL_G(R)))$.  
\end{defn}

It is convenient to write $\sG\!\sL_G^{\sP}(R)$ for the $\sP_G$-category 
$\sC\!at(\tilde G,(GL_G(R)))$ and $\sG\!\sL_G^{\sV}(R)$ for the $\sV_G$-category $\sG\!\sL_G(R)$,
and similarly for their total space variants  $\sC\!at(\tilde G,(\widetilde{GL}_G(R)))$ 
and $\widetilde{\sG\!\sL}_G(R)$.  We have the following comparison theorem.

\begin{thm}  The $G$-spectra $\bK_G \sG\!\sL_G^{\sP}(R)$ and
$\bK_G \sG\!\sL_G^{\sV}(R)$ are weakly equivalent, functorially
in $G$-rings $R$. 
\end{thm}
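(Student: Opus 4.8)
The plan is to obtain the asserted equivalence as a natural zigzag, by means of the product-of-operads comparison of \S\ref{Sec2.3}. Consider the $E_\infty$ operad of $G$-categories $\sO_G\times\sP_G$, with its two projections to $\sO_G$ and to $\sP_G$. Pulling back algebra structures along these projections makes both $\sG\!\sL_G^{\sO}(R)$ and $\sG\!\sL_G^{\sP}(R)$ into $(\sO_G\times\sP_G)$-categories, and it suffices to connect them, naturally in the $G$-ring $R$, by a chain of maps of $(\sO_G\times\sP_G)$-categories each of which is an equivalence of $H$-fixed categories for every $H\subset G$. Given such a chain, we apply the classifying space functor $B$ --- which commutes with passage to $H$-fixed points and carries categorical $H$-equivalences to weak $G$-equivalences --- and then the infinite loop space machine $\bE_G$ associated to $\sO_G\times\sP_G$ (producted with the Steiner operads), getting weak equivalences of $G$-spectra. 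Finally \S\ref{Sec2.3} identifies the two ends of the chain with $\bE_G$ built from $\sO_G$ applied to $B\sG\!\sL_G^{\sO}(R)$ and with $\bE_G$ built from $\sP_G$ applied to $B\sG\!\sL_G^{\sP}(R)$, that is, with $\bK_G\sG\!\sL_G^{\sO}(R)$ and $\bK_G\sG\!\sL_G^{\sP}(R)$; since every step is natural in $R$, this gives the functorial weak equivalence $\bK_G\sG\!\sL_G^{\sO}(R)\htp \bK_G\sG\!\sL_G^{\sP}(R)$.

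The $H$-equivalence condition is handled at the level of the total-space models $\widetilde{\sG\!\sL}_G(R)$ and $\sC\!at(\tilde G,\widetilde{GL}_G(R))$, which are coproducts over $n\geq 0$ of orbit categories, by $GL(n,R)$, of chaotic $(GL(n,R)\rtimes G)$-categories. As in the proofs of \myref{UniPQ} and \myref{GLHtpyType} (see \cite[\S6]{GMM}), for a subgroup $\LA$ of $GL(n,R)\rtimes G$ the $\LA$-fixed object set of each such chaotic category is nonempty precisely when $\LA$ meets $GL(n,R)$ trivially; hence any $(GL(n,R)\rtimes G)$-equivariant functor between two of them is automatically a $\LA$-fixed equivalence for all $\LA$, and passing to orbits over $GL(n,R)$ and then to $H$-fixed categories --- exactly as in \cite[\S6]{GMM} --- one gets the required $H$-fixed equivalences of $G$-categories. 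This also matches up the fixed-point decompositions: for $H\subset G$ both $(\sG\!\sL_G^{\sO}(R))^H$ and $(\sG\!\sL_G^{\sP}(R))^H$ are equivalent to the groupoid of finite-rank free $R$-modules equipped with a semilinear $H$-action, so that the resulting weak equivalence of $G$-spectra is compatible on $H$-fixed homotopy groups with the identification of $\pi_*^H\bK_G(-)$ in the spirit of the Galois-extension example in \S\ref{SecKGB}.

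The substantive point, which I expect to be the main obstacle, is the construction of the bridging $(\sO_G\times\sP_G)$-categories themselves. One must put compatible actions of $\sO_G$ and of $\sP_G$ on a single $G$-category which forgets to $\sG\!\sL_G^{\sP}(R)$ on the one side and to $\sG\!\sL_G^{\sO}(R)$ on the other: the $\sP_G$-action being the one of the Proposition above, acting on a monomorphism $\al\colon R^n\rtarr R[U]$ through $R[\ph]$ for $\ph\in\sP_G(j)$, and the $\sO_G$-action being the one provided by \myref{ScriptG}, acting on a $\tilde G$-cocycle through the block-sum permutative structure of $\widetilde{GL}_G(R)$. Because there is no operad map $\sP_G\rtarr \sO_G$ --- the obstruction discussed in \S\ref{SecPQR} --- these two actions cannot simply be transported onto one and the same coordinate, so the bridging category has to be thickened, carrying the operad-$\sP_G$ injection data alongside the $\tilde G$-cocycle data in a way that makes the combined structure an honest $(\sO_G\times\sP_G)$-action that is interchanged correctly by the two forgetful functors. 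Setting this up is the delicate step; once it is done, checking it is a diagram chase of the same flavor as the verifications that $\sG\!\sL_G^{\sP}(R)$ and $\sE_G^{\sP}$ are $\sP_G$-categories, and the first paragraph then finishes the argument. Should the thickening prove unwieldy, a fallback is to compare both constructions through the span-style model of \S\ref{Version1}, which already encodes the relevant naturality, and to deduce the equivalence from the matching of $H$-fixed spectra together with \myref{SpFour2} and its evident restriction along inclusions $H\hookrightarrow G$.
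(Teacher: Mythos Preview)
Your overall strategy is exactly the paper's: use the product operad $\sO_G\times\sP_G$ and connect the two $G$-categories by a zigzag of $(\sO_G\times\sP_G)$-maps that induce equivalences on classifying spaces, then apply $\bK_G$. Your second paragraph correctly locates the comparison at the level of the chaotic total-space categories and universal principal $(G,GL(n,R)_G)$-bundles.

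Where you go astray is the third paragraph. You treat the construction of the bridging $(\sO_G\times\sP_G)$-category as the main obstacle and anticipate a delicate thickening; in fact the bridge is immediate. If $\sA$ is a $\sC$-algebra and $\sB$ is a $\sD$-algebra in any cartesian monoidal category, then $\sA\times\sB$ is canonically a $(\sC\times\sD)$-algebra, with the action on each factor coming through the corresponding operad projection, and the two projections $\sA\times\sB\rtarr\sA$, $\sA\times\sB\rtarr\sB$ are $(\sC\times\sD)$-maps. The paper simply takes the diagonal product
\[
\coprod_{n\geq 0}\ \widetilde{\sG\!\sL}_G^{\sO}(n,R)\times \widetilde{\sG\!\sL}_G^{\sP}(n,R),
\]
which is an $(\sO_G\times\sP_G)$-category mapping to $\widetilde{\sG\!\sL}_G^{\sO}(R)$ and to $\widetilde{\sG\!\sL}_G^{\sP}(R)$, and then passes to orbits under the diagonal $GL(n,R)$-action to obtain the middle term of the bottom row. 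Since the product of two total spaces of universal principal $(G,GL(n,R)_G)$-bundles is again one, both horizontal projections become equivalences after applying the classifying space functor. There is no need for an operad map $\sP_G\rtarr\sO_G$, nor to merge the two actions on a single coordinate: the product operad acts on the product category factorwise, and that is all the compatibility required. Your ``thickening, carrying the operad-$\sP_G$ injection data alongside the $\tilde G$-cocycle data'' is literally this product, and once you see that, the step you flag as delicate is a one-line observation rather than an obstacle.
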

\begin{proof} We again use the product of operads trick from \cite{MayGeo}. 
Projections and quotient maps give a commutative diagram of 
$(\sP_G\times\sV_G)$-categories
\[ \xymatrix{
\widetilde{\sG\!\sL}_G^{\sP}(R) \ar[d] &
\widetilde{\sG\!\sL}_G^{\sP}(R)\times \widetilde{\sG\!\sL}_G^{\sV}(R)
\ar[d] \ar[l] \ar[r]  &
\widetilde{\sG\!\sL}_G^{\sV}(R) \ar[d] \\
\sG\!\sL_G^{\sP}(R)   & 
\sG\!\sL_G^{\sP\times\sV}(R) \ar[l] \ar[r] &
\sG\!\sL_G^{\sV}(R).\\} \]
The middle term at the top denotes the diagonal product, namely
\[ \coprod_n \ \widetilde{\sG\!\sL}_G^{\sP}(n,R) \times \widetilde{\sG\!\sL}_G^{\sV}(n,R).\] 
The middle term on the bottom is the coproduct over $n$ of the orbits of these products
under the diagonal action of $GL(n,R)$.
The product of total spaces of universal principal $(G,GL(R,n)_G)$-bundles is 
the total space of another universal principal $(G,GL(R,n)_G)$-bundle.  Therefore,
after application of the classifying space functor, the horizontal projections 
display two equivalences between universal principal $(G,GL(R,n)_G)$-bundles.
The conclusion follows by hitting the resulting diagram with the functor $\bK_G$ 
defined with respect to $(\sP_G\times\sV_G)$-categories and using evident equivalences 
to the functors $\bK_G$ defined with respect to $\sP_G$-categories and $\sV_G$-categories when the input is given by $\sP_G$ or $\sV_G$-categories.
\end{proof}

\subsection{Multiplicative actions on $\sE^U_G$ and $\sG\!\sL_G(R)$}
 
We agree to think of $\sV^\times_G$-categories as ``multiplicative'', whereas we think of 
$\sV_G$-categories as ``additive''.  

\begin{prop}\mylabel{ExamEM} The $G$-category $\sE^U_G$ is a $\sV^\times_G$-category.
\end{prop}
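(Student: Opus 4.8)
The plan is to equip $\sE_G$ with the multiplicative analogue of the $\sP_G$-action already built from disjoint unions, using cartesian products of finite subsets of $U$ in place of coproducts. First I would define, for each $j\geq 0$, a $G$-functor
\[ \tha_j\colon \sQ_G(j)\times\sE_G^j\rtarr\sE_G. \]
On objects, for a based injection $\ps\colon U^j\rtarr U$ and finite subsets $A_1,\cdots,A_j$ of $U$ (recall \myref{ExamE} and \myref{EGJ}), put
\[ \tha_j(\ps;A_1,\cdots,A_j) = \ps(A_1\times\cdots\times A_j), \]
the image in $U$ of the product subset $A_1\times\cdots\times A_j\subseteq U^j$; this is a finite subset of $U$ of cardinality $\prod_i|A_i|$, it is the zero object $\emptyset$ of $\sE_G$ as soon as some $A_i$ is empty, and $\tha_0$ selects the singleton $\{1\}\subseteq U$ determined by the chosen $G$-fixed basepoint, which will be the multiplicative unit. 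On morphisms, given the unique morphism $\io\colon\ps\rtarr\ps'$ in the chaotic category $\sQ_G(j)$ together with bijections $\al_i\colon A_i\rtarr B_i$, let $\tha_j(\io;\al_1,\cdots,\al_j)$ be the unique bijection making the square
\[ \xymatrix{
A_1\times\cdots\times A_j \ar[d]_{\al_1\times\cdots\times\al_j} \ar[r]^-{\ps} & \ps(A_1\times\cdots\times A_j) \ar[d]^{\tha_j(\io;\al_1,\cdots,\al_j)} \\
B_1\times\cdots\times B_j \ar[r]_-{\ps'} & \ps'(B_1\times\cdots\times B_j)
} \]
commute, which makes sense because $\ps$ and $\ps'$ are injective and hence restrict to bijections onto their images.

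Next I would check that each $\tha_j$ is a $G$-functor and that the $\tha_j$ assemble into an operad action. Equivariance is immediate: $G$ acts diagonally by translation on $U^j$, so $g(A_1\times\cdots\times A_j) = gA_1\times\cdots\times gA_j$, and $G$ acts on $\sQ_G(j)$ by conjugation, so $(g\ps)(gA_1\times\cdots\times gA_j) = g\com\ps(A_1\times\cdots\times A_j)$, with the parallel identity on morphisms. The unit axiom $\tha_1(\id_U;A)=A$ is clear, and the $\SI_j$-equivariance condition holds because the right $\SI_j$-action on $\sQ_G(j)$ is precomposition with the coordinate permutation of $U^j$, which matches the corresponding permutation of the factors of $A_1\times\cdots\times A_j$. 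The associativity axiom is the one step that requires a little care: unwinding the structure map $\ga$ of $\sQ_G$, which is obtained by ``first taking products of injections, then composing'', gives $\ga(\ps;\ps_1,\cdots,\ps_k) = \ps\com(\ps_1\times\cdots\times\ps_k)$, and applying this injection to a product of $j=j_1+\cdots+j_k$ finite subsets of $U$ grouped into the evident blocks, the map $\ps_1\times\cdots\times\ps_k$ carries the block product to $\tha_{j_1}(\ps_1;\cdots)\times\cdots\times\tha_{j_k}(\ps_k;\cdots)$ inside $U^k$, after which $\ps$ sends this to $\tha_k(\ps;\tha_{j_1}(\ps_1;\cdots),\cdots,\tha_{j_k}(\ps_k;\cdots))$. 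This is the required identity on objects, and the corresponding identity on morphisms is then forced by the uniqueness clause in the definition of $\tha_j$ on morphisms together with chaoticity of the $\sQ_G(j)$.

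Finally, paralleling the discussion of the $\sP_G$-action, I would record two supplementary points. Carrying ordering data along, the assignment sending $(\ps;(A_1,\al_1),\cdots,(A_j,\al_j))$ with $\al_i\colon\mathbf{n_i}\rtarr A_i$ to the pair $(\ps(A_1\times\cdots\times A_j),\,\ps\com(\al_1\times\cdots\times\al_j))$, the source reindexed by the canonical lexicographic bijection $\mathbf{n_1}\times\cdots\times\mathbf{n_j}\iso\mathbf{n}$ with $n=n_1\cdots n_j$, defines a $\sQ_G$-action on the chaotic $G$-category $\tilde{\sE}_G$ that is compatible with passage to orbits over symmetric groups. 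Moreover the same formulas make sense for an arbitrary, not necessarily based, injection $\ps\colon U^j\rtarr U$, so they in fact define an action of the unreduced operad $\bar{\sQ}_G$ on $\sE_G$; restriction to based injections recovers the $\sQ_G$-action and pins down $\{1\}$ as the preferred unit. I do not expect a genuine obstacle: the content of the proof is simply that cartesian product of finite sets, transported into $U$ along the injections parametrized by $\sQ_G$, is strictly associative once the block-versus-lexicographic reindexing built into the structure map $\ga$ is taken into account, which is exactly the dual of the verification already made for disjoint unions and $\sP_G$.
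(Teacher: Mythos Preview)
Your proposal is correct and follows essentially the same approach as the paper: you define the action on objects by $\ps(A_1\times\cdots\times A_j)$ and on morphisms via the unique bijection making the evident square commute, exactly as the paper does (the paper uses the name $\xi_j$ rather than $\tha_j$). Your additional verification of the operad axioms and your supplementary remarks on $\tilde{\sE}_G$ and on the unreduced operad $\bar{\sQ}_G$ go beyond what the paper records here, but they are consistent with it.
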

\begin{proof} Define a $G$-functor
\[ \xi_j\colon\sV^\times_G(j) \times (\sE^U_G)^j\rtarr \sE^U_G \]
as follows. On objects, for $\ph\in\sV^\times_G(j)$ and $A_i\in \sE_G$, $1\leq i\leq j$, 
define 
\[ \xi_j(\ph; A_1,\cdots,A_j) = \ph(A_1\times \cdots\times A_j). \]
For a morphism 
\[ (\ze; \nu_1, \cdots , \nu_j)\colon (\ph; A_1, \cdots , A_j)\rtarr (\ps; B_1, \cdots , B_j) \]
define $\xi_j(\ze; \nu_1, \cdots , \nu_j)$ to be the unique bijection that makes the
following diagram commute.
\[ \xymatrix{
A_1 \times \cdots \times  A_j  \ar[d]_{\nu_1\times \cdots \times \nu_j}
\ar[r]^-{\ph} &
\ph(A_1\times \cdots\times A_j)  \ar[d]^{\xi_j(\ze; \nu_1, \cdots , \nu_j)} \\
B_1 \times \cdots \times B_j \ar[r]_-{\ps} 
& \ps(B_1\times \cdots\times B_j)\\} \]
Then the $\xi_j$ specify an action of $\sV^\times_G$ on $\sE^U_G$. 
\end{proof}

\begin{prop} If $R$ is a commutative $G$-ring, then $\sG\!\sL_G(R)$ 
is a $\sV^\times_G$-category.
\end{prop}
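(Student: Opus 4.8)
The plan is to mimic the proof of the preceding proposition, replacing the additive operad $\sP_G$ and the direct sum by the multiplicative operad $\sQ_G$ and the tensor product $\otimes_R$, exactly as the proof of \myref{ExamEM} replaced disjoint unions by Cartesian products for $\sE_G$. The one genuinely new ingredient is that, when $R$ is commutative, the tensor product $M_1\otimes_R\cdots\otimes_R M_j$ of finite-dimensional free left $R$-modules is again a finite-dimensional free left $R$-module, that $\otimes_R$ is symmetric, and that the functor $A\mapsto R[A]$ from sets (with $\times$) to free $R$-modules (with $\otimes_R$) is strong symmetric monoidal; commutativity of $R$ is precisely what makes all three true. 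On objects I would define the $G$-functor
\[ \xi_j\colon \sQ_G(j)\times \sG\!\sL_G(R)^j \rtarr \sG\!\sL_G(R) \]
by $\xi_j(\ph; M_1,\cdots, M_j) = R[\ph](M_1\otimes_R\cdots\otimes_R M_j)$, where $M_1\otimes_R\cdots\otimes_R M_j$ is viewed as a submodule of $R[U]^{\otimes_R j}$ via the canonical natural, $G$-equivariant, associative, and commutative isomorphism $R[A\times B]\iso R[A]\otimes_R R[B]$, iterated to $R[U^j]\iso R[U]^{\otimes_R j}$, and where $R[\ph]\colon R[U^j]\rtarr R[U]$ is a monomorphism since $\ph$ is an injection. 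Because tensoring over $R$ with a free (hence flat) module preserves monomorphisms, $M_1\otimes_R\cdots\otimes_R M_j\rtarr R[U]^{\otimes_R j}$ is injective, so $\xi_j(\ph; M_1,\cdots, M_j)$ is indeed a finite-dimensional free $R$-submodule of $R[U]$, i.e.\ an object of $\sG\!\sL_G(R)$.

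On morphisms, given the unique morphism $\io\colon \ph\rtarr \ps$ of the chaotic category $\sQ_G(j)$ and isomorphisms $\al_i\colon M_i\rtarr N_i$ of $R$-modules, I would let $\xi_j(\io;\al_1,\cdots,\al_j)$ be the unique $R$-module isomorphism making the square with top and bottom edges $R[\ph]$ and $R[\ps]$ and left edge $\al_1\otimes_R\cdots\otimes_R\al_j$ commute; here $\al_1\otimes_R\cdots\otimes_R\al_j$ is an isomorphism of left $R$-modules precisely because $R$ is commutative, and the assignment is functorial and natural by inspection. That the $\xi_j$ are $G$-functors reduces to $g(M_1\otimes\cdots\otimes M_j)=gM_1\otimes\cdots\otimes gM_j$ and $g\com R[\ph]\com g^{-1}=R[g\ph]$, both immediate from $G$-equivariance of $R[-]$ and of $R[A\times B]\iso R[A]\otimes_R R[B]$. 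The unit condition holds since $\sQ_G(1)$ is the trivial category on $\id\colon U\rtarr U$ and $\xi_1(\id;M)=M$. Equivariance for the $\SI_j$-actions uses the symmetry of $\otimes_R$: the right action of $\SI_j$ on $\sQ_G(j)$ is precomposition with the coordinate permutations of $U^j$, which correspond under $R[U^j]\iso R[U]^{\otimes_R j}$ to the symmetry isomorphisms permuting the tensor factors, and these carry $M_1\otimes_R\cdots\otimes_R M_j$ to the correspondingly reordered tensor product. Finally, compatibility with the structure maps $\ga$ of $\sQ_G$ follows from strong monoidality of $R[-]$: under $R[U^j]\iso R[U^{j_1}]\otimes_R\cdots\otimes_R R[U^{j_k}]$ one has $R[\ph_1\times\cdots\times\ph_k]=R[\ph_1]\otimes_R\cdots\otimes_R R[\ph_k]$, so both composites in the associativity diagram are computed by applying $R[\ps]\com(R[\ph_1]\otimes_R\cdots\otimes_R R[\ph_k])$ to the same $j$-fold tensor product of the $M_i$ (regrouped using associativity of $\otimes_R$). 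This is the bookkeeping-heavy step, but it runs exactly parallel to the corresponding checks for $\sP_G$ in the preceding proof and for $\sQ_G$ in \myref{ExamEM}.

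The main obstacle is therefore not a single hard argument but keeping straight the three features of $\otimes_R$ invoked above, and in particular noticing that each of them fails without commutativity --- indeed, for noncommutative $R$ the category of left $R$-modules carries no tensor product at all, so the ``multiplicative'' action is simply undefined, which is why the hypothesis cannot be dropped. Granting these features, the operad action axioms are routine diagram chases. Finally, running the same definitions on objects only, exactly as in the proof of the preceding proposition, shows in addition that $\widetilde{\sG\!\sL}_G(R)$ is a $\sQ_G$-category and that $\widetilde{\sG\!\sL}_G(R)\rtarr \sG\!\sL_G(R)$ is a map of $\sQ_G$-categories.
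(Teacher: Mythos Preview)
Your proof is correct and follows essentially the same approach as the paper: define $\xi_j(\ph;M_1,\dots,M_j)=R[\ph](M_1\otimes_R\cdots\otimes_R M_j)$ via the identification $R[U^j]\iso R[U]^{\otimes_R j}$, and on morphisms take the unique isomorphism making the evident square commute. One small slip: $\sQ_G(1)$ is not the trivial category on $\id$ but the chaotic category on all based injections $U\rtarr U$; however, the unit axiom only requires $\xi_1(\id;M)=M$, which you verify correctly, so this does not affect the argument.
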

\begin{proof} Define a functor
\[ \xi_j\colon\sV^\times_G(j) \times \sG\!\sL(R)_G^j\rtarr \sG\!\sL_G(R) \]
as follows. Identify $R[U^j]$ with $\otimes_jR[U]$, where $\otimes = \otimes_R$. On objects, for $\ph\in\sV_G(j)$ and $R$-modules $M_i\subset R[U]$, 
$1\leq i\leq j$, define 
\[ \xi_j(\ph; M_1, \cdots , M_j) = R[\ph](M_1\times \cdots\times M_j).\]
For a morphism 
\[ (\ze; \nu_1, \cdots , \nu_j)\colon (\ph; M_1, \cdots , M_j)
\rtarr (\ps; N_1, \cdots , N_j) \]
define $\xi_j(\ze; \nu_1, \cdots , \nu_j)$ to be the unique isomorphism of $R$-modules that makes the following diagram commute.
\[ \xymatrix{
M_1 \otimes \cdots \otimes M_j 
\ar[d]_{\nu_1\otimes\cdots \otimes \nu_j} \ar[r]^-{R[\ph]}
& \ph(M_1\otimes \cdots\otimes M_j)] 
\ar[d]^{\xi_j(\ze; \nu_1, \cdots , \nu_j)} \\
N_1 \otimes \cdots \otimes N_j  \ar[r]^-{R[\ps]}&  
\ps(N_1\otimes \cdots\otimes N_j).\\} \]
Then the $\xi_j$ specify an action of $\sV^\times_G$ on $\sG\!\sL_G(R)$.
\end{proof}

Restricting the action from $\sV^\times_G$ to $\sW_G$, the examples above and easy diagram chases 
prove that the operad pair $(\sV_G,\sW_G)$ acts on the categories $\sE_G$ and 
$\sG\!\sL_G(R)$.  This proves the following result. 

\begin{thm} The categories $\sE^U_G$ and $\sG\!\sL_G(R)$ for a commutative $G$-ring 
$R$ are $E_{\infty}$ ring $G$-categories in the sense of \myref{bipermcat}. 
\end{thm}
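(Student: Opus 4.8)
The plan is to assemble ingredients that are already in place. By \myref{bipermcat}, to prove that $\sE_G$ and (for commutative $R$) $\sG\!\sL_G(R)$ are $E_{\infty}$ ring $G$-categories I must produce on each an action of the operad pair $(\sP_G,\sR_G)$ whose multiplicative half extends to a $\sQ_G$-action. Both halves are already built: the additive $\sP_G$-actions $\tha_j$ and the multiplicative $\sQ_G$-actions $\xi_j$ were constructed in the propositions above (see in particular \myref{ExamEM}), and restricting $\xi$ along the inclusion $\sR_G\subset \sQ_G$ gives the multiplicative $\sR_G$-action, which thus trivially extends to $\sQ_G$. So the real work is to check the compatibility axioms relating $\tha$, $\xi$ and the action of $\sR_G$ on the operad $\sP_G$, as set out in \cite[VI.1.6, VI.1.10]{MQR} and reformulated in \cite[4.2]{Rant2}. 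The decisive structural simplification is that every $\sP_G(j)$, $\sR_G(j)$, $\sQ_G(j)$ and every summand of $\sE_G$ and of $\sG\!\sL_G(R)$ is chaotic, so each such axiom becomes an identity among the object-level formulas rather than a genuine commutative diagram of categories.

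First I would pin down the zero object: it is $\emptyset\in \sE_G$ (resp.\ the zero submodule $0\subset R[U]$), and it is a strict zero for the multiplicative action since $\emptyset\times A=\emptyset$ (resp.\ $0\otimes_R M=0$); because $\sR_G$ has no nullary operations, no further unit condition involving $0$ needs to be checked. Next I would verify the axiom that carries all the content, namely that the additive structure map $\bP_G\sE_G\rtarr\sE_G$ is a map of $\sR_G$-categories. Concretely, for $\ps\in \sR_G(k)$, $\ph_r\in \sP_G(j_r)$ and objects $A_{r,s}$, one wants
\[ \xi_k\bigl(\ps;\tha_{j_1}(\ph_1;A_{1,\bullet}),\cdots,\tha_{j_k}(\ph_k;A_{k,\bullet})\bigr)
= \tha_j\Bigl(\la(\ps;\ph_1,\cdots,\ph_k);\bigl(\xi_k(\ps;A_{1,i_1},\cdots,A_{k,i_k})\bigr)_I\Bigr), \]
where $j=j_1\cdots j_k$ and $I=(i_1,\cdots,i_k)$ runs lexicographically over $\prod_r\mathbf{j_r}$. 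The left side expands to $\ps\bigl(\ph_1(\coprod_s A_{1,s})\times\cdots\times\ph_k(\coprod_s A_{k,s})\bigr)$; using that $\ph_r$ restricts to $\ph_{r,s}$ on the $s^{th}$ copy of $U$ in $^{j_r}U$ and that products distribute over coproducts of finite sets, this becomes $\coprod_I\ps\bigl(\ph_{1,i_1}(A_{1,i_1})\times\cdots\times\ph_{k,i_k}(A_{k,i_k})\bigr)$. The right side, unwound from the fact that $\la(\ps;\ph_1,\cdots,\ph_k)$ restricts on the $I^{th}$ copy of $^jU$ to $\ps\com(\ph_{1,i_1}\times\cdots\times\ph_{k,i_k})\com\ps^{-1}$, yields exactly the same coproduct — in short, $\la$ was constructed precisely to implement this distributive law. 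For $\sG\!\sL_G(R)$ the argument is word-for-word the same after replacing $\coprod$, $\times$ by $\oplus$, $\otimes_R$ and using $R[U^j]\iso\otimes_j R[U]$, the distributivity of $\oplus$ over $\otimes_R$ for free $R$-modules taking over from the set-theoretic one; commutativity of $R$ is exactly what makes $\otimes_R$ symmetric, hence what made the $\sQ_G$-action available in the first place.

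The remaining axioms — $(\SI_j\times G)$-equivariance, compatibility with the structure maps $\ga$ of $\sP_G$ and of $\sR_G$/$\sQ_G$, and the nullary conditions (one of which is force-free, as in the footnote to the construction of $\la$) — I would dispatch as routine diagram chases, all of which reduce in the same manner to the associativity and coherence of $\amalg$ and $\times$ (resp.\ $\oplus$ and $\otimes_R$) together with the operad and operad-pair axioms already in hand. I would also need to note that every structure here descends through the orbit quotients ($\SI_n$ on $\tilde{\sE}_G$, $GL(n,R)$ on $\widetilde{\sG\!\sL}_G(R)$), which is handled exactly as in the proofs that $\tilde{\sE}_G\rtarr\sE_G$ and $\widetilde{\sG\!\sL}_G(R)\rtarr\sG\!\sL_G(R)$ are maps of $\sP_G$- and $\sQ_G$-categories, via canonical orbit representatives. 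I do not expect a true obstacle: the whole proof is bookkeeping organized around a single distributivity isomorphism, so the only real ``difficulty'' is the sheer number of coherence diagrams in the definition of an operad-pair action, every one of which collapses once chaoticity turns it into an object-level identity.
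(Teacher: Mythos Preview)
Your overall approach matches the paper's: it too simply says that restricting the $\sQ_G$-action to $\sR_G$ and invoking ``easy diagram chases'' verifies the operad-pair axioms of \cite[VI.1.6, VI.1.10]{MQR}. Your more explicit unwinding of the key distributivity identity at the object level is correct and is exactly the point --- $\la$ was engineered for this.

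There is, however, one genuine slip. You assert that ``every summand of $\sE_G$ and of $\sG\!\sL_G(R)$ is chaotic,'' and you lean on this to reduce all the axioms to object-level identities. This is false: it is the \emph{total space} categories $\tilde{\sE}_G(n)$ and $\widetilde{\sG\!\sL}_G(n,R)$ that are chaotic, while $\sE_G(n)=\tilde{\sE}_G(n)/\SI_n$ has automorphism group $\SI_n$ and $\sG\!\sL_G(R)$ has automorphism groups $GL(n,R)$ (see \myref{ExamE} and \myref{ExamR}). So the diagrams do \emph{not} automatically commute on morphisms just because they commute on objects. You therefore still owe the morphism-level check. Fortunately this is no harder: $\tha_j$ and $\xi_j$ are defined on morphisms by ``the unique bijection/isomorphism making the evident square commute,'' and chasing those squares through the same distributivity of $\times$ over $\amalg$ (resp.\ $\otimes_R$ over $\oplus$) gives the required equality of bijections (resp.\ $R$-isomorphisms). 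The chaoticity of the \emph{operad} categories $\sP_G(j)$, $\sR_G(j)$ does help, since the operad-side component of any morphism is forced; it is only the $\sE_G$- or $\sG\!\sL_G(R)$-components that require the extra line of argument. With that correction, your proof goes through and is a faithful expansion of the paper's one-sentence appeal to diagram chases.
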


Although  we have a 
definition of a genuine permutative $G$-category, we do not have a comparably
simple definition of a genuine bipermutative $G$-category.  The previous examples 
show that we do have examples of $E_{\infty}$ ring $G$-categories.  In \cite{GMMOMult}, 
we will show how to construct $E_{\infty}$ ring $G$-categories from general naive 
bipermutative $G$-categories, in particular nonequivariant bipermutative categories,
and we will show how to construct genuine commutative ring $G$-spectra from them.

\section{The $\sV_G$-category $\sE^U_G(X)$ and the BPQ-theorem}\label{SecEGP}

We now return to the categorical BPQ-theorem, but thinking
in terms of $\sV_G$-categories rather than $\sP_G$-categories.  
This gives a more intuitive approach to the $G$-category of finite sets
over a $G$-space $X$.  

\subsection{The $G$-category $\sE^U_G(X)$ of finite sets over $X$}\label{Sec9.1}
\begin{defn}\mylabel{E(G,X)} 
Let $X$ be a $G$-space. We define the $G$-groupoid $\sE^U_G(X)=\sE_G^{\sV}(X)$ of 
finite sets over $X$. Its objects are the functions $p\colon A\rtarr X$, where 
$A$ is a finite subset of our ambient $G$-set $U$. For a second function 
$q\colon B\rtarr X$, a map $\nu \colon p\rtarr q$ is a bijection $\nu \colon A\rtarr B$ 
such that $q\com \nu= p$. Composition is given by composition of functions over $X$.
The group $G$ acts by translation of $G$-sets and conjugation on all maps in sight. 
Thus, for an object $p\colon A\rtarr X$, $gp\colon gA\rtarr X$ is given by 
$(gp)(ga) = g(p(a))$.  For a map 
$\nu \colon p\rtarr q$, $g\nu \colon gA\rtarr gB$ is given by $(g\nu)(ga) = g(\nu(a))$. 

To topologize $\sE^U_G(X)$, give $U$ and $X$ disjoint basepoints $\ast$.\footnote{These basepoints 
are just a convenience for specifying the topology; they play no other role.}
View the set $\sO\!b$ of objects of $\sE^U_G(X)$ as the set of based
functions $p\colon U_+\rtarr X_+$ such that $p^{-1}(\ast)$ is the complement 
of a finite set $A\subset U$. Topologize $\sO\!b$ as a 
subspace of $X_+^{U_+}$.  View the set $\sM\!or$ of morphisms of $\sE^U_G(X)$ 
as a subset of the set of functions $\mu \colon U_+\rtarr U_+$ that send the 
complement of some finite set $A\subset U$ to $\ast$ and map $A$ bijectively 
to some finite set $B\subset U$. Topologize $\sM\!or$ as the subspace of points 
$(p,\mu,q)$ in $\sO\! b\times U_+^{U_+}\times \sO\!b$, where $U_+^{U_+}$ is discrete. 
When $X$ is a finite set and thus a discrete space (since points are closed in spaces 
in the category $\sU$), $\sE^U_G(X)$ is discrete. 

Let  $\sE^U_G(n,X)$ denote the full subcategory of $\sE^U_G(X)$ of maps 
$p\colon A\rtarr X$ such that $A$ has $n$ elements. Then $\sE^U_G(X)$ is
the coproduct of the groupoids $\sE^U_G(n,X)$.
\end{defn}

\begin{prop} The operad $\sV_G$ acts naturally on the categories $\sE^U_G(X)$. 
\end{prop}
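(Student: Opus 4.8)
The plan is to define the action $\tha_j\colon \sP_G(j)\times\sE_G(X)^j\rtarr\sE_G(X)$ by copying, essentially verbatim, the $\sP_G$-action on $\sE_G$ constructed above and carrying the structure maps to $X$ along for the ride. On objects, given $\ph\in\sP_G(j)$, i.e.\ an injection ${}^j\!U\rtarr U$, and objects $p_i\colon A_i\rtarr X$ of $\sE_G(X)$ for $1\le i\le j$, view each $A_i$ as a subset of the $i$th copy of $U$ in ${}^j\!U$; then $\ph$ restricts to a bijection $\bar\ph$ from $A_1\amalg\cdots\amalg A_j$ onto the finite subset $\ph(A_1\amalg\cdots\amalg A_j)$ of $U$, and we set
\[ \tha_j(\ph;p_1,\dots,p_j)=(p_1\amalg\cdots\amalg p_j)\com\bar\ph^{-1}\colon \ph(A_1\amalg\cdots\amalg A_j)\rtarr X. \]
On morphisms, a map of $\sP_G(j)\times\sE_G(X)^j$ has the form $(\io;\al_1,\dots,\al_j)$ with $\io\colon\ph\rtarr\ps$ the unique morphism of the chaotic category $\sP_G(j)$ and $\al_i\colon A_i\rtarr B_i$ a bijection with $q_i\com\al_i=p_i$; we send it to the unique bijection $\tha_j(\io;\al_1,\dots,\al_j)$ making the square
\[ \xymatrix{ A_1\amalg\cdots\amalg A_j \ar[d]_{\al_1\amalg\cdots\amalg\al_j}\ar[r]^-{\bar\ph}& \ph(A_1\amalg\cdots\amalg A_j)\ar[d]^{\tha_j(\io;\al_1,\dots,\al_j)}\\ B_1\amalg\cdots\amalg B_j\ar[r]_-{\bar\ps}& \ps(B_1\amalg\cdots\amalg B_j)} \]
commute, exactly as in the $\sE_G$ case.

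First I would check that $\tha_j(\io;\al_1,\dots,\al_j)$ really is a morphism over $X$, namely that $\tha_j(\ps;q_1,\dots,q_j)\com\tha_j(\io;\al_1,\dots,\al_j)=\tha_j(\ph;p_1,\dots,p_j)$; this is a one-line chase from the square above together with the relations $q_i\com\al_i=p_i$. Functoriality of $\tha_j$ (identities and composites) is then inherited from the $\sE_G$ case, since the underlying bijections are computed there and the only extra datum, the map to $X$, is uniquely pinned down. $G$-equivariance, $\tha_j(g\ph;gp_1,\dots,gp_j)=g\,\tha_j(\ph;p_1,\dots,p_j)$, is immediate because $G$ acts by translation on $U$, hence on ${}^j\!U$ and on the subsets $A_i$, and by conjugation on all the functions in sight, so the displayed formula is $G$-equivariant on the nose. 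Continuity of $\tha_j$ on object and morphism spaces is routine: $U$ and the object sets of the $\sP_G(j)$ are discrete, and, with $\sE_G(X)$ topologized as above, $\tha_j$ is a reindexing of based functions $U_+\rtarr X_+$ that is locally constant in the discrete variables and continuous in the $X$-valued variable. Finally one verifies the operad axioms: the unit $\id\in\sP_G(1)$ has $\bar\id=\id$, so $\tha_1(\id;-)$ is the identity functor, and for the $\ga$-relation one compares $\tha_j(\ga(\ps;\ph_1,\dots,\ph_k);-)$ with the iterated composite $\tha_k(\ps;\tha_{j_1}(\ph_1;-),\dots,\tha_{j_k}(\ph_k;-))$; since $\ga$ in $\sP_G$ is ``take the coproduct of the $\ph_r$ and postcompose with $\ps$'', and since forming $\bar\ph$ commutes with these operations, both sides transport the coproduct of the $p_i$ along the same injection and therefore agree.

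Naturality in $X$ is then formal: a $G$-map $h\colon X\rtarr X'$ induces $h_*\colon\sE_G(X)\rtarr\sE_G(X')$ by postcomposition, and $h_*\com\tha_j=\tha_j\com(\id\times h_*^j)$ straight from the formula; likewise the forgetful $G$-functors $\sE_G(X)\rtarr\sE_G$ sending $(p\colon A\rtarr X)$ to $A$ are maps of $\sP_G$-categories, which identifies this action as the evident parametrized refinement of the one on $\sE_G$. I expect the only genuine work to be the bookkeeping in the $\ga$-axiom, tracking the identifications ${}^j\!U\iso{}^{j_1}\!U\amalg\cdots\amalg{}^{j_k}\!U$ and how the maps to $X$ are transported at each stage; but this is exactly the computation already carried out for $\sE_G$, now performed in the slice over $X$, and there is no conceptual obstacle beyond it.
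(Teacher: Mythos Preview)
Your proposal is correct and follows essentially the same approach as the paper: the paper defines $\tha_j(\ph;p_1,\dots,p_j)$ as the composite $\ph(A_1\amalg\cdots\amalg A_j)\xrightarrow{\ph^{-1}}A_1\amalg\cdots\amalg A_j\xrightarrow{\amalg p_i}{}^jX\xrightarrow{\nabla}X$, which is precisely your $(p_1\amalg\cdots\amalg p_j)\com\bar\ph^{-1}$ with the fold map made explicit, and defines $\tha_j$ on morphisms by exactly the same commuting-square prescription you give. The paper then simply asserts that the $\tha_j$ specify an action, whereas you spell out the checks (over $X$, $G$-equivariance, continuity, unit and $\ga$-axioms, naturality in $X$) in more detail; none of this is a departure in method.
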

\begin{proof}
For $j\geq 0$, we must define functors
\[ \tha_j\colon\sV_G(j)\times \sE^U_G(X)^j\rtarr \sE^U_G(X). \]
To define $\tha_j$ on objects, let $\ph\colon ^{j}U\rtarr U$ be an injective function and $p_i\colon A_i\rtarr X$ be a function, $1\leq i\leq j$, where $A_i$ is a finite subset of $U$.  We define $\tha_j(\ph; p_1,\cdots, p_j)$ to be the 
composite
\[ \xymatrix@1{
 \ph(A_1\amalg\cdots \amalg A_j) \ar[r]^-{\ph^{-1}} & A_1\amalg\cdots \amalg A_j \ar[r]^-{\amalg p_i}
 & \, ^{j}X \ar[r]^-{\nabla} & X,\\} \]
where $\nabla$ is the fold map, the identity on each of the $j$ copies of $X$. To define $\tha$ on
morphisms, let $\ps\colon \colon ^{j}U\rtarr U$ be another injective function, 
and let $\ze\colon \ph\rtarr \ps$ be the unique map in $\sV_G(j)$. For functions 
$q_i\colon B_i\rtarr X$ and bijections $\nu_i\colon A_i\rtarr B_i$ such that 
$q_i\nu_i = p_i$, define $\tha_j(\ze; \nu_1,\cdots, \nu_j)$ to be the unique dotted arrow bijection that makes the following diagram commute.
\[ \xymatrix{
\ph(A_1\amalg\cdots \amalg A_j) \ar[r]^-{\ph^{-1}}
 \ar@{-->}[dd]_{\tha(\ze; \nu_1,\cdots, \nu_j)} 
& A_1\amalg\cdots \amalg A_j \ar[dr]^-{\amalg p_i} 
\ar[dd]^-{\amalg \nu_i}  & &\\
& & ^{j}X \ar[r]^-{\nabla} & X.\\
\ps(B_1\amalg\cdots \amalg B_j)  \ar[r]_-{\ps^{-1}}
& B_1\amalg\cdots \amalg B_j  \ar[ur]_-{\amalg q_i} 
&  &  \\} \]
Then the maps $\tha_j$ specify an action of $\sV_G$ 
on the category $\sE^U_G(X)$.
\end{proof}

We have a multiplicative elaboration, which is similar to \cite[VI.1.9]{MQR} but curiously restricted.  Regarding a 
$G$-space $X$ as a constant $G$-category with object and morphism space both $X$, it makes
sense to speak of an action of the operad $\sV^\times_G$ on the $G$-category $X$.  For example, 
$\sV^\times_G$ acts on $X$ if $X$ is a commutative topological $G$-monoid.  The following result
is closely related to \myref{WVaction}. It has the minor advantage that restriction from $\sV^{\times}_G$
to $\sW_G$ is unnecessary but the major limitation that it only applies to commutative $G$-monoids, not
to general $\sV^\times_G$-algebras.

\begin{prop} If $X$ is a commutative topological $G$-monoid, then $\sE^U_G(X)$ 
is an $E_{\infty}$ ring $G$-category.
\end{prop} 
\begin{proof}
By analogy with the previous proof, for $k\geq 0$, we have functors
\[ \xi\colon\sV^\times_G(k)\times \sE^U_G(X)^k\rtarr \sE^U_G(X^k). \]
With notations as in the previous proof, on objects 
$(\ph;p_1,\cdots,p_k)$, $\xi(\ph;p_1,\cdots,p_k)$, $p_r\colon A_r\rtarr X$, $\xi(\ph; p_1,\cdots p_k)$  is defined to 
be the composite
\[ \xymatrix@1{
 \ph(A_1\times \cdots \times A_k) \ar[r]^-{\ph^{-1}} & A_1\times \cdots \times A_k \ar[r]^-{\times p_k}
 & X^k  \ar[r]^-{\pi} & X,  \\} \]
 where $\pi$ is the $k$-fold product on $X$.
On morphisms $(\ze;\nu_1,\cdots, \nu_k)$, $\nu_r\colon p_r\rtarr q_r$, where the $\nu_i$ are understood to be
order-preserving, $\xi(\ze;\nu_1,\cdots,\nu_k)$ is defined
to be the unique dotted arrow that makes the the following diagram commute.
\[ \xymatrix{
\ph(A_1\times \cdots \times A_k) \ar[r]^-{\ph^{-1}} \ar@{-->}[dd]_{\xi(\ze; \nu_1,\cdots, \nu_k)}
& A_1\times \cdots \times A_k\ar[dr]^-{\times p_i} \ar[dd]^-{\times \nu_i}  &&\\
& & X^k \ar[r]^{\pi} & X.\\
\ps(B_1\times \cdots \times B_k) \ar[r]_-{\ps^{-1}} & B_1\times \cdots \times B_k  \ar[ur]_-{\times q_i} 
&   &   \\} \]
Further details are similar to those in the proof of \cite[VI.1.9]{MQR}
or \cite[4.9]{Rant1}. 
\end{proof}

\subsection{Free $\sV_G$-categories and the $\sV_G$-categories $\sE^U_G(X)$}
The categories $\sE^U_G(X)$ are conceptually simple, and they allow us to 
give the promised genuinely equivariant variant of \myref{CATone}. To see that, we give
a reinterpretation of $\sE^U_G(X)$. Regarding $X$ as a topological
$G$-category as before, we have the topological $G$-category 
$\tilde{\sE}^U_G(j)\times_{\SI_j} X^j$. 

\begin{lem} The topological $G$-categories $\sE^U_G(j,X)$ and 
$\tilde{\sE}^U_G(j)\times_{\SI_j}X^j$ are naturally isomorphic.
\end{lem}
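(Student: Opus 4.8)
The plan is to exhibit an isomorphism of topological $G$-categories directly, matching objects and morphisms on the nose. Recall from \myref{EGJ} that $\tilde{\sE}_G(j)$ is the chaotic $\SI_j\times G$-category whose objects are pairs $(A,\al)$, with $A$ a $j$-element subset of $U$ and $\al\colon\mathbf j\rtarr A$ a bijection, with $(A,\al)\si = (A,\al\com\si)$ and $g(A,\al) = (gA,g\al)$; viewing $X$ as a topological $G$-category with identity morphisms only, $X^j$ carries the diagonal $G$-action and the coordinate $\SI_j$-action $(\si\cdot x)_i = x_{\si^{-1}(i)}$, so the objects of $\tilde{\sE}_G(j)\times_{\SI_j}X^j$ are the orbits $[(A,\al),x]$ of pairs with $x=(x_1,\dots,x_j)\in X^j$, the orbit of $((A,\al),x)$ also containing $((A,\al\com\si),\si^{-1}x)$ for every $\si\in\SI_j$. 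On the other side, $\sE_G(j,X)$ has objects the functions $p\colon A\rtarr X$ with $A\subseteq U$ of cardinality $j$, and a morphism $p\rtarr q$ (for $q\colon B\rtarr X$) is a bijection $f\colon A\rtarr B$ with $q\com f = p$; both categories carry explicit topologies (\myref{EGJ}, \myref{E(G,X)}).

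First I would define $\Phi\colon \tilde{\sE}_G(j)\times_{\SI_j}X^j\rtarr \sE_G(j,X)$ on objects by sending $[(A,\al),x]$ to the function $p$ with $p(\al(i)) = x_i$; a one-line computation using that $\al\com\si$ is a bijection shows this is constant on $\SI_j$-orbits. For morphisms, since $\tilde{\sE}_G(j)$ is chaotic and $X$ has no nonidentity morphisms, every morphism $[(A,\al),x]\rtarr[(B,\be),y]$ of the source is represented — for a unique $\si\in\SI_j$ with $\si^{-1}y = x$, equivalently $y_{\si(i)} = x_i$ — by the pair consisting of the unique arrow $(A,\al)\rtarr(B,\be\com\si)$ of $\tilde{\sE}_G(j)$ together with the identity morphism of $x$, where $x = \si^{-1}y$; I would send this to the bijection $f = \be\com\si\com\al^{-1}\colon A\rtarr B$. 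One then checks $q\com f = p$ from $y_{\si(i)} = x_i$, and conversely that any such $f$ recovers $\si = \be^{-1}\com f\com\al$, so that $\Phi$ is a bijection on each morphism set; it is likewise a bijection on objects, since $p = \Phi[(A,\al),(p(\al(1)),\dots,p(\al(j)))]$ for any $\al$, while $\Phi[(A,\al),x] = \Phi[(A',\al'),x']$ forces $A=A'$ and then $[(A,\al),x] = [(A,\al'),x']$ via $\si = \al^{-1}\com\al'$. It is then routine that $\Phi$ preserves composition and identities.

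The inverse $\Psi$ can be described using, once and for all, a reference bijection $\et_A\colon\mathbf j\rtarr A$ for each $j$-element $A\subseteq U$ (compare \cite[6.3, 6.5]{GMM}): send $p\colon A\rtarr X$ to $[(A,\et_A),(p(\et_A(1)),\dots,p(\et_A(j)))]$ and $f\colon p\rtarr q$ to the class of $(A,\et_A)\rtarr(B,f\com\et_A)$ — the point being that $f\com\et_A$ need not equal $\et_B$, and this discrepancy is exactly what the $\SI_j$-quotient absorbs. That $\Phi$ and $\Psi$ are mutually inverse then reduces to the identity $[(A,\al),x] = [(A,\et_A),(\text{reindexed }x)]$ in the balanced product. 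Continuity in both directions is immediate from the explicit point-set descriptions of the topologies in \myref{EGJ} and \myref{E(G,X)} ($U$ being discrete), $G$-equivariance is the computation $\Phi(g\cdot[(A,\al),x]) = \Phi[(gA,g\al),gx] = g\cdot p$ together with its evident analogue on morphisms, and naturality in $X$ is visible directly from the formulas. I expect the only real obstacle to be pure bookkeeping: keeping the right $\SI_j$-action on $\tilde{\sE}_G(j)$ and the left coordinate action on $X^j$ consistent, so that the possibly-many parallel bijections between two objects of $\sE_G(j,X)$ match up precisely with the indexing permutations $\si$ appearing in the morphisms of the balanced product; once the convention $(\si\cdot x)_i = x_{\si^{-1}(i)}$ is fixed, this unwinds routinely.
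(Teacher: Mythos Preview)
Your proof is correct and follows essentially the same approach as the paper: both exhibit the isomorphism directly by sending $[(A,\al),x]$ to the function $p\colon A\rtarr X$ with $p(\al(i))=x_i$, and matching morphisms via the bijection $\be\si\al^{-1}$. The paper's proof is terser---it simply states the correspondences on objects and morphisms and notes that passage to $\SI_j$-orbits makes them independent of the chosen ordering---whereas you spell out well-definedness, the inverse $\Psi$, equivariance, and continuity; your extra care with the indexing conventions and the possible non-uniqueness of the intertwining $\si$ is sound and does not deviate from the paper's argument.
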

\begin{proof}
For an ordered set $A=(a_1,\cdots,a_j)$ of points of $U$, let a point 
$(A;x_1,\cdot,x_j)$ of
$\sO\!b(\tilde{\sE}_G(j) \times_{\SI_j} X^j)$ correspond to the function 
$p\colon A\rtarr X$ given by $p(a_i)=x_i$.
Similarly, let a point $(\nu\colon A\rtarr B;x_1,\cdots,x_j)$ of 
$\sM\!or(\tilde{\sE}_G(j) \times_{\SI_j} X^j)$ correspond to the 
bijection $\nu\colon p\rtarr q$ over $X$, where $q\nu(a_i) = p(a_i) = x_i$. 
Since we have passed to orbits over $\SI_j$, our specifications are independent
of the ordering of $A$.  These correspondences identify the two categories. 
\end{proof}

Recall that we write $\bV_G$ for the monad on based $G$-categories associated
to the operad $\sV_G$, $|\sV_G|$ for the operad of $G$-spaces obtained by applying 
the classifying space functor $B$ to $\sV_G$, and $\mathbf{V}_G$ for the monad on 
based $G$-spaces associated to $|\sV_G|$.  Recall too that $X_+$ denotes the union 
of the $G$-category $X$ with a disjoint trivial basepoint category 
$\ast$ and that 
\begin{equation}\mylabel{opXtwo}
\bV_G(X_+) = \coprod_{j\geq 0}\sV_G(j)\times_{\SI_j} X^j.
\end{equation}

\begin{thm}\mylabel{identtoo} There is a natural map 
\[ \om\colon \bV_G(X_+)\rtarr \sE_G^U(X)\]
of $\sV_G$-categories, and it induces a weak equivalence 
\[ B\om\colon \mathbf{V}_G(X_+) \rtarr B\sE_G^U(X) \]
of $|\sV_G|$-spaces on passage to classifying spaces.
\end{thm}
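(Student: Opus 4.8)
The plan is to build $\om$ by freeness, then pass to classifying spaces and reduce the weak equivalence statement, summand by summand, to the uniqueness of universal $(G,\SI_j)$-bundles.

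First I would construct $\om$. Choose a $G$-fixed point $a_0\in U$; one exists since $U$ contains copies of the orbit $G/G=\ast$. Define a based $G$-functor $\et_X\colon X_+\rtarr \sE_G(X)$ by sending the disjoint basepoint of $X_+$ to the object $(\emptyset\rtarr X)$ of $\sE_G(0,X)$ and sending a point $x$ of the constant category $X$ to the object $(\{a_0\}\rtarr X)$ with value $x$; there is nothing to check on morphisms. Since $\sP_G$ is reduced, the action of $\sP_G$ on $\sE_G(X)$ makes it a \emph{based} $\sP_G$-category, and the identification $\bP_G(X_+)=\coprod_{j\geq 0}\sP_G(j)\times_{\SI_j}X^j$ exhibits $\bP_G(X_+)$ as the free $\sP_G$-category on the based $G$-category $X_+$; hence $\et_X$ extends uniquely to a map $\om\colon \bP_G(X_+)\rtarr\sE_G(X)$ of $\sP_G$-categories, which is natural in $X$ because $\et_X$ is. Unwinding the $\sP_G$-action on $\sE_G(X)$ and using the natural isomorphism $\sE_G(j,X)\iso\tilde{\sE}_G(j)\times_{\SI_j}X^j$ of the preceding lemma, one checks that on the $j$th summand $\om$ is the map $f_j\times_{\SI_j}\id_{X^j}$, where $f_j\colon\sP_G(j)\rtarr\tilde{\sE}_G(j)$ is the $(\SI_j\times G)$-functor between chaotic categories given on objects by $\ph\mapsto(A_\ph,\al_\ph)$, with $\al_\ph(i)=\ph(i,a_0)$ and $A_\ph=\al_\ph(\mathbf{j})$. ($G$-equivariance of $f_j$ is exactly where $a_0\in U^G$ is used, and functoriality is automatic since both categories are chaotic.)

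Next I would pass to classifying spaces. Exactly as in the corresponding statement for $\bO_G$ (arguing with nerves and orbits as in \cite[\S2.3]{GMM}), $B$ commutes with the coproducts, products, and passages to orbits in sight, so $\mathbf{P}_G(X_+)=|\bP_G(X_+)|\iso\coprod_j|\sP_G(j)|\times_{\SI_j}X^j$ and, again using the preceding lemma, $B\sE_G(X)\iso\coprod_j|\tilde{\sE}_G(j)|\times_{\SI_j}X^j$; under these identifications $B\om=\coprod_j|f_j|\times_{\SI_j}\id_{X^j}$. It therefore suffices to show that each $|f_j|\times_{\SI_j}\id\colon|\sP_G(j)|\times_{\SI_j}X^j\rtarr|\tilde{\sE}_G(j)|\times_{\SI_j}X^j$ is a weak $G$-equivalence. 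Now by \myref{UniPQ} and \myref{GSIHtpyType} both $|\sP_G(j)|$ and $|\tilde{\sE}_G(j)|$ are universal principal $(G,\SI_j)$-bundles, i.e.\ for $\LA\subset\SI_j\times G$ their $\LA$-fixed subspaces are contractible when $\LA\cap\SI_j=e$ and empty otherwise; hence the $(\SI_j\times G)$-map $|f_j|$ restricts to a weak equivalence on every $\LA$-fixed subspace, so it is a weak $(\SI_j\times G)$-equivalence between $\SI_j$-free spaces. For such a map, forming $(-)\times_{\SI_j}X^j$ yields a weak $G$-equivalence: for $H\subset G$ the $H$-fixed points of either Borel construction split over $[\al]\in H^1(H;\SI_j)$ just as in \myref{UniPrin} and \cite{GMM}, and on the $[\al]$-summand the map is induced by $|\sP_G(j)|^{\LA_\al}\rtarr|\tilde{\sE}_G(j)|^{\LA_\al}$ (a map of contractible spaces) crossed with the free action of the associated Weyl-type group, hence a weak equivalence. (One first $G$-CW approximates $X$, or appeals to the standing cofibrancy conventions, so that these fixed-point decompositions of homotopy orbits are valid.) Assembling over $j$ shows $B\om$ is a weak $G$-equivalence.

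The only genuinely non-formal input is this last step — the homotopy invariance of the $\SI_j$-Borel construction applied to a $(\SI_j\times G)$-equivalence of the two ``fattened configuration'' categories, together with the attendant twisted-diagonal bookkeeping on $H$-fixed points. But this is precisely the analysis already carried out for $\sO_G$ in the proofs of \myref{ident}, \myref{fixedCat}, \myref{UniPrin} and in \cite{GMM}, so in the write-up it reduces to a citation; the remaining ingredients — that $\om$ is a map of $\sP_G$-categories (automatic from freeness) and that $B$ commutes with the relevant colimits and orbits — are routine.
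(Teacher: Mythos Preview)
Your proof is correct and follows essentially the same route as the paper: construct $\om$ by freeness from the based inclusion $X_+\rtarr\sE_G(X)$ at a chosen $G$-fixed point of $U$, identify $\om$ summand-wise as $f_j\times_{\SI_j}\id_{X^j}$ (the paper writes $i_j$ for your $f_j$), and then use that $|f_j|$ is a map between universal principal $(G,\SI_j)$-bundles.

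The one place you work harder than necessary is the final step. The paper observes that $|\sP_G(j)|$ and $|\tilde{\sE}_G(j)|$ are $(\SI_j\times G)$-CW complexes (they are classifying spaces of $G$-categories with free $\SI_j$-action), so the weak $(\SI_j\times G)$-equivalence $|f_j|$ is in fact a $(\SI_j\times G)$-equivariant \emph{homotopy} equivalence by Whitehead. Then $|f_j|\times_{\SI_j}\id_{X^j}$ is a $G$-homotopy equivalence for purely formal reasons, with no need for the twisted-diagonal fixed-point decomposition or any cofibrancy hypothesis on $X$. Your argument via the $H^1(H;\SI_j)$ splitting is valid but can be replaced by this one-line upgrade.
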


\begin{proof} Pick any $G$-fixed point $1\in U$.\footnote{This must not be confused with the convenience basepoint $\ast$
used to define the topology.}  Define an inclusion 
$i\colon X_+\rtarr \sE_G^U(X)$ of based $G$-categories by identifying 
$\ast$ with $\sE_G^U(0,X)$ and mapping $X$ to $\sE_G^U(1,X)$ by sending 
$x$ to the map $1\rtarr x$ from the $1$-point subset $1$ of $U$ to $X$. 
Since $\bV_G(X_+)$ is the free (based) $\sV_G$-category generated by $X_+$, 
$i$ induces the required natural map $\om$. Explicitly, it is the composite
\[  \xymatrix@1{ \bV_G(X_+) \ar[r]^-{\bV_Gi} 
&  \bV_G\sE_G^U(X)) \ar[r]^-{\tha} & \sE_G^U(X).\\} \]
More explicitly still, let $\ul{1}\subset \, ^j\!U$ be the $j$-point subset consisting of the elements $1$ in the 
$j$ summands.  Then $\om$  is the coproduct of the maps
\[ \om_j = i_j\times_{\SI_j}\id\colon\sV_G(j)\times_{\SI_j}X^j \rtarr \tilde{\sE}^U_G(j)\times_{\SI_j}X^j, \]
where $i_j\colon\sV_G(j)\rtarr \tilde{\sE}^U_G(j)$ is the $(G\times \SI_j)$-functor that sends an object 
$\ph\colon ^j\!U\rtarr U$ to the set $\ph(\ul{1})\subset U$ and sends the morphism 
$\nu \colon \ph\rtarr\ps$ to the bijection
\[ \xymatrix@1{ 
\ph(\ul{1} )\ar[r]^-{\ph^{-1}} 
& \ul{1}  \ar[r]^-{\ps} & \ps(\ul 1).\\} \] 
Passing to classifying spaces, $|i_j|$ is a map between universal principal
$(G,\SI_j)$-bundles, both of which are $(G\times \SI_j)$-CW complexes. 
Therefore $|i_j|$ is a $(G\times \SI_j)$-equivariant homotopy equivalence.  
The conclusion follows.
\end{proof}
  
\subsection{The categorical BPQ theorem: second version}\label{subSecbait}
We begin by comparing \myref{identtoo}, which is about $G$-categories, with
Theorems \ref{ident}, \ref{CATone}, and \ref{CATtwo}, which are about $G$-fixed categories. 
Clearly $\sE_G(X)^G$ is a $\sV$-category,
where $\sV = (\sV_G)^G$. By \myref{identtoo}, it is weakly equivalent (in the
homotopical sense) to the $\sV$-category $(\bV_GX_+)^G$.  We also have the
$\sP$-category $\sF_G(X)^G$, which by \myref{CATone} and \myref{subtle}
is equivalent (in the categorical sense) to the $\sP$-category $(\bP_GX_+)^G$. Elaborating \myref{EvsF},
$\sE^U_G(X)^G$ and $\sF_G(X)^G$ are two small models for the category of all finite
$G$-sets and $G$-isomorphisms over $X$ and are therefore equivalent.  To take
the operad actions into account, recall the discussion in \S\ref{Sec2.3}.  We say that a map
of topological $G$-categories is a weak equivalence if its induced map of classifying 
$G$-spaces is a weak equivalence.

\begin{lem} The $\sP_G$-category $\bP_GX_+$ and the $\sV_G$-category $\bV_GX_+$
are weakly equivalent as $(\sP_G\times\sV_G)$-categories. Therefore the $\sP$-category
$(\bP_GX_+)^G$ and the $\sV$-category $(\bV_GX_+)^G$-categories are weakly equivalent
\end{lem}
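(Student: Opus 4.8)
The plan is to run the product-of-operads trick of \S\ref{Sec2.3} in the especially clean form it takes on free algebras, where no bar construction is needed. First I would form the product operad $\sO_G\times\sP_G$ of $G$-categories. It is reduced, since $\sO_G(0)$ and $\sP_G(0)$ are trivial, and it is $E_\infty$: each classifying space $|\sO_G(j)\times\sP_G(j)|\iso|\sO_G(j)|\times|\sP_G(j)|$ is a product of two universal principal $(G,\SI_j)$-bundles, hence (checking $\LA$-fixed points for $\LA\subset\SI_j\times G$) is again a universal principal $(G,\SI_j)$-bundle. Let $\mathbf{T}_G$ denote the monad on based $G$-categories associated to $\sO_G\times\sP_G$, so that $\mathbf{T}_G(X_+)=\coprod_{j\geq0}\bigl(\sO_G(j)\times\sP_G(j)\bigr)\times_{\SI_j}X^j$. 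The operad projections $p_1\colon\sO_G\times\sP_G\rtarr\sO_G$ and $p_2\colon\sO_G\times\sP_G\rtarr\sP_G$ induce maps of monads, so, regarding $\bO_G X_+$ and $\bP_G X_+$ as $(\sO_G\times\sP_G)$-categories by pullback along $p_1$ and $p_2$, they give maps of $(\sO_G\times\sP_G)$-categories
\[ \bO_G X_+ \xleftarrow{\ p_1\ } \mathbf{T}_G X_+ \xrightarrow{\ p_2\ } \bP_G X_+, \]
which on the $j^{th}$ summand are the projections $\bigl(\sO_G(j)\times\sP_G(j)\bigr)\times_{\SI_j}X^j\rtarr\sO_G(j)\times_{\SI_j}X^j$ and $\bigl(\sO_G(j)\times\sP_G(j)\bigr)\times_{\SI_j}X^j\rtarr\sP_G(j)\times_{\SI_j}X^j$.

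Next I would check that $Bp_1$ and $Bp_2$ are $G$-homotopy equivalences; together with the zigzag above this gives the first assertion, that $\bO_G X_+$ and $\bP_G X_+$ are weakly equivalent as $(\sO_G\times\sP_G)$-categories. As in the proof of \myref{identtoo} and following \cite[\S2.3]{GMM}, for a $(\SI_j\times G)$-space $Y$ regarded as a $G$-category one has a natural identification $B\bigl(\sC(j)\times_{\SI_j}Y\bigr)\iso|\sC(j)|\times_{\SI_j}Y$ when $\sC$ is $\sO_G$, $\sP_G$, or $\sO_G\times\sP_G$. Taking $Y=X^j$ and a coproduct over $j$, $Bp_1$ is identified with $\coprod_j$ of the maps
\[ \bigl(|\sO_G(j)|\times|\sP_G(j)|\bigr)\times_{\SI_j}X^j\rtarr|\sO_G(j)|\times_{\SI_j}X^j \]
induced by the projection $|\sO_G(j)|\times|\sP_G(j)|\rtarr|\sO_G(j)|$. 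Since source and target of that projection are universal principal $(G,\SI_j)$-bundles and hence $(\SI_j\times G)$-CW complexes, it is a $(\SI_j\times G)$-weak equivalence and therefore, by the equivariant Whitehead theorem, a $(\SI_j\times G)$-homotopy equivalence. Applying the functor $(-)\times_{\SI_j}X^j$, which preserves equivariant homotopies, and then $\coprod_j$, shows that $Bp_1$, and likewise $Bp_2$, is a $G$-homotopy equivalence.

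Finally, for the fixed-point statement I would apply $(-)^G$. Since fixed points commute with products of operads, $(\sO_G\times\sP_G)^G=\sO\times\sP$, and by \myref{Fixedptop} the categories $(\bO_G X_+)^G$, $(\mathbf{T}_G X_+)^G$, $(\bP_G X_+)^G$ are algebras over $\sO$, $\sO\times\sP$, $\sP$ respectively, with $p_1^G$ and $p_2^G$ maps of $(\sO\times\sP)$-categories. Since $B$ commutes with passage to $G$-fixed points and $Bp_1$, $Bp_2$ are $G$-homotopy equivalences, $B(p_1^G)=(Bp_1)^G$ and $B(p_2^G)=(Bp_2)^G$ are homotopy equivalences; hence $(\bO_G X_+)^G$ and $(\bP_G X_+)^G$ are weakly equivalent as $(\sO\times\sP)$-categories, in particular weakly equivalent. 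There is no genuine obstacle here; the only point that needs care is the identification $B\bigl(\sC(j)\times_{\SI_j}Y\bigr)\iso|\sC(j)|\times_{\SI_j}Y$, i.e.\ that $B$ commutes with these particular orbit constructions, which is exactly the commutation lemma established in the proof of the proposition that identifies $\mathbf{O}_G(X_+)$ with $|\bO_G X_+|$.
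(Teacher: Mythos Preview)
Your proposal is correct and follows essentially the same approach as the paper: the paper's proof is the terse two-line version of exactly what you wrote, displaying the zigzag $\bO_GX_+ \leftarrow (\bO_G\times\bP_G)(X_+) \rightarrow \bP_GX_+$ of $(\sO_G\times\sP_G)$-categories and asserting that the projections induce weak equivalences of $|\sO_G\times\sP_G|$-spaces on passage to classifying spaces. You have simply unpacked the justification for that last assertion (universal bundle plus equivariant Whitehead plus the commutation of $B$ with these particular orbits) and made the fixed-point deduction explicit.
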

\begin{proof} The projections 
\[\xymatrix@1{ \bP_GX_+ &  (\bP_G\times\bV_G)(X_+)  \ar[l] \ar[r] & \bV_GX_+\\} \]
are maps of $(\sP_G\times\sV_G)$-categories that induce weak equivalences of 
$|\sP_G\times\sV_G|$-spaces on passage to classifying spaces.
\end{proof}

\begin{thm} The classifying spaces of the $\sP$-category $\sF_G(X)^G$ and the 
$\sV$-category $\sE^U_G(X)^G$ are weakly equivalent as $|\sP\times\sV|$-spaces.
\end{thm}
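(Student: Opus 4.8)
The plan is to assemble the identification from results already in hand, threading the three descriptions through a zig-zag of equivalences that respects the relevant operad actions. Since $|-|$ preserves products we identify $|\sO\times\sP|$ with $|\sO|\times|\sP|$, and for an $\sO$-category $\sA$ (resp. a $\sP$-category) we regard $B\sA$ as an $|\sO\times\sP|$-space by pulling back its action along the projection $\sO\times\sP\rtarr\sO$ (resp. $\sO\times\sP\rtarr\sP$). With that convention it suffices to produce a zig-zag of $|\sO\times\sP|$-equivalences
\[ B\sF_G(X)^G \iso B(\bO_GX_+)^G \htp B(\bP_GX_+)^G \htp B\sE_G(X)^G. \]

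For the left isomorphism, \myref{CATone} gives an isomorphism of permutative categories $\sF_G(X)^G\iso (\bO_GX_+)^G$; applying $B$ and pulling back along $\sO\times\sP\rtarr\sO$ gives the displayed isomorphism of $|\sO\times\sP|$-spaces. For the middle equivalence, invoke the lemma immediately preceding this theorem, which exhibits the zig-zag of $(\sO_G\times\sP_G)$-categories $\bO_GX_+\ltarr (\bO_G\times\bP_G)(X_+)\rtarr \bP_GX_+$ whose maps induce weak equivalences of $|\sO_G\times\sP_G|$-spaces after applying $B$. Since $G$ is finite, $|-|$ commutes with passage to $G$-fixed points (and with the orbit constructions $(-)\times_{\SI_j}X^j$, exactly as in the identification $\mathbf{O}_G(X_+)\iso|\bO_GX_+|$), and the fixed-point functor carries $(\sO_G\times\sP_G)$-categories to $(\sO\times\sP)$-categories; so taking $G$-fixed points yields a zig-zag of $(\sO\times\sP)$-categories inducing weak equivalences of $|\sO\times\sP|$-spaces, giving $B(\bO_GX_+)^G\htp B(\bP_GX_+)^G$.

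For the right equivalence, \myref{identtoo} provides the map $\om\colon \bP_G(X_+)\rtarr\sE_G(X)$ of $\sP_G$-categories and shows that $B\om$ is a weak equivalence of $|\sP_G|$-spaces; in fact, at each orbit level it is built from the $(\SI_j\times G)$-homotopy equivalence $|i_j|$ between $(\SI_j\times G)$-CW complexes, so $B\om$ is a genuine $G$-equivalence and hence restricts to a weak equivalence on $G$-fixed points. Using once more that $|-|$ commutes with $(-)^G$, together with \myref{Fixedptop} identifying $(\mathbf{P}_G(X_+))^G = B(\bP_GX_+)^G$ and $(B\sE_G(X))^G = B\sE_G(X)^G$ as $|\sP|$-spaces for $\sP = (\sP_G)^G$, we obtain the weak equivalence $B(\bP_GX_+)^G\htp B\sE_G(X)^G$ of $|\sP|$-spaces, hence of $|\sO\times\sP|$-spaces by pullback along $\sO\times\sP\rtarr\sP$. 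Composing the three steps proves the theorem.

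The proof is essentially bookkeeping; the main point requiring care is keeping straight which operad acts at each stage. In particular, the comparison must be routed through the honest $(\sO_G\times\sP_G)$-algebra $(\bO_G\times\bP_G)(X_+)$ rather than attempting to put a $\sP$-action directly on $\sF_G(X)$ — which by \myref{subtle} cannot exist — and likewise directly on $\bP_G X_+$; the product-operad trick is exactly what avoids this. The only other delicate point is the interchange of $|-|$ with $(-)^G$ and with orbits by $\SI_j$, which is where finiteness of $G$ is used.
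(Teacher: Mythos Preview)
Your proof is correct and follows essentially the same route as the paper: the theorem is stated there as an immediate consequence of the preceding lemma together with \myref{CATone} and \myref{identtoo}, assembled via the same zig-zag $\sF_G(X)^G \iso (\bO_GX_+)^G \htp (\bP_GX_+)^G \htp \sE_G(X)^G$ that you spell out. You supply more explicit detail on why each step is compatible with the $|\sO\times\sP|$-action and why the equivalences survive passage to $G$-fixed points than the paper does, but the argument is the same.
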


The conclusion is that, on the $G$-fixed level, the categories $\sE^U_G(X)^G$ and 
$\sF_G(X)^G$ can be used interchangeably as operadically structured versions of
the category of finite $G$-sets over $X$.  On the equivariant level, $\sE^U_G(X)$
but not $\sF_G(X)$ is operadically structured. It is considerably more convenient
than the categories $\bP_G(X_+)$ or $\bV_G(X_+)$.   With the notations
$\bK_G\bV_G(X_+) = \bE_G B\bV_G(X_+) = \bE_G \BV _G(X_+)$ and  
$\bK_G\sE^U_G(X) = \bE_GB\sE^U_G(X)$, we have the following immediate consequence 
of Theorems \ref{SpThree2} and \ref{identtoo}. It is our preferred version of the
categorical BPQ theorem, since it uses the most intuitive categorical input.

\begin{thm}[Categorical Barratt-Priddy-Quillen theorem]\mylabel{GBPQ} 
For $G$-spaces $X$, there is a composite natural weak equivalence
\[ \al\colon \SI^{\infty}_G X_+\rtarr \bK_G\bV_GX_+\rtarr \bK_G\sE^U_G(X). \]
\end{thm}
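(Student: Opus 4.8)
The plan is to obtain the two weak equivalences in the composite from results already in the excerpt and simply paste them together. First I would invoke \myref{SpThree2}, which gives the natural weak equivalence $\al\colon \SI^{\infty}_G X_+ \rtarr \bK_G\bO_G(X_+)$, but since we now wish to work with the operad $\sP_G$ rather than $\sO_G$, I would instead use the same tautological argument from \myref{SpThree} applied to the operad $\sC_G = |\sP_G|$ (or, more precisely, to its product with the Steiner operads, as always when feeding into the machine $\bE_G$). Taking $Y = X_+$ and using the identification \eqref{opXtwo} of $\bP_G(X_+) = \coprod_j \sP_G(j)\times_{\SI_j} X^j$, together with the fact established just before \myref{SpThree2} that $\mathbf{P}_G(X_+) \iso |\bP_G(X_+)| = B\bP_G(X_+)$, the equivariant BPQ theorem yields a natural weak equivalence $\SI^{\infty}_G X_+ \rtarr \bE_G\mathbf{P}_G(X_+) = \bK_G\bP_G(X_+)$. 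This is the first arrow.

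The second arrow is supplied directly by \myref{identtoo}: the map $\om\colon \bP_G(X_+)\rtarr \sE_G(X)$ of $\sP_G$-categories induces, on passage to classifying spaces, a weak equivalence $B\om\colon \mathbf{P}_G(X_+)\rtarr B\sE_G(X)$ of $|\sP_G|$-spaces. Applying the infinite loop space machine $\bE_G$ (which, by construction, takes weak equivalences of $|\sP_G|$-algebras—or their product-with-Steiner variants—to weak equivalences of $G$-spectra, since it is built from the two-sided bar construction) then gives $\bK_G\bP_G(X_+) = \bE_GB\bP_G(X_+)\rtarr \bE_GB\sE_G(X) = \bK_G\sE_G(X)$ as a weak equivalence. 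Composing the two arrows gives the desired natural weak equivalence $\al\colon \SI^{\infty}_G X_+\rtarr \bK_G\sE_G(X)$, and naturality in $X$ follows from the naturality of each constituent: the BPQ equivalence is natural in the based $G$-space $X_+$ (hence in $X$) by \myref{SpThree3}, and $\om$ is natural in $X$ by \myref{identtoo}.

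I do not expect a genuine obstacle here, since all the real work has been done in the earlier sections; the only points requiring care are bookkeeping ones. The first is remembering, as in \myref{pickyprod}, that $\sP_G$ and $\sO_G$ must actually be replaced by their products with the Steiner operads $\sK_V$ (or $\sK_U$) before being fed into $\bE_G$, and that the projections from the product operads induce weak equivalences of the associated monads, so that none of the homotopical content changes. The second is checking that $\bE_G$ does indeed preserve the weak equivalence $B\om$—this is where one uses that $B\om$ is a map of $|\sP_G|$-algebras and that the bar-construction machine is homotopical on such maps, which is exactly the kind of statement recorded (and used repeatedly) in \S\ref{Sec4}. Given those two observations, the proof reduces to a one-line concatenation, which is why we simply write:

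\begin{proof}
By \myref{SpThree2} (applied with the operad $\sP_G$ in place of $\sO_G$, using \eqref{opXtwo} and the isomorphism $\mathbf{P}_G(X_+)\iso B\bP_G(X_+)$), there is a natural weak equivalence $\SI^{\infty}_G X_+\rtarr \bK_G\bP_G(X_+)$. By \myref{identtoo}, the map $\om$ of $\sP_G$-categories induces a weak equivalence $B\om\colon \mathbf{P}_G(X_+)\rtarr B\sE_G(X)$ of $|\sP_G|$-spaces; applying $\bE_G$ gives a natural weak equivalence $\bK_G\bP_G(X_+)\rtarr \bK_G\sE_G(X)$. The composite is the required natural weak equivalence $\al$.
\end{proof}
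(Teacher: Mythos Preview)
Your proposal is correct and follows exactly the approach the paper takes: the paper does not even write out a proof, stating only that the theorem is ``the following immediate consequence of Theorems~\ref{SpThree2} and~\ref{identtoo}.'' Your argument makes explicit precisely the two steps this sentence encodes---applying the general BPQ theorem \myref{SpThree} with $\sC_G=|\sP_G|$ to obtain the first arrow, and applying $\bE_G$ to the weak equivalence $B\om$ of \myref{identtoo} to obtain the second---and your care about the product-with-Steiner operads and the homotopical behavior of the bar construction is appropriate but routine, as you note.
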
 

\begin{rem}
It is not known how the tom Dieck splitting theorem behaves with respect 
to the Mackey functor structure on homotopy groups.  It seems likely to us
that  this could be analyzed using this version of the BPQ theorem and 
our categorical proof of the splitting.
\end{rem}

\section{Appendix: pairings of operads}\label{SecPair}

We recall the following definition from \cite[1.4]{MayPair}.
It applies equally well equivariantly. We write it element-wise,
but written diagrammatically it applies to operads in any symmetric
monoidal category $\sV$. Write $\mathbf{j} = \{1, \cdots, j\}$ and let 
\[ \otimes \colon \SI_j\times \SI_k \rtarr \SI_{jk} \]
be the homomorphism obtained by identifying $\mathbf{j}\times \mathbf{k}$
with $\mathbf{jk}$ by ordering the set of $jk$ elements $(q,r)$, $1\leq q\leq j$
and $1\leq r\leq k$, lexicographically.  
More precisely, let $\lambda_{j,k}:\mathbf{jk}\rtarr \mathbf{j}\times \mathbf{k}$ be the lexicographic ordering.
Then, given $\rho\in \SI_j$ and $\sigma\in \SI_k$, $\rho\otimes\sigma$ is defined by
 \[ \mathbf{jk} \xrightarrow{\lambda_{j,k}} \mathbf{j}\times \mathbf{k} \xrightarrow{\rho\times \sigma} \mathbf{j}\times \mathbf{k} \xrightarrow{\lambda_{j,k}^{-1}} \mathbf{jk}.\]

For nonnegative integers $h_q$ and $i_r$, let 
\[ \de\colon \coprod _{(q,r)} (\mathbf{h}_q \times \mathbf{i}_r)\rtarr 
(\coprod_q\mathbf{h}_q)\times (\coprod_r\mathbf{i}_r) \]
be the distributivity isomorphism viewed as a permutation via block and lexicographic identifications of the source and target sets with the appropriate set $\mathbf{n}$.  A little more precisely, we define the permutation $\delta$ to be the composite
\[\begin{split}
 \sum_{q,r}\mathbf{h_qi_r}& \xrightarrow{\iso} \mathbf{h_1i_1}\amalg \mathbf{h_1i_2}\amalg \dots \amalg \mathbf{h_j i_k} \xrightarrow{\lambda\amalg \dots \amalg\lambda} \mathbf{h_1}\times \mathbf{i_1} \amalg \dots\amalg\mathbf{h_j}\times \mathbf{i_k} \\
&   \xrightarrow{\mathrm{dist}} (\mathbf{h_1}\amalg\dots\amalg\mathbf{h_j})\times (\mathbf{i_1}\amalg \dots\amalg\mathbf{i_k}) \xrightarrow{\iso} \mathbf{h}\times \mathbf{i} \xrightarrow{\la_{h,i}^{-1}} \mathbf{hi}
\end{split}\]

\begin{defn}\mylabel{pairop} Let $\sC$, $\sD$, and $\sE$ be operads in a symmetric
monoidal category $\sV$ (with product denoted $\otimes$). A pairing of operads
\[ \boxtimes\colon (\sC, \sD)  \rtarr \sE \]
consists of maps 
\[ \boxtimes\colon \sC(j) \otimes \sD(k) \rtarr \sE(jk) \]
in $\sV$ for $j\geq 0$ and $k\geq 0$ such that the diagrammatic versions of the following properties hold.  Let $c\in \sC(j)$ and $d\in \sD(k)$.
\begin{enumerate}[(i)]
\item If $\mu\in \SI_j$ and $\nu\in \SI_k$, then
\[  c\mu\boxtimes d\nu = (c\boxtimes d)(\mu\otimes \nu) \]
\item With $j = k = 1$, $\id\boxtimes \id = \id$.
\item If $c_q\in \sC(h_q)$ for $1\leq q\leq j$ and $d_r\in \sD(i_r)$ 
for $1\leq r\leq k$, then\footnote{The original definition in \cite{MayPair} had $\de$ on the other side in this condition.}

\[\ga(c\boxtimes d;\times_{(q,r)}c_q\boxtimes d_r) 
= \Big[\ga(c;\times_q c_q)\boxtimes \ga(d;\times_r d_r)\Big]\de. \]
\end{enumerate}
\end{defn}

When specialized to spaces, the following definition (which is a variant of \cite[1.2]{MayPair}) gives one
possible starting point for multiplicative infinite loop space theory.

\begin{defn}\mylabel{MayPair}  Let  $\boxtimes\colon (\sC, \sD)  \rtarr \sE$ be a pairing of operads in $\sV$. 
A pairing of a $\sC$-algebra $X$ and a $\sD$-algebra $Y$
to an $\sE$-algebra $Z$ is a map $f\colon X\otimes Y \rtarr Z$ such that the following diagram commutes
for all $j$ and $k$, where $X^j$ denotes the $j$th tensor power in $\sV$ and we write $\tha$ generically for action maps.
\[
\xymatrix{ 
\sC(j)\otimes X^j \otimes \sD(k)\otimes Y^k \ar[d]_{\boxtimes}  \ar[rr]^-{\tha\otimes \tha} & & X\otimes Y \ar[d]^{f} \\
\sE(jk) \otimes (X\otimes Y)^{jk} \ar[r]_-{\id\otimes f^{jk}} & \sE(jk) \otimes Z^{jk} \ar[r]_-{\tha} & Z \\} 
\] 
On the left, $\boxtimes$ denotes the composite 
$$ \xymatrix@1{\sC(j)\otimes X^j \otimes \sD(k)\otimes Y^k \ar[r]^-{\id\otimes t\otimes \id} & 
\sC(j) \otimes \sD(k)\otimes  X^j \otimes Y^k \ar[r]^-{\boxtimes\otimes \la} & \sE(jk) \otimes Z^{jk}. \\} $$
Here, in elementwise notation,  
$$\la((x_1\otimes \cdots \otimes x_j)\otimes (y_1\otimes \cdots \otimes y_k)) = ((x_1\otimes y_1) \otimes \cdots \otimes (x_j\otimes y_k)), $$
where we order the  pairs $(x_q\otimes y_r)$,  $1\leq q \leq j$ and $1\leq r\leq k$, lexicographically. 
\end{defn}

Letting $\sV$ be the category of unbased $G$-spaces, with $\otimes = \times$, 
but then passing to monads on based $G$-spaces, we obtain the following observations.

\begin{prop}\mylabel{pairsm}  For based $G$-spaces $X$ and $Y$,
a pairing $\boxtimes\colon (\sC_G,\sD_G)\rtarr \sE_G$ of operads of $G$-spaces induces a natural pairing
$$\boxtimes\colon \mathbf{C}_G X\sma \mathbf{D}_G Y \rtarr \mathbf{E}_G(X\sma Y)$$
such that the following diagrams commute.
\[ \xymatrix{
X\sma Y \ar[r]^-{\et\sma \et} \ar[dr]_{\et} & \mathbf{C}_GX\sma \mathbf{D}_GY\ar[d]^{\boxtimes}\\
& \mathbf{E}_G(X\sma Y) \\} \]
\[\xymatrix{ 
\mathbf{C}_G \mathbf{C}_G X\sma \mathbf{D}_G\mathbf{D}_GY \ar[rr]^-{\mu\sma \mu} \ar[d]_{\boxtimes} &  &
\mathbf{C}_G X\sma \mathbf{D}_GY \ar[d]^{\boxtimes}\\
\mathbf{E}_G(\mathbf{C}_G X\sma \mathbf{D}_GY) \ar[r]_-{\mathbf{E}_G\boxtimes} & \mathbf{E}_G\mathbf{E}_G(X\sma Y) \ar[r]_{\mu} & \mathbf{E}_G(X\sma Y) \\} \]
The following diagram commutes for any pairing $f\colon X\otimes Y \rtarr Z$ of a 
$\sC_G$-algebra $X$ and a $\sD_G$-algebra $Y$ to an $\sE_G$-algebra $Z$.
\[
\xymatrix{ 
\mathbf{C_G}  X\sma \mathbf{D_G} Y  \ar[rr]^-{\tha\sma \tha} \ar[d]_{\boxtimes}   & &
X\sma Y \ar[d]^{f}\\
\mathbf{E_G} (X\sma Y) \ar[r]_-{\mathbf{E_G} f} &  \mathbf{E_G} Z \ar[r]_-{\tha} & Z. \\} 
\]
\end{prop}
\begin{proof}  The map $\boxtimes$ is induced from the map $\boxtimes$ of the previous definition and the
commutativity of the first two diagrams is checked by chases from \myref{pairop}.  The commutativity of 
the second implies that $\boxtimes$ is a pairing in the sense of \myref{MayPair}.  The commutativity of the
third follows from  \myref{MayPair}. 
\end{proof} 

\begin{exmp}\mylabel{pairopex}  The following commutative diagram, in which we ignore the path space variable for simplicity, 
shows that condition (iii) is satisfied by the pairing $(\sK_V,\sK_W) \rtarr \sK_{V\oplus W}$ defined in  \myref{SteinPair}. This
completes the proof of that result.
{\tiny
\[ \xymatrix{
\mathbf{hi}\times V\times W \ar[r] \ar[d]_{\delta\times \id} & \Big(\coprod\limits_{q,r}\mathbf{h_q i_r}\Big) \times V \times W \ar[r]^{\mathrm{dist}} \ar[d]_{\amalg \la \times \id} & \coprod\limits_{q,r} \Big( \mathbf{h_qi_r}\times V\times W\Big) \ar[d]_{\amalg \la} \ar[r]^-{\amalg c_q\otimes d_r} & \mathbf{j}\times\mathbf{k}\times V\times W \ar[d]^{\mathrm{twist}}
\\
\mathbf{hi}\times V\times W \ar[d]_{\la\times \id} & \Big(\coprod\limits_{q,r}\mathbf{h_q}\times \mathbf{i_r}\Big) \times V\times W \ar[r]^{\mathrm{dist}} \ar[d]_{\overline{\delta}\times \id} & \coprod\limits_{q,r}\Big(\mathbf{h_q}\times \mathbf{i_r}\times V \times W\Big) \ar[d]_{\amalg\, \mathrm{twist}} & \mathbf{j}\times V \times \mathbf{k}\times W \ar[d]^{c\times d}
\\
\mathbf{h}\times \mathbf{i}\times V \times W \ar[r] \ar[d]_{\mathrm{twist}} & (\coprod\limits_q \mathbf{h_q}) \times (\coprod\limits_r \mathbf{i_r}) \times V \times W \ar[d]_{\mathrm{twist}} & \coprod\limits_{q,r} (\mathbf{h_q}\times V\times \mathbf{i_r}\times W) \ar[d]_{\overline{\delta}} \ar[uur]^(.6){\amalg(c_q\times d_r)} & V\times W
\\ 
\mathbf{h}\times V \times \mathbf{i}\times W \ar[r] & (\coprod\limits_q \mathbf{h_q})\times V \times (\coprod\limits_r \mathbf{i_r}) \times W \ar[r]^{\mathrm{dist}} & \Big(\coprod\limits_q \mathbf{h_q}\times V\Big) \times \Big(\coprod\limits_r \mathbf{i_r}\times W\Big) \ar[r]^-{\amalg c_q\times \amalg d_r} & (\coprod\limits_q V) \times (\coprod\limits_r W) \ar[u]_{c\times d} \ar@/^7ex/[uu]^{\id}\\
}\]
}
\end{exmp}

The following counterexample was pointed out to us by Anna Marie Bohmann and Angelica Osorno.   Using a 
more sophisticated categorical framework, we shall explain how to get around the difficulty  in \cite{GMMOAdd, GMMOMult}.

\begin{noexmp}\mylabel{oops}  We show that the pairing (\ref{otimes}) is not a self-pairing of $\sP$. 
Letting $\tau\in \sP(2)$ be the transposition $\tau = (12)$, we calculate
\[ \gamma(\tau\otimes \tau; \id_2\otimes\id_1,\id_2\otimes\id_1,\id_1\otimes\id_1,\id_1\otimes\id_1) = (1526)(3)(4)\]
whereas
\[ \big[\gamma(\tau;\id_2,\id_1)\otimes \gamma(\tau;\id_1,\id_1)\big]\delta = (14526)(3).\]
In this case $\delta$ is the transposition $(23)$.  Thus condition (iii) fails.
\end{noexmp}

\section{Appendix: the double bar construction and the proof of \myref{Trouble}}\label{DoubleTrouble}

The proof of \myref{Trouble} is based on a construction that the senior author has
used for decades in unpublished work and whose algebraic analogue has also 
long been used.  Heretofore he has always found alternative 
arguments that avoid its use in published work, and the topological version seems not to have 
appeared in print.  The construction works 
in great generality with different kinds of 
bar constructions, as described in \cite{Meyer1, Meyer2, Shul} for example.  We restrict attention
to the monadic bar construction used in this paper.  We shall be informal, since 
it is routine to fill in the missing details.

We assume given two monads $\mathbf C$ and 
$\mathbf D$ in some reasonable category $\sU$, and we assume given a morphism
of monads $\io\colon \mathbf C\rtarr \mathbf D$.  We also assume given a right $\mathbf D$-functor 
$\SI\colon \sU\rtarr \sV$ for some other reasonable category $\sV$. Then $\SI$ is
a right $\mathbf D$-functor with the pullback action
\[  \SI\mathbf C \rtarr \SI \mathbf D \rtarr \SI. \]
Let $X$ be a $\mathbf C$-algebra in $\sU$.  Reasonable means in particular 
that we can form ``geometric realizations'' of simplicial objects $X$ as usual, 
tensoring $X$ over the category $\DE$ with a canonical (covariant) simplex functor from $\DE$ to $\sU$ or $\sV$. 

We assume that the functor $\mathbf D$ commutes with geometric realization, so that the realization
of a simplicial $\mathbf D$-algebra is a $\mathbf D$-algebra.  Then the bar 
construction 
$$ \io_! X = B(\mathbf D,\mathbf C,X)$$ 
in $\sU$ specifies an ``extension of scalars'' functor that converts $\mathbf C$-algebras $X$ to 
$\mathbf D$-algebras in a homotopically well-behaved fashion. Since 
$D$ acts on $\SI$, we have the bar construction $B(\SI,\mathbf D,\io_! X)$, and we also have the
bar construction $B(\SI,\mathbf C, X)$, both with values in $\sV$.  Under these assumptions, 
we have the following result.

\begin{thm}\mylabel{Double} There is a natural equivalence $B(\SI,\mathbf D,\io_! X) \htp B(\SI,\mathbf C,X)$.
\end{thm}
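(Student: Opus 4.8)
The plan is to realize $B(\SI,\mathbf D,\io_! X)$ as the realization of a bisimplicial object and then to recompute that realization in the other simplicial direction, where it collapses. Recall that $\io_! X = B(\mathbf D,\mathbf C,X)$ is the realization of the simplicial $\mathbf D$-algebra $q\mapsto \mathbf D\mathbf C^qX$, each term being the free $\mathbf D$-algebra on $\mathbf C^qX$. Applying the simplicial bar construction $B_\bullet(\SI,\mathbf D,-)$ in each simplicial degree $q$ produces a bisimplicial object $\mathcal B_{\bullet,\bullet}$ with
\[ \mathcal B_{p,q} \;=\; \SI\mathbf D^p\bigl(\mathbf D\mathbf C^qX\bigr) \;=\; \SI\mathbf D^{p+1}\mathbf C^qX, \]
whose horizontal ($p$-)operators are those of $B_\bullet(\SI,\mathbf D,-)$ applied to the free algebra $\mathbf D(\mathbf C^qX)$, and whose vertical ($q$-)operators are $\SI\mathbf D^p$ applied to those of $B_\bullet(\mathbf D,\mathbf C,X)$. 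Here the first copy of $\mathbf D$ carries the right $\mathbf D$-action of $\SI$, while the $(p+1)$st copy carries the right $\mathbf C$-action $\mathbf D\mathbf C\xrightarrow{\mathbf D\io}\mathbf D\mathbf D\xrightarrow{\mu}\mathbf D$ coming from $B_\bullet(\mathbf D,\mathbf C,X)$. Since $\mathbf D$ commutes with geometric realization by hypothesis, and $\SI$ does so as well, realizing $\mathcal B$ in the vertical direction first and then in the horizontal direction recovers $B(\SI,\mathbf D,\io_!X)$; equivalently, $B(\SI,\mathbf D,\io_!X)$ is naturally isomorphic to the realization of $\mathcal B$, which by the usual comparison agrees with either iterated realization.

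The second step is to realize $\mathcal B$ in the horizontal direction first. For each fixed $q$, the horizontal simplicial object $p\mapsto \mathcal B_{p,q}$ is, by construction, $B_\bullet(\SI,\mathbf D,\mathbf D(\mathbf C^qX))$, the two-sided bar construction of the right $\mathbf D$-functor $\SI$ against the \emph{free} $\mathbf D$-algebra on $\mathbf C^qX$. Hence the standard extra-degeneracy argument — the same one used in the proof of \myref{SpThree} via \cite[9.9]{MayGeo} — shows that the augmentation
\[ \epz_q\colon B(\SI,\mathbf D,\mathbf D\mathbf C^qX)\rtarr \SI\mathbf C^qX, \]
induced by the right $\mathbf D$-action of $\SI$, is a strong simplicial homotopy equivalence, natural in $\mathbf C^qX$. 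What must be checked is that the $\epz_q$ assemble into a map of simplicial objects from $q\mapsto B(\SI,\mathbf D,\mathbf D\mathbf C^qX)$ to $q\mapsto \SI\mathbf C^qX = B_q(\SI,\mathbf C,X)$. This reduces to the bookkeeping observation that collapsing the string $\mathbf D^{p+1}$ into $\SI$ through the right $\mathbf D$-action of $\SI$ transports the right $\mathbf C$-action on the last $\mathbf D$ (which governs the vertical $d_0$ of $\mathcal B$) onto the right $\mathbf C$-action of $\SI$ obtained by pullback along $\io$ — and the latter is exactly the $\mathbf C$-action defining $B_\bullet(\SI,\mathbf C,X)$; the remaining vertical operators involve only the $\mathbf C$'s and $X$ and are matched trivially.

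Granting this compatibility, $\epz_\bullet$ is a levelwise homotopy equivalence of simplicial objects, and — using the properness (Reedy cofibrancy) of the two simplicial objects involved, as throughout the paper — its realization is a homotopy equivalence $B(\SI,\mathbf D,\io_!X)\htp B(\SI,\mathbf C,X)$. All constructions are plainly functorial in $X$ and in the monad map $\io$, which gives the asserted naturality. The main obstacle is the middle paragraph: keeping straight which copy of $\mathbf D$ in $\mathcal B_{p,q}$ supports which action, and verifying that the free-algebra augmentations $\epz_q$ commute with the vertical simplicial operators; the bisimplicial realization comparison and the passage from a levelwise equivalence to an equivalence of realizations are routine instances of the bar-construction machinery already in use.
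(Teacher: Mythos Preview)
Your proof is correct and follows essentially the same route as the paper: both construct the same bisimplicial object $\SI\mathbf D^p\mathbf D\mathbf C^qX$, realize in one order to obtain $B(\SI,\mathbf D,\io_!X)$, and in the other order collapse the free $\mathbf D$-algebra via the extra degeneracy to reach $B(\SI,\mathbf C,X)$. The paper packages the horizontal realization as a right $\mathbf C$-functor $B(\SI,\mathbf D,\mathbf D)$ equivalent to $\SI$, whereas you work levelwise and verify the augmentations $\epz_q$ are simplicial; these are two phrasings of the same computation, and your explicit mention of commutation of $\SI$ with realization and of properness makes explicit what the paper leaves implicit.
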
  

\begin{proof}[Proof of \myref{Trouble}]  We replace $\sU$ by $G\sU$ and $\sV$ by $G\sS\!p$.   We take
$\mathbf C$ to be the monad associated to the operad $\sC_{U^G} = (\sC_G)^G\times \sK_{U^G}$ and $\mathbf D$ to 
be the monad associated to $\sC_U = \sC_{G}\times \sK_U$.  We take $\SI$ to be $\SI^{\infty}_G$, and we recall that 
$\SI^{\infty}_G = i_*\SI^{\infty}$ by \myref{obvious}.
By inspection or a commutation of left adjoints argument, the functor $i_*$ commutes with geometric realization.  Therefore
\[ \mathbf E_G(\io_!X) \equiv B(\SI^{\infty}_G,\mathbf {C_{U}}, \io_! X) \htp B(\SI^{\infty}_G,\mathbf{C_{U^G}},X)\iso i_*B(\SI^{\infty},\mathbf{C_{U^G}},X) \equiv i_*\mathbf E X, \]
where \myref{Double} gives the equivalence.
\end{proof}

\begin{proof}[Proof of \myref{Double}]  We construct the double bar construction
\[ B(\SI,\mathbf D,\mathbf D, \mathbf C,X) \]
as the geometric realization of the bisimplicial object $B_{\bullet,\bullet}(\SI,\mathbf D,\mathbf D, \mathbf C,X)$
in $\sV$ whose $(p,q)$-simplex object is $\SI \mathbf{D}^p\mathbf{D}\mathbf{C}^qX$.  The horizontal 
face and degeneracy operations are those obtained by applying the simplicial bar construction
$B_{\bullet}(\SI,\mathbf D, Y)$ to the $\mathbf D$-algebras $ Y=\mathbf{D}\mathbf{C}^qX$.  The vertical face
and degeneracy operations are those obtained by applying the simplicial bar construction $B_{\bullet}(\UP, \mathbf C,X)$
to the $\mathbf C$-functors $\UP = \SI \mathbf{D}^p\mathbf{D}$.  The geometric realization of a bisimplicial object
is obtained equivalently as the realization of the diagonal simplicial object, the horizontal realization of its 
vertical realization, and the vertical realization of its horizontal realization.   Realizing first vertically 
and then horizontally, we obtain
\[ B(\SI,\mathbf D, B(\mathbf D,\mathbf C, X)) = B(\SI,\mathbf D, i_!X).\] 
Realizing first horizontally and then vertically, we  
obtain the bar construction
\[ B(B(\SI,\mathbf D,\mathbf D), \mathbf C, X) \htp B(\SI, \mathbf C, X). \] 
Here $B(\SI,\mathbf D,\mathbf D)$ is the right $\mathbf C$-functor whose value on a $\mathbf C$-algebra $Y$ is
$B(\SI,\mathbf D,\mathbf DY)$ with right $\mathbf C$-action induced by the $\mathbf C$-action $\mathbf CY \rtarr Y$.  
The equivalence is induced by the standard natural equivalence $B(\SI,\mathbf D,\mathbf DY) \rtarr \SI Y$.
\end{proof}

\begin{rem} The double bar construction can be defined more generally and symmetrically.  Dropping the assumption
that there is a map of monads $\mathbf C\rtarr \mathbf D$, $B(\SI,\mathbf D,\mathbf F,\mathbf C,X)$ is defined 
if $F$ is a left $\mathbf D$-functor and a right $\mathbf C$-functor $\bU\rtarr \bU$ such that the 
following diagram commutes.
\[ \xymatrix{
\mathbf D \mathbf F \mathbf C \ar[r] \ar[d] & \mathbf D\mathbf F \ar[d] \\
\mathbf F \mathbf C \ar[r] & \mathbf F.\\} \]
This can even work when the domain and target categories of $\mathbf F$ differ but agree
with the categories on which $\mathbf C$ and $\mathbf D$ are defined.
\end{rem}

\end{document}